\documentclass[hidelinks,onefignum,onetabnum]{siamonline220329}



\usepackage{lipsum}
\usepackage{amsfonts}
\usepackage{graphicx}
\usepackage{epstopdf}
\usepackage{amsmath,amssymb}
\usepackage{soul}
\usepackage{xcolor}
\usepackage{algorithm}
\usepackage{algpseudocode}
\usepackage{subcaption}
\usepackage{mathtools}
\usepackage{booktabs}
\usepackage{aligned-overset}
\ifpdf
  \DeclareGraphicsExtensions{.eps,.pdf,.png,.jpg}
\else
  \DeclareGraphicsExtensions{.eps}
\fi
\usepackage[shortlabels]{enumitem}
\setlist[enumerate]{leftmargin=.5in}
\setlist[itemize]{leftmargin=.5in}


\newsiamremark{remark}{Remark}
\newsiamremark{hypothesis}{Hypothesis}
\newsiamremark{assumption}{Assumption}
\crefname{hypothesis}{Hypothesis}{Hypotheses}
\newsiamthm{claim}{Claim}
\newsiamthm{prop}{Proposition}

\headers{Plug-and-play quasi-Newton methods}{H. Y. Tan, S. Mukherjee, J. Tang, and C.-B. Sch\"onlieb}

\title{Provably Convergent Plug-and-Play Quasi-Newton Methods}

\author{Hong Ye Tan\thanks{Department of Applied Mathematics and Theoretical Physics, University of Cambridge, UK (\email{hyt35@cam.ac.uk}, \email{sm2467@cam.ac.uk}, \email{cbs31@cam.ac.uk}).}
\and Subhadip Mukherjee\footnotemark[1] \thanks{Department of Computer Science, University of Bath, UK. Department of Electronics and Electrical Communication Engineering, Indian Institute of Technology, Kharagpur, India (\email{smukherjee@ece.iitkgp.ac.in}).}
\and Junqi Tang\footnotemark[1] \thanks{School of Mathematics, University of Birmingham, UK (\email{j.tang.2@bham.ac.uk}).}
\and Carola-Bibiane Sch\"onlieb\footnotemark[1]}

\usepackage{amsopn}

\newcommand{\R}{\mathbb{R}}
\DeclareMathOperator*{\argmin}{arg\,min}
\DeclareMathOperator*{\argmax}{arg\,max}

\DeclareMathOperator*{\zer}{zer}
\DeclareMathOperator*{\dom}{dom}
\DeclareMathOperator{\prox}{prox}
\DeclareMathOperator{\dist}{dist}


\usepackage{acro}

\ifpdf
\hypersetup{
  pdftitle={Plug and Play Quasi-Newton Methods},
  pdfauthor={Hong Ye Tan, Subhadip Mukherjee, Junqi Tang, and Carola-Bibiane Sch\"onlieb}
}
\fi



\newcommand{\Rev}[1]{{\color{black}{#1}}}
\newcommand{\RRev}[1]{{\color{black}{#1}}}
\begin{document}

\maketitle

\begin{abstract}
%
\Rev{Plug-and-Play (PnP) methods are a class of efficient iterative methods that aim to combine data fidelity terms and deep denoisers using classical optimization algorithms, such as ISTA or ADMM, \RRev{with applications in inverse problems and imaging}. Provable PnP methods are a subclass of PnP methods with convergence guarantees, such as fixed point convergence or convergence to critical points of some energy function. Many existing provable PnP methods impose heavy restrictions on the denoiser or fidelity function, such as \textit{nonexpansiveness} or \textit{strict convexity}, respectively. In this work, we propose a novel algorithmic approach incorporating quasi-Newton steps into a provable PnP framework based on proximal denoisers, resulting in greatly accelerated convergence while retaining light assumptions on the denoiser. By characterizing the denoiser as the proximal operator of a weakly convex function, we show that the fixed points of the proposed quasi-Newton PnP algorithm are critical points of a weakly convex function. Numerical experiments on image deblurring and super-resolution demonstrate \RRev{2--8x} faster convergence as compared to other provable PnP methods with similar \RRev{reconstruction quality}.}
\end{abstract}

\begin{keywords}
Plug-and-Play, inverse problems, quasi-Newton methods, image reconstruction
\end{keywords}

\begin{MSCcodes}
49M15, 49J52, 65K15
\end{MSCcodes}

\section{Introduction}
\Rev{Many image restoration problems can be formulated as reconstructing data $x\in\mathbb{R}^n$ from a noisy measurement $y=Ax+\varepsilon\in\mathbb{R}^m$, where $A$ is a linear forward operator, and $\varepsilon$ is some measurement noise. One common way to solve this is the variational formulation}
\begin{equation} \label{eq:regProb}
    \argmin_{x \in \R^n} \varphi(x) = f(x) + g(x),
\end{equation}%
where $f:\R^n \rightarrow \R$ is typically a continuously differentiable data fidelity term, and $g:\R^n \rightarrow \overline{\R}$ is a regularization term that controls the prior. In many cases, the fidelity term incorporates a forward operator $A:\R^n \rightarrow \R^m$, which may correspond to physical operators such as blurring operators or Radon transforms \cite{helgason1980radon}. For a noisy measurement $y = Ax + \varepsilon$ with additive white noise $\varepsilon \sim \mathcal{N}(0, \sigma^2 I)$, the fidelity term takes the form of the negative log likelihood $f(x) = \|Ax-y\|^2/(2\sigma^2)$. For many physical forward operators, such as blurring or down-sampling, the optimization problem $\min_x f(x)$ is ill-posed, thus a regularization term is needed \cite{kaipio2006statistical}. Classical examples for regularization include using Fourier spectra \Rev{(spectral regularization)} or total variation (TV) regularization on natural images \cite{ruderman94FourierReg,rudin1992nonlinear}, whereas recent works aim to learn a neural network regularizer \cite{lunz2018adversarial,mukherjee2020LearnedConvexreg}. 

Fully data-driven approaches have been shown to outperform explicitly defined regularizers \cite{zhang2021plug,wen2018survey,mukherjee2020LearnedConvexreg}. However, the outputs of these learned schemes often do not correspond to closed-form minimization problems of the form \eqref{eq:regProb}. This is particularly limiting in sensitive applications such as medical imaging, where interpretability is necessary \cite{vellido2020importance,tjoa2020survey}. \Rev{Recent lines of work consider combining iterative algorithms with generic denoisers, with notable examples including regularization by denoising (RED) \cite{cohen2021regularization,reehorst2018regularization}, consensus equilibrium \cite{buzzard2018plug}, and deep mean-shift priors \cite{arjomand2017deep}. In this work, we will focus on the line of Plug-and-Play (PnP) methods, which arise from replacing proximal steps with denoisers}. Under certain conditions on the fidelity and denoisers as detailed in Section \ref{ssec:pnp}, fixed point convergence \Rev{of certain PnP methods} can be established, characterized by critical points of a corresponding functional. 



The PnP framework of replacing the regularization proximal step with a denoiser is flexible in the choice of denoiser. In particular, it allows for the use of both classical denoisers such as NLM or BM3D \cite{buades2011NLM,dabov2007bm3d}, as well as data-driven denoisers \cite{zhang2017learning,zhang2021plug,ryu2019plug}. \RRev{This allows for extending the use of Gaussian denoisers to other image reconstruction tasks, such as super-resolution or image deblocking. Recently, PnP methods based on the half-quadratic splitting were able to achieve state-of-the-art performance for image reconstruction using a variable-strength Gaussian denoiser called DRUNet \cite{zhang2017learning}. Named the deep Plug-and-Play image restoration (DPIR) method, DPIR outperforms or is competitive with fully learned methods for applications such as image deblurring, super-resolution, and demosaicing while using only a single denoiser prior \cite{zhang2021plug}.} This work demonstrates the flexibility of PnP, using one prior for multiple reconstruction tasks.

While PnP methods can be used to achieve excellent performance, empirical convergence does not equate to traditional notions of convergence. Indeed, while DPIR is able to achieve state-of-the-art results in as few as eight PnP iterations, there are no associated theoretical results. Moreover, DPIR can diverge when more PnP iterations are applied \cite{hurault2021gradient}. This can be empirically alleviated using various stopping criteria, but this raises an additional issue for defining a notion of ``best reconstruction". In this work, we sidestep this by considering provable PnP methods. We use the term ``provable PnP" to refer to PnP methods equipped with some notion of convergence, such as fixed-point convergence, or the stronger notion of convergence to critical points of a function. 

\Rev{Various approaches for accelerating PnP methods have been proposed, including using classical accelerated optimization algorithms, block-coordinate methods, parallelization, and dimensionality reduction \cite{kamilov2017plug,gan2023block,tang2022accelerating,kamilov2023plug,sun2020async}. In the context of convergence to fixed points of a functional, theoretical results for PnP based on accelerated classical methods such as FISTA have not arisen in the literature. This work proposes to extend the work on provable PnP methods by introducing a quasi-Newton step to accelerate convergence, while retaining a corresponding closed-form minimization problem with relatively weak constraints.}

\subsection{Definitions and Notations}
We begin with some definitions and notation. Let $\overline{\R} = \R \cup \{+\infty\}$ be the extended real line. Recall that a function $g:\R^n \rightarrow \overline{\R}$ is \emph{proper} if the effective domain $\dom g = \{x \in \R^n \mid g(x) < +\infty\}$ is nonempty, and \emph{closed} (or \emph{lower-semicontinuous}) if for every sequence $x^k \rightarrow x$ in $\R^n$, we have $g(x) \le \liminf_k g(x^k)$.

\begin{definition}
For a scalar $\gamma> 0$ and a proper closed convex function $g:\R^n \rightarrow \overline{\R}$, the \emph{proximal map} is 
\begin{equation} \label{eq:proxDefi}
    \prox_{\gamma g}(x) = \argmin_{u \in \R^n} \left\{g(u) + \frac{1}{2\gamma} \|u-x\|^2\right\}.
\end{equation}
The \emph{Moreau envelope} is the value function of the proximal map, defined as
\begin{equation}
    g^\gamma(x) = \min_{u \in \R^n }\left\{g(u) + \frac{1}{2\gamma} \|u-x\|^2\right\}.
\end{equation}

\end{definition}
Properties of the Moreau envelope and proximal operators are well documented in classical literature \cite{Rockafellar1972CA, bauschke2011convex,moreau1965proximite,gribonval2020characterization}. In particular, \RRev{for proper closed convex $g$, the proximal operator is single-valued and nonexpansive, and the envelope function $g^\gamma$ is convex and $\mathcal{C}^1$ with derivative}
\[\nabla g^\gamma(x) = \gamma^{-1} (x - \prox_{\gamma g}(x)).\]

\subsection{Plug-and-Play Methods}\label{ssec:pnp}

The Plug-and-Play (PnP) framework was first introduced by Venkatakrishnan et al. in 2013 for model-based image reconstruction \cite{venkatakrishnan2013plug}. \Rev{PnP methods arise from composite convex optimization algorithms, wherein a prior regularization step is associated with a denoising step.} The first composite optimization algorithm considered was Alternating Directions Method of Multipliers (ADMM), a classical proximal splitting algorithm used for minimizing composite functions. In the case of image reconstruction, a maximum likelihood estimation model can be decomposed into a composite problem. For a noisy measurement $y$ and unknown data $x$, let $p(y|x)$ be the conditional likelihood, and $p(x)$ the prior of the unknown $x$. The maximum a-posteriori (MAP) estimate $\hat{x}$ is given as follows:
\begin{align*}
    \hat{x} &= \argmax_x \left\{ p(y|x) + p(x) \right\} \\
    &= \argmin_x \left\{f(x;y) + g(x)\right\},
\end{align*}
where $f$ is the likelihood/fidelity term, and $g$ is the prior/regularization term. A classical example would be TV regularization for additive Gaussian noise, where the fidelity term is $f(x;y) = \|Ax - y\|_2^2/2\sigma^2$, and the prior term is $g(x) = \lambda \|\nabla x\|_1$ \cite{rudin1992nonlinear}. To solve the minimization problem for general convex $f,\,g$, \RRev{proximal splitting algorithms such as ADMM consider alternating applications of the individual proximal operators $\prox_f, \prox_g$ or subgradients $\partial f, \partial g$. The key observation of PnP is that the prior regularization step can also be interpreted as a denoising operation \cite{ryu2019plug}.}

More generally, the PnP framework can be applied to monotone operator splitting methods. Under light conditions, the composite convex optimization problem of minimizing $f+g$ can be reformulated as the monotone inclusion problem $0 \in \partial f(x) + \partial g(x)$ \cite{Rockafellar1972CA,bauschke2011convex}. For convex $f$ and $g$, the operators $\partial f$ and $\partial g$ are monotone operators. Monotone operator splitting methods aim to solve the inclusion $0 \in \partial f(x) + \partial g(x)$, using only \RRev{the resolvents of the individual operators $\partial f, \partial g$, and/or the individual operators $\partial f, \partial g$ themselves} \cite{bauschke2011convex}. In convex analysis terms, this corresponds to splitting the proximal operator $\prox_{f+g}$ in terms of the simpler proximals $\prox_f$ and $\prox_g$ or gradients $\nabla f$ and $\nabla g$. Two common splitting algorithms are the forward-backward splitting (FBS) and the Douglas-Rachford splitting (DRS), given as follows \cite{bauschke2011convex,douglas1956numerical}: %
\begin{equation}\tag{FBS}
    x^{k+1} = \prox_g(I - \nabla f)(x^k); 
\end{equation}%
\begin{equation} \tag{DRS}
\left\{
    \begin{aligned}
        x^{k+1} &= \prox_f(y^k),\\
        y^{k+1} &= y^k + \prox_g(2x^{k+1} - y^k) - x^{k+1}.
    \end{aligned}\right.
\end{equation}

One classical application of a splitting algorithm is the iterative thresholding and shrinkage algorithm (ISTA) for LASSO problems, where the fidelity $f$ is quadratic, and the prior term is the $\ell_1$ norm $g(x) = \|x\|_1$ \cite{daubechies2004iterative,beck2009fast}. Applying the PnP framework to FBS and DRS, by replacing the prior proximal terms $\prox_g$ with a denoiser $D_\sigma$, gives the PnP-FBS and PnP-DRS methods. 

\begin{equation}\tag{PnP-FBS}
    x^{k+1} = D_\sigma(I - \nabla f)(x^k); 
\end{equation}%
\begin{equation} \tag{PnP-DRS}
\left\{
    \begin{aligned}
        x^{k+1} &= \prox_f(y^k),\\
        y^{k+1} &= y^k + D_\sigma(2x^{k+1} - y^k) - x^{k+1}.
    \end{aligned}\right.
\end{equation}

Provable PnP results first arose by Chan et al. for the PnP-ADMM scheme, demonstrating fixed-point convergence under a bounded denoiser assumption $\|D_\sigma(x) - x\| \le C \sigma^2$ \cite{chan2016plug}. Ryu et al. demonstrate convergence of the PnP-FBS algorithm when $f$ is strongly convex and the denoiser residual $D_\sigma - I$ is Lipschitz with \RRev{sufficiently small Lipschitz constant, as well as for PnP-DRS and PnP-ADMM in the case where $D_\sigma - I$ is Lipschitz with Lipschitz constant less than 1} \cite{ryu2019plug}. Various works show fixed-point convergence of PnP-ADMM and PnP-FBS when $f$ has Lipschitz gradient under an ``averaged denoiser" assumption, where $(1-\theta)I + \theta D_\sigma$ is nonexpansive for some $\theta \in (0,1)$, mainly using monotone operator theory \cite{sun2019online,sun2021scalable,hertrich2021convolutional}. Cohen et al. show fixed-point convergence of a relaxed PnP-FBS scheme when $f$ has Lipschitz gradient under a demicontractive denoiser assumption, which is a strictly weaker condition than nonexpansiveness \cite{cohen2021regularization}. Sreehari et al. show convergence of PnP-ADMM to an implicitly defined convex function when the denoiser is nonexpansive and has symmetric gradient, by utilizing Moreau's theorem to characterize the denoiser as a proximal map of a convex function \cite{sreehari2016plug,moreau1965proximite}. In the case of nonexpansive linear denoisers, PnP-FBS and PnP-ADMM converge to fixed points of a closed-form convex optimization problem \cite{nair2021fixed}. 


\Rev{While plentiful, many of these convergence results impose restrictive or difficult-to-verify conditions on the denoisers $D_\sigma$.} Instead of replacing the regularizing proximal operator $\prox_g$ with a denoiser, Hurault et al. and Cohen et al. instead consider applying FBS with the proximal operator on the fidelity term and a gradient step on the regularization, $x^{k+1} = \prox_f(I - \nabla g)(x^k)$ \cite{hurault2021gradient,cohen2021has}. Replacing the regularization step with a denoiser $D_\sigma = I - \nabla g_\sigma$ results in the Gradient Step PnP (GS-PnP) algorithm $x^{k+1} = (\prox_f \circ D_\sigma) (x^k)$. Using this parameterization, they show further that the fixed points of GS-PnP are stationary points of a particular (non-convex) function. Moreover, a follow-up work shows that a gradient-step denoiser of the form $D_\sigma = I - \nabla g_\sigma$ can be interpreted as a proximal step $D_\sigma = \prox_{\phi_\sigma}$ \cite{hurault2022proximal}. Using this, they are able to achieve iterate convergence under KL-type conditions to a stationary point of a (non-convex) closed-form functional of the form \eqref{eq:regProb}.

\Rev{The GS-PnP style schemes require that the gradient of the potential $\nabla g_\sigma$ is Lipschitz with Lipschitz constant $L<1$. Methods of training neural networks with Lipschitz constraints include spectral regularization, adversarial training against Lipschitz bounds during training, or spline based architectures \cite{ryu2019plug,miyato2018spectral,fazlyab2019efficient,neumayer2023approximation}. Hurault et al. consider fine-tuning the DRUNet denoiser by using spectral regularization to enforce the Lipschitz gradient condition \cite{hurault2022proximal}. While it can be shown empirically that the Lipschitz constant is less than one locally, there is no theoretical guarantee, which can lead to occasional divergence. One possible way of remedying this is by averaging the denoiser with the identity operator, as remarked in \cite{hurault2022proximal}. This consists of replacing the denoiser $D_\sigma = I - \nabla g_\sigma$ with the relaxed $D_\sigma^\alpha \coloneqq (1-\alpha) I + \alpha D_\sigma = I - \alpha \nabla g_\sigma$ for some $\alpha \in (0,1)$. We can rewrite the relaxed denoiser as $D_\sigma^\alpha = I - \nabla g_\sigma^\alpha$, where $g_\sigma^\alpha = \alpha g_\sigma$ has $\alpha L$-Lipschitz gradient. Taking $\alpha<1/L$ gives the appropriate contraction condition on $g_\sigma^\alpha$ and thus convergence of the associated PnP schemes \cite{hurault2022proximal,hurault2023RelaxedProxDenoiser}.}

\subsection{Quasi-Newton Methods}
For minimizing a twice continuously differentiable function $f:\R^n \rightarrow \R$, a classical second-order method is Newton's method \cite{NoceWrig06}:
\begin{equation}
    x^{k+1} = x^{k} -  (\nabla^2 f)^{-1} \nabla f (x^{k}),
\end{equation}
where $\nabla^2 f$ is the Hessian of $f$. This can be interpreted as minimizing a local quadratic approximation
\begin{subequations}
\begin{gather}
    \hat{f}_k(y) = f(x^{k})+ \nabla f(x^{k})^\top (y - x^{k}) + \frac{1}{2} (y-x^{k})^\top \nabla^2 f(x^k) (y - x^{k}),\\
    x^{k+1} = \argmin_y \hat{f}_k(y).
\end{gather}
\end{subequations}
Newton's method is able to achieve quadratic convergence rates with appropriate initialization and step-sizes \cite{NoceWrig06}. However, the inverse of the Hessian may be computationally demanding, especially in high-dimensional applications such as image processing. Quasi-Newton (qN) methods propose to replace the inverse Hessian $(\nabla^2 f)^{-1}$ with (low-rank) approximations to the inverse Hessian, \Rev{with notable examples including the Broyden-Goldfarb-Fletcher-Shanno (BFGS) algorithm, the David-Fletcher-Powell (DFP) formula, and the symmetric rank one method (SR1) \cite{NoceWrig06}}. 

\Rev{Like Newton's method, quasi-Newton methods utilize the curvature information from the Hessian approximation to accelerate convergence, with applications in non-convex stochastic optimization, neural network training, and Riemannian optimization \cite{byrd2016stochastic,huang2015broyden,wang2017stochastic}. Classical theory gives asymptotic superlinear convergence under the Dennis-Mor\'e condition, which states that the Hessian approximation converges to the Hessian at the minimum \cite{dennis1974characterization}. Non-asymptotic convergence of quasi-Newton methods is still an active area of research. BFGS and DFP have only recently been shown to have non-asymptotic superlinear convergence rates of $\mathcal{O}((1/k)^{k/2})$ when the objective function is strongly convex with Lipschitz continuous gradient, has Lipschitz continuous Hessian at the minimum, and satisfies a concordance condition \cite{jin2023non,rodomanov2021rates}. However, BFGS sees empirical success even when these conditions are not explicitly verified, including in the non-convex setting \cite{li2001modified,li2001global}. Interestingly, certain accelerated proximal gradient methods can be interpreted as a proximal quasi-Newton method \cite{ochs2019adaptive}.}

\Rev{Variants of BFGS include limited memory BFGS (L-BFGS), stochastic BFGS, greedy BFGS, and sharpened BFGS \cite{liu1989limited,jin2022sharpened,mokhtari2015global,schraudolph2007stochastic,rodomanov2021greedy}. Of these variants, the limited memory version is most suited to repeated iteration. Standard quasi-Newton methods continually update the Hessian approximation using all the previous iterates, leading to a linear per-iteration computational cost increase. L-BFGS instead utilizes only the last $m$ iterates, where $m>1$ is a user-specified parameter, typically chosen to be less than 50. }\RRev{Moreover, the Hessian need not be stored and/or computed at each iteration, as the method only relies on Hessian-vector products, which can be computed efficiently with two loop recursions \cite{NoceWrig06}.}  

To relate \Rev{quasi-Newton methods} to the PnP framework described previously, we would like to consider applying Newton-type methods for convex composite optimization, by replacing a proximal operator with a denoiser. Lee et al. consider the problem of minimizing 
\begin{equation}
    \varphi(x) = f(x) + g(x),
\end{equation}
where $f(x)$ is a convex $\mathcal{C}^1$ function, and $g$ is a possibly non-smooth convex regularizer \cite{lee2014proximal}. For a symmetric positive definite matrix $B_k \approx \nabla^2 f(x^k)$, the proximal Newton-type search direction $\Delta x^k$, satisfying $x^{k+1} = x^{k} + t_k \Delta x^k$, is given as the minimizer of a local quadratic approximation on the smooth component $\hat f_k(y)$:
\begin{subequations}
\begin{gather}
    \hat f_k(y) = f(x^k)+ \nabla f(x^k)^\top (y - x^k) + \frac{1}{2} (y-x^k)^\top B_k (y - x^k), \\
    \Delta x^k = \argmin_d \hat \varphi_k (x^k + d) \coloneqq \hat f_k(x^k+d) + g(x^k+d). \label{eq:proxNewtonSearchDir}
\end{gather}
\end{subequations}
Define the \emph{scaled proximal map} for a positive definite matrix $B$ as in \cite{lee2014proximal}:
\begin{equation}
    \prox_g^B(x) \coloneqq \argmin_{y \in \R^n} g(y) + \frac{1}{2} \|y - x\|_B^2,
\end{equation}
where the $B$-norm is defined as $\|z\|_B^2 = z^\top B z$. \RRev{For example, taking} $B$ to be the identity matrix results in the standard proximal map as defined in \eqref{eq:proxDefi}. The search direction \eqref{eq:proxNewtonSearchDir} has a closed form in terms of the scaled proximal map:
\begin{equation}
    \Delta x^k = \prox_g^{B_k}(x - B_k^{-1} \nabla f(x^k)) - x^k.
\end{equation}
With this search direction, appropriate step sizes and $B_k$, the proximal Newton-type methods are able to achieve similar convergence rates to Newton-type methods, achieving global convergence and local superlinear convergence. While the scaled proximal map allows for such analysis, it is not amenable to the PnP framework. For example, if we compute the Hessian approximation $B_k$ using a BFGS-type approach, a naive approach of replacing $\prox^{B_k}_g$ with a denoiser would require a careful analysis of the interaction of $B_k$ on the resulting regularization, and possibly require the denoiser to depend on $B_k$. Instead, we seek a proximal Newton-type method that utilizes only the unscaled proximal map, with possibly a scalar constant which can be easily interpreted as a regularization parameter controlling the strength of regularization. 

In \Cref{sec:sec2}, we will detail a classical composite minimization algorithm that uses only the unscaled proximal map $\prox_g$, as well as arbitrary descent steps that allow for Newton-type steps. We further extend the classical analysis from convex to weakly convex functions\Rev{, inspired by the GS-PnP characterization of denoisers as proximal maps of weakly convex functions}. In \Cref{sec:pnpqn}, we use this extension to propose the PnP-quasi-Newton (PnP-qN) method, further convergence and characterizing cluster points of the algorithm. In \Cref{sec:experiments}, we evaluate the proposed PnP-qN method \Rev{with the quasi-Newton method given by L-BFGS,} and compare it with other \RRev{provable and non-provable PnP methods with comparable reconstruction quality}. 

\section{Proximal Quasi-Newton}\label{sec:sec2}
In this section, we will first describe a classical algorithm for optimizing composite sums of a (possibly non-convex) smooth function and a (possibly non-smooth) convex function. We will then extend the analysis to allow for \textit{weak convexity} instead of \textit{convexity}. By replacing proximal terms with deep denoisers corresponding to proximal operators of weakly convex maps, we construct a Plug-and-Play scheme with convergence properties of the classical algorithm. 

Let us work on the Euclidean domain $\R^n$. Let \Rev{$\mathcal{C}^{1,1}_{L_f}$} denote the class of \Rev{$\mathcal{C}^1$} functions $f:\R^n \rightarrow \R$ with $L_f$-Lipschitz gradient, and $\Gamma_0$ the class of proper, closed, and convex functions $g:\R^n \rightarrow \overline{\R}$. Consider a variational objective having the following form:
\begin{equation}\label{eq:varphiStatement}
    \varphi = f + g,\quad f \in \Rev{\mathcal{C}^{1,1}_{L_f}},\ g \in \Gamma_0.
\end{equation}
We can consider $f$ as the fidelity term and $g$ as a regularization term. A prominent example \Rev{from} inverse problems \Rev{is} the quadratic fidelity loss $f(x;y) = \frac{1}{2}\|Ax - y\|^2$ for some linear forward operator $A:\R^n \rightarrow \R^m$ and observation $y \in \R^m$, where the norm is taken as the Euclidean norm. 

\subsection{MINFBE: Minimizing Forward-Backward Envelope}
We first detail a classical composite optimization algorithm for minimizing \eqref{eq:varphiStatement}, which will serve as the base of our proposed PnP scheme. Moreover, we describe some of its convergence properties that transfer to the PnP framework. By constructing a smooth convex envelope function around the original objective $\varphi$, this envelope can be shown to have desirable properties such as sharing minimizers, smoothness, and being minorized and majorized by convex functions. By applying descent steps and proximal mappings in a particular fashion, the classical algorithm is able to obtain global objective convergence to critical points at a rate of $\mathcal{O}(1/k)$, local linear convergence if the function is locally strongly convex, and superlinear convergence when the descent steps are taken to be quasi-Newton with suitable assumptions \cite{Stella2017}.

For the problem \eqref{eq:varphiStatement}, define the following expressions \cite{Stella2017}:
\begin{subequations}\begin{gather}
    l_\varphi(u,x) = f(x) + \langle \nabla f(x),\, u-x \rangle + g(u),\\
    T_\gamma(x) = \argmin_u  \left\{l_\varphi(u,x) + \frac{1}{2\gamma} \|u-x\|^2\right\} = \prox_{\gamma g}(x - \gamma \nabla f(x)), \label{eq:TgammaDef}\\ 
    R_\gamma(x) = \gamma^{-1} (x - T_\gamma(x)), \\
    \varphi_\gamma(x) = \min_u  \left\{l_\varphi(u,x) + \frac{1}{2\gamma} \|u-x\|^2\right\}.
\end{gather}\end{subequations}
Here, $l_\varphi$ is a local linearized decoupling of $\varphi$, $T_\gamma$ can be interpreted as an FBS step (with step-size $\gamma$ for $f+g$) and $R_\gamma$ is a scaled residual or ``gradient direction". Note that $x = T_\gamma(x) \Leftrightarrow x \in \zer \partial \varphi$, i.e. fixed points of $T_\gamma$ correspond to critical points of $\varphi$. $\varphi_\gamma$ is defined as the \emph{forward-backward envelope} of $\varphi$. We further explicitly write the Moreau envelope for $g$: 
\begin{subequations}\begin{align}
    g^\gamma (x) &= \min_u \left\{ g(u) + \frac{1}{2\gamma} \|u-x\|^2 \right\} \label{eq:gGammaDef1}\\
    &= g\left(\prox_{\gamma g}(x)\right) + \frac{1}{2\gamma} \|\prox_{\gamma g}(x) - x\|^2. \label{eq:gGammaDef2}
\end{align}\end{subequations}
With the above definitions, we have the following closed-form expressions for the forward-backward envelope:
\begin{subequations}\begin{align}
    \varphi_\gamma &= f(x) + g(T_\gamma(x)) - \gamma \langle \nabla f(x), R_\gamma (x) \rangle + \frac{\gamma}{2} \|R_\gamma(x)\|^2 \label{eq:phigamma1}\\
    &= f(x) - \frac{\gamma}{2} \|\nabla f(x)\|^2 + g^\gamma (x - \gamma \nabla f(x)). \label{eq:phigamma2}
\end{align}\end{subequations}
In fact, $\varphi_\gamma$ has many desirable properties, such as sharing minimizers with $\varphi$, and having an easily computable derivative in terms of the Hessian of $f$.
\begin{prop}[{\cite[Sec 2]{Stella2017}}]\label{prop:sharedMinimizer}
    The following holds:
    \begin{enumerate}[i.]
        \item $\varphi(z) = \varphi_\gamma(z)$ for all $\gamma>0,\, z \in \zer \partial \varphi$; \label{prop:sharedMinimizerProp1}
        \item $\inf \varphi = \inf \varphi_\gamma$ and $\argmin \varphi \subseteq \argmin \varphi_\gamma$ for $\gamma \in (0,1/L_f]$;
        \item $\argmin \varphi = \argmin \varphi_\gamma$ for all $\gamma \in (0, 1/L_f)$.
    \end{enumerate}
    Suppose additionally that $f$ is $\mathcal{C}^2$. Then $\varphi_\gamma$ is $\mathcal{C}^1$ and the gradient of $\varphi_\gamma$ can be written as 
    \begin{equation}\label{eq:gradvarphigamma}
        \nabla \varphi_\gamma(x) = \left(I - \gamma \nabla^2 f(x)\right)R_\gamma(x).
    \end{equation}
    Moreover, if $\gamma \in (0,1/L_f)$, the set of stationary points of $\varphi_\gamma$ equals $\zer \partial \varphi$.
\end{prop}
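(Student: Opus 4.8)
The plan is to establish the four claims in order, leaning on the two closed-form expressions \eqref{eq:phigamma1}--\eqref{eq:phigamma2} for $\varphi_\gamma$ together with the stated properties of the Moreau envelope. For item~\eqref{prop:sharedMinimizerProp1}, I would observe that if $z \in \zer\partial\varphi$ then $0 \in \nabla f(z) + \partial g(z)$, which is exactly the optimality condition for the inner minimization defining $T_\gamma(z)$; hence $T_\gamma(z) = z$, so $R_\gamma(z) = 0$, and plugging into \eqref{eq:phigamma1} collapses it to $f(z) + g(z) = \varphi(z)$. This uses no restriction on $\gamma$. For item~(ii), I would first show $\varphi_\gamma \le \varphi$ pointwise: taking $u = x$ in the defining minimization of $\varphi_\gamma$ gives $\varphi_\gamma(x) \le f(x) + g(x) = \varphi(x)$. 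Conversely, using \eqref{eq:phigamma1} and the descent lemma for $f \in \mathcal{C}^{1,1}_{L_f}$ (i.e. $f(T_\gamma(x)) \le f(x) + \langle \nabla f(x), T_\gamma(x) - x\rangle + \frac{L_f}{2}\|T_\gamma(x)-x\|^2$), one gets $\varphi(T_\gamma(x)) \le \varphi_\gamma(x) - \frac{1}{2}(\tfrac1\gamma - L_f)\|x - T_\gamma(x)\|^2 \le \varphi_\gamma(x)$ for $\gamma \le 1/L_f$. Combining the two inequalities gives $\inf\varphi \le \inf\varphi_\gamma$ from the first and $\inf\varphi_\gamma \le \inf\varphi$ from evaluating $\varphi_\gamma$ along a minimizing sequence of $\varphi$ (or from $\varphi_\gamma \le \varphi$ directly), so $\inf\varphi = \inf\varphi_\gamma$. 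If $z \in \argmin\varphi$, then $z \in \zer\partial\varphi$, so by item~(i) $\varphi_\gamma(z) = \varphi(z) = \inf\varphi = \inf\varphi_\gamma$, giving $z \in \argmin\varphi_\gamma$.

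For item~(iii) I would upgrade the descent estimate: when $\gamma \in (0, 1/L_f)$ strictly, the coefficient $\frac{1}{2}(\tfrac1\gamma - L_f)$ is strictly positive, so $\varphi(T_\gamma(x)) \le \varphi_\gamma(x)$ with equality forcing $x = T_\gamma(x)$, i.e. $x \in \zer\partial\varphi$. Now if $z \in \argmin\varphi_\gamma$, then $\varphi(T_\gamma(z)) \le \varphi_\gamma(z) = \inf\varphi_\gamma = \inf\varphi$, so $T_\gamma(z)$ is a minimizer of $\varphi$, hence in $\zer\partial\varphi$, hence a fixed point of $T_\gamma$; but the strict inequality then forces $z = T_\gamma(z)$ as well, so $z \in \zer\partial\varphi \subseteq \argmin$? — here I should be slightly careful: $z = T_\gamma(z)$ gives $z \in \zer\partial\varphi$ and $\varphi_\gamma(z) = \varphi(z) = \inf\varphi$, so $z \in \argmin\varphi$. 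Together with item~(ii) this yields $\argmin\varphi = \argmin\varphi_\gamma$.

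For the $\mathcal{C}^2$ addendum, I would differentiate the expression \eqref{eq:phigamma2}: $\nabla\varphi_\gamma(x) = \nabla f(x) - \gamma\nabla^2 f(x)\nabla f(x) + (I - \gamma\nabla^2 f(x))\nabla g^\gamma(x - \gamma\nabla f(x))$, using the chain rule and the fact that $\nabla g^\gamma(y) = \gamma^{-1}(y - \prox_{\gamma g}(y))$, which is $\mathcal{C}^0$ (since $\prox_{\gamma g}$ is nonexpansive) — so $\varphi_\gamma$ is indeed $\mathcal{C}^1$. Factoring out $(I - \gamma\nabla^2 f(x))$ and recognizing $\nabla f(x) + \nabla g^\gamma(x - \gamma\nabla f(x)) = \nabla f(x) + \gamma^{-1}(x - \gamma\nabla f(x) - T_\gamma(x)) = \gamma^{-1}(x - T_\gamma(x)) = R_\gamma(x)$ gives \eqref{eq:gradvarphigamma}. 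Finally, for $\gamma \in (0, 1/L_f)$ the matrix $I - \gamma\nabla^2 f(x)$ is invertible (since $\gamma L_f < 1$ bounds the spectrum of $\gamma\nabla^2 f(x)$ below $1$), so $\nabla\varphi_\gamma(x) = 0 \iff R_\gamma(x) = 0 \iff x = T_\gamma(x) \iff x \in \zer\partial\varphi$.

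The main obstacle is organizing the descent-lemma estimate so that it cleanly yields all of (ii) and (iii) — in particular being careful about which inequalities are strict and tracking the direction of the inclusions $\argmin\varphi \subseteq \argmin\varphi_\gamma$ versus equality; the actual calculations (descent lemma, chain rule, spectral bound on $I - \gamma\nabla^2 f$) are routine. I would also double-check that $\prox_{\gamma g}$ being nonexpansive, hence $\nabla g^\gamma$ continuous, is genuinely all that's needed for $\varphi_\gamma \in \mathcal{C}^1$, since $g$ itself is only assumed closed convex (not smooth).
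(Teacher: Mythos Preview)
Your proof is correct and follows the standard route. The paper itself does not prove this proposition (it is quoted from \cite{Stella2017}), but your argument matches exactly the approach the paper takes in its weakly-convex extension (\Cref{prop:varphiIneqs,prop:PhiGammaEquality}): the sandwich $\varphi(T_\gamma(x)) \le \varphi_\gamma(x) \le \varphi(x)$ obtained from the descent lemma and from taking $u=x$ in the defining minimization, together with differentiation of the Moreau-envelope form \eqref{eq:phigamma2} using $\nabla g^\gamma(y) = \gamma^{-1}(y-\prox_{\gamma g}(y))$ and the spectral bound on $I-\gamma\nabla^2 f(x)$.
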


Assuming that we are able to compute both $\varphi_\gamma$ and $\varphi$, \Cref{prop:sharedMinimizer}(i) allows us to check whether we have converged to a stationary point of $\varphi$. \Cref{alg:minfbe} is a classical forward-backward algorithm for optimizing the nonsmooth composite objective \eqref{eq:varphiStatement}. 
\begin{algorithm}
\caption{MINFBE \cite{Stella2017}}\label{alg:minfbe}
\begin{algorithmic}[1]
\Require $x^0, \gamma_0 > 0, \xi \in (0,1), \beta \in [0,1), k \gets 0$
\If{$R_{\gamma_k}(x^k) = 0$}
    \State stop
\EndIf
\State Choose $d^k$ s.t. $\langle d^k,\ \nabla \varphi_{\gamma_k}(x^k) \rangle \le 0$
\State Choose $\tau_k \ge 0$ and $w^k = x^k +  \tau_k d^k$ s.t. $\varphi_{\gamma_k}(w^k) \le \varphi_{\gamma_k}(x^k)$
\If{$f(T_{\gamma_k}(w^k)) > f(w^k) - \gamma_k \langle \nabla f(w^k),\ R_{\gamma_k} (w^k)\rangle + \frac{(1 - \beta) \gamma_k}{2} \|R_{\gamma_k}(w^k)\|^2$ }
    \State{$\gamma_k \gets \xi \gamma_k$, goto 1}
\EndIf
\State $x^{k+1} \gets T_{\gamma_k}(w^k)$
\State $\gamma_{k+1} \gets \gamma_k$
\State $k \gets k+1$, goto 1

\end{algorithmic}
\end{algorithm}

\Rev{In \Cref{alg:minfbe}, $\xi$ is an Armijo backtracking parameter, while $\beta$ is used to control the strictness of the descent condition in Step 6.} For appropriately chosen $\gamma$, the condition in Step 6 never holds, as stated in the next lemma. Moreover, the step-sizes $\gamma_k$ are bounded below by a constant in terms of $\sigma,\, \beta$ and $L_f$. This guarantees that a step is always possible.
 
\begin{lemma}[{\cite[Lem 3.1]{Stella2017}}]\label{lem:lem3_1}
Let $(\gamma_k)_{k \in \mathbb{N}}$ be the sequence of step-size parameters in \Cref{alg:minfbe}, and let $\gamma_\infty = \min_{i \in \mathbb{N}} \gamma_i$. Then for all $k \ge 0$,
\begin{equation*}
    \gamma_k \ge \gamma_\infty \ge \min \{\gamma_0,\, \xi (1-\beta)/L_f\}.
\end{equation*}
\end{lemma}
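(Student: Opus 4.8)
The plan is to show that the backtracking test in Step~6 can only be triggered while $\gamma_k$ is still \emph{large}, so that each halving of the step-size cannot push it below $\xi(1-\beta)/L_f$; together with monotonicity of the sequence $(\gamma_k)$ this yields the stated bound. The only analytic ingredient needed is the descent lemma for functions in $\mathcal{C}^{1,1}_{L_f}$: for all $x,y\in\R^n$,
\begin{equation*}
    f(y) \le f(x) + \langle \nabla f(x),\, y-x\rangle + \frac{L_f}{2}\|y-x\|^2 .
\end{equation*}

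First I would fix an iteration and a current value of $\gamma_k$, together with the point $w^k$ selected in Step~5 for that value; note that after a halving the algorithm returns to Step~1 and re-selects $d^k$ and $w^k$, so $w^k$ is always the point associated with the \emph{current} $\gamma_k$. Applying the descent lemma with $x=w^k$ and $y=T_{\gamma_k}(w^k)$, and using the identity $T_{\gamma_k}(w^k)-w^k = -\gamma_k R_{\gamma_k}(w^k)$ coming from the definition of $R_\gamma$, gives
\begin{equation*}
    f\big(T_{\gamma_k}(w^k)\big) \le f(w^k) - \gamma_k\langle \nabla f(w^k),\, R_{\gamma_k}(w^k)\rangle + \frac{L_f\gamma_k^2}{2}\,\|R_{\gamma_k}(w^k)\|^2 .
\end{equation*}
If $\gamma_k \le (1-\beta)/L_f$, then $L_f\gamma_k^2/2 \le (1-\beta)\gamma_k/2$, so the right-hand side is at most $f(w^k) - \gamma_k\langle \nabla f(w^k),\, R_{\gamma_k}(w^k)\rangle + \tfrac{(1-\beta)\gamma_k}{2}\|R_{\gamma_k}(w^k)\|^2$; that is, the inequality in Step~6 \emph{fails}, and no halving occurs.

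Consequently, a halving $\gamma_k\gets\xi\gamma_k$ is executed only when $\gamma_k > (1-\beta)/L_f$, and immediately after such a step $\gamma_k > \xi(1-\beta)/L_f$. Since the step-size is never increased (Steps~7 and~10 only keep or shrink it, and $\gamma_{k+1}\gets\gamma_k$), the sequence $(\gamma_k)$ is nonincreasing, and every value it ever attains — including those produced inside the backtracking loop — is at least $\min\{\gamma_0,\ \xi(1-\beta)/L_f\}$; formally this is an induction on the total number of halvings performed so far, with base case $\gamma_0$ and inductive step the displayed dichotomy. In particular $\gamma_\infty=\min_{i}\gamma_i$ is well defined and $\gamma_k\ge\gamma_\infty\ge\min\{\gamma_0,\,\xi(1-\beta)/L_f\}$ for all $k$. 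A byproduct of the same dichotomy is that the inner backtracking loop terminates after finitely many halvings, so the sequence is indeed well defined. There is no real obstacle in the argument; the only point requiring care is the bookkeeping that $w^k$, and hence $R_{\gamma_k}(w^k)$, is re-evaluated together with $\gamma_k$ after each halving, so that the descent-lemma estimate is applied to exactly the quantities appearing in the test of Step~6.
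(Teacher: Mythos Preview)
Your argument is correct and is essentially the standard one: the descent lemma for $f\in\mathcal{C}^{1,1}_{L_f}$ shows that the test in Step~6 cannot be triggered once $\gamma_k\le(1-\beta)/L_f$, so every halving starts from a value strictly larger than $(1-\beta)/L_f$ and lands above $\xi(1-\beta)/L_f$; monotonicity then gives the bound. The paper does not prove \Cref{lem:lem3_1} itself (it is cited from \cite{Stella2017}), but its proof of the weakly-convex analogue, \Cref{lem:gammaBddBelowWeakConv}, follows the same idea packaged slightly differently: it adds $g(T_\gamma(w))$ to both sides of the Step~6 inequality and contradicts \Cref{prop:varphiIneqs}(ii), which in turn is nothing more than the descent lemma rewritten in terms of $\varphi_\gamma$. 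Unwinding that proposition yields exactly your direct inequality, so the two routes coincide.
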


The MINFBE algorithm can be interpreted as a descent step (Step 5) followed by a FBS step (Step 9). In particular, note that the descent direction $d^k$ does not have to be the direction of steepest descent, which allows for more flexibility in the algorithm. By combining the two of these steps together, the algorithm achieves global convergence as well as local linear convergence. This algorithm enjoys the following convergence guarantees.
\begin{definition}[Linear and Superlinear Convergence]
    We say a sequence $(x^k)_{k \in \mathbb{N}}$ converges to $x_*$;
    \begin{enumerate}[i.]
        \item \emph{Q-linearly} with factor $\omega \in [0,1)$ if $\|x^{k+1} - x_*\| \le \omega \|x^k - x_*\|$ for all $k \ge 0$;
        \item \emph{Q-superlinearly} if $\|x^{k+1} - x_*\|/ \|x^k - x_*\| \rightarrow 0$. 
    \end{enumerate}
    The convergence is R-linear (R-superlinear) if $\|x^k - x_*\|\le a_k$ for some sequence $(a_k)_{k \in \mathbb{N}}$ s.t. $a_k \rightarrow 0$ Q-linearly (Q-superlinearly).
\end{definition}
\begin{theorem}[{\cite[Thm 3.6, 3.7]{Stella2017}}]\label{thm:convexIterConv}
Suppose that $f$ is convex and that $\varphi$ is coercive. In particular, suppose that the level set $\{x \in \R^n \mid \varphi(x) \le \varphi(x^0)\}$ has diameter $R$, $0<R<\infty$. Then for the sequences generated by \Cref{alg:minfbe}, either $\varphi(x^0) - \inf \varphi \ge R^2 / \gamma_0$ and 
\begin{equation}
    \varphi(x^1) - \inf \varphi \le \frac{R^2}{2 \gamma_0},
\end{equation}
or for any $k \in \mathbb{N}$, it holds that
\begin{equation}
    \varphi(x^k) - \inf \varphi \le \frac{2 R^2}{k \min \{\gamma_0, \xi (1-\beta)/L_f\}}.
\end{equation}

Suppose in addition that $x_*$ is a strong minimizer of $\varphi$, i.e. there exists a neighborhood $N$ of $x_*$ and $c>0$ such that for any $x\in N$, 
\begin{equation*}
\varphi(x) - \varphi(x_*) \ge \frac{c}{2} \|x - x_*\|^2.
\end{equation*}
Then for sufficiently large $k$, $(\varphi(x^k))_{k \in \mathbb{N}}$ and $(\varphi_{\gamma_k}(w^k))_{k \in \mathbb{N}}$ converge Q-linearly to $\varphi(x_*)$ with factor $\omega$, where
\begin{equation*}
    \omega \le \max \left\{ \frac{1}{2}, 1-\frac{c}{4} \min \{\gamma_0, \xi(1-\beta)/L_f\}\right\} \in \left[1/2,1\right),
\end{equation*}
and $(x^k)_{k \in \mathbb{N}}$ converges R-linearly to $x_*$. If $x_*$ is also a strong minimizer of $\varphi_{\gamma_\infty}$ where $\gamma_\infty$ is defined as in \Cref{lem:lem3_1}, then $(\varphi(w^k))_{k \in \mathbb{N}}$ also converges R-linearly to $x_*$.
\end{theorem}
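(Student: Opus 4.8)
The plan is to adapt the standard convergence analysis of the forward--backward envelope (FBE) from \cite{Stella2017} to our setting. The two statements---sublinear $\mathcal{O}(1/k)$ convergence of the objective, and Q-linear convergence under a strong-minimizer assumption---are proven along rather different lines, so I would handle them in two stages, both resting on a single key descent inequality that I would establish first.

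\emph{Step 1: the fundamental descent inequality.} The first thing I would do is show that one full iteration of \Cref{alg:minfbe} produces a sufficient decrease of the FBE, of the form
\begin{equation*}
    \varphi_{\gamma_{k+1}}(x^{k+1}) \le \varphi_{\gamma_k}(w^k) - \frac{\beta\gamma_k}{2}\|R_{\gamma_k}(w^k)\|^2 \le \varphi_{\gamma_k}(x^k) - \frac{\beta\gamma_k}{2}\|R_{\gamma_k}(w^k)\|^2.
\end{equation*}
The first inequality uses the closed-form expression \eqref{eq:phigamma1}--\eqref{eq:phigamma2} for $\varphi_\gamma$ together with the line-search acceptance criterion in Step 6 of the algorithm (which is precisely what bounds $f(T_{\gamma_k}(w^k))$); the second inequality is just Step 5. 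Combined with \Cref{lem:lem3_1}, which keeps $\gamma_k$ bounded away from $0$, this gives a per-iteration decrease proportional to $\|R_{\gamma_k}(w^k)\|^2$, and hence (telescoping) $\sum_k \|R_{\gamma_k}(w^k)\|^2 < \infty$.

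\emph{Step 2: the $\mathcal{O}(1/k)$ rate.} Here I would exploit convexity of $f$. Convexity gives $l_\varphi(u,w^k) \le \varphi(u)$ for all $u$, so evaluating the definition of $\varphi_{\gamma_k}$ at $u = x_*$ (a minimizer of $\varphi$) and estimating the quadratic term by $\tfrac{1}{2\gamma_k}\|x_* - w^k\|^2 \le \tfrac{1}{2\gamma_k}R^2$ yields $\varphi_{\gamma_k}(w^k) \le \inf\varphi + R^2/(2\gamma_k)$; the diameter bound on the sublevel set $\{\varphi \le \varphi(x^0)\}$ is what controls $\|x_* - w^k\|$, after checking that all iterates stay in this set (which follows from the monotone decrease of $\varphi$ along $x^k$, using $\varphi(x^{k+1}) \le \varphi_{\gamma_k}(x^{k+1}) \le \varphi_{\gamma_k}(w^k) \le \varphi_{\gamma_k}(x^k)$ and \Cref{prop:sharedMinimizer}). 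Then I would combine the sufficient-decrease telescoping sum with this bound: writing $\delta_k = \varphi_{\gamma_k}(w^k) - \inf\varphi$, Step 1 and a convexity-based lower bound on the decrease in terms of $\delta_k^2$ give a recursion $\delta_{k+1} \le \delta_k - c\,\delta_k^2$ for a constant $c \gtrsim \min\{\gamma_0, \xi(1-\beta)/L_f\}/R^2$, whose solution is the stated $\mathcal{O}(1/k)$ bound; the dichotomy in the statement is just the case distinction on whether $\delta_0$ already exceeds $R^2/\gamma_0$. Finally $\varphi(x^{k}) \le \varphi_{\gamma_{k-1}}(x^{k}) \le \varphi_{\gamma_{k-1}}(w^{k-1})$ transfers the rate from the envelope to $\varphi$ itself.

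\emph{Step 3: Q-linear convergence under a strong minimizer.} Now I would use the strong-minimizer inequality $\varphi(x) - \varphi(x_*) \ge \tfrac{c}{2}\|x-x_*\|^2$ near $x_*$. Since $\varphi(x^k) \to \inf\varphi = \varphi(x_*)$, eventually the iterates enter this neighborhood, so $\|x^k - x_*\|^2 \lesssim \varphi(x^k) - \varphi(x_*)$. The crux is a contraction estimate on $\delta_k = \varphi_{\gamma_k}(w^k) - \varphi(x_*)$: quadratic growth lets me replace the $\delta_k^2$ in the sufficient-decrease inequality by a term proportional to $\delta_k$ itself (roughly, $\|R_{\gamma_k}(w^k)\|^2 \gtrsim \delta_k / \gamma_k$ via a quadratic-growth/error-bound argument on the FBE), yielding $\delta_{k+1} \le \omega\,\delta_k$ with $\omega = \max\{1/2,\, 1 - \tfrac{c}{4}\min\{\gamma_0,\xi(1-\beta)/L_f\}\}$. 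The $1/2$ floor comes from the backtracking safeguard bounding how large a single decrease can be relative to $\delta_k$. Q-linear convergence of $\varphi(x^k)$ and $\varphi_{\gamma_k}(w^k)$ follows, and then $\|x^k - x_*\| \lesssim \sqrt{\delta_{k-1}} \lesssim \omega^{(k-1)/2}$ gives R-linear convergence of the iterates; the last claim about $\varphi(w^k)$ needs $x_*$ to also be a strong minimizer of $\varphi_{\gamma_\infty}$ so that the same quadratic-growth-to-distance conversion applies at the points $w^k$.

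\emph{Main obstacle.} The routine parts are the telescoping and the convexity manipulations; the delicate point is the error-bound step in Stage 3, namely converting a one-step decrease controlled by $\|R_{\gamma_k}(w^k)\|^2$ into a genuine \emph{contraction} of the optimality gap. This requires relating $\|R_{\gamma_k}(w^k)\|$ (the FBS residual at $w^k$) to $\varphi_{\gamma_k}(w^k) - \inf\varphi_{\gamma_k}$, i.e. a quadratic growth / Polyak--\L{}ojasiewicz-type inequality for the envelope $\varphi_{\gamma_k}$ near $x_*$. One must verify that local strong minimality of $\varphi$ transfers to such a bound for $\varphi_{\gamma}$ uniformly over the relevant range of $\gamma$, and track the resulting constants carefully to land exactly on the stated $\omega$; keeping the descent-step flexibility (arbitrary $d^k$ with $\tau_k \ge 0$) from interfering with this estimate is where the care is needed, since one only knows $\varphi_{\gamma_k}(w^k) \le \varphi_{\gamma_k}(x^k)$ and must still locate $w^k$ near $x_*$.
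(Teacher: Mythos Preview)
The paper does not actually prove this theorem: it is stated with a citation to \cite[Thm 3.6, 3.7]{Stella2017} and no proof is given in the paper itself. So there is no ``paper's own proof'' to compare against---your sketch is effectively a reconstruction of the argument from the cited reference.

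That said, your outline is broadly in the right spirit but has one point that would need more care. In Step~2 you bound $\|x_* - w^k\|$ by the diameter $R$ of the sublevel set $\{\varphi \le \varphi(x^0)\}$, which requires $w^k$ to lie in that set. You justify this via $\varphi(x^{k+1}) \le \varphi_{\gamma_k}(x^{k+1}) \le \varphi_{\gamma_k}(w^k) \le \varphi_{\gamma_k}(x^k)$, but this chain controls $\varphi_{\gamma_k}(w^k)$, not $\varphi(w^k)$; since $\varphi_\gamma \le \varphi$, knowing $\varphi_{\gamma_k}(w^k) \le \varphi(x^0)$ does not place $w^k$ in the $\varphi$-sublevel set. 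The actual argument in \cite{Stella2017} works instead with the sublevel set of $\varphi_\gamma$ (or equivalently uses the fact that in the convex case $\varphi_\gamma$ and $\varphi$ share sublevel sets up to the right inclusions via \Cref{prop:sharedMinimizer}), and derives the recursion on $\delta_k$ from the inequality $\varphi_{\gamma_k}(w^k) \le \varphi(u) + \tfrac{1}{2\gamma_k}\|u - w^k\|^2$ evaluated along a convex combination $u = (1-\theta)w^k + \theta x_*$ rather than directly at $x_*$; optimizing over $\theta$ is what produces the quadratic recursion and the dichotomy in the statement. Your Step~3 sketch is essentially correct in structure.
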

In MINFBE, the initial descent step $w^k$ can be chosen arbitrarily as long as the objective function decreases. Suppose now that the descent direction is chosen using a quasi-Newton method:
\begin{equation*}
    d^k = - B_k^{-1} \nabla \varphi_\gamma(x^k).
\end{equation*}
If $B_k$ are positive definite, then $d^k$ are valid search directions. Assuming that $B_k$ satisfy the Dennis-Moré condition \Rev{\cite{NoceWrig06,dennis1974characterization}}, we can get superlinear convergence of the iterates.

\begin{theorem}[{\cite[Thm 4.1]{Stella2017}}]
    Fix $\gamma>0$. Suppose that $\nabla \varphi_\gamma$ is strictly differentiable at a stationary point $x_* \in \zer \partial \varphi$, and that $\nabla^2 \varphi_\gamma(x_*)$ is nonsingular. Let $(B_k)_{k \in \mathbb{N}}$ be a sequence of nonsingular $\R^{n \times n}$ matrices, and suppose the sequences
    \begin{equation}\label{eq:DennisMore}
        w^k = x^k - B_k^{-1} \nabla \varphi_\gamma(x^k) ,\quad x^{k+1} = T_\gamma(w^k)
    \end{equation}
    converge to $x_*$. If $x^k, w^k \notin \zer \partial \varphi$ for all $k \ge 0$ and the Dennis-Moré condition
    \begin{equation}\label{eq:DennisMore1}
        \lim_{k \rightarrow \infty} \frac{\|(B_k - \nabla^2 \varphi_\gamma(x^k))(w^k - x^k)\|}{\|w^k - x^k\|} = 0
    \end{equation}
    holds, then $(x^k)_{k \in \mathbb{N}}$ and $(w^k)_{k \in \mathbb{N}}$ converge Q-superlinearly to $x_*$.
\end{theorem}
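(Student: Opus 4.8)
The statement is a local, Dennis--Mor\'e--type superlinear convergence result, and the plan is to reduce it to the single estimate
\[
\|w^k - x_*\| = o(\|x^k - x_*\|) \qquad (k \to \infty).
\]
Once this is in hand, the extra forward--backward step $x^{k+1} = T_\gamma(w^k)$ appearing in \eqref{eq:DennisMore} costs nothing: by \eqref{eq:TgammaDef}, $T_\gamma = \prox_{\gamma g}\circ(I - \gamma \nabla f)$ is the composition of the nonexpansive operator $\prox_{\gamma g}$ (valid since $g \in \Gamma_0$) with the $(1+\gamma L_f)$-Lipschitz map $I - \gamma \nabla f$ (valid since $f \in \mathcal{C}^{1,1}_{L_f}$), and $T_\gamma(x_*) = x_*$ because $x_* \in \zer\partial\varphi$, so that $\|x^{k+1} - x_*\| \le (1+\gamma L_f)\|w^k - x_*\|$.

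Writing $\kappa := 1 + \gamma L_f$ and combining this Lipschitz bound with the displayed estimate, I would then read off both conclusions: $\|x^{k+1}-x_*\|/\|x^k-x_*\| \le \kappa\,\|w^k-x_*\|/\|x^k-x_*\| \to 0$, which is Q-superlinear convergence of $(x^k)$, and, using the displayed estimate at index $k+1$, $\|w^{k+1}-x_*\|/\|w^k-x_*\| \le \kappa\,\|w^{k+1}-x_*\|/\|x^{k+1}-x_*\| \to 0$, which is Q-superlinear convergence of $(w^k)$. All denominators are nonzero because $x^k, w^k \notin \zer\partial\varphi$ while $x_* \in \zer\partial\varphi$.

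To establish the displayed estimate I would run the standard Dennis--Mor\'e computation. Set $H := \nabla^2\varphi_\gamma(x_*)$ (nonsingular by hypothesis), $e^k := x^k - x_*$, and $s^k := w^k - x^k = -B_k^{-1}\nabla\varphi_\gamma(x^k)$, noting $s^k \ne 0$ (the role of the hypothesis $x^k \notin \zer\partial\varphi$) and $w^k - x_* = e^k + s^k$. From \eqref{eq:DennisMore}, $B_k s^k = -\nabla\varphi_\gamma(x^k)$; since $x_*$ is a stationary point of $\varphi_\gamma$ we have $\nabla\varphi_\gamma(x_*)=0$, and differentiability of $\nabla\varphi_\gamma$ at $x_*$ (a consequence of the assumed strict differentiability) gives $\nabla\varphi_\gamma(x^k) = \nabla\varphi_\gamma(x^k) - \nabla\varphi_\gamma(x_*) = H e^k + o(\|e^k\|)$. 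Substituting $e^k = (w^k - x_*) - s^k$ and rearranging yields
\[
H(w^k - x_*) = -\bigl(B_k - H\bigr)s^k + o(\|e^k\|).
\]
The Dennis--Mor\'e condition \eqref{eq:DennisMore1} states $\|(B_k - \nabla^2\varphi_\gamma(x^k))s^k\| = o(\|s^k\|)$, and strict differentiability of $\nabla\varphi_\gamma$ at $x_*$ together with $x^k \to x_*$ forces $\nabla^2\varphi_\gamma(x^k) \to H$, so $\|(B_k - H)s^k\| = o(\|s^k\|)$; since $\|s^k\| \le \|w^k - x_*\| + \|e^k\|$, this bound is $o(\|w^k - x_*\| + \|e^k\|)$. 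Applying $H^{-1}$, taking norms, and absorbing the $o(\|w^k - x_*\|)$ contribution into the left-hand side for $k$ large then gives $\|w^k - x_*\| = o(\|e^k\|)$, as required.

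The step I expect to be the main obstacle is exactly this second-order bookkeeping: by \eqref{eq:gradvarphigamma} the gradient $\nabla\varphi_\gamma$ is built from $R_\gamma$, hence from $\prox_{\gamma g}$, which need not be differentiable, so $\varphi_\gamma$ need not be twice continuously differentiable, and one must argue carefully in what sense $\nabla^2\varphi_\gamma(x^k)$ exists and how it connects to the \emph{fixed} invertible matrix $H = \nabla^2\varphi_\gamma(x_*)$ that is actually inverted. This is precisely why \emph{strict} (rather than mere) differentiability of $\nabla\varphi_\gamma$ at $x_*$ is assumed — it is what upgrades the iteration-dependent Hessians in \eqref{eq:DennisMore1} to the single limit Hessian $H$ along the convergent sequence. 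The remaining pieces, namely the identity above and the absorption argument, are routine because both the Dennis--Mor\'e ratio and the first-order remainder tend to $0$.
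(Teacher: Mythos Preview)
Your proposal is correct and follows the standard Dennis--Mor\'e route. The paper itself does not prove this statement: it is quoted verbatim from \cite[Thm~4.1]{Stella2017} without proof. The only place the paper touches the argument is in the later weakly-convex analogue (\Cref{thm:Superlinear}), where the sole point singled out is that $\prox_{\gamma g}$ remains $1$-Lipschitz, after which ``the rest of the proofs of Thm~4.1 and~4.2 of \cite{Stella2017} follows as usual.'' That is precisely the ingredient you use to bound $\|x^{k+1}-x_*\|\le(1+\gamma L_f)\|w^k-x_*\|$, so your sketch is aligned with what the paper (via its source) relies on.
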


\RRev{If $B_k$ are updated accordingly to the BFGS update step, then the updates as given in the previous theorem converge superlinearly to the minimum, under some additional assumptions on $\varphi$ such as being convex with strong local minimum $x_*$, or satisfying a stronger Kurdyka-Łojasiewicz property at cluster points $\omega(x^0)$ \cite[Thm 4.3]{Stella2017}.} 
Moreover, it can be shown that $\tau_k=1$ is a valid step-size for sufficiently large $k$. For completeness, the BFGS update steps are given as below. Note that it is usually more practical to update the inverse Hessian approximation $H_k = B_k^{-1}$ \cite{NoceWrig06}.
\begin{subequations}
\begin{gather}
    s^k = w^k - x^k, \quad y^k = \nabla \varphi_\gamma(w^k) - \nabla \varphi_\gamma(x^k), \\
    B_{k+1} = \left\{ \begin{matrix*}[l]
        B_k + \frac{y^k y^{k\top}}{y^{k\top}s^k} - \frac{B_k s^k (B_k s^k)^\top}{s^{k\top}B^ks^k} & \text{if } \langle s^k, y^k \rangle > 0, \\
        B_k & \text{otherwise}\\
    \end{matrix*}\right..\\
    H_{k+1} = \left\{ \begin{matrix*}[l]
        \left(I - \frac{s^k y^{k\top}}{y^{k\top} s^k}\right)H_k\left(I - \frac{y^k s^{k\top}}{y^{k\top} s^k}\right) + \frac{s^k s^{k\top}}{y^{k\top} s^k} & \text{if } \langle s^k, y^k \rangle > 0, \\
        H_k & \text{otherwise}\\
    \end{matrix*}\right..
\end{gather}
\end{subequations}

\subsection{Weakly-Convex Extension}
Suppose now that $g$ is not convex, but instead is $M$-weakly convex. \Rev{Recall that a function $g(x)$ is $M$-weakly convex if $g+M\|x\|^2/2$ is convex. For a $M$-weakly convex function $g$, we have for all $x, y$ and $z \in \partial g(y)$ (where $\partial g$ denotes the Clarke subdifferential \RRev{of $g$}),}
\begin{subequations}
\begin{align}
g(x) &\ge g(y) + \langle z, x-y \rangle - \frac{M}{2} \|x-y\|^2,\label{eq:WeakConvIneq} \\
g(tx + (1-t)y) &\le t g(x) + (1-t) g(y) + \frac{M}{2} t(1-t)\|x-y\|^2.
\end{align}
\end{subequations}

\Rev{
In the following \Cref{sec:pnpqn}, we will model the proposed denoiser $D_\sigma = \prox_g$ as the \RRev{proximal operator} of a weakly convex function. In particular, a gradient step denoiser $D_\sigma = I - \nabla g_\sigma$ with contractive $\nabla g_\sigma$ is the \RRev{proximal operator} of a weakly convex function \cite{hurault2023RelaxedProxDenoiser}. We can extend the classical convex analysis to this case as well, albeit with a smaller allowed $\gamma$.}

To transfer the results from the previous section to the case where $g$ is weakly convex, we are required to check that the function values at the MINFBE iterates are non-increasing. As we will show in the following proposition, this is still the case for sufficiently small $\gamma$. Many properties of the forward-backward envelope still hold, and we are still able to attain global convergence and superlinear local convergence, subject to the Dennis-Mor\'e condition \eqref{eq:DennisMore1}. 
\begin{prop} \label{prop:varphiIneqs}
    For all $x \in \R^n$, $\gamma>0$,
    \begin{enumerate}[i.]
        \item $\varphi_\gamma(x) \le \varphi(x)- \frac{\gamma - M \gamma^2}{2}\|R_\gamma(x)\|^2$;
        \item $\varphi(T_\gamma(x)) \le \varphi_\gamma(x) - \frac{\gamma}{2}(1-\gamma L_f) \|R_\gamma(x)\|^2$ for all $\gamma > 0$;
        \item $\varphi(T_\gamma(x)) \le \varphi_\gamma(x)$ for all $\gamma \in (0, 1/L_f]$.
    \end{enumerate}
\end{prop}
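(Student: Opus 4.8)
The three estimates are the weakly-convex counterparts of the classical forward--backward envelope inequalities of \cite{Stella2017}, and the plan is to derive them directly from the closed form \eqref{eq:phigamma1} of $\varphi_\gamma$, using the first-order optimality of the proximal step for part (i), the descent lemma for $f$ for part (ii), and a sign observation for part (iii). Throughout I would use the identity $x - T_\gamma(x) = \gamma R_\gamma(x)$, so that \eqref{eq:phigamma1} reads
\[
\varphi_\gamma(x) = f(x) - \gamma\langle \nabla f(x),\, R_\gamma(x)\rangle + g(T_\gamma(x)) + \frac{\gamma}{2}\|R_\gamma(x)\|^2 .
\]

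For part (i) I would combine this expression with an upper bound on $g(T_\gamma(x))$ obtained from weak convexity. Since $T_\gamma(x) = \prox_{\gamma g}(x - \gamma\nabla f(x))$, Fermat's rule together with the sum rule for the Clarke subdifferential (valid since the quadratic penalty is $\mathcal{C}^1$) gives the inclusion $R_\gamma(x) - \nabla f(x) \in \partial g(T_\gamma(x))$. Applying the weak convexity inequality \eqref{eq:WeakConvIneq} with $y = T_\gamma(x)$, this subgradient, and the base point $x$, and substituting $x - T_\gamma(x) = \gamma R_\gamma(x)$, yields after rearrangement
\[
g(T_\gamma(x)) \le g(x) + \gamma\langle \nabla f(x),\, R_\gamma(x)\rangle - \Bigl(\gamma - \tfrac{M\gamma^2}{2}\Bigr)\|R_\gamma(x)\|^2 .
\]
Plugging this into the displayed formula for $\varphi_\gamma(x)$, the $\pm\gamma\langle\nabla f(x),R_\gamma(x)\rangle$ terms cancel and $f(x)+g(x)=\varphi(x)$, leaving exactly $\varphi_\gamma(x) \le \varphi(x) - \tfrac{\gamma - M\gamma^2}{2}\|R_\gamma(x)\|^2$; when $M=0$ this recovers the classical convex estimate.

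For part (ii) I would use the descent lemma for $f \in \mathcal{C}^{1,1}_{L_f}$ at the point $T_\gamma(x)$, namely $f(T_\gamma(x)) \le f(x) - \gamma\langle\nabla f(x),R_\gamma(x)\rangle + \tfrac{L_f\gamma^2}{2}\|R_\gamma(x)\|^2$, add $g(T_\gamma(x))$ to both sides, and recognize from the displayed expression for $\varphi_\gamma(x)$ that $f(x) - \gamma\langle\nabla f(x),R_\gamma(x)\rangle + g(T_\gamma(x)) = \varphi_\gamma(x) - \tfrac{\gamma}{2}\|R_\gamma(x)\|^2$. This gives $\varphi(T_\gamma(x)) \le \varphi_\gamma(x) - \tfrac{\gamma}{2}(1-\gamma L_f)\|R_\gamma(x)\|^2$. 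Part (iii) is then immediate: for $\gamma \in (0,1/L_f]$ one has $1 - \gamma L_f \ge 0$, so the subtracted term is nonnegative.

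None of the individual steps is hard; the only point deserving care is the well-definedness of $T_\gamma$ and the subgradient inclusion underlying part (i). For the proximal map of the $M$-weakly convex $g$ to be single-valued one needs $g + \tfrac{1}{2\gamma}\|\cdot\|^2$ strongly convex, i.e. effectively $\gamma < 1/M$; outside this range the coefficient $\gamma - M\gamma^2$ in (i) is negative and the estimate is vacuous anyway, so no generality is lost. Relative to the convex analysis of \cite{Stella2017}, the sole substantive change is the extra $-\tfrac{M}{2}\|x - T_\gamma(x)\|^2$ term inherited from \eqref{eq:WeakConvIneq}, which degrades the coefficient from $\gamma$ to $\gamma - M\gamma^2$ and is precisely what forces the smaller admissible step-sizes discussed in the surrounding text.
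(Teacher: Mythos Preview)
Your proposal is correct and follows essentially the same approach as the paper: for (i) you use the optimality inclusion $R_\gamma(x)-\nabla f(x)\in\partial g(T_\gamma(x))$ together with the weak-convexity inequality \eqref{eq:WeakConvIneq} and the closed form \eqref{eq:phigamma1}, exactly as the paper does, and for (ii)--(iii) you spell out the descent-lemma argument that the paper imports verbatim from \cite[Prop~2.2]{Stella2017}. Your additional remark on the single-valuedness of $T_\gamma$ for $\gamma<1/M$ is a valid caveat that the paper's statement leaves implicit.
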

\begin{proof}
\textbf{(i). } By the optimality condition in \eqref{eq:TgammaDef}, we have 
\begin{equation*}
    R_\gamma(x) - \nabla f(x) \in \partial g(T_\gamma(x)).
\end{equation*}
By \eqref{eq:WeakConvIneq}, we have
\begin{align*}
    g(x) &\ge g(T_\gamma(x)) + \langle R_\gamma(x) - \nabla f(x), x - T_\gamma(x) \rangle - \frac{M}{2}\|x-T_\gamma(x)\|^2\\
    &= g(T_\gamma(x))- \gamma \langle \nabla f(x), R_\gamma(x) \rangle + \gamma \|R_\gamma(x)\|^2 - \frac{M \gamma^2}{2} \|R_\gamma(x)\|^2.
\end{align*}
Adding $f(x)$ to both sides and applying \eqref{eq:phigamma1} gives the result.

\textbf{(ii), (iii). } The proof is identical to that in \cite[Prop 2.2]{Stella2017}, requiring only the Lipschitz convexity of $\nabla f$. 
\end{proof}

\begin{prop}\label{prop:PhiGammaEquality}
    Suppose $\gamma - M \gamma^2 \ge 0$, or equivalently $\gamma \in [0,1/M]$. Then the following hold:
    \begin{enumerate}[i.]
        \item $\varphi_\gamma(z) = \varphi(z)$ for all $z \in \zer \partial \varphi$;
        \item $\inf \varphi = \inf \varphi_\gamma$ and $\argmin \varphi \subseteq \argmin \varphi_\gamma$ for $\gamma \in (0, 1/L_f]$;
        \item $\argmin \varphi = \argmin \varphi_\gamma$ for $\gamma \in (0, 1/L_f)$.
    \end{enumerate}
\end{prop}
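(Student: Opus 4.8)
The plan is to mirror the structure of the convex statement \Cref{prop:sharedMinimizer}, replacing the convex inequalities of \cite{Stella2017} by their weakly convex counterparts established in \Cref{prop:varphiIneqs}. The two workhorses are: the pointwise envelope bound $\varphi_\gamma \le \varphi$, which is exactly \Cref{prop:varphiIneqs}(i) once $\gamma(1-M\gamma)\ge 0$, i.e.\ $\gamma\le 1/M$; and the sufficient-decrease bound $\varphi(T_\gamma(x))\le\varphi_\gamma(x)$ for $\gamma\le 1/L_f$ from \Cref{prop:varphiIneqs}(iii), together with its strict version \Cref{prop:varphiIneqs}(ii) for $\gamma<1/L_f$.

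For (i), fix $z\in\zer\partial\varphi$. The inequality $\varphi_\gamma(z)\le\varphi(z)$ is immediate from \Cref{prop:varphiIneqs}(i) since $\gamma\le 1/M$. For the reverse, I would expand the definition $\varphi_\gamma(z)=\min_u\{f(z)+\langle\nabla f(z),u-z\rangle+g(u)+\tfrac{1}{2\gamma}\|u-z\|^2\}$. Since $f$ is $\mathcal{C}^1$ the sum rule gives $\partial\varphi(z)=\nabla f(z)+\partial g(z)$, hence $-\nabla f(z)\in\partial g(z)$; applying the weak-convexity lower bound \eqref{eq:WeakConvIneq} at $y=z$ with this subgradient yields $g(u)\ge g(z)-\langle\nabla f(z),u-z\rangle-\tfrac{M}{2}\|u-z\|^2$. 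Substituting, the terms linear in $u$ cancel and the objective is bounded below by $\varphi(z)+\tfrac{1-M\gamma}{2\gamma}\|u-z\|^2\ge\varphi(z)$ because $\gamma\le 1/M$; minimizing over $u$ gives $\varphi_\gamma(z)\ge\varphi(z)$, so equality holds.

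For (ii), $\inf\varphi_\gamma\le\inf\varphi$ follows from $\varphi_\gamma\le\varphi$, while $\inf\varphi\le\varphi(T_\gamma(x))\le\varphi_\gamma(x)$ for all $x$ (using \Cref{prop:varphiIneqs}(iii), valid since $\gamma\le 1/L_f$) gives the reverse, so $\inf\varphi=\inf\varphi_\gamma$; for $x_*\in\argmin\varphi$ the chain $\inf\varphi_\gamma=\inf\varphi=\varphi(x_*)\ge\varphi_\gamma(x_*)$ then forces $x_*\in\argmin\varphi_\gamma$. For (iii), $\argmin\varphi\subseteq\argmin\varphi_\gamma$ is part (ii); conversely take $x_*\in\argmin\varphi_\gamma$, so $\varphi_\gamma(x_*)=\inf\varphi_\gamma=\inf\varphi$. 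Plugging into \Cref{prop:varphiIneqs}(ii) and using $\varphi(T_\gamma(x_*))\ge\inf\varphi=\varphi_\gamma(x_*)$ yields $0\le-\tfrac{\gamma}{2}(1-\gamma L_f)\|R_\gamma(x_*)\|^2$, which since $\gamma<1/L_f$ forces $R_\gamma(x_*)=0$, i.e.\ $x_*=T_\gamma(x_*)$, hence $x_*\in\zer\partial\varphi$ by the fixed-point characterization of $T_\gamma$ (from the optimality condition in \eqref{eq:TgammaDef}); part (i) then gives $\varphi(x_*)=\varphi_\gamma(x_*)=\inf\varphi$, so $x_*\in\argmin\varphi$.

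The one place that needs care — the main obstacle — is the passage to the Clarke subdifferential in the weakly convex regime: I must confirm that the sum rule $\partial\varphi=\nabla f+\partial g$ holds (it does, as $f$ is $\mathcal{C}^1$), that $\prox_{\gamma g}$ is single-valued so that $T_\gamma$ and $R_\gamma$ are well defined, and that the equivalence $x=T_\gamma(x)\Leftrightarrow x\in\zer\partial\varphi$ remains valid. For $\gamma<1/M$ the inner minimization defining $T_\gamma$ is strongly convex, so all of this is immediate; the only borderline case is $\gamma=1/M$, where the inner problem is merely convex and one may need an extra remark (or simply restrict to $\gamma<1/M$ where relevant). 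Everything else is a direct and routine adaptation of the argument in \cite{Stella2017}.
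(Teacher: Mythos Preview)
Your proposal is correct and, for parts (ii) and (iii), follows exactly the argument the paper defers to \cite[Prop 2.3]{Stella2017}. The only genuine deviation is in the reverse inequality of part (i): the paper argues that $z\in\zer\partial\varphi$ implies $z=T_\gamma(z)$, hence $R_\gamma(z)=0$, and then reads off $\varphi(z)=\varphi(T_\gamma(z))\le\varphi_\gamma(z)$ directly from \Cref{prop:varphiIneqs}(ii); you instead expand the envelope definition and apply the weak-convexity subgradient inequality \eqref{eq:WeakConvIneq} with $-\nabla f(z)\in\partial g(z)$. Both routes are valid. The paper's is shorter since it recycles \Cref{prop:varphiIneqs}(ii), but it silently invokes the fixed-point equivalence $\zer\partial\varphi=\mathrm{Fix}\,T_\gamma$ that you rightly flag as needing care at $\gamma=1/M$; your direct argument for (i) avoids this, though you still need the equivalence in (iii) anyway, so the net gain is modest.
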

\begin{proof}
    \textbf{(i). } \Cref{prop:varphiIneqs}(i) combined with the condition $\gamma-M\gamma^2 \ge 0$ shows $\varphi_\gamma(x) \le \varphi(x)$. If $z \in \zer \partial \varphi$, then $z = T_\gamma(z)$, and \Cref{prop:varphiIneqs}(ii) reads $\varphi(z) \le \varphi_\gamma(z)$.

    \textbf{(ii), (iii). } Identical to \cite[Prop 2.3]{Stella2017}. 
\end{proof}

With weakly convex functions, we are still able to provide a lower bound on the $\gamma$ such that the condition in Step 6 of \Cref{alg:minfbe} does not hold, \RRev{removing the need to reduce step-sizes}. The proof relies only on the Lipschitz constant of $\nabla f$ and does not require convexity of $g$. However, we require that $\gamma - M\gamma^2 \ge 0$. In practice, the denoisers we use have $M<1/2$, which allows for any $\gamma \in (0,1)$. 

\begin{lemma}\label{lem:gammaBddBelowWeakConv}
    Suppose $g$ is weakly convex. If $0<\gamma<\min \{(1-\beta)/L_f, 1/M\}$, then the condition in Step 6 in \Cref{alg:minfbe} never holds. Moreover, this implies MINFBE iterations satisfy $\gamma_k \ge \gamma_\infty \ge \min\{\gamma_0, \xi(1-\beta)/L_f, 1/M\} > 0$ for all $k$.
\end{lemma}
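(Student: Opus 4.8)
The plan is to mimic the proof of \cite[Lem.~3.1]{Stella2017} almost verbatim, since the only place convexity of $g$ was ever used there is in guaranteeing that the forward--backward objects $T_\gamma$, $R_\gamma$ and $\varphi_\gamma$ are single valued. So the first thing I would check is exactly that: for $\gamma<1/M$ the inner objective $u\mapsto l_\varphi(u,x)+\tfrac1{2\gamma}\|u-x\|^2 = f(x)+\langle\nabla f(x),u-x\rangle+g(u)+\tfrac1{2\gamma}\|u-x\|^2$ is $(\gamma^{-1}-M)$-strongly convex, because $g$ is $M$-weakly convex; hence it has a unique minimizer and $T_\gamma(x)=\prox_{\gamma g}(x-\gamma\nabla f(x))$, $R_\gamma(x)=\gamma^{-1}(x-T_\gamma(x))$ and $\varphi_\gamma(x)$ are all well defined. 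This is the sole reason the admissible range for $\gamma$ shrinks from $(0,(1-\beta)/L_f)$ to $(0,\min\{(1-\beta)/L_f,1/M\})$ relative to the convex case.

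For the first assertion I would fix $\gamma\in(0,\min\{(1-\beta)/L_f,1/M\})$ and an arbitrary $w$ (playing the role of $w^k$). The descent lemma for $f\in\mathcal{C}^{1,1}_{L_f}$ gives
\[
  f(T_\gamma(w))\le f(w)+\langle\nabla f(w),\,T_\gamma(w)-w\rangle+\tfrac{L_f}{2}\|T_\gamma(w)-w\|^2 .
\]
Using $T_\gamma(w)-w=-\gamma R_\gamma(w)$, the right-hand side becomes $f(w)-\gamma\langle\nabla f(w),R_\gamma(w)\rangle+\tfrac{L_f\gamma^2}{2}\|R_\gamma(w)\|^2$, and since $L_f\gamma\le 1-\beta$ the quadratic term is at most $\tfrac{(1-\beta)\gamma}{2}\|R_\gamma(w)\|^2$. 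This is precisely the reverse of the inequality tested in Step~6 of \Cref{alg:minfbe}, so the Step~6 condition can never hold for such $\gamma$. I would stress that this used only the Lipschitz gradient of $f$ and the definition of $R_\gamma$; neither the descent property from Step~5 nor convexity of $g$ plays any role.

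For the lower bound on the step-sizes I would use the same bookkeeping as in \cite[Lem.~3.1]{Stella2017}: $\gamma_k$ is non-increasing in $k$, being altered only in Step~7, where it is multiplied by $\xi\in(0,1)$, and Step~7 is reached only when the Step~6 condition holds. By the first assertion, if the Step~6 condition holds at the current step-size then that step-size must be at least $\min\{(1-\beta)/L_f,1/M\}$; hence a backtracking step can never output a value below $\xi$ times this threshold, and combining with the initial value $\gamma_0$ and the definition $\gamma_\infty=\min_i\gamma_i$ yields $\gamma_k\ge\gamma_\infty\ge\min\{\gamma_0,\xi(1-\beta)/L_f,1/M\}$, exactly as in the convex case; this is strictly positive because $L_f<\infty$ and $M<\infty$.

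I do not expect a real obstacle: the argument is essentially a transcription of the convex proof. The one point genuinely needing care is the well-definedness (single-valuedness) of $T_\gamma$, $R_\gamma$ and $\varphi_\gamma$ in the weakly convex regime --- handled by the strong-convexity observation above, which is also exactly why the extra constraint $\gamma<1/M$ appears throughout.
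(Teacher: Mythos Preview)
Your proposal is correct and follows essentially the same approach as the paper. The only cosmetic difference is that the paper adds $g(T_\gamma(w))$ to both sides and invokes \Cref{prop:varphiIneqs}(ii) (which itself is just the descent lemma for $f$) to reach a contradiction via $\varphi$ and $\varphi_\gamma$, whereas you apply the descent lemma for $f\in\mathcal{C}^{1,1}_{L_f}$ directly to bound $f(T_\gamma(w))$ without passing through the envelope; the underlying inequality is identical.
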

\begin{proof}
    Suppose $0<\gamma < \min \{(1-\beta)/L_f, 1/M\}$, and for contradiction that the condition in Step 6 holds. Then there exists some $w$ such that 
    \[f(T_\gamma(w)) > f(w) - \gamma \langle \nabla f(w), R_\gamma(w)\rangle + \frac{(1-\beta) \gamma}{2}\|R_\gamma(w^k)\|^2.\]
    Adding $g(T_\gamma(w))$ to both sides and considering \eqref{eq:phigamma1}, this becomes
    \[\varphi(T_\gamma(w)) > \varphi_\gamma(w) - \frac{\beta \gamma}{2}\|R_\gamma(w)\|^2.\]
    But from \Cref{prop:varphiIneqs}(ii), we also have
    \begin{align*}
        \varphi(T_\gamma(w)) &\le \varphi_\gamma(w) - \frac{\gamma}{2}(1-\gamma L_f) \|R_\gamma(w)\|^2 \\
        &\le \varphi_\gamma(w) - \frac{\beta \gamma}{2}\|R_\gamma(w)\|^2,
    \end{align*}
    where the second inequality follows from $\gamma < (1-\beta)/L_f$, giving a contradiction. The second part holds since $(\gamma_k)_{k \in \mathbb{N}}$ is a non-increasing sequence.
\end{proof}
\begin{remark}
While $\gamma<1/M$ is not strictly needed for the proof of the above lemma, this requirement is needed for convergence in future results.
\end{remark}

The following theorem characterizes the convergence of the functional $\varphi$, which relies on the non-increasing condition of Step 5 in \Cref{alg:minfbe}. This is an analogue of \cite[Prop 3.4]{Stella2017}.
\begin{theorem}\label{thm:phiDecreases}
    Suppose $0<\gamma_0<1/M$. Then the MINFBE iterations satisfy the following:
    \begin{enumerate}[i.]
        \item $\varphi(x^{k+1}) \le \varphi(x^k) - \frac{\beta \gamma_k}{2} \|R_{\gamma_k}(w^k)\|^2 - \frac{\gamma_k - M \gamma_k^2}{2} \|R_{\gamma_k}(x^k)\|^2$;
        \item Either the sequence $\|R_{\gamma_k}(x^k)\|$ is square-summable, or $\varphi(x^k) \rightarrow \inf \varphi = -\infty$ and the set $\omega (x^0)$ of cluster points of the sequence $(x^k)_{k \in \mathbb{N}}$ is empty.
        \item $\omega(x^0) \subseteq \zer \partial \varphi$;
        \item If $\beta >0$, then either the sequence $\|R_{\gamma_k}(w^k)\|$ is square-summable and every cluster point of $(w^k)_{k \in \mathbb{N}}$ is critical, or $\varphi_{\gamma_k}(w^k) \rightarrow \inf \varphi = -\infty$ and $(w^k)_{k \in \mathbb{N}}$ has no cluster points.
    \end{enumerate}
\end{theorem}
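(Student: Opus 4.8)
The plan is to mirror the proof of \cite[Prop 3.4]{Stella2017}, replacing convexity of $g$ by weak convexity via the modified sufficient-decrease estimate \Cref{prop:varphiIneqs}(i), and exploiting the step-size floor of \Cref{lem:gammaBddBelowWeakConv}. A useful preliminary observation: $\gamma_k$ is only ever reduced by the discrete factor $\xi$ and stays above $\gamma_\infty \ge \min\{\gamma_0,\xi(1-\beta)/L_f,1/M\}>0$, so it can decrease only finitely often; hence $\gamma_k \equiv \gamma_\infty \in (0,1/M)$ for all large $k$, which I will use in the cluster-point arguments. For (i): when \Cref{alg:minfbe} exits to Step 8 the inequality in Step 6 fails, and adding $g(T_{\gamma_k}(w^k))$ to both sides of the negated Step-6 inequality and substituting the closed form \eqref{eq:phigamma1} gives $\varphi(x^{k+1}) \le \varphi_{\gamma_k}(w^k) - \tfrac{\beta\gamma_k}{2}\|R_{\gamma_k}(w^k)\|^2$. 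Chaining this with $\varphi_{\gamma_k}(w^k)\le\varphi_{\gamma_k}(x^k)$ from Step 5 (always attainable with $\tau_k=0$) and with \Cref{prop:varphiIneqs}(i) at $x^k$ yields (i); since $\gamma_k\le\gamma_0<1/M$, both correction terms are nonnegative, so $(\varphi(x^k))_k$ is non-increasing.

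For (ii): telescoping (i) gives $\sum_{k=0}^{N}\bigl(\tfrac{\beta\gamma_k}{2}\|R_{\gamma_k}(w^k)\|^2+\tfrac{\gamma_k-M\gamma_k^2}{2}\|R_{\gamma_k}(x^k)\|^2\bigr)\le\varphi(x^0)-\varphi(x^{N+1})$. By \Cref{lem:gammaBddBelowWeakConv}, $\gamma_k(1-M\gamma_k)\ge\gamma_\infty(1-M\gamma_0)>0$, so the coefficient of $\|R_{\gamma_k}(x^k)\|^2$ is bounded away from $0$. If $\inf\varphi>-\infty$ the partial sums are bounded and $\|R_{\gamma_k}(x^k)\|$ is square-summable; otherwise, if $\|R_{\gamma_k}(x^k)\|$ fails to be square-summable, the telescoped bound forces $\varphi(x^{N+1})\to-\infty$, so $\varphi(x^k)\to\inf\varphi=-\infty$, and lower semicontinuity of $\varphi=f+g$ ($f$ continuous, $g$ closed, $\varphi$ nowhere $-\infty$) precludes any cluster point, giving $\omega(x^0)=\emptyset$.

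For (iii) and (iv): if $\bar x\in\omega(x^0)$ then $\omega(x^0)\ne\emptyset$, so (ii) gives $\|R_{\gamma_k}(x^k)\|\to0$; since $\gamma=\gamma_\infty<1/M$ makes the inner objective of $\prox_{\gamma g}$ strongly convex, $\prox_{\gamma g}$ and hence $R_\gamma,T_\gamma$ are single-valued and continuous, so along a subsequence $x^{k_j}\to\bar x$ with $\gamma_{k_j}=\gamma_\infty$ we get $R_{\gamma_\infty}(\bar x)=0$, i.e. $\bar x=T_{\gamma_\infty}(\bar x)$, which by the fixed-point characterization of $T_\gamma$ (valid for $\gamma<1/M$) means $0\in\nabla f(\bar x)+\partial g(\bar x)=\partial\varphi(\bar x)$. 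For (iv) with $\beta>0$: if $\|R_{\gamma_k}(w^k)\|$ is square-summable the same argument applied to $(w^k)$ shows every cluster point is critical; otherwise $\beta\gamma_k/2\ge\beta\gamma_\infty/2>0$ and the telescoped bound forces $\varphi(x^k)\to-\infty$, whence $\inf\varphi=-\infty$, and the sandwich $\varphi(x^{k+1})\le\varphi_{\gamma_k}(w^k)\le\varphi_{\gamma_k}(x^k)\le\varphi(x^k)$ (from (i) and \Cref{prop:varphiIneqs}(i)) forces $\varphi_{\gamma_k}(w^k)\to-\infty=\inf\varphi$; finally, for $\gamma=\gamma_\infty<1/M$ the envelope $\varphi_\gamma$ is finite and continuous by \eqref{eq:phigamma2} ($f,\nabla f$ continuous and the Moreau envelope of the weakly convex $g$ finite and continuous for $\gamma<1/M$), so no subsequence of $(w^k)$ can converge along which $\varphi_{\gamma_\infty}(w^k)\to-\infty$; hence $(w^k)$ has no cluster points.

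The main obstacle I expect is the bookkeeping in the degenerate $-\infty$ branches: (a) keeping the coefficients $\gamma_k-M\gamma_k^2$ uniformly positive — this is exactly where the hypothesis $\gamma_0<1/M$ together with \Cref{lem:gammaBddBelowWeakConv} is essential; and (b) upgrading ``residual $\to0$'' to ``cluster point is critical'' and ``$\varphi_{\gamma_k}(w^k)\to-\infty$'' to ``no cluster points'', which hinges on single-valuedness, continuity and finiteness of $T_\gamma,R_\gamma,\varphi_\gamma$ for the weakly convex $g$; these hold precisely because $\gamma_\infty<1/M$ renders the relevant inner objectives (strongly) convex. Everything else is a direct transcription of the convex argument in \cite{Stella2017}.
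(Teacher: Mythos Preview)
Your proposal is correct and follows essentially the same route as the paper's proof: the chain in (i) via Step~6, Step~5 and \Cref{prop:varphiIneqs}(i), the telescoping/dichotomy for (ii), and the continuity of $R_{\gamma_\infty}$ for (iii). The only noticeable deviation is in (iv): the paper first establishes monotonicity of $\varphi_{\gamma_\infty}(w^k)$ via $\varphi_{\gamma_k}(w^{k+1})\le\varphi_{\gamma_k}(x^{k+1})\le\varphi_{\gamma_k}(w^k)$ and then reruns the (ii)--(iii) argument for $(w^k)$, whereas you reach $\varphi_{\gamma_k}(w^k)\to-\infty$ directly from the sandwich in (i) and rule out cluster points by continuity of $\varphi_{\gamma_\infty}$; both arguments are valid and equally short.
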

\begin{proof}
    \textbf{(i). } Recalling $x^{k+1} = T_{\gamma_k}(w^k)$,
    \begin{align}
        \varphi(x^{k+1}) &\le \varphi_{\gamma_k}(w^{k}) - \frac{\beta \gamma_k}{2} \|R_{\gamma_k}(w^k)\|^2 \notag \\
        &\le \varphi_{\gamma_k}(x^{k})- \frac{\beta \gamma_k}{2} \|R_{\gamma_k}(w^k)\|^2 \label{eq:varphiStepPhigamma}\\
        &\le \varphi(x^k) - \frac{\beta \gamma_k}{2} \|R_{\gamma_k}(w^k)\|^2 - \frac{\gamma_k - M \gamma_k^2}{2} \|R_{\gamma_k}(x^k)\|^2, \label{eq:perstepDecrease}
    \end{align}
    where the first and second inequalities come from Step 6 and 5 in \Cref{alg:minfbe} respectively, and the final inequality is \Cref{prop:varphiIneqs}(i). 

    \textbf{(ii)-(iv). } We follow \cite{Stella2017} with minor modifications. Let $\varphi_* = \lim_{k \rightarrow \infty} \varphi(x^k)$, which exists as $(\varphi(x^k))_{k \in \mathbb{N}}$ is monotone by (i) and $\gamma_k - M\gamma_k^2\ge 0$. If $\varphi_* = -\infty$, then $\inf \varphi = -\infty$. By properness and lower semi-continuity of $\varphi$, as well as the monotonicity of $\varphi(x^k)$, no cluster points of $(x^k)_{k \in \mathbb{N}}$ exist. If instead $\varphi_* > -\infty$, by telescoping \eqref{eq:perstepDecrease},
    \begin{equation}\label{eq:telescoping}
        \frac{1}{2} \sum_{i=0}^k \gamma_i \left(\beta \|R_{\gamma_i}(w^i)\|^2 + (1-\gamma_i M)\|R_{\gamma_i}(x^i)\|^2\right) \le \varphi(x^0) - \varphi(x^{k+1}) \le \varphi(x^0) - \varphi_*.
    \end{equation}
    Since $\gamma_k$ is uniformly bounded below by \Cref{lem:gammaBddBelowWeakConv}, we have square summability of $\|R_{\gamma_k}(x^k)\|$, showing (ii). 

    By square summability, $R_{\gamma_k}(x^k) \rightarrow 0$. Moreover, the functions $R_{\gamma_k} = R_{\gamma_\infty}$ are constant for sufficiently large $k$, and $R_{\gamma_\infty}$ is continuous by continuity of the proximal operator and of $\nabla f$. Therefore, any cluster point $z \in \omega(x^k)$ has $R_{\gamma_\infty}(x^{k_j}) \rightarrow R_{\gamma_\infty}(z) = 0$ for some subsequence $x^{k_j} \rightarrow z$. Thus $z = T_{\gamma_\infty}(z) \Rightarrow z \in \zer \partial \varphi$, showing (iii).
    
    

    If $\beta>0$, for sufficiently large $k$ such that $\gamma_k = \gamma_\infty$, the following chain of inequalities holds:
    \begin{equation}
        \varphi_{\gamma_k}(w^{k+1}) \le \varphi_{\gamma_k}(x^{k+1}) = \varphi_{\gamma_k}(T_k(w^k)) \le \varphi_{\gamma_k}(w^k).
    \end{equation}
    The first inequality comes from Step 5, the equality from Step 9, and the final inequality from \Cref{prop:varphiIneqs}. The monotonicity of $\varphi_{\gamma_k}(w^k)$ for sufficiently large $k$ allows for a similar argument to hold for the $w^k$ sequence, giving (iv).
\end{proof}
Convergence results can also be extended to the weakly convex case. In particular, the following theorem shows the convergence of the residuals between each step.

\begin{theorem}[Global Residual Convergence] \label{thm:residualConvergence}
    Suppose $0<\gamma_0\le1/(2M)$, and let $c = \min\{\gamma_0, \xi(1-\beta)/L_f, 1/M\}>0$ be the lower bound for $\gamma_\infty$. The MINFBE iterations satisfy
    \begin{equation}
        \min_{i\le k} \|R_{\gamma_i}(x^i)\|^2 \le \frac{2}{k+1} \frac{\varphi(x^0) - \inf \varphi}{c - Mc^2}.
    \end{equation}
    If in addition $\beta >0$, then we also have
    \begin{equation}
        \min_{i\le k} \|R_{\gamma_i}(w^i)\|^2 \le \frac{2}{k+1} \frac{\varphi(x^0) - \inf \varphi}{\beta c}.
    \end{equation}
\end{theorem}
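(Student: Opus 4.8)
The plan is to derive both estimates from the single-step decrease of \Cref{thm:phiDecreases}(i),
\[
\varphi(x^{k+1}) \le \varphi(x^k) - \frac{\beta\gamma_k}{2}\,\|R_{\gamma_k}(w^k)\|^2 - \frac{\gamma_k - M\gamma_k^2}{2}\,\|R_{\gamma_k}(x^k)\|^2,
\]
by telescoping. If $\inf\varphi = -\infty$ both claimed bounds are vacuous, so I may assume $\inf\varphi > -\infty$; summing the displayed inequality over $i = 0, 1, \dots, k$ and using $\varphi(x^{k+1}) \ge \inf\varphi$ gives
\[
\sum_{i=0}^{k} \frac{\beta\gamma_i}{2}\,\|R_{\gamma_i}(w^i)\|^2 + \sum_{i=0}^{k} \frac{\gamma_i - M\gamma_i^2}{2}\,\|R_{\gamma_i}(x^i)\|^2 \le \varphi(x^0) - \inf\varphi .
\]

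Next I would replace the $\gamma_i$-dependent coefficients by positive constants. Since $(\gamma_k)$ is non-increasing we have $\gamma_i \le \gamma_0 \le 1/(2M)$, and by \Cref{lem:gammaBddBelowWeakConv} we have $\gamma_i \ge \gamma_\infty \ge c$. The key point is that $h(\gamma) := \gamma - M\gamma^2$ is non-decreasing on $[0, 1/(2M)]$, hence on the range $c \le \gamma_i \le 1/(2M)$ one has $\gamma_i - M\gamma_i^2 \ge c - Mc^2$, which is strictly positive because $c \le \gamma_0 \le 1/(2M) < 1/M$; similarly $\beta\gamma_i \ge \beta c$. Substituting these bounds and dropping the nonnegative $w$-sum gives $\tfrac{1}{2}(c - Mc^2)\sum_{i=0}^{k}\|R_{\gamma_i}(x^i)\|^2 \le \varphi(x^0) - \inf\varphi$; since $\sum_{i=0}^{k}\|R_{\gamma_i}(x^i)\|^2 \ge (k+1)\min_{i\le k}\|R_{\gamma_i}(x^i)\|^2$, rearranging yields the first inequality. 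For the second, assuming $\beta > 0$, I keep the $w$-sum and drop the $x$-sum instead, obtaining $\tfrac{1}{2}\beta c\sum_{i=0}^{k}\|R_{\gamma_i}(w^i)\|^2 \le \varphi(x^0) - \inf\varphi$, and apply the same ``minimum is at most the average'' argument.

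The only subtle step is the monotonicity of $h(\gamma) = \gamma - M\gamma^2$, and it is exactly what forces the hypothesis $\gamma_0 \le 1/(2M)$ (rather than merely $\gamma_0 < 1/M$ as in \Cref{thm:phiDecreases}): if some $\gamma_i$ were allowed past the vertex $\gamma = 1/(2M)$ of this downward parabola, $h(\gamma_i)$ could dip below $h(c)$ and the uniform lower bound $c - Mc^2$ would fail. Beyond this, the argument uses only \Cref{thm:phiDecreases}(i) and the step-size lower bound of \Cref{lem:gammaBddBelowWeakConv}, together with the elementary telescoping and averaging estimates; no convexity of $g$ or additional regularity of $f$ is needed.
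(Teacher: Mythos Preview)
Your proposal is correct and follows essentially the same route as the paper: telescope the per-step decrease of \Cref{thm:phiDecreases}(i), use the step-size lower bound from \Cref{lem:gammaBddBelowWeakConv}, and invoke the monotonicity of $\gamma \mapsto \gamma - M\gamma^2$ on $[0,1/(2M)]$ to replace the $\gamma_i$-dependent coefficients by $c - Mc^2$ before applying the ``minimum $\le$ average'' bound. Your explicit remark on why the hypothesis $\gamma_0 \le 1/(2M)$ is needed (rather than just $\gamma_0 < 1/M$) is a helpful addition not spelled out in the paper.
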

\begin{proof}
    As in \cite[Thm 3.5]{Stella2017}. If $\inf \varphi = -\infty$, there is nothing to prove, so suppose otherwise that $\inf \varphi > -\infty$. Considering \eqref{eq:telescoping} along with $(\gamma_k)_{k \in \mathbb{N}}$ being nonincreasing implies
    \begin{equation}
        \frac{(k+1)(\gamma_k - M\gamma_k^2)}{2} \min_{i\le k}\|R_{\gamma_i}(x^i)\|^2 + \frac{(k+1)\beta \gamma_k}{2} \min_{i\le k}\|R_{\gamma_i}(w^i)\|^2\le \varphi(x^0) - \inf \varphi.
    \end{equation}
    Now note that $\gamma - M \gamma^2$ is increasing for $\gamma < 1/(2M)$, so $\gamma_k - M \gamma_k^2$ is lower bounded by $c-Mc^2>0$. Rearranging yields both inequalities.
\end{proof}

To obtain convergence of the objective similar to \Cref{thm:convexIterConv}, it is insufficient for $g$ to be weakly convex. We can alternatively utilize the KL property, which is a useful and general property satisfied by a large class of functions, including semialgebraic functions \cite{attouch2010proximal}. Moreover, it can be used to show convergence in the absence of other regularity conditions such as convexity \cite{Attouch2013, Bolte2014, hurault2022proximal}. 

\begin{definition}[KL Property \cite{Attouch2013,Bolte2014}]
    Suppose $\varphi:\R^n \rightarrow \overline{\R}$ is proper and lower semi-continuous. $\varphi$ satisfies the \emph{Kurdyka-Łojasiewicz (KL) property} at a point $x_*$ in $\dom \partial \varphi$ if there exists $\eta \in (0,+\infty]$, a neighborhood $U$ of $x_*$ and a continuous concave function $\Psi:[0,\eta) \rightarrow [0,+\infty)$ such that:
    \begin{enumerate}
        \item $\Psi(0) = 0$;
        \item $\Psi$ is $\mathcal{C}^1$ on $(0,\eta)$;
        \item $\Psi'(s)>0$ for $s \in (0,\eta)$;
        \item For all $u \in U \cap \{\varphi(x_*) < \varphi(u) < \varphi(x_*) + \eta\}$, we have
        \[\varphi'(\varphi(u) - \varphi(x_*)) \dist (0, \partial \varphi(u)) \ge 1.\]
    \end{enumerate}
    We say that $\varphi$ is a KL function if the KL property is satisfied at every point of $\dom \partial \varphi$.
\end{definition}
Utilizing the KL property, we are able to show that the iterates generated by MINFBE are sufficiently well-behaved, and hence converge. Moreover, from \Cref{thm:phiDecreases}, we have that the iterates converge to critical points of the non-convex objective $\varphi$. Under the PnP scheme, this will correspond to convergence to critical points of some function determined by the denoiser.


\begin{theorem}\label{thm:KLConvergence}
    Suppose that $f$ satisfies the KL condition and $g$ is semialgebraic, and both $f$ and $g$ are bounded from below. Suppose further that there exist constants $\bar\tau, c>0$ such that $\tau_k<\bar\tau$ and $\|d^k\| \le c \|R_{\gamma_k}(x^k)\|$, $\beta>0$, and that $\varphi$ is coercive or has compact level sets. Then the sequence of iterates $(x^k)_{k \in \mathbb{N}}$ is either finite and ends with $R_{\gamma_k}(x^k)=0$, or converges to a critical point of $\varphi$.
\end{theorem}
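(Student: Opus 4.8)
The plan is to follow the now-standard KL-based convergence argument (as in Attouch--Bolte--Svaiter and the proximal-type adaptations in \cite{Stella2017,Bolte2014}), establishing the three ingredients: a \emph{sufficient decrease} property, a \emph{relative error / subgradient bound}, and then invoking the KL inequality together with a length-summability argument to conclude Cauchy-ness of $(x^k)_{k\in\mathbb N}$. First I would dispose of the trivial case: if some $R_{\gamma_k}(x^k)=0$ the algorithm stops by Step 1 and there is nothing to prove, so assume the sequence is infinite, whence $R_{\gamma_k}(x^k)\ne 0$ for all $k$. Since $\gamma_k$ is nonincreasing and bounded below by \Cref{lem:gammaBddBelowWeakConv} (using $\gamma_0<1/M$, which holds once we note $\gamma_0\le 1/(2M)$ is the natural choice), the step-sizes are eventually constant at some $\gamma_\infty>0$; I would pass to the tail $k\ge k_0$ where $\gamma_k=\gamma_\infty$, and work with the fixed maps $T_{\gamma_\infty},R_{\gamma_\infty},\varphi_{\gamma_\infty}$. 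Coercivity (or compact level sets) of $\varphi$ together with the monotone decrease of $\varphi(x^k)$ from \Cref{thm:phiDecreases}(i) confines $(x^k)$ to a compact set, so $(x^k)$ is bounded, $\omega(x^0)$ is nonempty and compact, and by \Cref{thm:phiDecreases}(iii) every cluster point lies in $\zer\partial\varphi$; moreover $\varphi$ is constant on $\omega(x^0)$ (standard: $\varphi(x^k)\downarrow\varphi_*>-\infty$ since $\varphi$ is bounded below, and lower semicontinuity plus the decrease give $\varphi\equiv\varphi_*$ on the limit set).

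The sufficient-decrease ingredient is exactly \Cref{thm:phiDecreases}(i): for $k\ge k_0$,
\[
\varphi(x^k)-\varphi(x^{k+1})\ \ge\ \frac{\beta\gamma_\infty}{2}\|R_{\gamma_\infty}(w^k)\|^2+\frac{\gamma_\infty-M\gamma_\infty^2}{2}\|R_{\gamma_\infty}(x^k)\|^2\ \ge\ a\,\|R_{\gamma_\infty}(x^k)\|^2
\]
with $a=(\gamma_\infty-M\gamma_\infty^2)/2>0$ (here $\gamma_0\le 1/(2M)$ guarantees $a>0$). For the relative-error ingredient I need to bound $\dist(0,\partial\varphi(x^{k+1}))$ by a multiple of $\|R_{\gamma_\infty}(x^k)\|$. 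The optimality condition for $x^{k+1}=T_{\gamma_\infty}(w^k)=\prox_{\gamma_\infty g}(w^k-\gamma_\infty\nabla f(w^k))$ gives $R_{\gamma_\infty}(w^k)-\nabla f(w^k)\in\partial g(x^{k+1})$, hence $\nabla f(x^{k+1})-\nabla f(w^k)+R_{\gamma_\infty}(w^k)\in\partial\varphi(x^{k+1})$, so using $L_f$-Lipschitzness of $\nabla f$ and $w^k-x^{k+1}=\gamma_\infty R_{\gamma_\infty}(w^k)$,
\[
\dist(0,\partial\varphi(x^{k+1}))\ \le\ (1+L_f\gamma_\infty)\,\|R_{\gamma_\infty}(w^k)\|.
\]
Now I must convert $\|R_{\gamma_\infty}(w^k)\|$ and the step length $\|x^{k+1}-x^k\|$ into controlled multiples of $\|R_{\gamma_\infty}(x^k)\|$. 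This is where the hypotheses $\tau_k<\bar\tau$ and $\|d^k\|\le c\,\|R_{\gamma_\infty}(x^k)\|$ enter: $w^k-x^k=\tau_k d^k$ so $\|w^k-x^k\|\le \bar\tau c\,\|R_{\gamma_\infty}(x^k)\|$; then $\|R_{\gamma_\infty}(w^k)\|\le \|R_{\gamma_\infty}(w^k)-R_{\gamma_\infty}(x^k)\|+\|R_{\gamma_\infty}(x^k)\|\le (\mathrm{Lip}(R_{\gamma_\infty})\bar\tau c+1)\|R_{\gamma_\infty}(x^k)\|$ since $R_{\gamma_\infty}=\gamma_\infty^{-1}(I-\prox_{\gamma_\infty g}(I-\gamma_\infty\nabla f))$ is Lipschitz (composition of nonexpansive prox with $(1+\gamma_\infty L_f)$-Lipschitz map); and $\|x^{k+1}-x^k\|\le\|x^{k+1}-w^k\|+\|w^k-x^k\|\le(\gamma_\infty\mathrm{Lip}(R_{\gamma_\infty})\bar\tau c+\gamma_\infty+\bar\tau c)\|R_{\gamma_\infty}(x^k)\| =: b\,\|R_{\gamma_\infty}(x^k)\|$. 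Combining, there is $\kappa>0$ with $\dist(0,\partial\varphi(x^{k+1}))\le\kappa\,\|R_{\gamma_\infty}(x^k)\|$ and $\|x^{k+1}-x^k\|\le b\,\|R_{\gamma_\infty}(x^k)\|$, while $\varphi(x^k)-\varphi(x^{k+1})\ge a\,\|R_{\gamma_\infty}(x^k)\|^2$.

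With these three estimates the finish is the classical uniformized-KL argument (Bolte--Sabach--Teboulle \cite{Bolte2014}): since $\omega(x^0)$ is compact, $\varphi$ is constant $=\varphi_*$ on it, and $f$ satisfies KL while $g$ is semialgebraic (hence $\varphi=f+g$ satisfies KL), apply the uniformized KL property on a neighborhood of $\omega(x^0)$ with desingularizing function $\Psi$; concavity of $\Psi$ gives, for large $k$,
\[
\Psi\!\big(\varphi(x^k)-\varphi_*\big)-\Psi\!\big(\varphi(x^{k+1})-\varphi_*\big)\ \ge\ \frac{\varphi(x^k)-\varphi(x^{k+1})}{\dist(0,\partial\varphi(x^k))}\ \ge\ \frac{a\,\|R_{\gamma_\infty}(x^k)\|^2}{\kappa\,\|R_{\gamma_\infty}(x^{k-1})\|},
\]
and a Young/AM--GM estimate on the right together with $\|x^{k+1}-x^k\|\le b\|R_{\gamma_\infty}(x^k)\|$ yields $\sum_k\|x^{k+1}-x^k\|<\infty$ after telescoping $\Psi$. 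Hence $(x^k)$ is Cauchy, converges to some $x_*\in\omega(x^0)\subseteq\zer\partial\varphi$, i.e. a critical point of $\varphi$. The main obstacle — and the only place real care is needed beyond citing \cite{Stella2017,Bolte2014} — is the relative-error bound: one must route the subgradient bound through $w^k$ and then re-express everything in terms of $\|R_{\gamma_\infty}(x^k)\|$, which is precisely what the added hypotheses $\|d^k\|\le c\|R_{\gamma_k}(x^k)\|$ and $\tau_k<\bar\tau$ are engineered to permit; note weak convexity of $g$ never obstructs this step since $\partial g$ here is the (Clarke) subdifferential and $\prox_{\gamma_\infty g}$ is still single-valued and nonexpansive for $\gamma_\infty<1/M$.
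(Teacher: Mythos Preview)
Your proposal is correct and follows essentially the same KL-based route as the paper's proof: sufficient decrease from \Cref{thm:phiDecreases}(i), a subgradient bound at $x^{k+1}$ via the optimality condition of the prox step, the uniformized KL inequality on the compact set $\omega(x^0)$, and a telescoping/summability argument to conclude $(x^k)$ is Cauchy. The only cosmetic difference is that the paper takes $\|R_{\gamma_k}(w^k)\|$ as the anchor quantity (using the $\beta$-term in the decrease and \Cref{lem:6.6} to bound $\|x^{k+1}-x^k\|$), whereas you anchor on $\|R_{\gamma_\infty}(x^k)\|$ and convert $\|R_{\gamma_\infty}(w^k)\|$ back via Lipschitz continuity of $R_{\gamma_\infty}$; both bookkeepings work and yield the same conclusion.
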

\begin{proof}
Deferred to the supplementary material. The proof is very similar to that in \cite[Thm 3.9, Appendix 4]{Stella2017}.
\end{proof}

The crux of using the MINFBE method is that we are able to incorporate Newton-type steps into the iterations. Since we are able to get convergence to a critical point from the previous theorem, we are in a position to apply the next theorem to show superlinear convergence in a neighborhood of a minimizer. 

\begin{theorem}\label{thm:Superlinear}
    Suppose that $f$ is continuously differentiable with $L_f$-Lipschitz gradient and $g$ is $M$-weakly convex. Let $\gamma = \gamma_\infty$ as in \Cref{lem:gammaBddBelowWeakConv}. Suppose the search directions are chosen as 
    \[d^k = -B_k^{-1} \nabla \varphi_\gamma(x^k),\]
    the step-sizes in Step 5 are chosen with $\tau_k = 1$ tried first, and $B_k$ satisfy the Dennis-Moré condition \eqref{eq:DennisMore}. Suppose further that the iterates $(x^k)_{k \in \mathbb{N}},\, (w^k)_{k \in \mathbb{N}}$ converge to a critical point $x_*$ at which $\nabla \varphi_\gamma$ is continuously differentiable with $\nabla^2 \varphi_\gamma (x_*) \succ 0$. Then $(x^k)_{k \in \mathbb{N}}$ and $(w^k)_{k \in \mathbb{N}}$ converge Q-superlinearly to $x_*$.
\end{theorem}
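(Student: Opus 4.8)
The plan is to reduce \Cref{thm:Superlinear} to the already-stated superlinear convergence result (\cite[Thm 4.1]{Stella2017}, quoted above as the theorem preceding the BFGS update display), by verifying that its hypotheses are met along the MINFBE trajectory. The key point is that once the step-sizes stabilize at $\gamma = \gamma_\infty$ — which they do by \Cref{lem:gammaBddBelowWeakConv}, since $0<\gamma_0<1/M$ forces the backtracking in Step 6 never to trigger and $\gamma_k$ is non-increasing — the tail of the MINFBE iteration is precisely the scheme \eqref{eq:DennisMore}: $w^k = x^k - B_k^{-1}\nabla\varphi_\gamma(x^k)$ followed by $x^{k+1} = T_\gamma(w^k)$. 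So I first fix $k$ large enough that $\gamma_k \equiv \gamma_\infty =: \gamma$, and note that the choice $d^k = -B_k^{-1}\nabla\varphi_\gamma(x^k)$ with $\tau_k = 1$ gives exactly $w^k = x^k + d^k$ of the required form.

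The remaining hypotheses of \cite[Thm 4.1]{Stella2017} to check are: (a) $\nabla\varphi_\gamma$ is strictly (here: continuously) differentiable at the limit point $x_*$ with $\nabla^2\varphi_\gamma(x_*)$ nonsingular — this is assumed outright in the statement via $\nabla^2\varphi_\gamma(x_*)\succ 0$; (b) the sequences $(x^k)$ and $(w^k)$ converge to $x_* \in \zer\partial\varphi$ — also assumed (and note $x_* \in \zer\partial\varphi$ is consistent, since by \Cref{prop:sharedMinimizer}-type reasoning, or rather \Cref{prop:PhiGammaEquality} in the weakly convex setting with $\gamma<1/M$, stationary points of $\varphi_\gamma$ coincide with $\zer\partial\varphi$); (c) the Dennis–Moré condition \eqref{eq:DennisMore1} — assumed; and (d) $x^k, w^k \notin \zer\partial\varphi$ for all $k$. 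Point (d) is the one needing a small argument: if some $x^k \in \zer\partial\varphi$ then $R_{\gamma_k}(x^k) = 0$ and MINFBE terminates (Step 1), so on the nontrivial infinite branch $x^k \notin \zer\partial\varphi$; and if $w^k \in \zer\partial\varphi$ then $\nabla\varphi_\gamma(w^k) = 0$, which combined with $x^{k+1}=T_\gamma(w^k)=w^k$ again forces termination — so on the infinite branch both exclusions hold (at least for the tail, which is all that matters for an asymptotic statement). I would also remark that $\tau_k=1$ is admissible in Step 5 for large $k$: this is the standard fact that the unit step is eventually accepted by the descent test $\varphi_\gamma(w^k)\le\varphi_\gamma(x^k)$ under the Dennis–Moré condition near a nondegenerate minimizer, which is part of the cited analysis (and is exactly the "$\tau_k=1$ is a valid step-size for sufficiently large $k$" remark made after \cite[Thm 4.1]{Stella2017} in the excerpt).

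With (a)–(d) in hand, \cite[Thm 4.1]{Stella2017} applies verbatim to the tail sequences and yields Q-superlinear convergence of $(x^k)$ and $(w^k)$ to $x_*$; since superlinear convergence of a tail is superlinear convergence of the whole sequence, this finishes the proof. The main (and essentially only) obstacle is bookkeeping: confirming that nothing in the weakly-convex extension breaks the structural identity of the tail iteration with \eqref{eq:DennisMore} — i.e. that the Step 6 backtracking is inactive (\Cref{lem:gammaBddBelowWeakConv}, needing $\gamma_0<1/M$), that $T_\gamma$ and $\nabla\varphi_\gamma = (I-\gamma\nabla^2 f)R_\gamma$ retain their closed forms (they depend only on $f\in\mathcal C^{1,1}_{L_f}$ and the single-valued prox of the convex function $g + M\|\cdot\|^2/2$, suitably re-centered), and that the critical-point/stationary-point correspondence still holds for $\gamma<1/M$ via \Cref{prop:PhiGammaEquality}. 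None of these require new estimates; they are invocations of the weakly-convex propositions already proved in this section together with the cited classical theorem.
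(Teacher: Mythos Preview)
Your proposal is correct and takes essentially the same approach as the paper: both reduce to \cite[Thm~4.1]{Stella2017} and argue that nothing in the passage from convex to $M$-weakly convex $g$ obstructs that proof. The paper's version is far terser---it singles out the one place convexity of $g$ is implicitly used in the cited proof, namely nonexpansiveness of $\prox_{\gamma g}$, and observes that for $\gamma<1/M$ the map $u\mapsto g(u)+\frac{1}{2\gamma}\|u-x\|^2$ is strongly convex, so $\prox_{\gamma g}$ remains $1$-Lipschitz and the rest of \cite[Thm~4.1,~4.2]{Stella2017} carries over verbatim. You gesture at this via ``the single-valued prox of the convex function $g+M\|\cdot\|^2/2$'' but do not name the Lipschitz property explicitly; since that property is what lets one pass from $\|w^k-x_*\|=o(\|x^k-x_*\|)$ to $\|x^{k+1}-x_*\|=\|T_\gamma(w^k)-T_\gamma(x_*)\|\le C\|w^k-x_*\|$, it is worth stating rather than leaving inside the black box.
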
 
\begin{proof}
    The proof is nearly identical to \cite[Thm 4.1]{Stella2017}. If $\gamma_g$ is $M$-weakly convex, then for $\gamma < 1/M$, $u\mapsto\left(g(u) + \frac{1}{2\gamma}\|u-x\|^2 \right)$ 
    is strongly convex. Thus $\prox_{\gamma g}$ is 1-Lipschitz \cite{Rockafellar1972CA}. The rest of the proofs of Thm 4.1 and 4.2 of \cite{Stella2017} follows as usual. 
\end{proof}
This shows superlinear convergence instead of linear convergence in the case where the critical point is a strong local minimum, i.e. it is locally strongly convex. Note the differentiability condition in the second part can be dropped if $f$ and $g$ are both $\mathcal{C}^2$. Moreover, assuming either $\varphi$ is convex and $x_*$ is a strong local minimum, or $\varphi$ satisfies a stronger KL inequality, these conditions indeed hold \RRev{if $B_k$ is updated according to the BFGS scheme} \cite[Thm 4.3]{Stella2017}.

\section{PnP-qN: Deep Denoiser Extension}\label{sec:pnpqn}
To convert \Cref{alg:minfbe} to the PnP framework, we consider replacing the proximal step in \eqref{eq:TgammaDef} with a denoiser. In particular, we consider the gradient-step denoiser setup in \cite{hurault2022proximal}. Let the denoiser $D_\sigma$ be given by
\begin{subequations} \label{eq:DenoiserFormulation}
\begin{gather}
    D_\sigma = I - \nabla g_\sigma, \label{eq:dSigmaNN}\\
    g_\sigma = \frac{1}{2} \|x - N_\sigma(x) \|^2, \label{eq:gSigmaNN}
\end{gather}
\end{subequations}
where $g_\sigma$ is a $\mathcal{C}^2$ function with $L$-Lipschitz gradient with $L<1$. Note the subscript in $g_\sigma$ represents a denoising strength, as opposed to the forward-backward envelope of $g$ as we will define for our problem later. The mapping $N_\sigma(x)$ takes the form of a $\mathcal{C}^2$ neural network, allowing for the computation of $g_\sigma$ explicitly. Under these assumptions, the denoiser $D_\sigma$ takes the form of a proximal mapping of a weakly convex function, as stated in the next proposition. 


\begin{prop}[{\cite[Prop 1]{hurault2023RelaxedProxDenoiser}}]\label{prop:defPhiSigma}
$D_\sigma(x) = \prox_{\phi_\sigma}(x)$, where $\phi_\sigma$ is defined by 
\begin{equation}\label{eq:phiSigmaClosedForm}
    \phi_\sigma (x) = g_\sigma (D_\sigma^{-1}(x)) - \frac{1}{2} \|D_\sigma^{-1}(x) - x\|^2
\end{equation}
if $x \in \mathrm{Im}(D_\sigma)$, and $\phi_\sigma(x) = +\infty$ otherwise. Moreover, $\phi_\sigma$ is $\frac{L}{L+1}$-weakly convex.
\end{prop}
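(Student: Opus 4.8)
The plan is to exhibit $D_\sigma$ as the gradient of a smooth, strongly convex potential and then read off the prox-representation from Moreau/conjugacy duality. First I would set $h_\sigma(x) = \tfrac12\|x\|^2 - g_\sigma(x)$, so that $\nabla h_\sigma = I - \nabla g_\sigma = D_\sigma$. Since $g_\sigma$ is $\mathcal{C}^2$ with $L$-Lipschitz gradient and $L<1$, we have $-LI \preceq \nabla^2 g_\sigma(x) \preceq LI$, hence $(1-L)I \preceq \nabla^2 h_\sigma(x) \preceq (1+L)I$ for all $x$; thus $h_\sigma$ is $(1-L)$-strongly convex with $(1+L)$-Lipschitz gradient, $h_\sigma \in \Gamma_0$, and $D_\sigma = \nabla h_\sigma$ is a $\mathcal{C}^1$ diffeomorphism of $\R^n$ with inverse $D_\sigma^{-1} = \nabla h_\sigma^*$. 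In particular $\mathrm{Im}(D_\sigma) = \R^n$, so the case distinction in \eqref{eq:phiSigmaClosedForm} is vacuous here and is kept only for robustness.

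Next I would \emph{define} $\phi_\sigma \coloneqq h_\sigma^* - \tfrac12\|\cdot\|^2$ and verify the two claims. For the prox identity, note $\phi_\sigma + \tfrac12\|\cdot\|^2 = h_\sigma^*$, which is $\tfrac{1}{1+L}$-strongly convex (conjugate of a $(1+L)$-smooth convex function) and in particular lies in $\Gamma_0$; expanding $\phi_\sigma(u) + \tfrac12\|u-x\|^2$ the cross quadratic terms cancel, so for every $x \in \R^n$,
\begin{equation*}
\prox_{\phi_\sigma}(x) = \argmin_u \big\{ h_\sigma^*(u) - \langle u,x\rangle \big\} = \nabla (h_\sigma^*)^*(x) = \nabla h_\sigma(x) = D_\sigma(x),
\end{equation*}
using $(h_\sigma^*)^* = h_\sigma$ (Fenchel–Moreau, $h_\sigma \in \Gamma_0$) and single-valuedness of the argmin by strong convexity. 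For the closed form, the Fenchel–Young equality for $h_\sigma$ holds at the unique point $u = D_\sigma^{-1}(x)$ with $\nabla h_\sigma(u) = x$, giving $h_\sigma^*(x) = \langle D_\sigma^{-1}(x), x\rangle - h_\sigma(D_\sigma^{-1}(x))$; substituting $h_\sigma = \tfrac12\|\cdot\|^2 - g_\sigma$ and using $\langle D_\sigma^{-1}(x),x\rangle - \tfrac12\|D_\sigma^{-1}(x)\|^2 - \tfrac12\|x\|^2 = -\tfrac12\|D_\sigma^{-1}(x) - x\|^2$ reproduces exactly \eqref{eq:phiSigmaClosedForm}. For weak convexity I would use the duality in the other direction: $h_\sigma$ has $(1+L)$-Lipschitz gradient, so $h_\sigma^* - \tfrac{1}{2(1+L)}\|\cdot\|^2$ is convex, whence $\phi_\sigma + \tfrac{M}{2}\|\cdot\|^2 = h_\sigma^* - \tfrac{1-M}{2}\|\cdot\|^2$ is convex precisely for $M \ge 1-\tfrac{1}{1+L} = \tfrac{L}{L+1}$, i.e.\ $\phi_\sigma$ is $\tfrac{L}{L+1}$-weakly convex.

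The Hessian bounds and the final algebra are routine; the one point deserving care — and the place the hypothesis $L<1$ is essential — is ensuring $\phi_\sigma + \tfrac12\|\cdot\|^2 \in \Gamma_0$ (in fact strongly convex), which is what legitimizes computing $\prox_{\phi_\sigma}$ through the conjugate; without $L<1$ the potential $h_\sigma$ need only be convex, the prox subproblem may fail to be strictly convex, and single-valuedness breaks. An equivalent route, avoiding conjugates, is to verify the optimality condition $x - D_\sigma(x) \in \partial\phi_\sigma(D_\sigma(x))$ directly: $\phi_\sigma$ is $\mathcal{C}^1$ with $\nabla\phi_\sigma = \nabla h_\sigma^* - I = D_\sigma^{-1} - I$, and $D_\sigma^{-1}(u) - u = \nabla g_\sigma(D_\sigma^{-1}(u))$, so with $u = D_\sigma(x)$ one gets $\nabla\phi_\sigma(u) = \nabla g_\sigma(x) = x - u$; combined with uniqueness of the minimizer of the strongly convex objective $\phi_\sigma(\cdot) + \tfrac12\|\cdot - x\|^2$, this again yields $\prox_{\phi_\sigma}(x) = D_\sigma(x)$.
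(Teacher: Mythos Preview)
The paper does not supply its own proof of this proposition; it is quoted from \cite{hurault2023RelaxedProxDenoiser} and accompanied only by the remark that it is ``a slight extension of the fact that a function $f$ is a proximal operator of some proper convex l.s.c.\ function $\varphi$, if and only if it is a subgradient of a convex l.s.c.\ function $\psi$ and $f$ is nonexpansive.'' Your argument is correct and is precisely this extension made explicit: you construct the potential $h_\sigma = \tfrac12\|\cdot\|^2 - g_\sigma$ so that $D_\sigma = \nabla h_\sigma$, use $L<1$ to get strong convexity (hence $D_\sigma$ is a diffeomorphism and the prox subproblem has a unique minimizer), and then read off $\phi_\sigma = h_\sigma^* - \tfrac12\|\cdot\|^2$ together with the weak-convexity constant from the Baillon--Haddad--type duality between $(1+L)$-smoothness of $h_\sigma$ and $\tfrac{1}{1+L}$-strong convexity of $h_\sigma^*$. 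This is the standard route, matches the paper's hint, and is how the cited source proceeds as well; there is nothing to correct.
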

\RRev{This proposition allows us to take the weak convexity constant required in the previous section as $M=L/(L+1)$}. Since $L<1$, we have $M<1/2$. \RRev{This result can be thought of a slight extension of the fact that a function $f$ is a proximal operator of some proper convex l.s.c. function $\varphi$, if and only if it is a subgradient of a convex l.s.c. function $\psi$ and $f$ is nonexpansive \cite{gribonval2020characterization,moreau1965proximite}.}

Suppose that $\gamma_k = \gamma>0$ is fixed in the MINFBE iterations, satisfying the conditions in \Cref{lem:gammaBddBelowWeakConv}. Consider making the substitution with $\phi_\sigma$ defined as in \Cref{prop:defPhiSigma}, targeting $\varphi = f+g$:
\begin{equation}\label{eq:gPhiSub}
    \gamma g = \phi_\sigma.
\end{equation}
The FBS step $T_\gamma (x)= \prox_{\gamma g} (x - \gamma \nabla f(x))$ thus becomes, \RRev{using $D_\sigma = \prox_{\phi_\sigma}$,}
\begin{equation}
    T_\gamma(x) = D_\sigma (x - \gamma \nabla f(x)).
\end{equation}
This will target the objective function $\varphi(x) = f(x) + g(x) = f(x) + \phi_\sigma(x)/\gamma$. To iterate \Cref{alg:minfbe} with this substitution, we need to evaluate $\varphi_\gamma$. Recalling \eqref{eq:phigamma2}, we can instead evaluate the Moreau envelope $g^\gamma$. By definition \eqref{eq:gGammaDef2} and the substitution \eqref{eq:gPhiSub}, we have:
\begin{align*}
        g^\gamma(y) \overset{\eqref{eq:gGammaDef2}}&{=} g( \prox_{\gamma g}(y)) + \frac{1}{2\gamma}\|\prox_{\gamma g}(y) - y\|^2 \\
        \overset{\eqref{eq:gPhiSub}}&{=} \frac{1}{\gamma} \phi_\sigma(D_\sigma (y)) + \frac{1}{2\gamma}\|D_\sigma(y) - y\|^2 \\
        \overset{\eqref{eq:phiSigmaClosedForm}}&{=} \frac{1}{\gamma} g_\sigma (D^{-1}_\sigma(D_\sigma(y))) - \frac{1}{2\gamma}\|D^{-1}_\sigma(D_\sigma(y)) - D_\sigma(y)\|^2  + \frac{1}{2\gamma} \|D_\sigma(y) - y\|^2 \\
        &= \frac{1}{\gamma}g_\sigma(y).
\end{align*}
Using this substitution, we obtain the Plug-and-Play scheme PnP-MINFBE, detailed in \Cref{alg:pnpminfbe}. We have a closed form for the forward-backward envelope of $\varphi$, as well as some other expressions essential for iterating MINFBE, given by:
\begin{subequations}\label{eqs:pnpMINFBEclosedform}
\begin{gather}
    \varphi(x) = f(x) + \frac{1}{\gamma} \phi_\sigma(x), \label{eq:TrueVarphi}\\
    \varphi_\gamma(x) = f(x) - \frac{\gamma}{2} \|\nabla f(x)\|^2 + \frac{1}{\gamma} g_\sigma(x - \gamma \nabla f(x)), \label{eq:TrueVarphiGamma} \\
    \nabla \varphi_\gamma (x) = (I - \gamma \nabla^2 f) R_\gamma(x), \label{eq:gradVarphiGamma2} \\
    \varphi(x^{k+1}) = f(x^{k+1}) + \frac{1}{\gamma} \left(g_\sigma(w^k - \gamma \nabla f(w^k)) - \|w^k - \gamma \nabla f(w^k) - T_\gamma(w^k)\|^2/2\right). \label{eq:lastGather}
\end{gather}
\end{subequations}

\begin{algorithm}
\caption{PnP-MINFBE}\label{alg:pnpminfbe}
\begin{algorithmic}[1]
\Require $x^0, \gamma <\min\{\gamma_0, (1-\beta)/L_f, 1/M\}, \beta \in [0,1), k \gets 0$
\If{$R_{\gamma_k}(x^k) = 0$}
    \State stop
\EndIf
\State Choose $d^k$ s.t. $\langle d^k,\ \nabla \varphi_{\gamma}(x^k) \rangle \le 0$
\State Choose $\tau_k \ge 0$ and $w^k = x^k +  \tau_k d^k$ s.t. $\varphi_{\gamma}(w^k) \le \varphi_{\gamma}(x^k)$
\State $x^{k+1} \gets D_\sigma(w^k - \gamma \nabla f(w^k))$
\State $k \gets k+1$, goto 1
\end{algorithmic}
\end{algorithm}
To compute the search direction $d^k$ at each step, we can use a quasi-Newton method to approximate the inverse Hessian of $\varphi_\gamma$. While a closed form exists for $\nabla^2 \varphi_\gamma$, \RRev{such as in \cite[Thm 2.10]{Stella2017}}, it requires the Jacobian of the denoiser $D_\sigma$, rendering methods requiring the Hessian computationally intractable due to the dimensionality of our problems. Therefore, we resort to a BFGS-like algorithm using the differences and secants
\[s^k = w^k - x^k,\ y^k = \nabla \varphi_\gamma (w^k) - \nabla \varphi_\gamma(x^k).\]
In particular, we will use the L-BFGS method due to the memory restrictions imposed by using images for our experiments. This can be implemented using a two-loop recursion, using only the last $m$ secants computed \cite{NoceWrig06}. We additionally impose a safeguard to reject updating the Hessian approximation if the secant condition $\langle s^k, y^k\rangle > 0$ is not satisfied. For completeness, we write the two-loop recursion for L-BFGS in \Cref{alg:L-BFGS}. The initial (inverse) Hessian approximations are chosen as $H^k_0 = c_k I$ as in \cite{NoceWrig06}, given by
\[c_k = \frac{\langle s^{k-1}, y^{k-1} \rangle}{\langle y^{k-1}, y^{k-1} \rangle}.\]

\begin{algorithm}
    \caption{L-BFGS \cite{NoceWrig06}}\label{alg:L-BFGS}
    \begin{algorithmic}[1]
        \Require $m>0$, secants $(s^i)^{k-1}_{i=k-m}$, differences $(y^i)^{k-1}_{i=k-m}$, initial Hessian guesses $(H^k_0)_{k \in \mathbb{N}}$
        \State $q \gets \nabla \varphi_\gamma (x^k)$
        \State $\rho_i \gets 1/\langle y^i, s^i \rangle$ for $i = k-1, k-2, ..., k-m$
        \For {$i = k-1, k-2, ..., k-m$}
            \State $\alpha_i \gets \rho_i \langle s^i, q\rangle$
            \State $q \gets q - \alpha_i y^i$
        \EndFor
        \State $r \gets H^k_0 q$
        \For {$i = k-m, k-m+1, ...., k-1$}
            \State $\beta \gets \rho_i \langle y^i, r\rangle$
            \State $r \gets r + (\alpha_i - \beta)s^i$
        \EndFor
        \State \textbf{stop with } $B_k^{-1} \nabla \varphi_\gamma(x^k) = H^k \nabla \varphi_\gamma(x^k) = r$
    \end{algorithmic}
\end{algorithm}

\begin{algorithm}
\caption{PnP-LBFGS}\label{alg:pnpLBFGS}
\begin{algorithmic}[1]
\Require $x^0, \gamma <\min\{(1-\beta)/L_f, 1/M\}, \beta \in [0,1), k \gets 0$
\If{$R_{\gamma_k}(x^k) = 0$}
    \State stop
\EndIf
\State Compute $d^k \gets -B_k^{-1} \nabla \varphi_\gamma(x^k)$ using L-BFGS (c.f. Algorithm \ref{alg:L-BFGS}) with differences and secants $(s^i,y^i)^{k-1}_{i=k-m}$.
\State Choose $\tau_k \in [0,1]$ and $w^k = x^k +  \tau_k d^k$ s.t. $\varphi_{\gamma}(w^k) \le \varphi_{\gamma}(x^k)$
\State $x^{k+1} \gets D_\sigma(w^k - \gamma \nabla f(w^k))$
\State $s^k \gets w^k - x^k,\ y^k \gets \nabla \varphi_\gamma(w^k) - \nabla \varphi_\gamma(x^k)$
\State $k \gets k+1$, goto 1
\end{algorithmic}
\end{algorithm}

Utilizing the results from the previous section, we can show the following convergence results for \RRev{PnP-MINFBE (\Cref{alg:pnpminfbe}) and PnP-LBFGS (\Cref{alg:pnpLBFGS})}.

\begin{corollary}\label{cor:PnPMINFBEConvergence}
    Suppose that $f$ is $\mathcal{C}^1$ and KL with $L_f$-Lipschitz gradient, $g_\sigma$ is $\mathcal{C}^2$ and semi-algebraic with $L_g$-Lipschitz gradient with $L_g<1$. Assume further that $\gamma<1/(2M)$ is chosen as in \Cref{lem:gammaBddBelowWeakConv} such that $\gamma = \gamma_\infty$, and there exist $\bar \tau, c>0$ such that $\tau_k \le \bar \tau$ and $\|d^k\| \le c \|R_\gamma(x^k)\|$. Then the PnP-MINFBE iterations of \Cref{alg:pnpminfbe} satisfy the following:

    \begin{enumerate}[i.]
        \item $\varphi(x^k)$ decreases monotonically;
        \item The residuals $R_\gamma(x^k)$ converge to zero at a rate $\mathcal{O}(1/\sqrt{k})$;
        \item If the iterates are bounded, then the iterates are either finite or converge to a critical point of $\varphi = f + \frac{1}{\gamma} \phi_\sigma$. Moreover, $\varphi = \varphi_\gamma$ at these critical points.
        \item If furthermore $d^k = -B_k^{-1} \nabla \varphi_\gamma(x^k)$ and the $B_k$ satisfy the Dennis-Moré condition \eqref{eq:DennisMore}, then the $x^k$ and $w^k$ converge superlinearly to $x_*$.
    \end{enumerate}
\end{corollary}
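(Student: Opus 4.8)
The plan is to obtain \Cref{cor:PnPMINFBEConvergence} as a direct translation of the results of \Cref{sec:sec2} through the change of variables $\gamma g = \phi_\sigma$, i.e.\ $g = \phi_\sigma/\gamma$ with $\varphi = f + g$. First I would argue that \Cref{alg:pnpminfbe} is literally an instance of \Cref{alg:minfbe} run with the fixed step $\gamma_k \equiv \gamma$: since $\gamma < \min\{(1-\beta)/L_f,\,1/M\}$ and $\phi_\sigma$ is $M$-weakly convex with $M = L/(L+1) < 1/2$ by \Cref{prop:defPhiSigma}, \Cref{lem:gammaBddBelowWeakConv} guarantees the backtracking test in Step~6 of \Cref{alg:minfbe} never triggers, so Steps~6--8 may be dropped and $\gamma$ stays constant; moreover the closed forms \eqref{eqs:pnpMINFBEclosedform} show that the quantities $T_\gamma$, $R_\gamma$, $\varphi$, $\varphi_\gamma$, $\nabla\varphi_\gamma$ computed by \Cref{alg:pnpminfbe} coincide with the corresponding objects of \Cref{sec:sec2} under this substitution. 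With this identification, item~(i) is \Cref{thm:phiDecreases}(i) (the decrement is non-negative because $\gamma - M\gamma^2 = \gamma(1-M\gamma) \ge 0$ and $\beta \ge 0$); item~(ii) is \Cref{thm:residualConvergence}, whose hypothesis $\gamma \le 1/(2M)$ is exactly what is assumed, giving $\min_{i\le k}\|R_\gamma(x^i)\|^2 = \mathcal{O}(1/k)$ hence $\|R_\gamma(x^k)\| = \mathcal{O}(1/\sqrt{k})$; and the assertion $\varphi = \varphi_\gamma$ at critical points is \Cref{prop:PhiGammaEquality}(i), which is valid since $\gamma \le 1/M$.

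For item~(iii) I would invoke \Cref{thm:KLConvergence}. The conditions $\tau_k \le \bar\tau$, $\|d^k\| \le c\|R_\gamma(x^k)\|$ (and $\beta > 0$) are assumed, $f$ is KL by hypothesis, and the remaining point is that $g = \gamma^{-1}\phi_\sigma$ is semialgebraic and bounded below. Semialgebraicity would be checked from the closed form \eqref{eq:phiSigmaClosedForm}: if $g_\sigma$ is $\mathcal{C}^2$ and semialgebraic then so is $\nabla g_\sigma$, hence $D_\sigma = I - \nabla g_\sigma$ is semialgebraic, and since $\nabla^2 g_\sigma$ has operator norm at most $L<1$ the map $D_\sigma$ is a diffeomorphism onto $\mathrm{Im}(D_\sigma)$, so the graph of $D_\sigma^{-1}$ is semialgebraic; projecting the semialgebraic set $\{(x,t) : \exists y,\ x = D_\sigma(y),\ t = g_\sigma(y) - \tfrac12\|y - x\|^2\}$ and applying Tarski--Seidenberg then shows $\phi_\sigma$ is semialgebraic on its effective domain $\mathrm{Im}(D_\sigma)$. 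Boundedness below follows from $g_\sigma \ge 0$ and the descent lemma: writing $y = D_\sigma^{-1}(x)$ one has $D_\sigma^{-1}(x) - x = \nabla g_\sigma(y)$ and $\|\nabla g_\sigma(y)\|^2 \le 2L\,g_\sigma(y)$, so $\phi_\sigma(x) = g_\sigma(y) - \tfrac12\|\nabla g_\sigma(y)\|^2 \ge (1-L)\,g_\sigma(y) \ge 0$ (and $f$ is bounded below for the data-fidelity terms considered). Finally, the hypothesis that the iterates are bounded replaces the coercivity/compact-level-set assumption of \Cref{thm:KLConvergence}: the KL argument only requires the iterates to remain in a compact set on which $\varphi$ is bounded below, so it goes through unchanged, and \Cref{thm:phiDecreases}(iii) identifies the limit as a critical point of $\varphi = f + \gamma^{-1}\phi_\sigma$.

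Item~(iv) is then \Cref{thm:Superlinear}: by~(iii) the sequences $(x^k)_{k\in\mathbb N}$ and $(w^k)_{k\in\mathbb N}$ converge to a critical point $x_*$, and with $d^k = -B_k^{-1}\nabla\varphi_\gamma(x^k)$, $\tau_k = 1$ tried first, and the Dennis--Mor\'e condition \eqref{eq:DennisMore} assumed, \Cref{thm:Superlinear} yields Q-superlinear convergence of both sequences provided $\nabla\varphi_\gamma$ is $\mathcal{C}^1$ at $x_*$ with $\nabla^2\varphi_\gamma(x_*)\succ 0$; this regularity is automatic from \eqref{eq:TrueVarphiGamma} when $f$ is $\mathcal{C}^2$ (e.g.\ a quadratic fidelity), and the positive-definiteness is the standard non-degeneracy condition carried over from the theorem. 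The main obstacle is the bookkeeping of hypotheses across the substitution — chiefly, verifying that semialgebraicity and boundedness below transfer from $g_\sigma$ to the \emph{implicitly} defined potential $\phi_\sigma$, which is given only on $\mathrm{Im}(D_\sigma)$ through $D_\sigma^{-1}$, and confirming that the weaker ``bounded iterates'' hypothesis can legitimately stand in for the coercivity assumption of \Cref{thm:KLConvergence}; the rest is a routine specialization with $\gamma_k \equiv \gamma$ and $M = L/(L+1)$.
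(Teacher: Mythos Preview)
Your proposal is correct and follows essentially the same route as the paper: items (i)--(ii) from \Cref{thm:phiDecreases} and \Cref{thm:residualConvergence}, item (iii) from \Cref{thm:KLConvergence} after checking via Tarski--Seidenberg that $\phi_\sigma$ is semialgebraic, with the equality $\varphi=\varphi_\gamma$ at critical points from \Cref{prop:PhiGammaEquality}(i), and item (iv) from \Cref{thm:Superlinear}. You are in fact more careful than the paper in two places --- you supply an explicit lower bound $\phi_\sigma \ge (1-L)g_\sigma \ge 0$ via the descent lemma (the paper does not verify this, though \Cref{thm:KLConvergence} requires it), and you correctly flag that ``bounded iterates'' substitutes for the coercivity hypothesis of \Cref{thm:KLConvergence} since that assumption is used only to force boundedness of $(x^k)$ in the KL argument.
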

\begin{proof}
    \textbf{(i), (ii). } 
    Follows from \Cref{thm:phiDecreases,thm:residualConvergence}.
    \textbf{(iii). } By the Tarski-Siedenberg theorem \cite{Attouch2013}, compositions and inverses of semi-algebraic mappings are semi-algebraic. Therefore $D_\sigma$ and $D_\sigma^{-1}$ are semi-algebraic (on their domain), and hence so is $\phi_\sigma$. Therefore, 
    \begin{equation*}
        \varphi = f + \frac{1}{\gamma}\phi_\sigma
    \end{equation*}
    is a KL function. Moreover, $\varphi_\gamma$ is also a KL function. So we have convergence by \Cref{thm:KLConvergence}. The final part follows from \Cref{prop:PhiGammaEquality}.
    \textbf{(iv). } Follows from \Cref{thm:Superlinear}.
\end{proof}
\begin{remark}\label{rmk:constantTau}
    An essential part of the classical proof relies on the fact that $\tau=1$ will eventually always be accepted in MINFBE, under a Newton-type descent direction choice. \Rev{During numerical testing, we observed that the Armijo search for $\tau$ was only occasionally necessary when the image is being optimized, with at most 10 line searches required before converging.} 
\end{remark}
In our case, $f$ will be a quadratic fidelity term of the form $f(x) = \|Ax - y\|^2/2$ for some linear operator $A$ and measurement $y$. This is semi-algebraic and hence KL, and moreover trivially bounded below. From \eqref{eq:gSigmaNN}, we additionally have that $g_\sigma$ is bounded below. Since $N_\sigma$ will take the form of a neural network which is a composition of semi-algebraic operations and arithmetic operations, $g_\sigma$ will also be semi-algebraic. Therefore, we can apply \Cref{cor:PnPMINFBEConvergence} and get convergence to critical points \RRev{of the associated function $\varphi = f + \frac{1}{\gamma} \phi_\sigma$}.
\begin{table}[]
\centering
\caption{Hyperparameters for PnP-LBFGS.}
\label{tab:hparams}
\begin{tabular}{@{}ccccccc@{}}
\toprule
\multicolumn{1}{l}{} & \multicolumn{3}{c}{Deblur} & \multicolumn{3}{c}{SR} \\ \cmidrule(l){2-7} 
$\sigma$         & 2.55    & 7.65   & 12.75   & 2.55  & 7.65  & 12.75  \\ \midrule
$\alpha$             & 0.5 & 0.5 & 0.7 & 0.5 & 0.5 & 0.5                               \\
$\gamma$             & \multicolumn{6}{c}{1}                               \\
$\beta$              & \multicolumn{6}{c}{0.01}                            \\
$\lambda$            & 1       & 1      & 1       & 4     & 1.5   & 1      \\
$\sigma_d / \sigma$  & 1       & 0.75      & 0.75    & 2     & 1     & 0.75   \\ \bottomrule
\end{tabular}
\end{table}
\begin{table}[]
\centering
\caption{Hyperparameters for PnP-$\hat\alpha$PGD.}
\begin{tabular}{@{}ccccccc@{}}
\toprule
                  & \multicolumn{3}{c}{Deblur} & \multicolumn{3}{c}{SR} \\ \cmidrule(l){2-7} 
$\sigma$          & 2.55    & 7.65   & 12.75   & 2.55  & 7.65  & 12.75  \\ \midrule
$\alpha$ & 0.6 & 0.8 & 0.85 & 1 & 1 & 1 \\
$L_f$ & \multicolumn{3}{c}{1} & \multicolumn{3}{c}{0.25} \\
$\lambda$ & \multicolumn{6}{c}{$(\alpha+1)/(\alpha L_f)$} \\
$\hat\alpha$ &\multicolumn{6}{c}{1/($\lambda L_f$)} \\ 
$\sigma_d/\sigma$ & 1.5     & 1      & 1       & 2     & 2     & 2      \\ \bottomrule
\end{tabular}
\label{tab:R1_aPGD_hparams}
\end{table}

\section{Experiments}\label{sec:experiments}
In this section, we consider the application of the proposed PnP-LBFGS method, \RRev{given by \Cref{alg:pnpLBFGS}}, with a pre-trained denoiser to image deblurring and super-resolution. We use the pretrained Lipschitz-constrained proximal denoiser given in \cite{hurault2022proximal}. The (gradient-step) denoiser takes the form \eqref{eq:DenoiserFormulation}, where $N_\sigma$ is a neural network based on the DRUNet architecture \cite{zhang2021plug}. The Lipschitz constraint on $\nabla g_\sigma$ is enforced by applying a penalty on the spectral norm of $\nabla^2 g_\sigma$ during training. \RRev{While this spectral constraint affects the performance of the end-to-end denoiser, it provides sufficient conditions for convergence in the context of PnP, in particular, convergence to a critical point of a closed-form functional.}


The datasets we consider for image reconstruction are the CBSD68, CBSD10 and set3c datasets\footnote{\url{https://www2.eecs.berkeley.edu/Research/Projects/CS/vision/bsds/}}, containing images of size $256\times256$ with three color channels and pixel intensity values in $[0,255]$ \cite{martin2001database}. The forward operators corresponding to \RRev{the considered reconstruction problems of deblurring and super-resolution} are linear, and we can write the fidelity term as $f(x) = \lambda \|Ax-y\|^2/2$, where $A$ is the degradation operator, $y$ is the degraded image, and $\lambda$ is a regularization parameter. For reconstruction, $y$ will be taken as $y = Ax_{\text{true}} + \varepsilon$, where $x_{\text{true}}$ is the ground-truth image and the noise $\varepsilon$ is pixel-wise Gaussian with standard deviations $\sigma \in \{2.55, 7.65, 12.75\}$ corresponding to 1\%, 3\%, and 5\% noise (relative to the maximum pixel intensity value), respectively. The underlying optimization problems corresponding to fixed points of PnP-MINFBE thus take the form (as in \eqref{eq:TrueVarphi}):
\begin{equation}
    \min_x \varphi(x) = \frac{\lambda}{2} \|Ax - y\|^2 + \frac{1}{\gamma} \phi_\sigma,
\end{equation}
where $\gamma \le \min\{(1-\beta)/L_f, 1/2M\}$ as in \Cref{lem:gammaBddBelowWeakConv,thm:residualConvergence}. In this case, $f$ is $\mathcal{C}^2$, and we can easily compute the derivative \RRev{of the forward-backward envelope using} \eqref{eq:gradVarphiGamma2}.

The methods we compare against are PnP methods with similar convergence guarantees, namely $\mathcal{O}(1/\sqrt{k})$ residual convergence and a KL-type iterate convergence \cite{hurault2022proximal}. \RRev{Our analysis additionally shows} superlinear convergence to minima with positive-definite Hessian using Newton's directions. Although we can not verify whether the Hessian approximation $B_k$ obtained via L-BFGS satisfies the Dennis-Moré condition for superlinear convergence, we will empirically demonstrate faster convergence in terms of both time and iteration count compared to the competing methods.

 

The PnP methods that we will compare against are the \Rev{PnP-PGD, PnP-DRS, PnP-DRSdiff and PnP-$\hat\alpha$PGD methods \cite{hurault2022proximal,hurault2023RelaxedProxDenoiser}}. Here PGD stands for proximal gradient descent, DRS for Douglas-Rachford splitting, DRSdiff for DRS with differentiable fidelity terms, and $\hat\alpha$PGD for $\hat\alpha$-relaxed PGD. The update rules corresponding to the chosen PnP methods for comparison are as follows: 

\begin{align}
    &\left\{ \begin{matrix*}[l]
        z^{k+1} = x^k - \lambda \nabla f(x^k) \\
        x^{k+1} = D_\sigma(z^{k+1})
    \end{matrix*}\right. \label{eq:PnP-PGD}\tag{PnP-PGD} \\
    &\left\{ \begin{matrix*}[l]
        y^{k+1} = \prox_{\lambda f}(x^k) \\
        z^{k+1} = D_\sigma(2y^{k+1} - x^k) \\
        x^{k+1} = x^k + (z^{k+1} - y^{k+1})
    \end{matrix*}\right. \label{eq:PnP-DRSdiff}\tag{PnP-DRSdiff} \\
    &\left\{ \begin{matrix*}[l]
        y^{k+1} = D_\sigma(x^k) \\
        z^{k+1} = \prox_{\lambda f}(2y^{k+1} - x^k) \\
        x^{k+1} = x^k + (z^{k+1} - y^{k+1})
    \end{matrix*}\right. \label{eq:PnP-DRS}\tag{PnP-DRS} \\
    &\Rev{\left\{ \begin{matrix*}[l]
        q^{k+1} =(1-\hat\alpha)y^k + \hat\alpha x^k \\
        x^{k+1} = D_\sigma (x^k - \lambda \nabla f(q^{k+1})) \\
        y^{k+1} = (1-\hat\alpha)y^k+\hat\alpha x^{k+1}
    \end{matrix*}\right. \tag{PnP-\text{$\hat\alpha$}PGD}\label{eq:PnP-aPGD}} 
\end{align}
\begin{figure}[h]
    \centering
    \subfloat[\centering Residual DPIR\textsuperscript{1}, $\sigma=7.65$]{{\includegraphics[height=3.75cm]{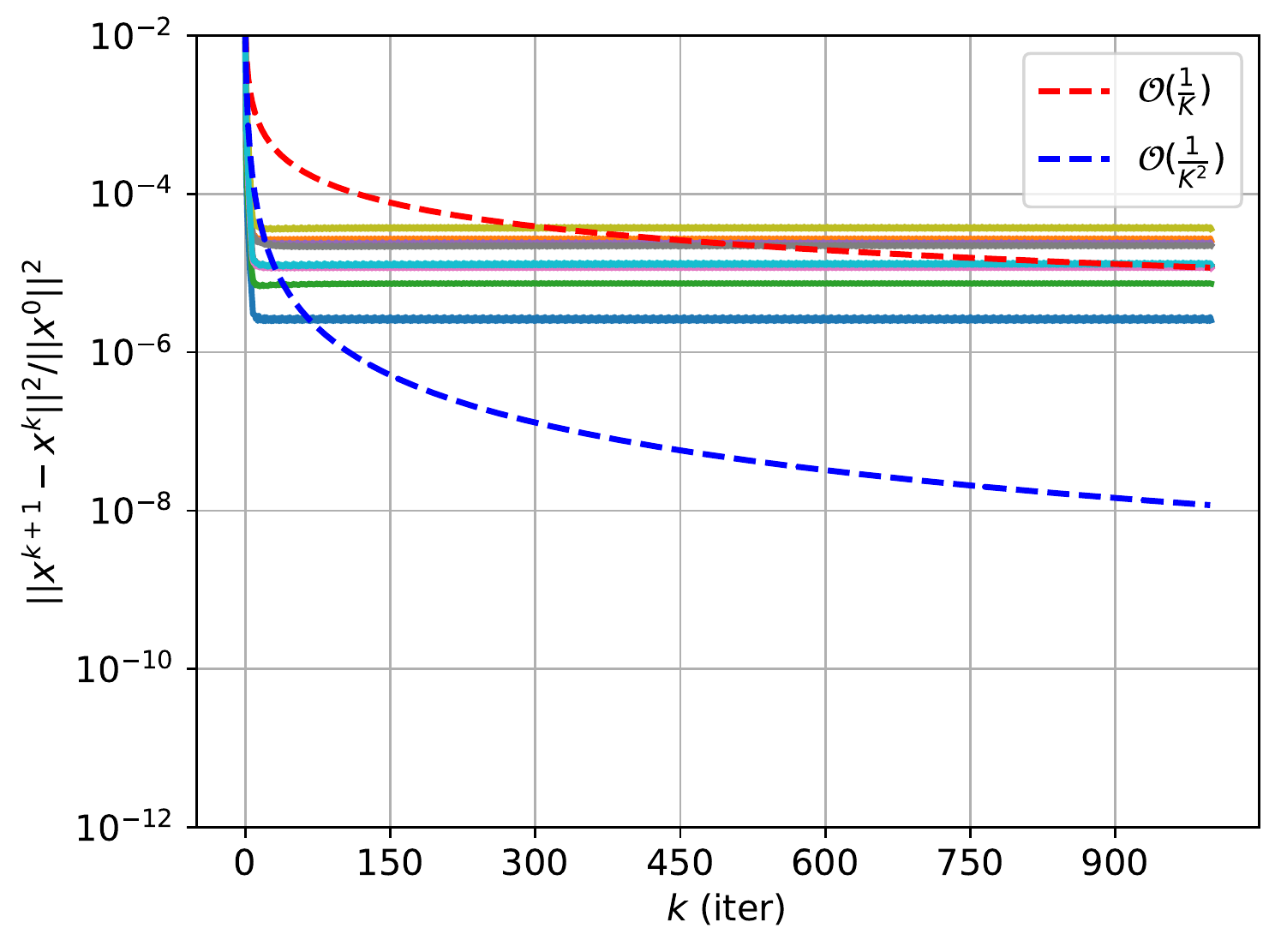}}}%
    \subfloat[\centering Residual DPIR\textsuperscript{2}, $\sigma=7.65$]{{\includegraphics[height=3.75cm]{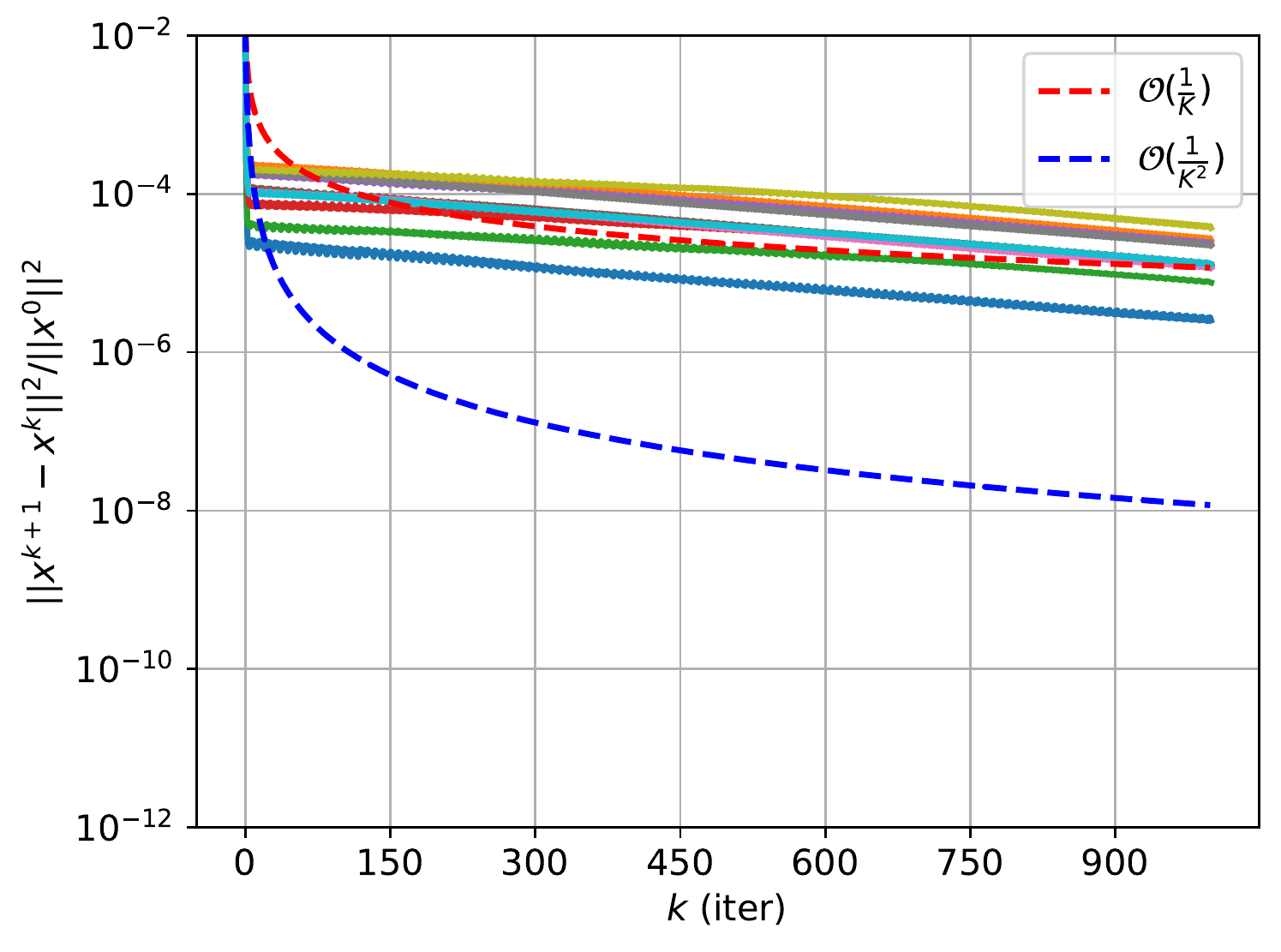}}}%
`    \subfloat[\centering Residual DPIR\textsuperscript{1}, $\sigma=2.55$]{{\includegraphics[height=3.75cm]{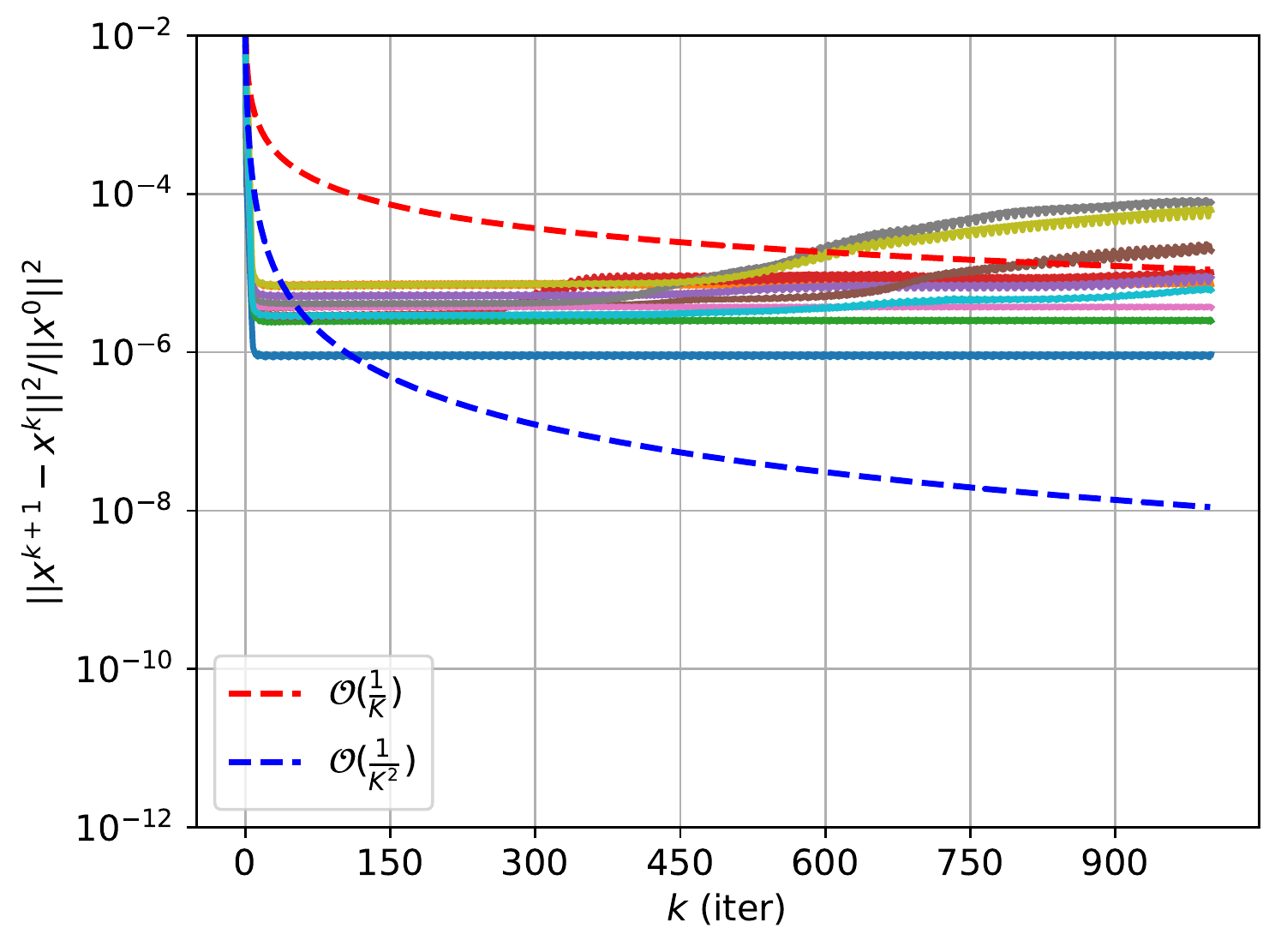}}}%

    \subfloat[\centering  PSNR DPIR\textsuperscript{1}, $\sigma=7.65$]{{\includegraphics[height=3.75cm]{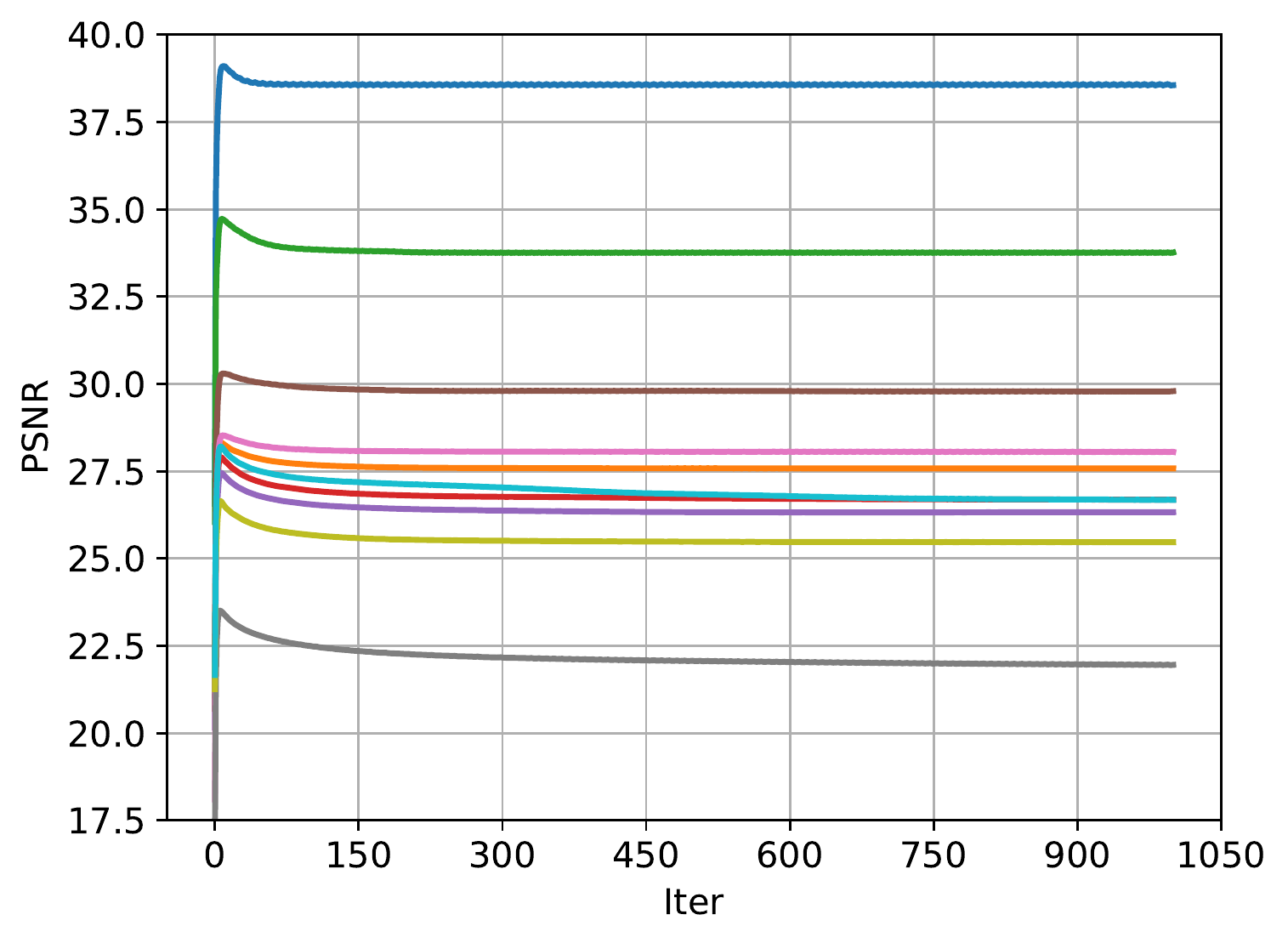}}}%
    \subfloat[\centering  PSNR DPIR\textsuperscript{2}, $\sigma=7.65$]{{\includegraphics[height=3.75cm]{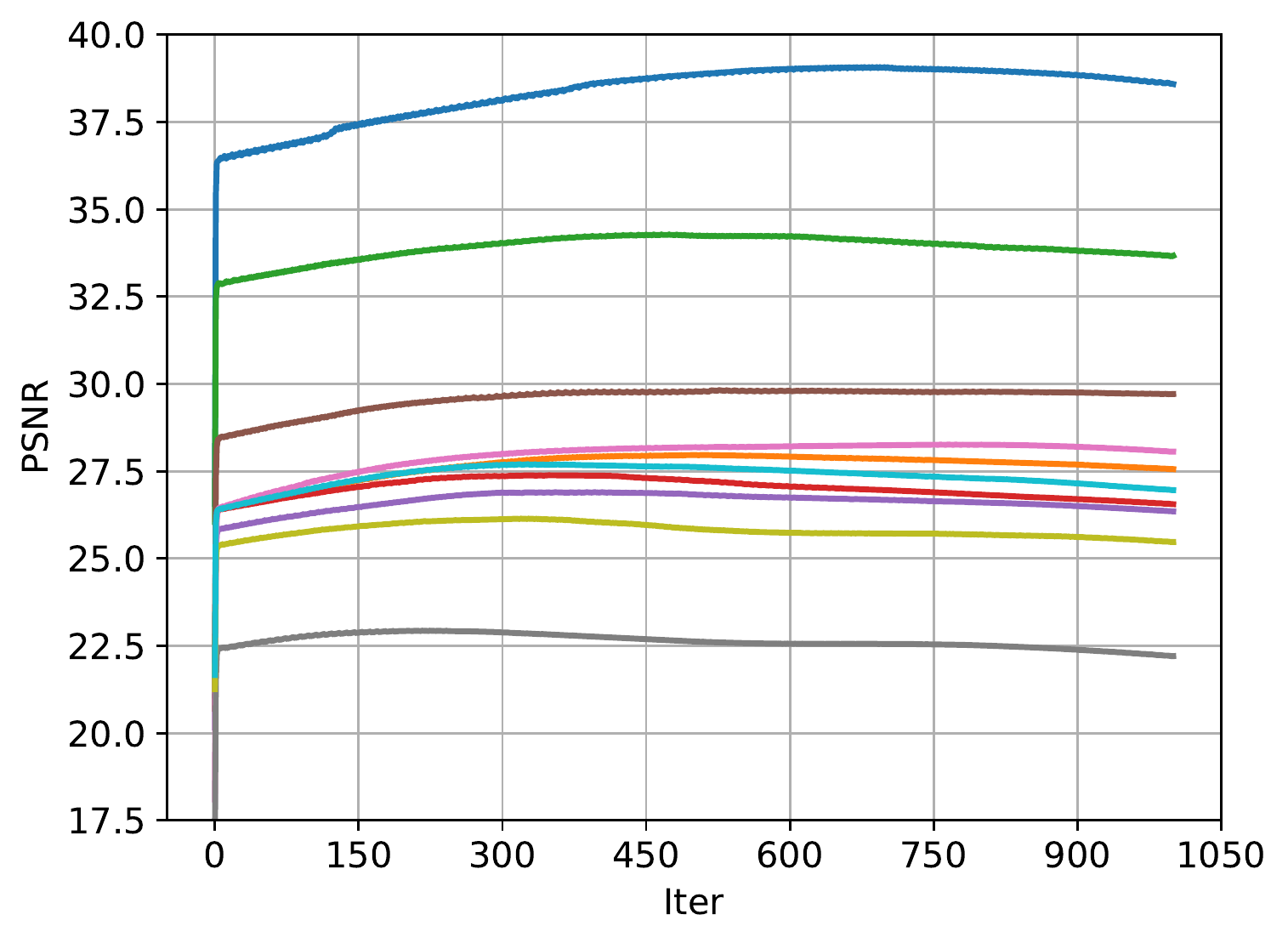}}}%
`    \subfloat[\centering PSNR DPIR\textsuperscript{1}, $\sigma=2.55$]{{\includegraphics[height=3.75cm]{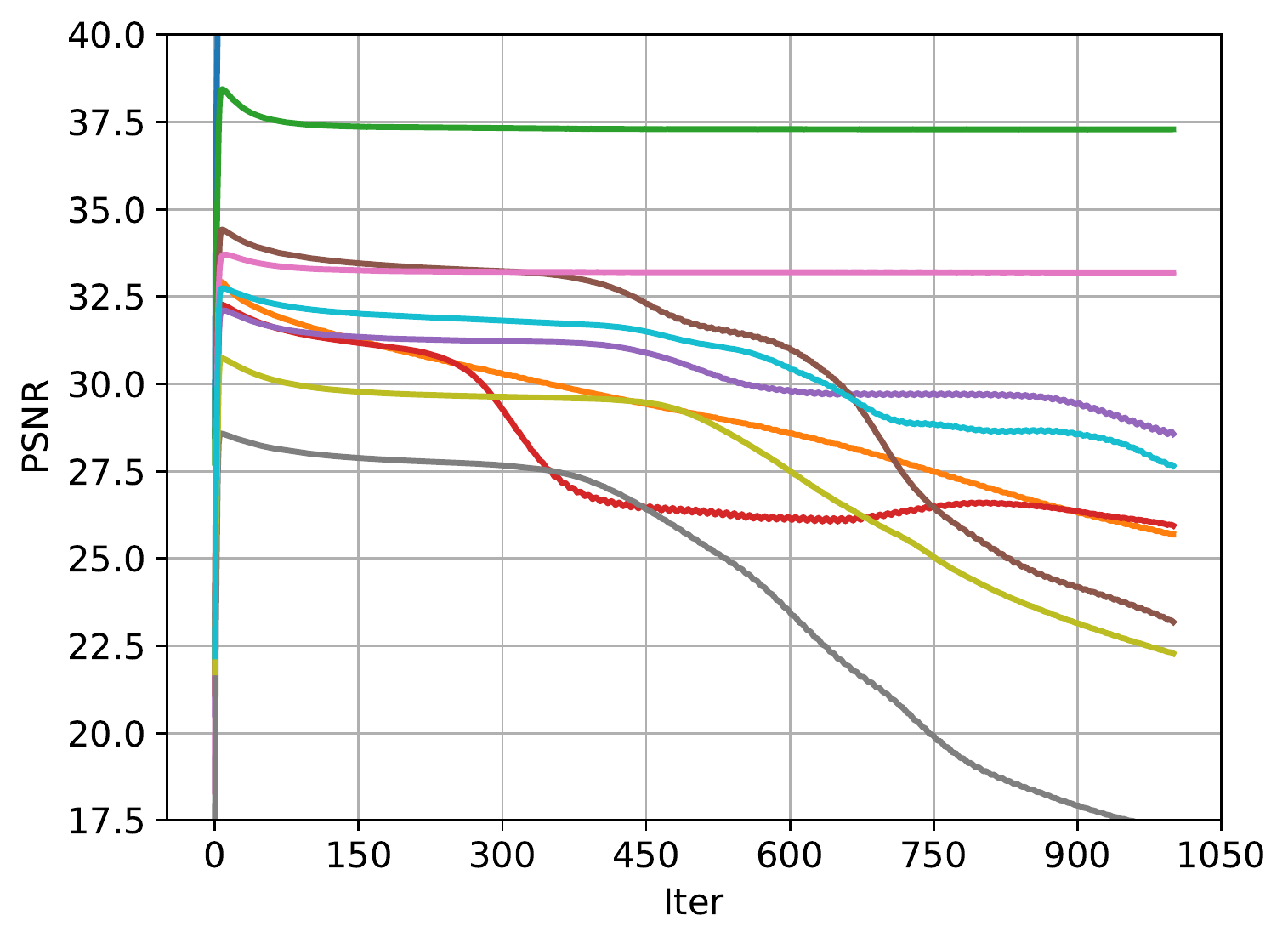}}}%
    \caption{\Rev{Performance of DPIR measured in terms of residual $\|x^{k+1}-x^k\|^2/\|x^0\|^2$ and PSNR for deblurring with noise levels $\sigma=2.55, 7.65$, applied with two different denoiser strength regimes. \RRev{Each curve corresponds to one of the 10 images from the CBSD10 dataset.} DPIR\textsuperscript{1} has denoiser strength decreased from 49 to $\sigma$ over 8 iterations for deblurring, and extended with $\sigma_d=\sigma$ for following iterations. DPIR\textsuperscript{2} has denoiser strength decreased from 49 to $\sigma$ over 1000 iterations. We observe that both methods have decreasing PSNR at later iterations and non-converging residual, and further that DPIR diverges for small noise levels. }}
    \label{fig:dpir_divergence} 
\end{figure}

\begin{figure}[h]
    \centering
    \subfloat[\centering Residual PnP-FISTA]{{\includegraphics[height=3.75cm]{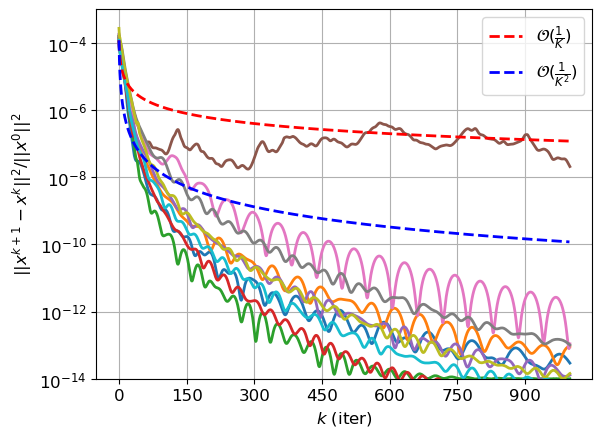}}}%
    \subfloat[\centering PSNR PnP-FISTA]{{\includegraphics[height=3.75cm]{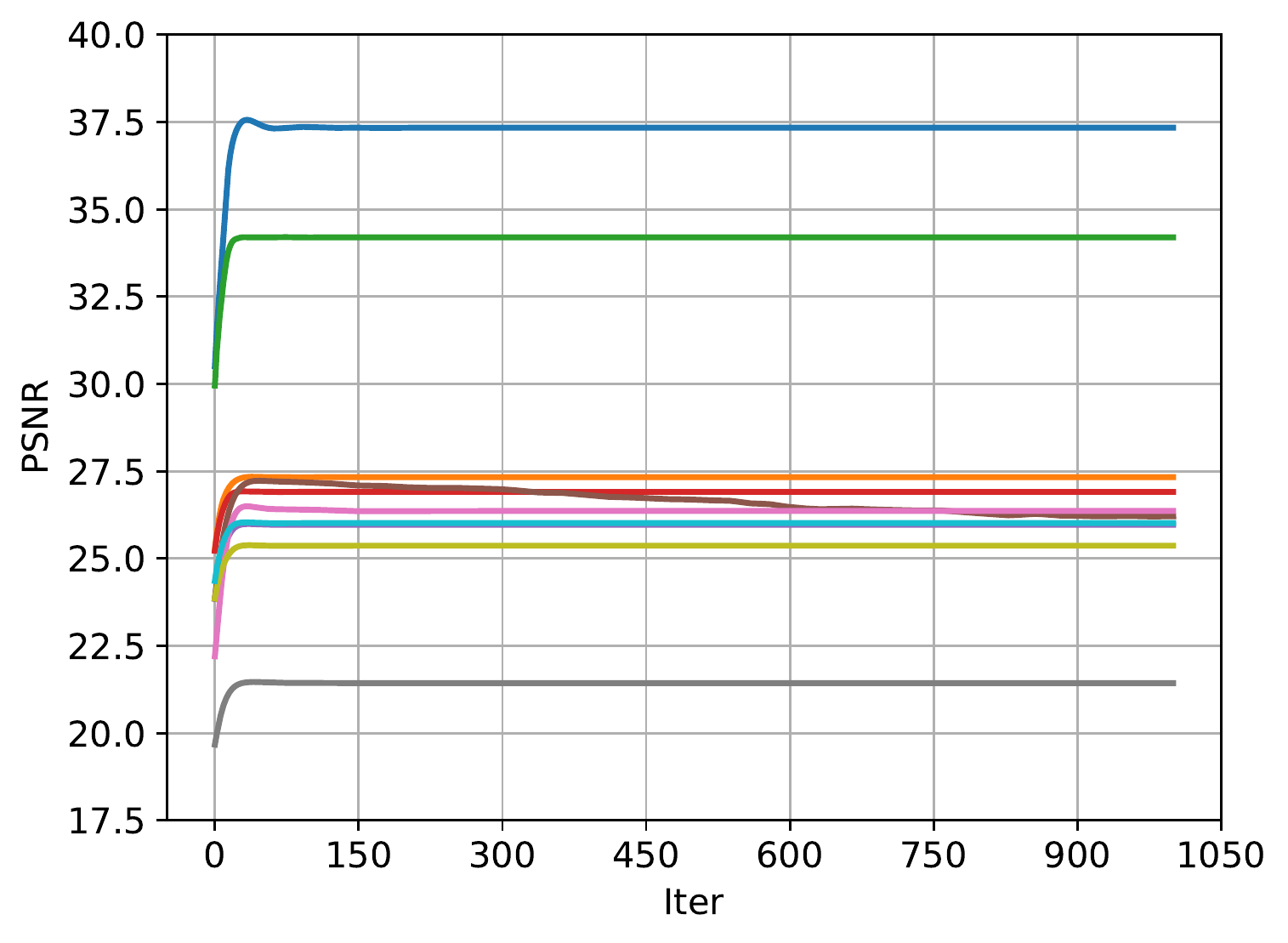}}}%
    \subfloat[\centering Failure example]{{\includegraphics[height=3.75cm]{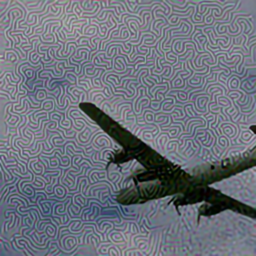}}}%
    \caption{\Rev{Residual $\|x^{k+1}-x^k\|^2/\|x^0\|^2$ and PSNR for PnP-FISTA applied to super-resolution with noise level $\sigma=7.65$. \RRev{Each curve corresponds to one of the 10 images from the CBSD10 dataset.} Using the parameters of PnP-LBFGS, which should resolve any Lipschitz constraint issues, has the same divergence issue. PnP-FISTA sometimes fails, leading to images with artifacts as seen in subfigure (c).}}
    \label{fig:fista_divergence}%
\end{figure}
\begin{figure}[]%
    \centering
    \subfloat[\centering PnP-LBFGS\textsuperscript{1} (28.75dB)]{{\includegraphics[height=3.50cm]{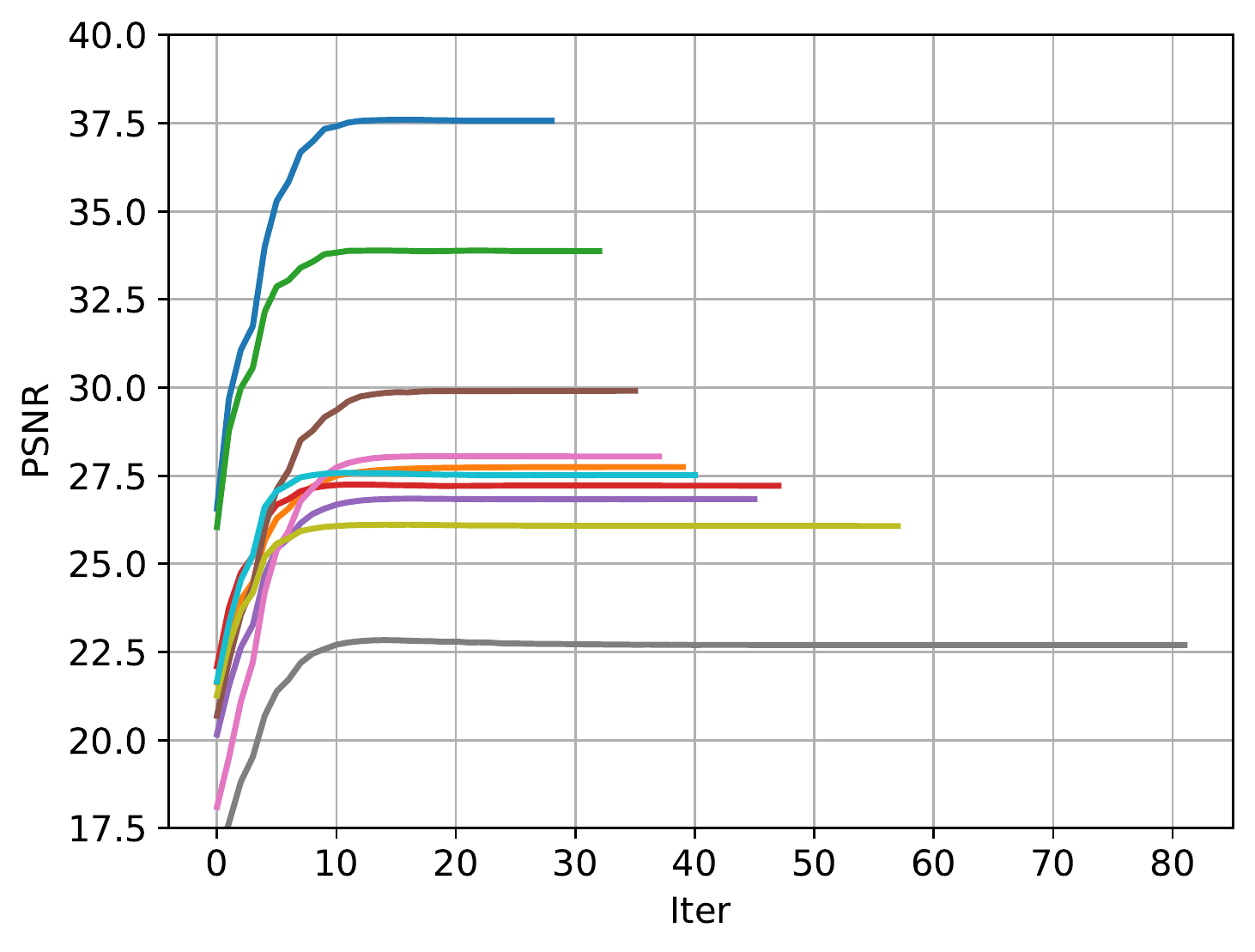}}}%
    \subfloat[\centering PnP-LBFGS\textsuperscript{2} (28.75dB)]{{\includegraphics[height=3.50cm]{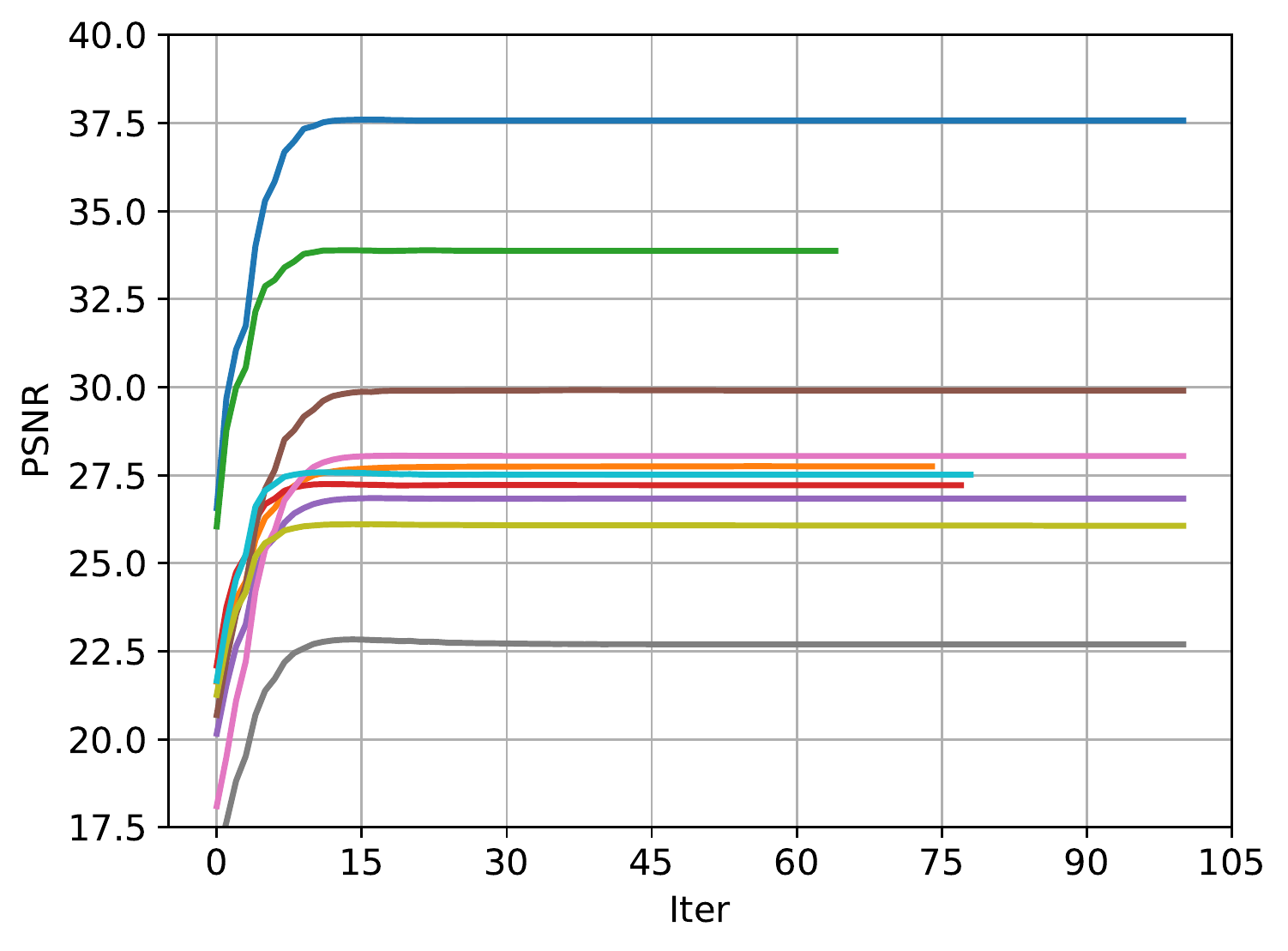}}}%
    \subfloat[\centering DPIR (28.49dB)]{{\includegraphics[height=3.50cm]{figs/deblur/dpir/PSNR.pdf}}}%
    
    \subfloat[\centering PnP-PGD (28.60dB)]{{\includegraphics[height=3.50cm]{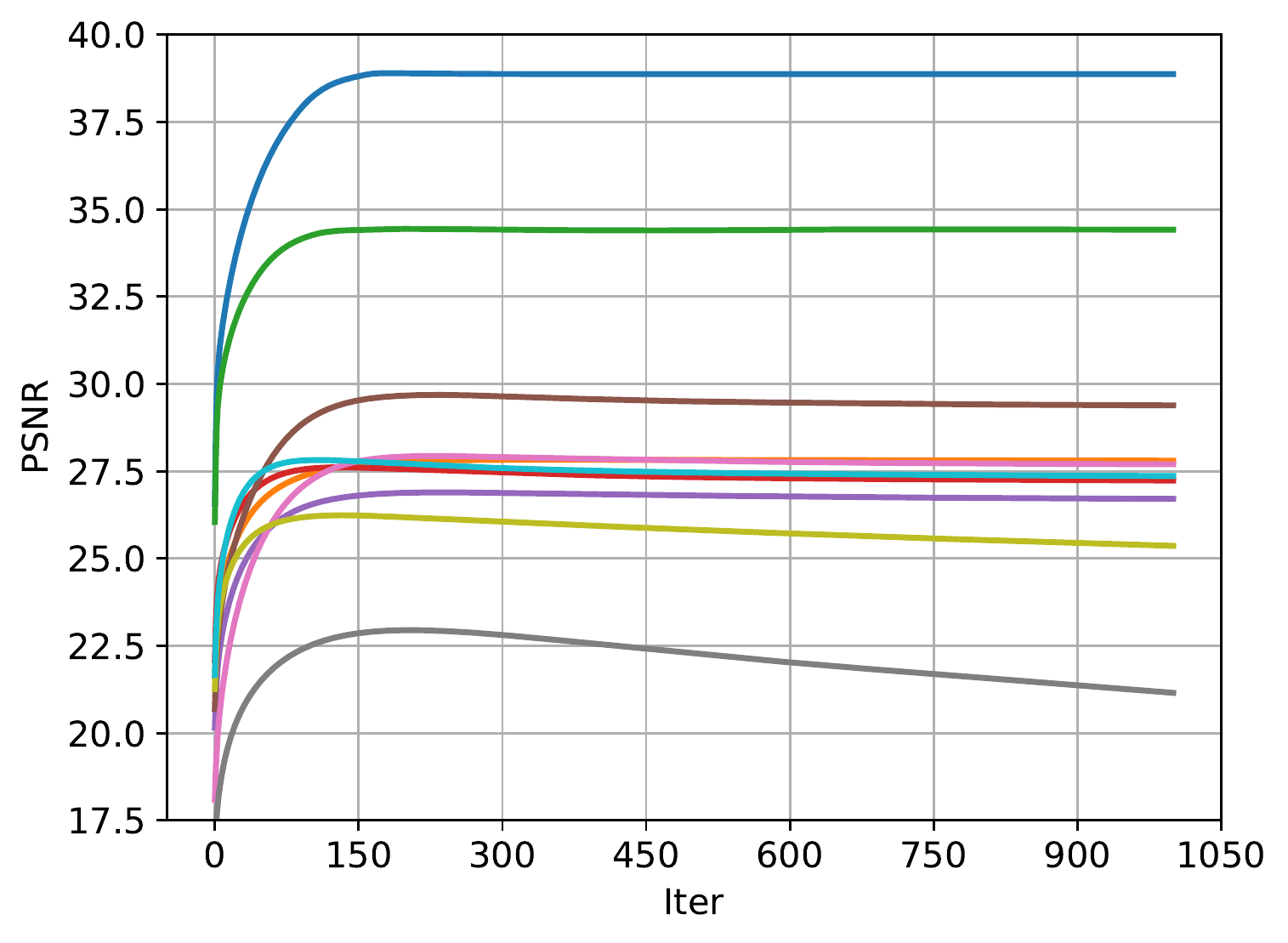}}}%
    \subfloat[\centering PnP-$\hat\alpha$PGD (29.05dB)]{{\includegraphics[height=3.50cm]{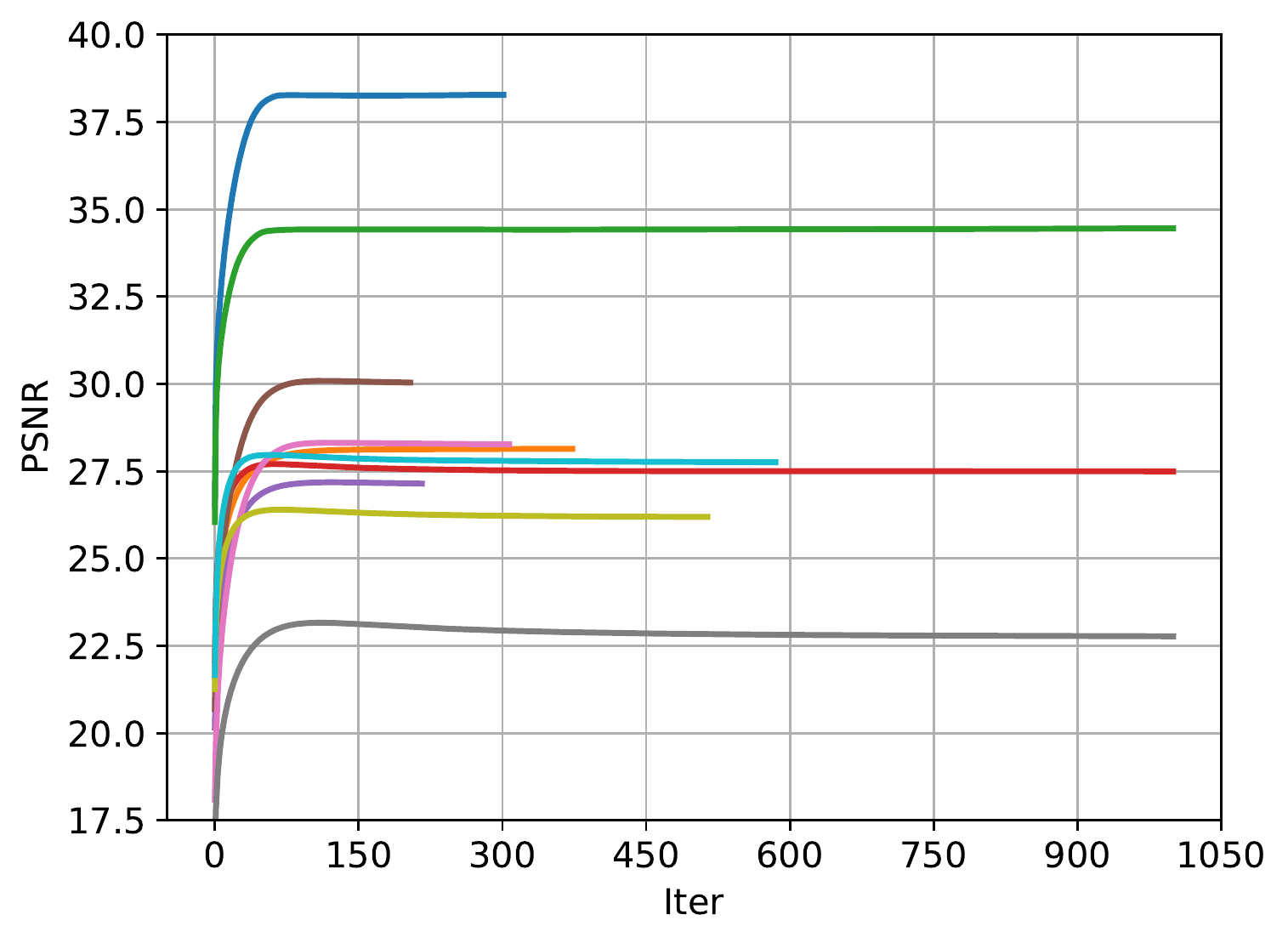}}}%
    \subfloat[\centering PnP-FISTA (28.75dB)]{{\includegraphics[height=3.50cm]{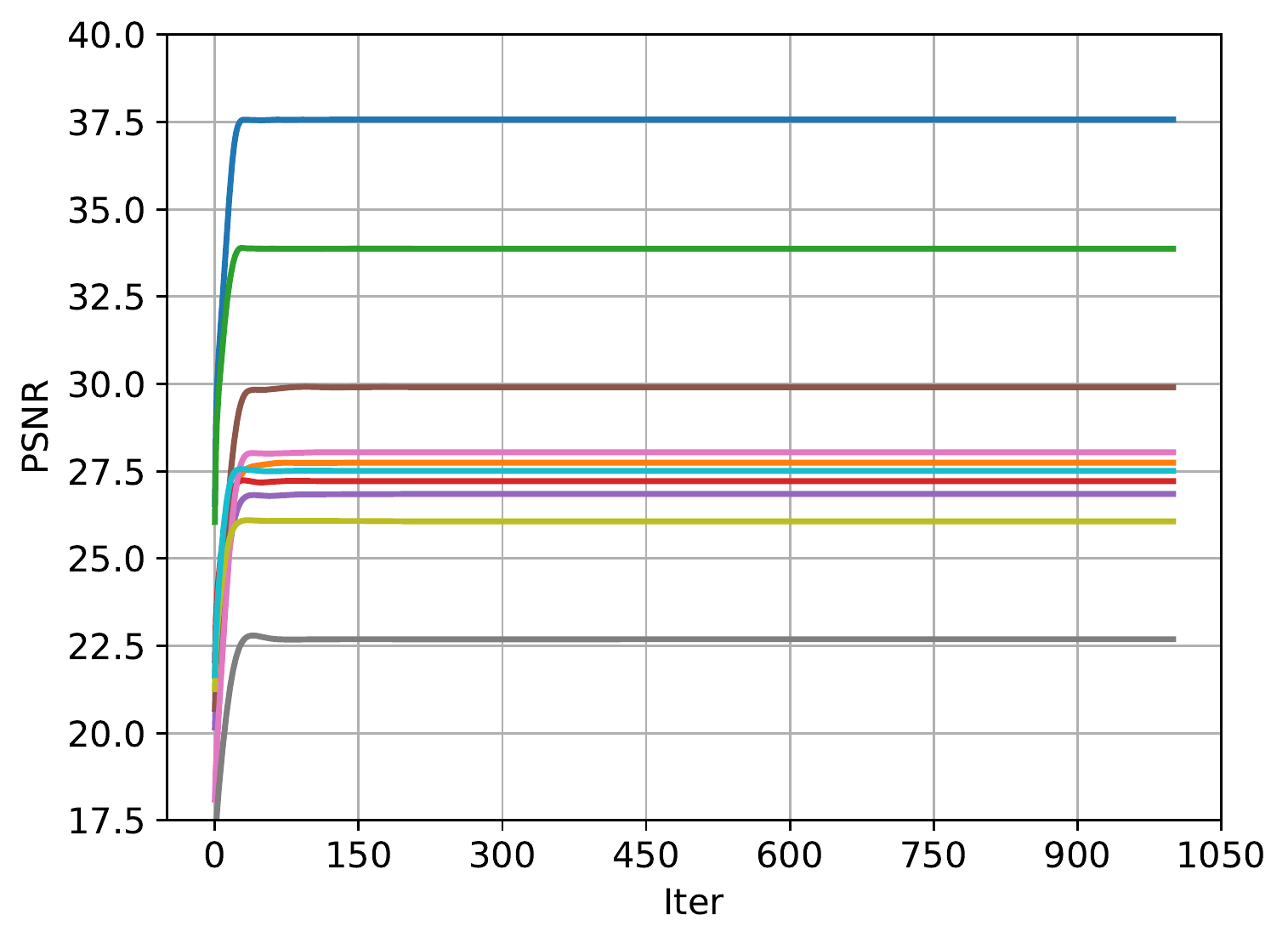}}}%
        
    \subfloat[\centering PnP-DRSdiff (28.61dB)]{{\includegraphics[height=3.50cm]{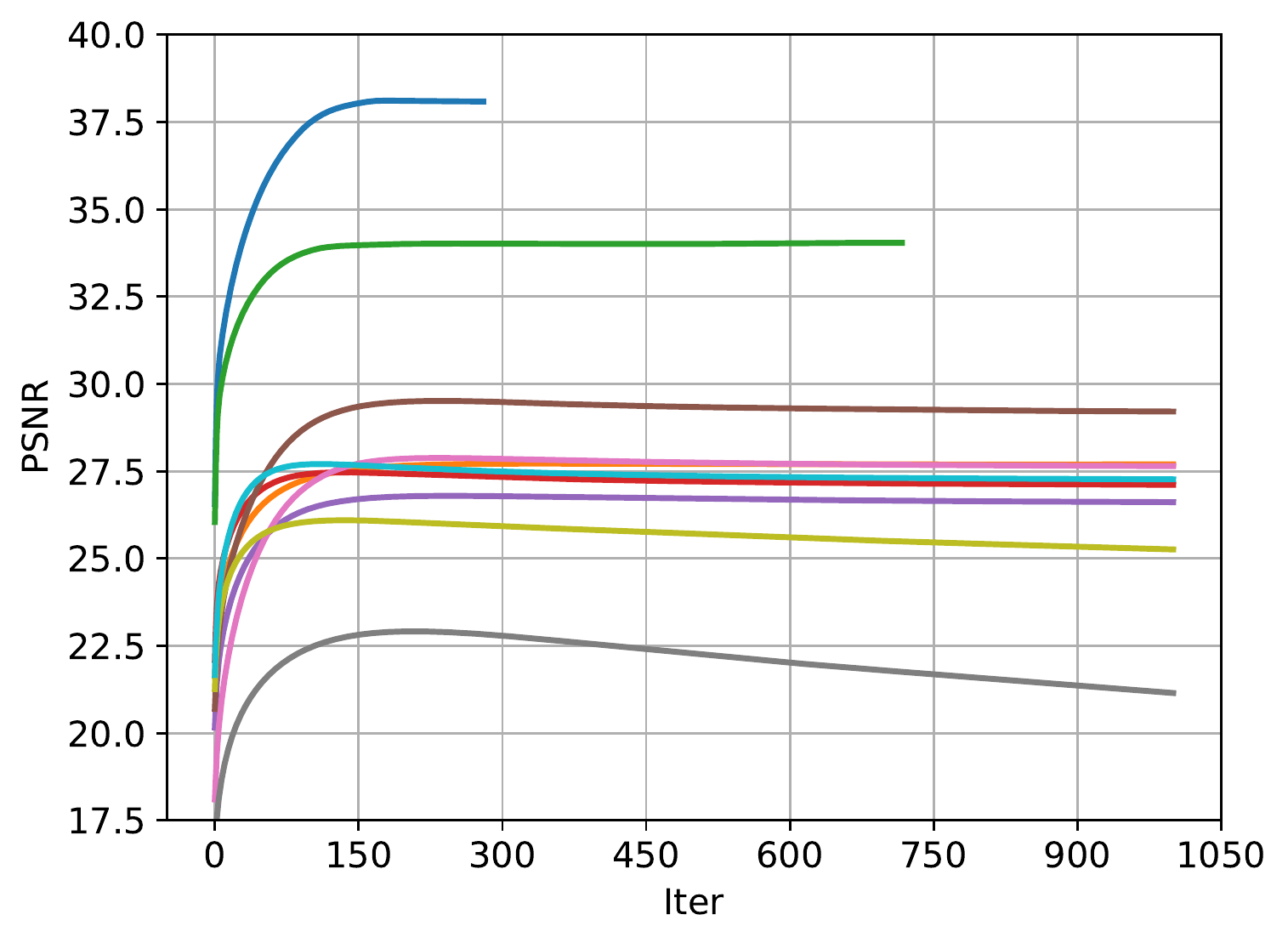}}}%
    \subfloat[\centering PnP-DRS (28.80dB)]{{\includegraphics[height=3.50cm]{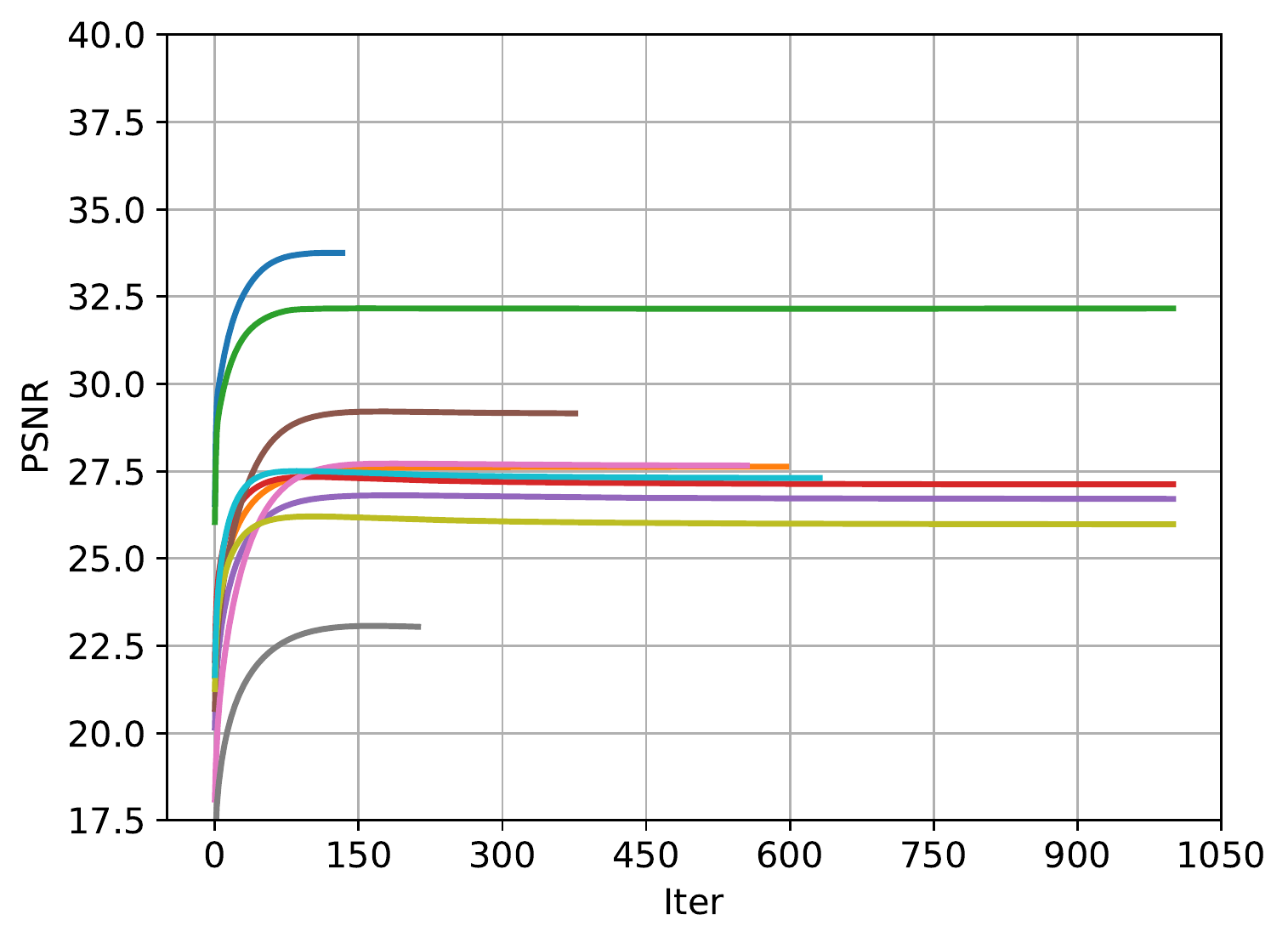}}}%
    \caption{Convergence of the PSNRs for \Rev{deblurring}, with the average dB in brackets. Each curve corresponds to one of the 10 images from the CBSD10 dataset. Note that the scale of (a) is 10 times smaller than the other curves, terminating at 100 instead of 1000. PnP-LBFGS and PnP-DRS have generally more stable convergence, which can be attributed to the smaller Lipschitz constant of $I - D_\sigma$. PnP-LBFGS\textsuperscript{1} also converges in much fewer iterations than the compared methods. The average PSNR between PnP-LBFGS with the two stopping criteria differ by only 0.0013dB.} 
    \label{fig:deblurConvergencePSNR}%
\end{figure}

\begin{figure}[]
    \centering
    \subfloat[\centering PnP-LBFGS\textsuperscript{1} ]{{\includegraphics[height=3.50cm]{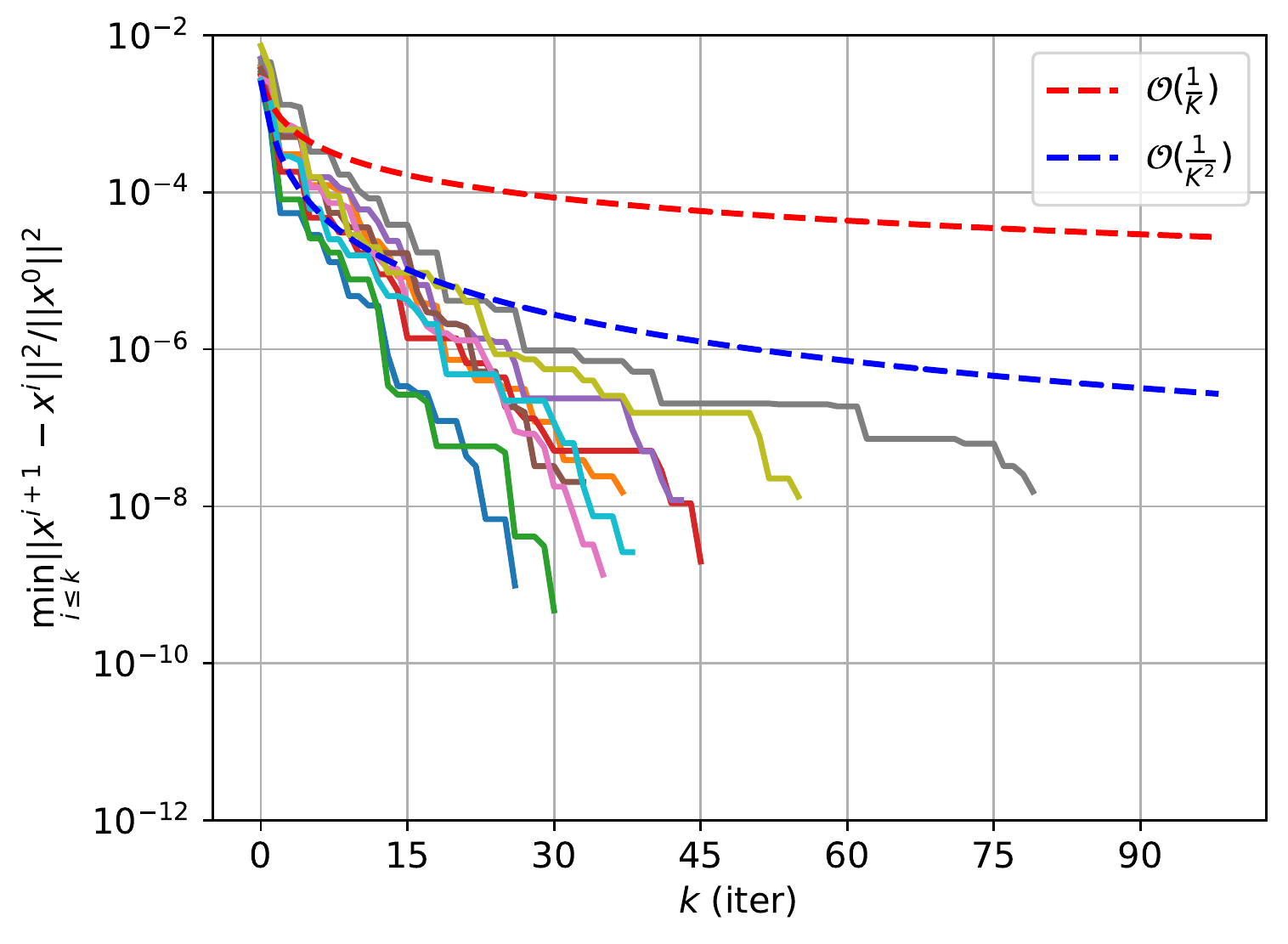} }}%
    \subfloat[\centering PnP-LBFGS\textsuperscript{2} ]{{\includegraphics[height=3.50cm]{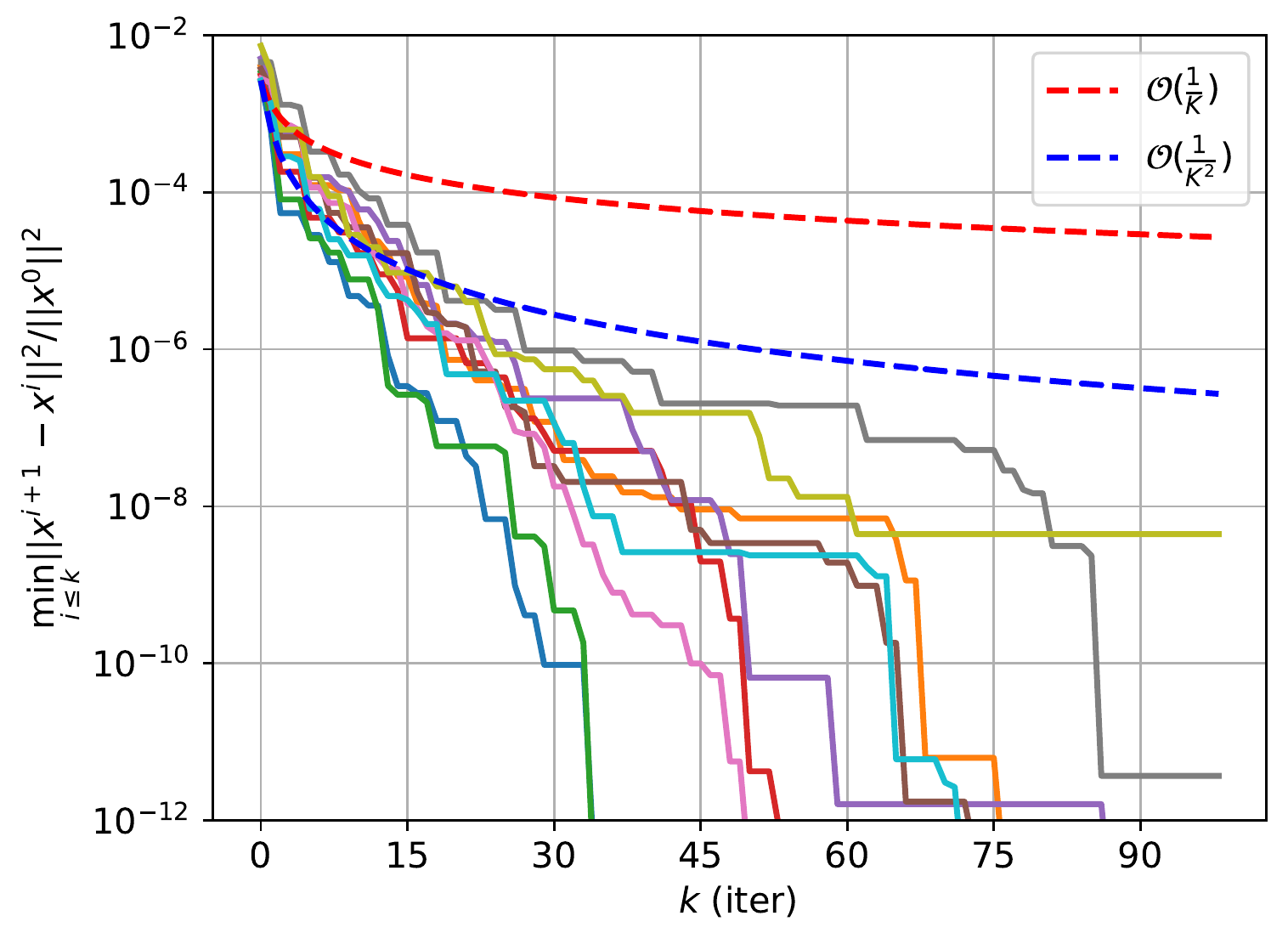} }}%
    \subfloat[\centering DPIR]{{\includegraphics[height=3.50cm]{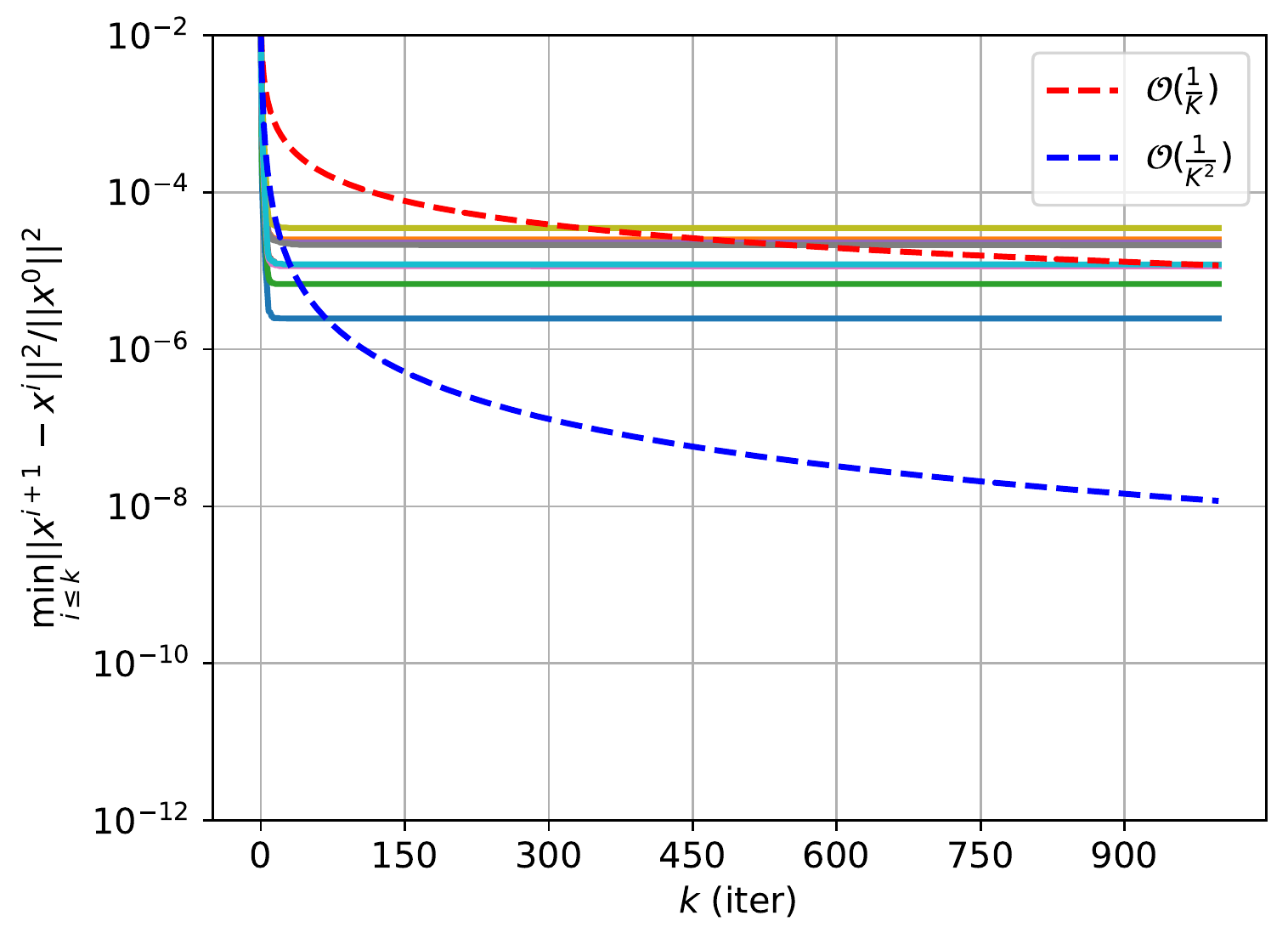}}}%

    \subfloat[\centering PnP-PGD]{{\includegraphics[height=3.50cm]{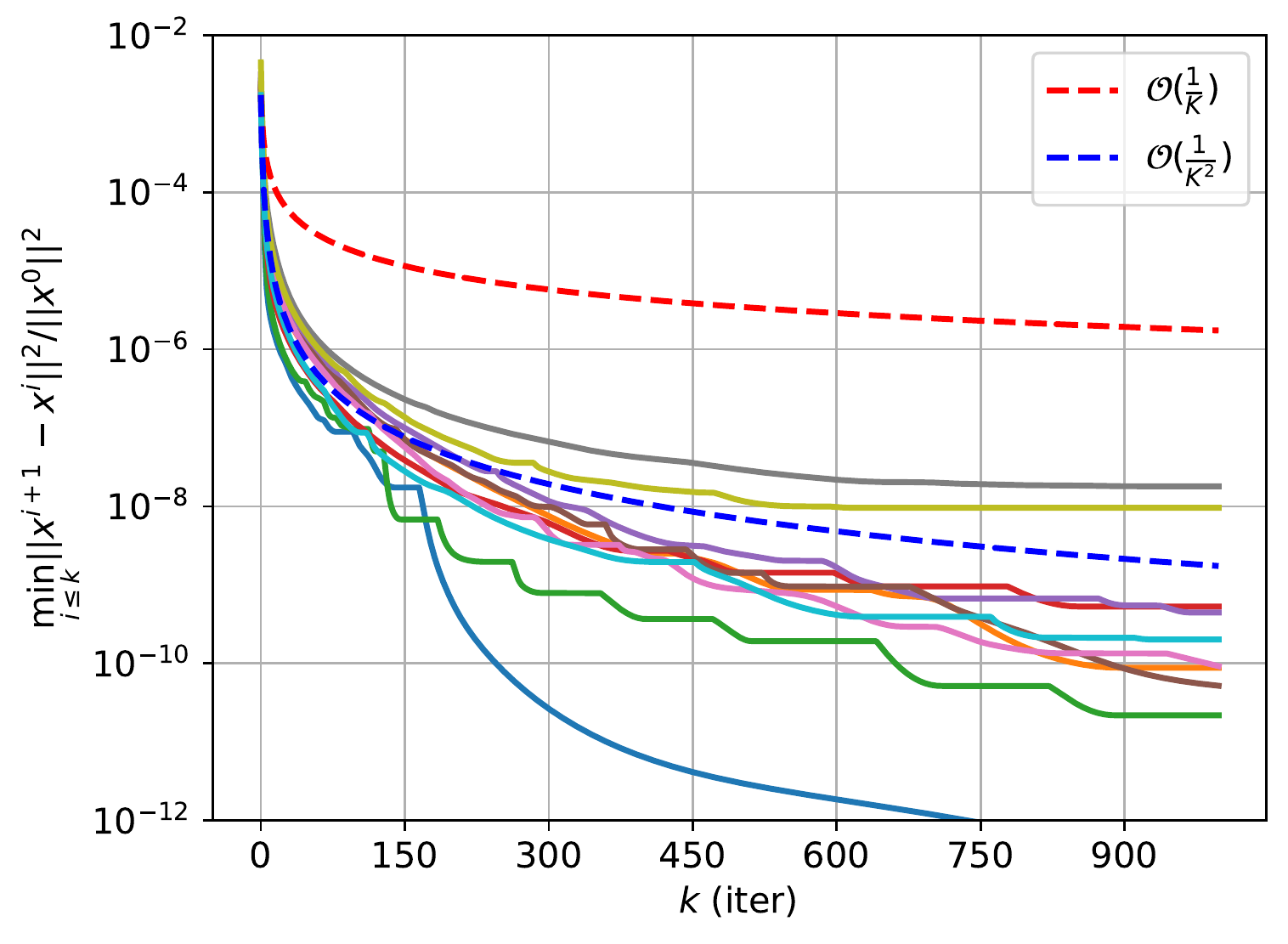}}}%
    \subfloat[\centering PnP-$\hat\alpha$PGD]{{\includegraphics[height=3.50cm]{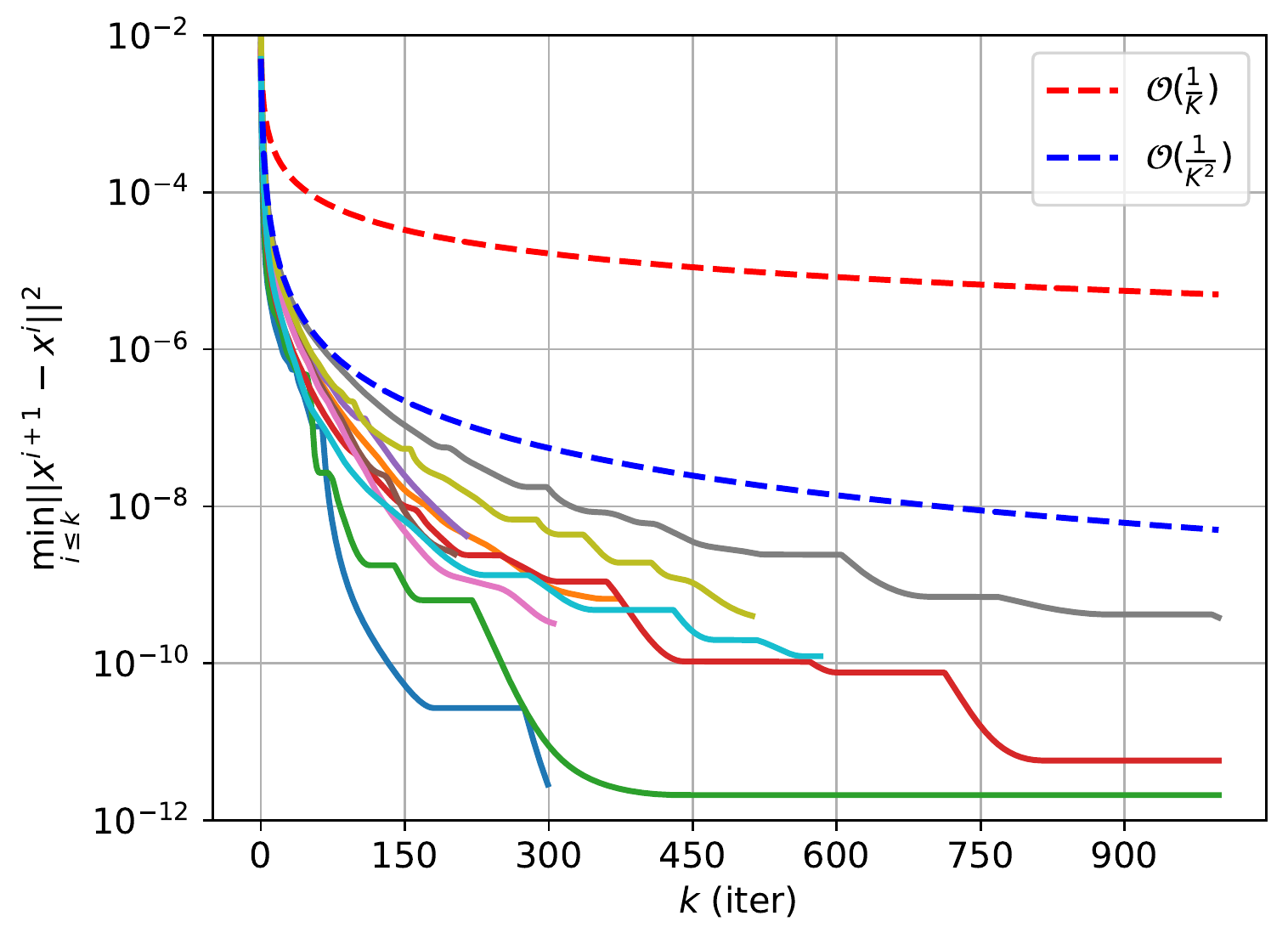}}}
    \subfloat[\centering PnP-FISTA]{{\includegraphics[height=3.50cm]{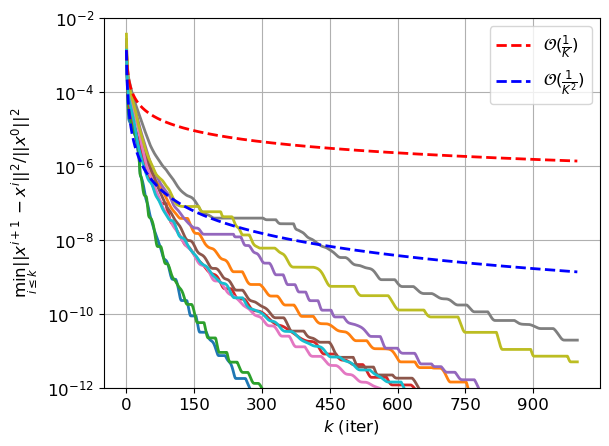}}}%
    
    \subfloat[\centering PnP-DRSdiff]{{\includegraphics[height=3.50cm]{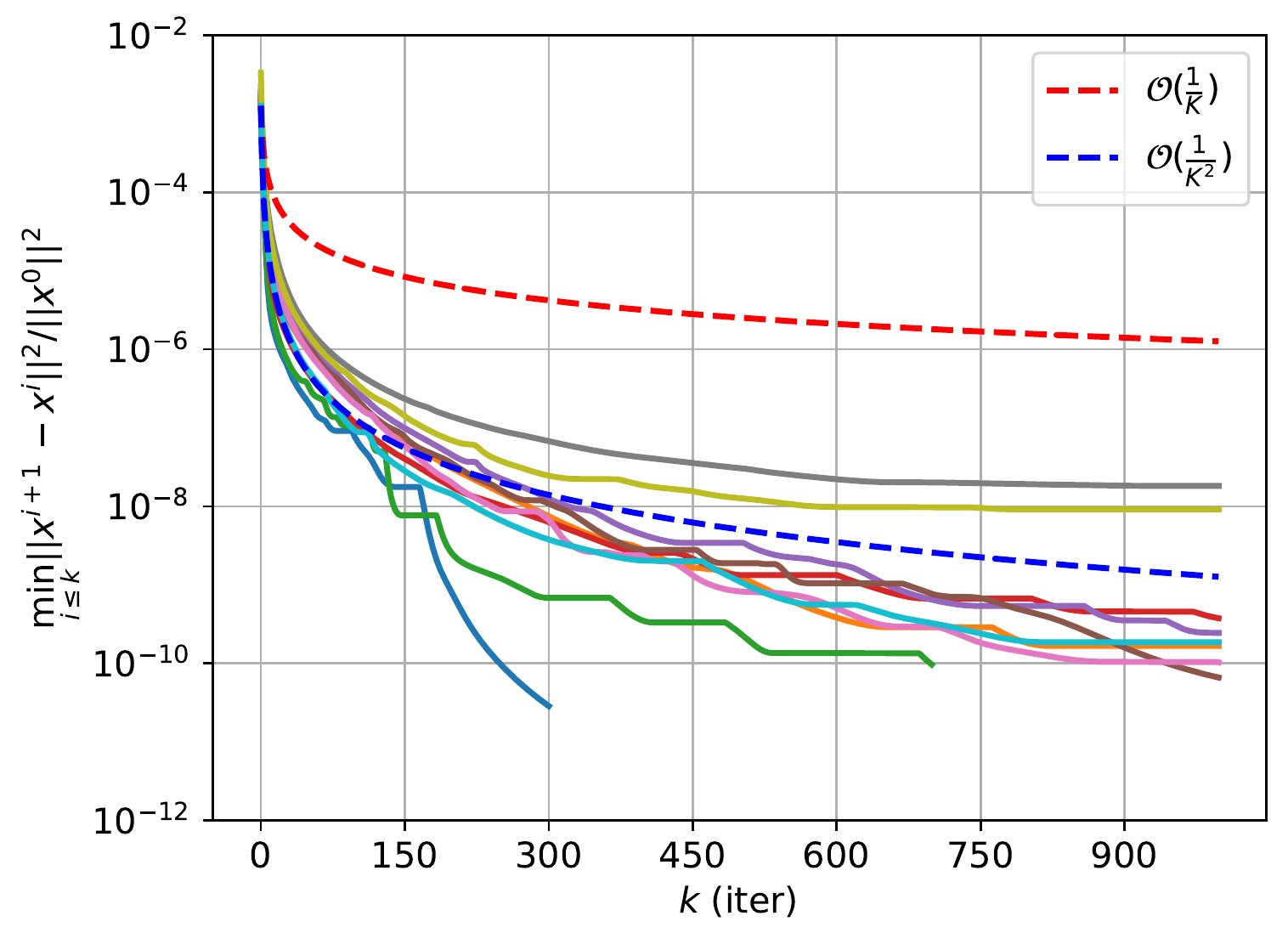}}}%
    \subfloat[\centering PnP-DRS]{{\includegraphics[height=3.50cm]{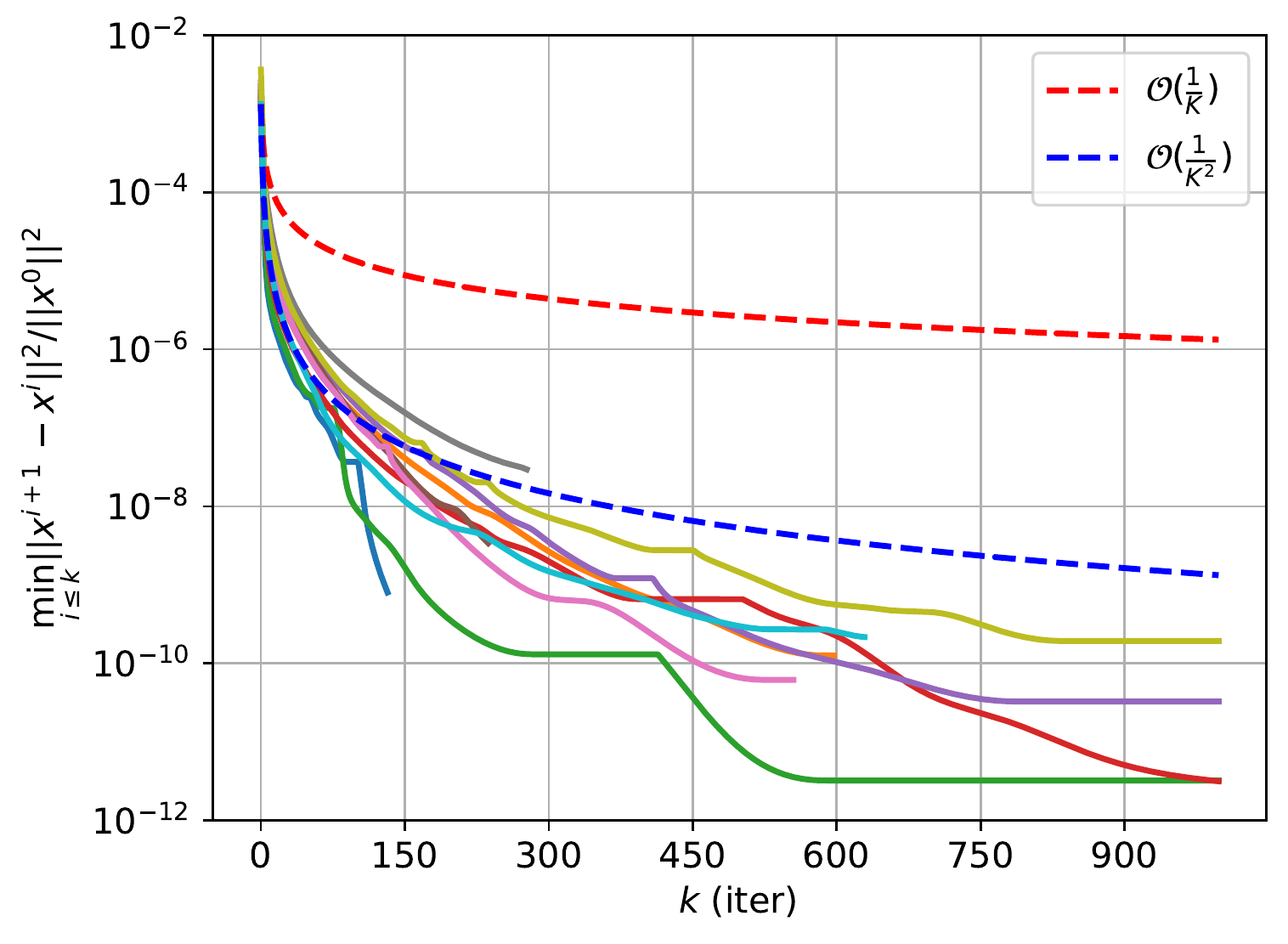}}}%
    \caption{Convergence of the residuals $\min_{i\le k} \|x^{i+1}-x^i\|^2/\|x^0\|^2$ of the various methods \Rev{for deblurring}. Each curve corresponds to one of the 10 images from the CBSD10 dataset, evaluated with the first blur kernel and $\sigma = 7.65$. Note that the x-axis scale of (a) is 10 times smaller than the other curves, terminating at 100 instead of 1000.}
    \label{fig:deblurConvergencelog2}%
\end{figure}%
\begin{figure}[]
    \centering
    \subfloat[\centering $\varphi(x_k)$]{{\includegraphics[height=3.50cm]{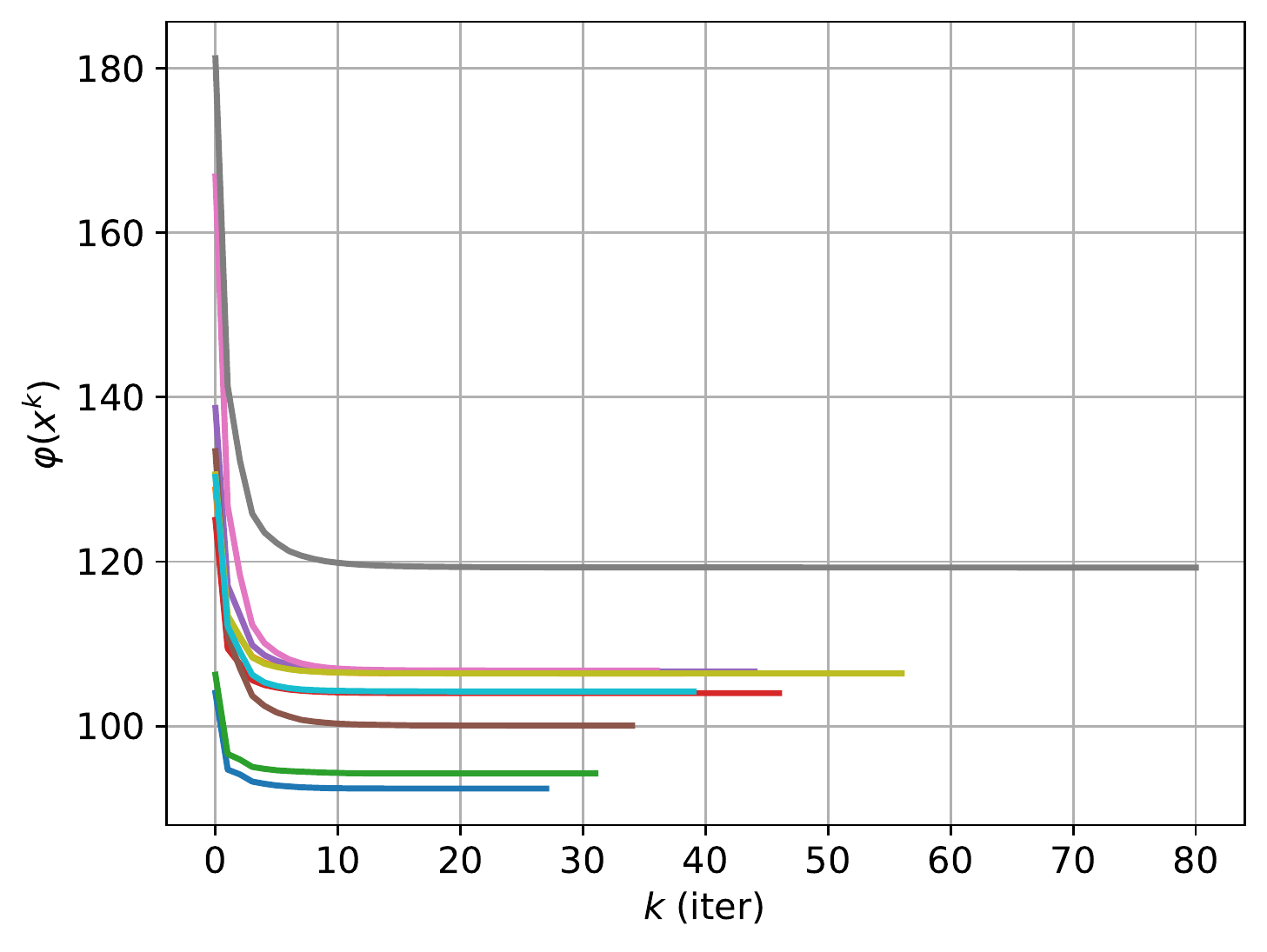}}}%
    \subfloat[\centering $\varphi_\gamma(x_k)$]{{\includegraphics[height=3.50cm]{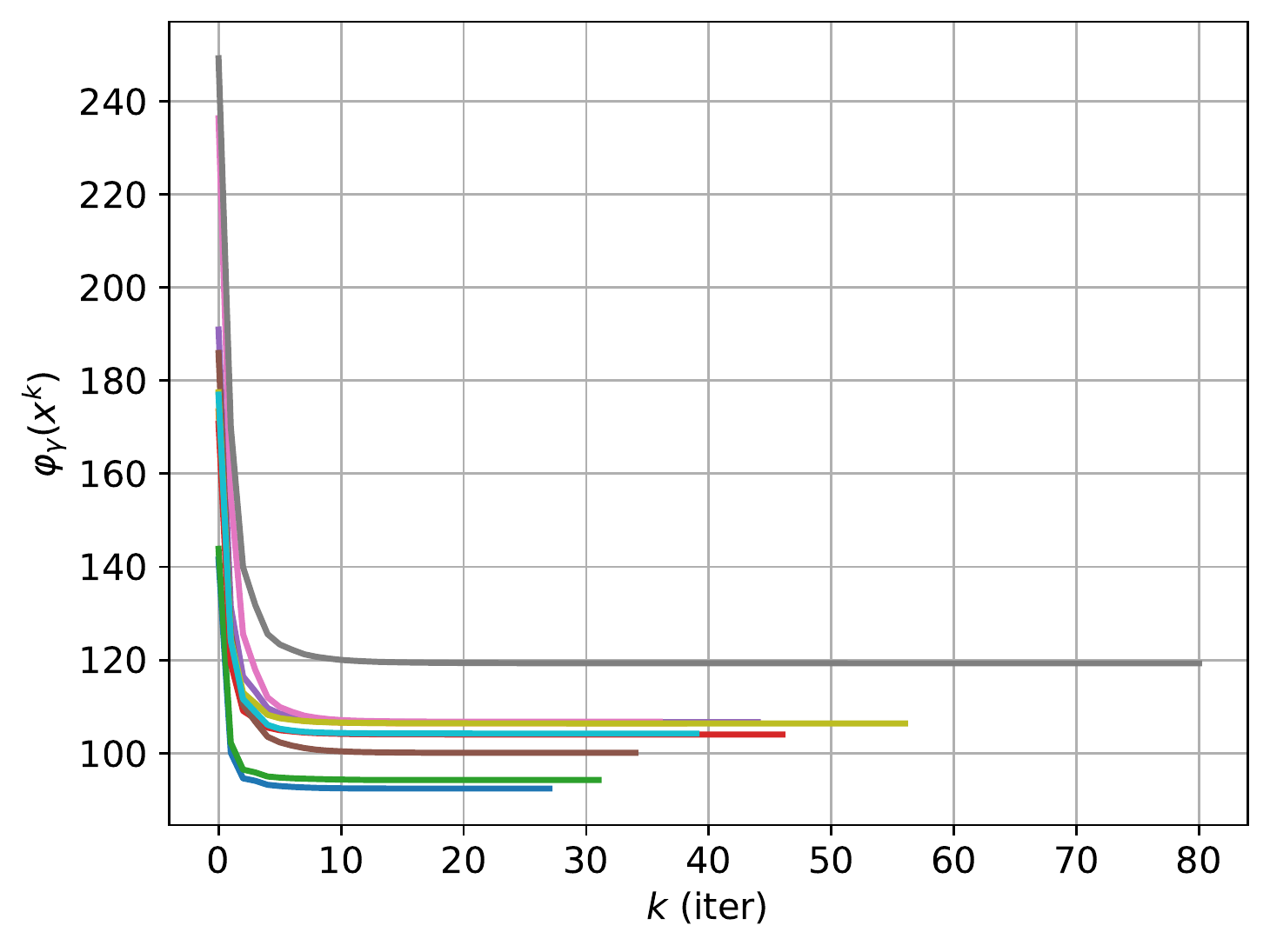}}}%
    \subfloat[\centering $\varphi(x_k) - \varphi_\gamma(x_k)$]{{\includegraphics[height=3.50cm]{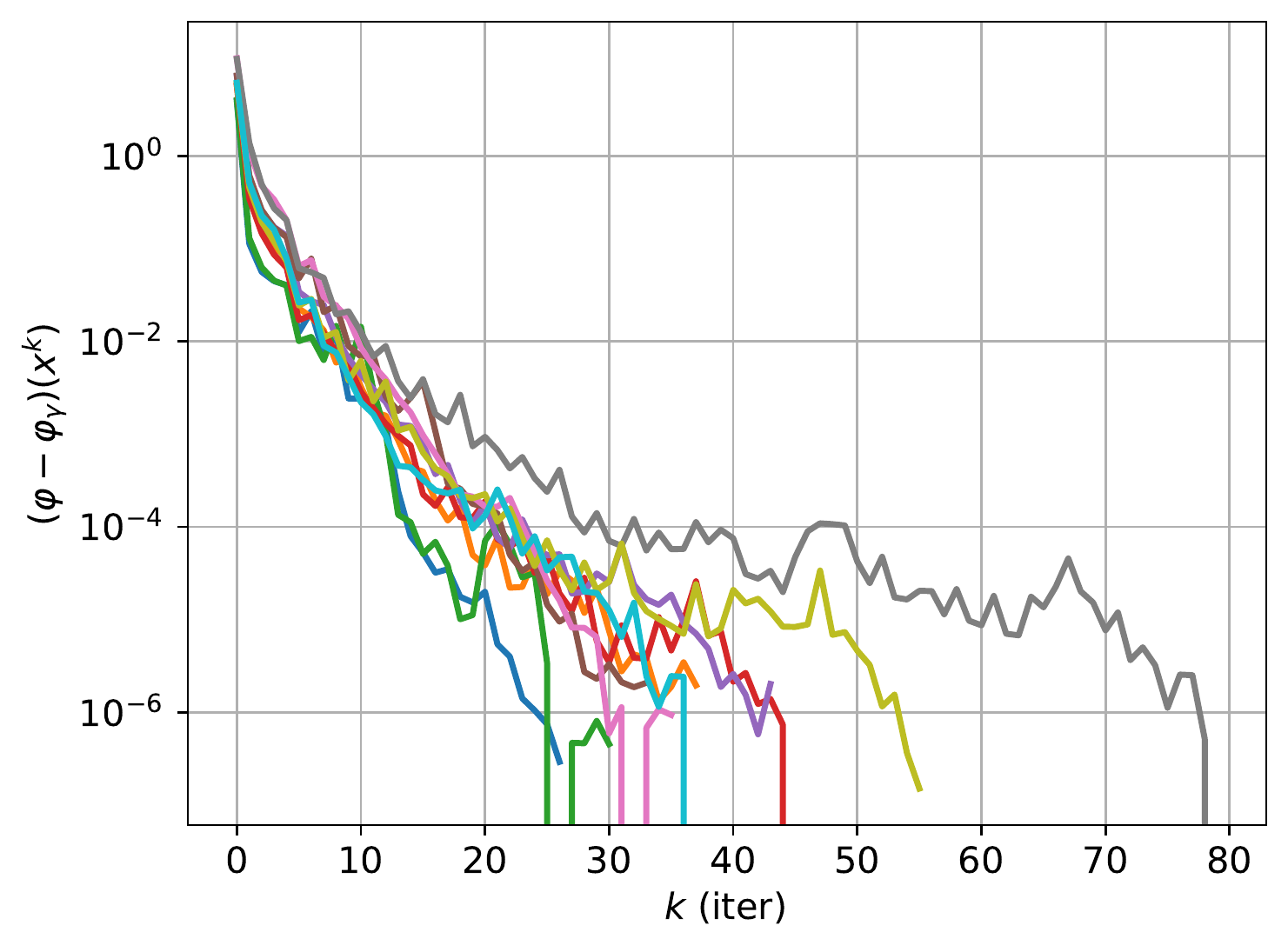}}}%
    \caption{Evolution of the objective $\varphi$, forward-backward envelope $\varphi_\gamma$, and their difference $\varphi - \varphi_\gamma$ for \Rev{deblurring} with PnP-LBFGS\textsuperscript{1}. These values are equal at the true minima, i.e., $\varphi_\gamma(x_*) = \varphi(x_*)$. Each curve corresponds to one of the 10 images from the CBSD10 dataset, evaluated with the first blur kernel and $\sigma = 7.65$.}
    \label{fig:deblur_moreau_difference}
\end{figure}

\subsection{Hyperparameter and Denoiser Choices}
The hyperparameters for the proposed PnP-LBFGS and the existing PnP-$\hat\alpha$PGD methods are as in \Cref{tab:hparams,tab:R1_aPGD_hparams}, respectively, chosen via grid search to maximize the PSNR over the set3c dataset for the respective image reconstruction problems. The hyperparameter grid for PnP-LBFGS is given in the subsequent subsections, while the grid for PnP-$\hat\alpha$PGD is given below. For the denoiser in our experiment, we use the pre-trained network $N_\sigma$ as in \cite{hurault2022proximal}. 

\RRev{The convergence conditions for }PnP-PGD and PnP-DRSdiff are that $g_\sigma$ has $L$-Lipschitz gradient for some $L<1$, and directly using the denoiser $D_\sigma$ maintains theoretical convergence. For PnP-DRS, the condition needs to be strengthened to $L<1/2$. In this case, the denoiser is replaced with an averaged denoiser of the form $(I + D_\sigma)/2 = I - \frac{1}{2}\nabla g_\sigma$, which gives convergence results but changes the underlying optimization problem. For PnP-LBFGS \Rev{and PnP-$\hat\alpha$PGD}, we use an \RRev{averaged} denoiser $D_\sigma^\alpha = I - \alpha \nabla g_\sigma$ which appears to have better performance, with the relaxation parameter $\alpha$ chosen as in \Cref{tab:hparams,tab:R1_aPGD_hparams}. As remarked in the introduction, adding the relaxation parameter $\alpha$ means that the effective Lipschitz constant of the potential gradient $\alpha \nabla g_\sigma$ is $\alpha L$, which alleviates divergence issues when $L>1$. \RRev{In this case, $D_\sigma^\alpha = \prox_{\phi_\sigma^\alpha}$ for some weakly convex $\phi_\sigma^\alpha$, and the previous computations hold with $g_\sigma$ replaced with $\alpha g_\sigma$.}


\Rev{For the parameters of the relaxed PnP-$\hat\alpha$PGD algorithm, we perform a grid search as in \cite{hurault2023RelaxedProxDenoiser}. To obtain the values of the denoiser averaging parameter $\alpha$ and the denoiser strength $\sigma_d$, we do a grid search for the set3c dataset with $\alpha \in \{0.6,0.7,0.8,0.85,0.9,1.0\}$ and $\sigma_d/\sigma \in \{0.5, 0.75, 1.0, 1.5, 2.0\}$, where the noise level is $\sigma=7.65$. The main difficulty in finding these hyperparameters is the dependence between $\alpha$ and $\sigma_d$, leading to poor reconstructions for many of these values. Given the denoiser averaging parameter $\alpha$, the other hyperparameters of PnP-$\hat\alpha$PGD are given by $\lambda = \frac{\alpha+1}{\alpha L_f}, \hat\alpha = \frac{1}{\lambda L_f}$. 

For the Lipschitz constant, we take $L_f=1$ for deblurring and $L_f=1/4$ for super-resolution with $s_{sr}=2,\,3$, as in \Cref{ssec:deblur,ssec:sr}. It appears approximating $L_f=1$ for super-resolution or $L_f=1/9 = 1/s_{sr}^2$ for $s_{sr}=3$ results in divergence, indicating sensitivity to their hyperparameters. We find the best values to be as in \Cref{tab:R1_aPGD_hparams}, with the grid search taken to maximize the PSNR over the set3c dataset. We additionally employ a stopping criterion based on the Lyapunov functional that PnP-$\hat\alpha$PGD minimizes, with the same sensitivity as PnP-DRS and PnP-DRSdiff \cite{hurault2023RelaxedProxDenoiser}. 
}

\Rev{The regularization parameter $\lambda$ for the underlying optimization problem is restricted for PnP-LBFGS in a manner similar to PnP-PGD and PnP-DRS (but not PnP-DRSdiff). For PnP-PGD and PnP-DRS, one condition for convergence is that $\lambda L_f < 1$ \cite{hurault2022proximal}. However, for PnP-LBFGS, \Cref{lem:gammaBddBelowWeakConv} gives the condition that $\gamma < (1-\beta)/(\lambda L_f)$, targeting stationary points of 
\begin{equation*}
    \varphi(x) = \frac{\lambda}{2}\|Ax-y\|^2 + \frac{1}{\gamma}\phi_\sigma.
\end{equation*}
We note that as $\lambda$ increases, the allowed $\gamma$ decreases, which correspondingly increases the smallest allowed coefficient $1/\gamma$ of the prior $\phi_\sigma$ at the same rate as $\lambda$. This puts an upper bound on the ratio between the fidelity term and the regularization term, which may be restrictive for low-noise applications.} 


The memory length for LBFGS was chosen to be $m=20$, with a maximum of 100 iterations per image. The denoiser $D_\sigma^\alpha$ is chosen with denoising strength $\sigma_d$ similar to that used for PnP-DRS as in \cite{hurault2022proximal}. By using different denoising strengths, we are able to further control regularization along with the scaling parameter $\lambda$. The step-sizes $\tau_k$ are chosen using an Armijo line search starting from $\tau_k = 1$, and multiplying by 0.5 if the $\varphi_\gamma$ decrease condition in Step 5 of \Cref{alg:pnpLBFGS} is not met \cite{armijo1966minimization, beck2017first}. 

We additionally introduce a stopping criterion based on the differences between consecutive iterates of the envelope $\varphi_\gamma(x^{k+1}) - \varphi_\gamma(x^k) < 10^{-5}$, as well as the envelope and objective $\varphi(x^k) - \varphi_\gamma(x^k)< 5 \times 10^{-5}$, where we stop if \Rev{at least one} criterion is met for 5 iterations in a row. We note that while the criteria can be strengthened, there is minimal change in the optimization result. \Rev{We label PnP-LBFGS with the envelope-based stopping criterion as PnP-LBFGS\textsuperscript{1}.} For completeness, we also consider the stopping criterion when the relative difference between consecutive function values of $\varphi$ is less than $10^{-8}$. \Rev{We label PnP-LBFGS with the objective change stopping criterion as PnP-LBFGS\textsuperscript{2}.} The PnP-LBFGS algorithms with the two stopping criteria are labeled with superscripts, as PnP-LBFGS\textsuperscript{1} and PnP-LBFGS\textsuperscript{2}, respectively.  We further use PnP-LBFGS without superscripts to refer to both methods together, which share their parameters.

All implementations were done in PyTorch, and the experiments were performed on an AMD EPYC 7352 CPU and a Quadro RTX 6000 GPU with 24GB of memory \cite{PyTorch}. The code for our experiments are publicly available\footnote{\url{https://github.com/hyt35/Prox-qN}}.

\subsection{PnP Methods Without Convergence Guarantees}
For further comparison, we additionally consider two non-provable PnP methods, namely DPIR \cite{zhang2021plug} and PnP-FISTA \cite{kamilov2017plug}. DPIR is based on the half-quadratic splitting, which splits $\prox_{f+g}$ into alternating $\prox_f$ and $\prox_g$ steps, and further replaces $\prox_g$ with a denoising step $D_{\sigma_k}$ in the spirit of PnP. PnP-FISTA is based on the fast iterative shrinkage-thresholding algorithm, which arises by applying a Nesterov-style acceleration to the forward-backward splitting \cite{kamilov2017plug,kamilov2023plug}. We note that neither of these methods correspond to critical points of functions in the existing literature. 

\begin{align}
    &\left\{ \begin{matrix*}[l]
        \alpha_k = \hat\lambda \sigma^2/\sigma_k^2, \\
        x_{k+1} = \prox_{f/2\alpha_k}(z_{k}), \\
        z_{k+1} = D_{\sigma_k}(x_k).
    \end{matrix*}\right. \label{eq:DPIR}\tag{DPIR} \\
    &\left\{ \begin{matrix*}[l]
        x_k = D_\sigma(y_k - \lambda \nabla f(y_k)),\\
        t_{k+1} = \frac{1+\sqrt{1+4t_k^2}}{2},\\
        y_{k+1} = x_k + \frac{t_k-1}{t_{k+1}} (x_k-x_{k-1}).
    \end{matrix*}\right. \label{eq:PnP-FISTA}\tag{PnP-FISTA} 
\end{align}
\subsubsection{DPIR}
To improve the performance, DPIR uses a decreasing noise regime as well as image transformations during iteration \cite[Sec. 4.2]{zhang2021plug}. To extend past eight iterations, we consider using the log-scale noise from $\sigma_d = 49$ to $\sigma_d = \sigma$ over 8 and 24 iterations for deblurring and super-resolution respectively, as recommended in the DPIR paper \cite[Sec. 5.1.1, 5.2]{zhang2021plug}. The scaling for the proximal term is determined by a scaling parameter $\hat\lambda$, which was chosen to be $\hat\lambda=0.23$ in the original work. \Cref{fig:dpir_divergence} shows that while DPIR achieves state-of-the-art performance in the low iteration regime, the PSNR begins to drop when HQS is extended past the number of iterations used in the original DPIR paper \cite{hurault2021gradient}. Moreover, DPIR appears to have poor performance in the low noise regime for the following image reconstruction experiments. \RRev{In the following experiments, we consider DPIR with the suggested 8 and 24 iterations for deblurring and super-resolution respectively, as well as extending up to 1000 iterations to check the convergence behavior.}

\subsubsection{PnP-FISTA}
The denoiser parameters for PnP-FISTA are considered to be either the parameters for PnP-LBFGS or PnP-PGD. \RRev{Proofs for PnP schemes} such as PnP-PGD or PnP-DRS \RRev{generally} rely on classical monotone operator theory, and showing that the denoiser satisfies the necessary assumptions. However, proofs of convergence of FISTA depend heavily on the convexity of the problem \cite{beck2009fast,chambolle2015convergence}, and non-convex proofs additionally require techniques or conditions such as adaptive backtracking \cite{goldstein2014field,ochs2019adaptive} or quadratic growth conditions \cite{aujol2023fista}. These techniques and conditions are difficult to convert and verify in the PnP regime, which translates to difficulties in showing convergence of the associated PnP-FISTA schemes. 

In the following experiments, we run the DPIR and PnP-FISTA methods for 1000 iterations unless stated otherwise to verify the convergence behavior. \Cref{fig:dpir_divergence,fig:fista_divergence} additionally demonstrate some common modes of divergence for DPIR and PnP-FISTA, with DPIR failing for low noise levels and PnP-FISTA failing with artifacts.

\subsection{Deblurring}\label{ssec:deblur}
For deblurring, 10 blur kernels were used, including eight camera shake kernels, a $9\times 9$ uniform kernel, and a $25\times25$ Gaussian kernel with standard deviation $\sigma_{\text{blur}} = 1.6$ \cite{levin2009understanding,hurault2022proximal}. Visualizations of the kernels can be found in the supplementary material. The blurring operator $A$ corresponds to convolution with circular boundary conditions. In this case, the transpose $A^\top$ can be easily implemented using a transposed convolution with circular boundary conditions. The blurring operator was previously scaled to have $\|A^\top A\|_{\text{op}}\approx 0.96$, which was verified using a power iteration. Thus, $\nabla f$ is approximately $0.96 \lambda$-Lipschitz.

We chose hyperparameters of PnP-LBFGS following a grid search maximizing the PSNR on the set3c dataset. The parameter grids are $\alpha \in \{0.5,\, 0.7,\, 0.9,\, 1.0\},\ \lambda \in \{0.8,\, 0.9,\, 1.0\},\ \gamma \in \{0.8,\, 0.85,\, 0.9,\, 1.0\}$, and $ \sigma_d/\sigma \in \{0.5,\, 0.75,\, 1.0,\, 1.5,\, 2.0\}$. Note that this choice obeys $\gamma < \min \{(1-\beta)/L_f,\, 1/(2M)\}$, since $\varphi_\sigma$ is at most $1/2$-weakly convex. We observe empirically that the step-size $\tau = 1$ is also a valid descent almost all of the time, verifying the claim that is required to prove the superlinear convergence as remarked in \Cref{rmk:constantTau}.  The underlying optimization problems are slightly different for PnP-LBFGS and PnP-PGD: for PnP-PGD, the fidelity regularization is chosen to be $\lambda = 0.99$, and the iterates converge to cluster points of $\varphi_{\text{PnP-PGD}}$:

\[\varphi_{\text{PnP-LBFGS}} = \frac{1}{2}\|Ax - y\|^2 + \phi_\sigma^\alpha ,\quad \varphi_{\text{PnP-PGD}} = \frac{0.99}{2} \|Ax - y\|^2 + \phi_\sigma.\]

\noindent We observe in \Cref{tab:PSNR_deblur} that the PnP-PGD and PnP-DRSdiff converge to very similar results since they both minimize the same underlying functional. However, the PnP iterations sometimes do not converge, as demonstrated \Rev{by the steadily decreasing PSNR in subfigures (d) and (g) of \Cref{fig:deblurConvergencePSNR}. This can be attributed} to the Lipschitz constant of $g_\sigma$ being greater than 1 at these iterates. \RRev{The use of the averaged denoiser $D_\sigma^\alpha$} in PnP-DRS and PnP-LBFGS \RRev{reduces divergence}, where we see convergence for these images as well. We generally observe that PnP-$\hat\alpha$PGD has the best performance in terms of PSNR, which can be attributed to the larger allowed value of $\lambda$. Nonetheless, we observe significantly faster convergence for PnP-LBFGS compared to the other methods to comparable PSNR values for each test image. 

Comparing with the non-provable PnP methods, we observe in \Cref{fig:deblurConvergencePSNR} that PnP-FISTA converges to the same PSNR as PnP-LBFGS \RRev{on CBSD10}, but has a worse performance \RRev{when averaged over} all CBSD68 images in \Cref{tab:PSNR_deblur}. This can be attributed to divergence of the method for denoisers where the Lipschitz constant of $\nabla g_\sigma$ is greater than 1. DPIR instead reaches its peak in the first couple of iterations, before decreasing to the fixed point as iterated by the denoiser with the final denoising strength $\sigma_d = \sigma$. \RRev{This results in worse performance of DPIR at iteration $10^3$ as compared to iteration 8, demonstrating the non-convergence and the current gap in performance between provable PnP and non-provable PnP.} 


\Cref{fig:deblurConvergencePSNR} and \Cref{fig:deblurConvergencelog2} additionally demonstrate the difference between the stopping criteria. The stopping criteria of PnP-LBFGS\textsuperscript{1} is sufficient for convergence to a reasonable PSNR, and allows for much earlier stopping. PnP-LBFGS\textsuperscript{2} stops after more iterates and demonstrates the significantly faster convergence of the residuals compared to the other considered PnP methods. Moreover, \Cref{fig:deblur_moreau_difference} shows the convergence curves of the objective $\varphi$ and forward-backward envelope $\varphi_\gamma$, which rapidly converge to the same value, verifying \Cref{prop:sharedMinimizer}. 


\begin{table}[]
\caption{Table of average PSNR (dB) comparing existing \RRev{provable and non-provable }PnP methods evaluated on the CBSD68 dataset compared to the proposed PnP-LBFGS methods. The time shown is the average reconstruction time per image. The PnP-LBFGS\textsuperscript{1} method is significantly faster per image due to the faster convergence \RRev{compared to the other provable PnP methods}. }
\label{tab:PSNR_deblur}
\centering
\begin{tabular}{l|llll}
\hline
$\sigma$     & 2.55  & 7.65  & 12.75 & Time (s) \\ \hline
PnP-LBFGS\textsuperscript{1}        & 31.19 & 27.95 & 26.61 & 5.80     \\
PnP-LBFGS\textsuperscript{2}        & 31.17 & 27.78 & 26.61 & 9.55     \\
PnP-PGD     & 30.57 & 27.80 & 26.61 & 25.93    \\
PnP-DRSdiff & 30.57 & 27.78 & 26.61 & 22.72      \\
PnP-DRS     & 31.54 & 28.07 & 26.60 & 19.26     \\
PnP-$\hat\alpha$PGD     & 31.52 & 28.15 & 26.74 & 15.66     \\ \hline
PnP-FISTA    & 30.24 & 27.15 & 26.60 & 24.32     \\
DPIR (iter $10^3$) & 27.40 & 27.58 & 26.46 & 19.62     \\
DPIR (iter 8)     & 32.01 & 28.34 & 26.86 & 0.55 \\\hline
\end{tabular}
\end{table} 

\begin{figure}%
    \centering
    \subfloat[\centering Ground Truth ]{{\includegraphics[height=2.9cm]{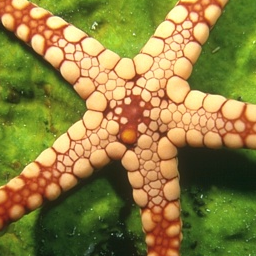}} } %
    \subfloat[\centering PnP-LBFGS\textsuperscript{1} (29.78dB)]{{\includegraphics[height=2.9cm]{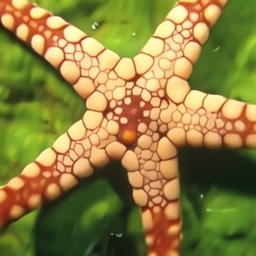}} } %
    \subfloat[\centering DPIR iter 100 (29.31dB)]{{\includegraphics[height=2.9cm]{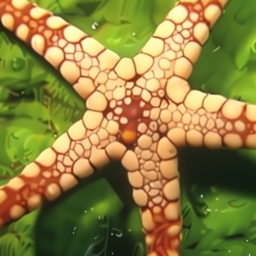}} } %
    \subfloat[\centering DPIR iter 8 (30.13dB)]{{\includegraphics[height=2.9cm]{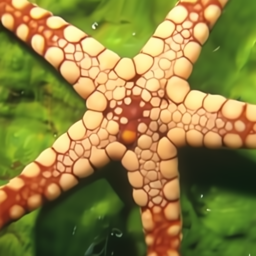}} }%
    \subfloat[\centering PnP-$\hat\alpha$PGD (30.00dB)]{{\includegraphics[height=2.9cm]{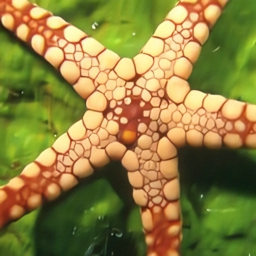}} } %

    \subfloat[\centering Corrupted ]{{\includegraphics[height=2.9cm]{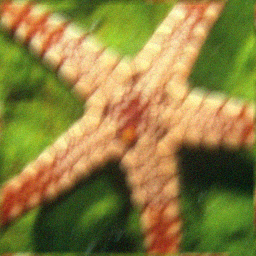}} } %
    \subfloat[\centering PnP-PGD (28.68dB)]{{\includegraphics[height=2.9cm]{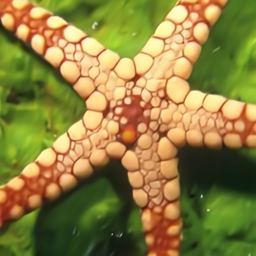}} } %
    \subfloat[\centering PnP-FISTA (29.58dB)]{{\includegraphics[height=2.9cm]{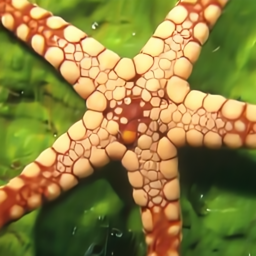}} } %
    \subfloat[\centering PnP-DRSdiff (28.66dB)]{{\includegraphics[height=2.9cm]{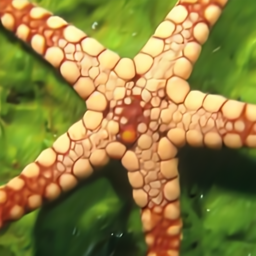}} } %
    \subfloat[\centering PnP-DRS (29.39dB)]{{\includegraphics[height=2.9cm]{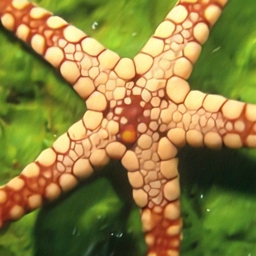}} } %
    \caption{Deblurring visualization using starfish image, with each method limited to a maximum of 100 iterations. Experiments are run with additive Gaussian noise $\sigma= 7.65$. PnP-LBFGS\textsuperscript{1} converges within the first 100 iterations, while the other PnP algorithms take longer to converge. Since the result of PnP-LBFGS\textsuperscript{1} and PnP-LBFGS\textsuperscript{2} are nearly identical, we show only PnP-LBFGS\textsuperscript{1}. \Rev{DPIR starts to decrease in PSNR after 8 iterations, leading to slightly worse performance.}}
    \label{fig:deblurImages}%
\end{figure}

\subsection{Super-resolution}\label{ssec:sr}
For super-resolution, we consider the forward operator with scale $s_{sr} \in \{2,3\}$ as $A = SK:\R^{n\times n} \rightarrow \R^{\lfloor n/s_{sr}\rfloor \times \lfloor n/s_{sr} \rfloor}$, which is a composition of a downsampling operator $S:\R^{n\times n} \rightarrow \R^{\lfloor n/s_{sr}\rfloor \times \lfloor n/s_{sr} \rfloor}$ and a circular convolution $K:\R^{n\times n}\rightarrow \R^{n\times n}$. The convolutions $K$ are Gaussian blur kernels with \Rev{blur strength given by standard deviations} $\sigma_{\text{blur}}=\{0.7,\,1.2,\,1.6,\,2.0\}$ as in \cite{zhang2021plug,hurault2022proximal}. For the PnP-LBFGS parameters, we chose hyperparameters maximizing the PSNR using a grid search on the set3c dataset over the following ranges: $\alpha \in \{0.5,\, 0.7,\, 0.9,\, 1.0\},\ \lambda \in \{1.0,\, 2.0,\, 3.0,\, 4.0\},\ \gamma \in \{0.8,\, 0.85,\, 0.9,\, 1.0\}$, and $\sigma_d/\sigma \in \{0.5,\, 0.75,\, 1.0,\, 1.5,\, 2.0\}$.

The Hessian $\nabla^2 f = \lambda A^\top A = \lambda K^\top S^\top S K$ is easily available, as $S^\top S : \R^{n\times n} \rightarrow \R^{n\times n}$ is a mask operator comprised of setting pixels with index not in $(s_{sr}\mathbb{Z})^2$ to zero, and $K^\top$ is a transposed convolution with circular boundary conditions. Note that on the image manifold, $S^\top S$ is approximately $1/s_{sr}^2$-Lipschitz, as we set $(s_{sr}^2-1)/s_{sr}^2$ of the pixels to zero. With $K$ being approximately 1-Lipschitz, we have that $A^\top A$ is approximately $1/s_{sr}^2$-Lipschitz.

The PnP-LBFGS parameters are $\beta = 0.01, \gamma = 1$, and $\lambda = 2,\, 1.5,\, 1$ for noise levels $\sigma = 2.55,\, 7.65,\, 12.75$ respectively. We can take \RRev{these values of} $\lambda$ since $L_f \approx 1/s_{sr}^2 \le 1/4$ and $\gamma = 1$ still obeys $\gamma < \min \{(1-\beta)/L_f,\, 1/(2M)\}$. The underlying functionals are as follows:
\[\varphi_{\text{PnP-LBFGS}} = \frac{\lambda_{\text{LBFGS}}}{2}\|Ax - y\|^2 + \phi_\sigma^\alpha ,\quad \varphi_{\text{PnP-PGD}} = \frac{0.99}{2} \|Ax - y\|^2 + \phi_\sigma.\]


\RRev{We observe in \Cref{tab:PSNR-superres} that the results for PnP-LBFGS are comparable to the other provable PnP methods, with overall faster wall-clock times. In \Cref{fig:srConvergencePSNR} and \Cref{fig:srConvergencelog2}, we are again able to see the difference between the stopping criteria. For the CBSD10 dataset, PnP-LBFGS\textsuperscript{1} converges on all images in under 40 iterations, while PnP-LBFGS\textsuperscript{2} sometimes requires all 100 iterations, and the other PnP methods take anywhere from 100 to $10^3$ iterations to converge. \Cref{fig:srConvergencelog2} shows again that the convergence of the residuals is significantly faster than the compared PnP methods per iteration. Note that for PnP-LBFGS, PnP-DRS and PnP-$\hat\alpha$PGD, we are allowed to choose larger values of the fidelity regularization term $\lambda$, leading to better reconstructions in the low noise regime compared to PnP-PGD and PnP-DRSdiff.} 

As seen in \Cref{subfig:SRconvlog2DPIR}, DPIR does not converge for super-resolution, and we observe an oscillating behavior of the residuals and PSNR. In contrast, PnP-FISTA is able to converge slightly faster than PnP-PGD, but does not converge for some images as seen by the decreasing PSNR for one curve in \Cref{fig:srConvergencePSNR}. Both PnP-FISTA and DPIR are able to perform reasonably for higher noise levels of $\sigma=12.75$, but have more divergence issues for lower noise levels, leading to reduced performance as seen in \Cref{tab:PSNR-superres}. \RRev{We again observe the gap in performance between DPIR at iteration $10^3$ and at iteration 24 as suggested in the original DPIR work. The performance gap between DPIR and provable PnP methods is less apparent for super-resolution as opposed to deblurring, as observed in \cite{hurault2021gradient}.}

    
\begin{figure}[h]%
    \centering
    \subfloat[\centering PnP-LBFGS\textsuperscript{1} (27.87dB)]{{\includegraphics[height=3.50cm]{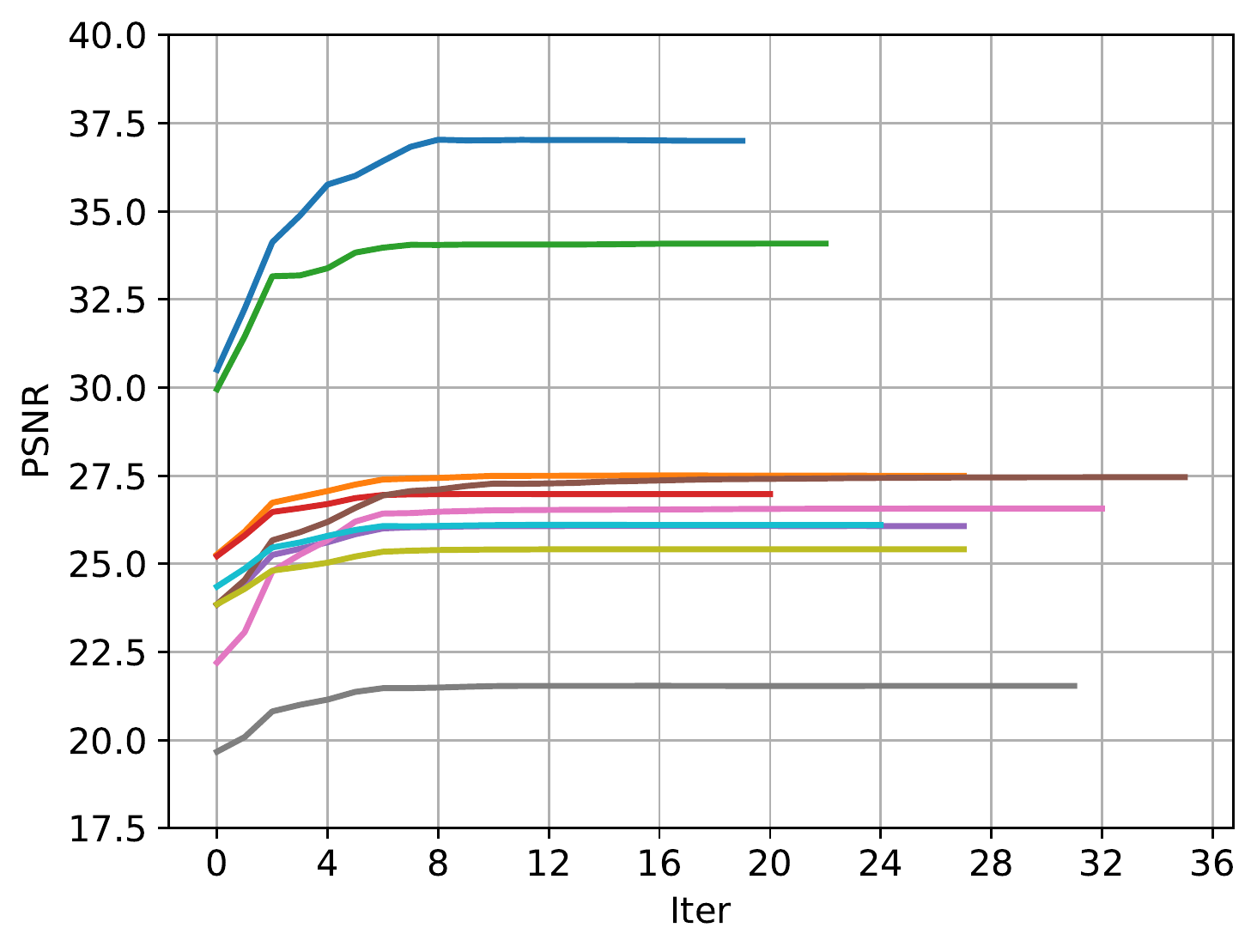} }}%
    \subfloat[\centering PnP-LBFGS\textsuperscript{2} (27.87dB)]{{\includegraphics[height=3.50cm]{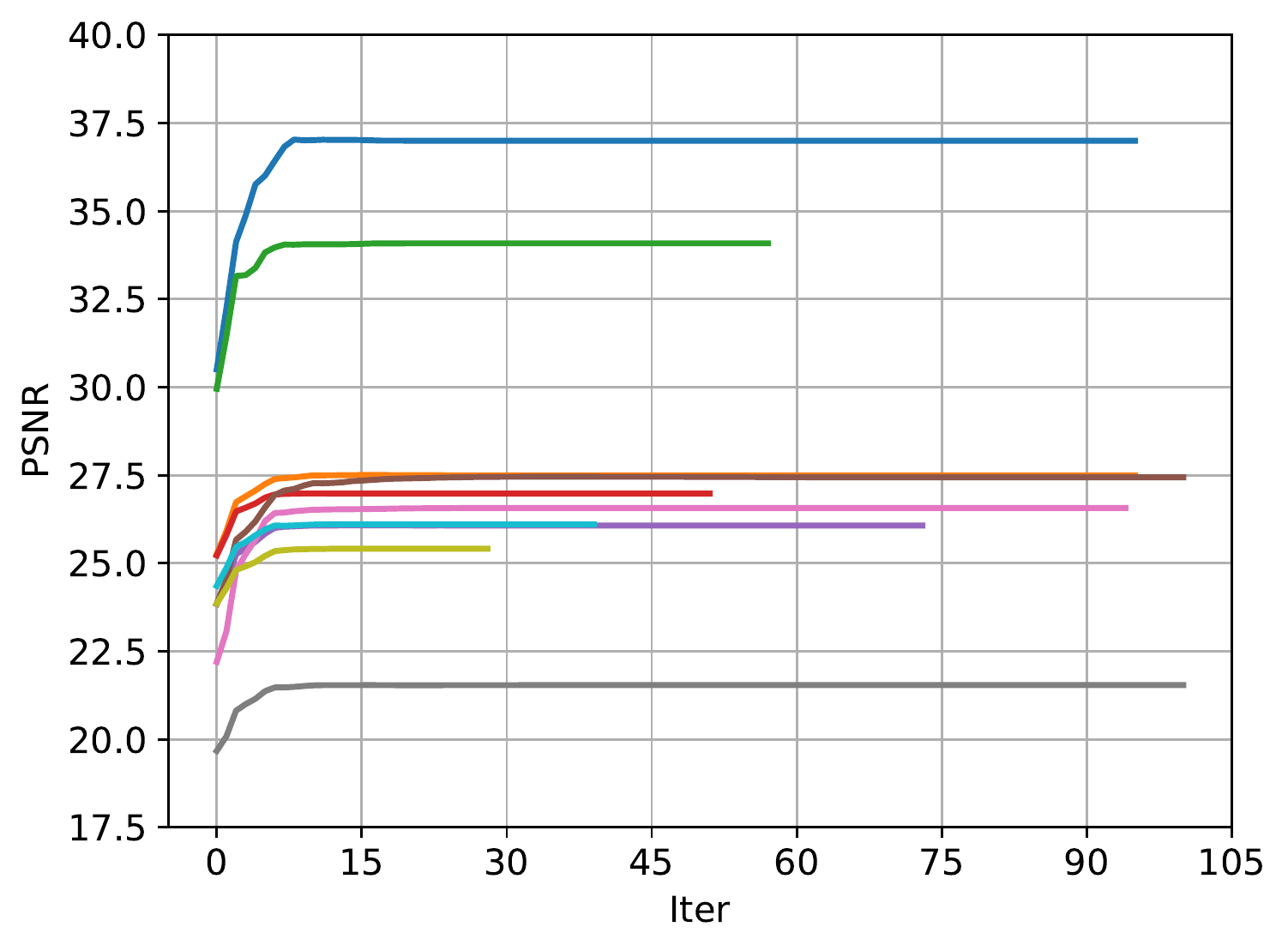} }}%
    \subfloat[\centering DPIR (27.58dB)]{{\includegraphics[height=3.50cm]{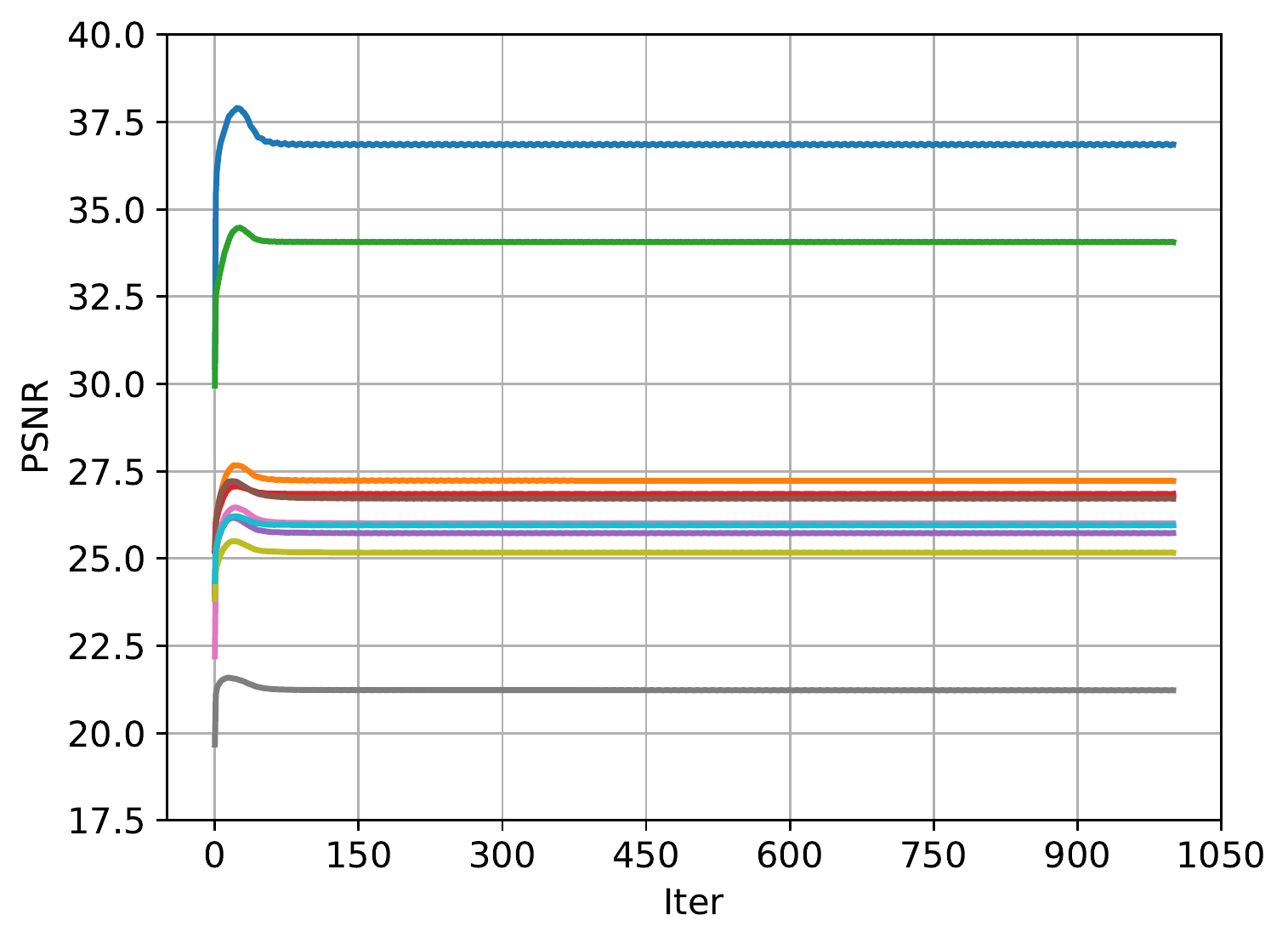} }}%
        
    \subfloat[\centering PnP-PGD (27.82dB)]{{\includegraphics[height=3.50cm]{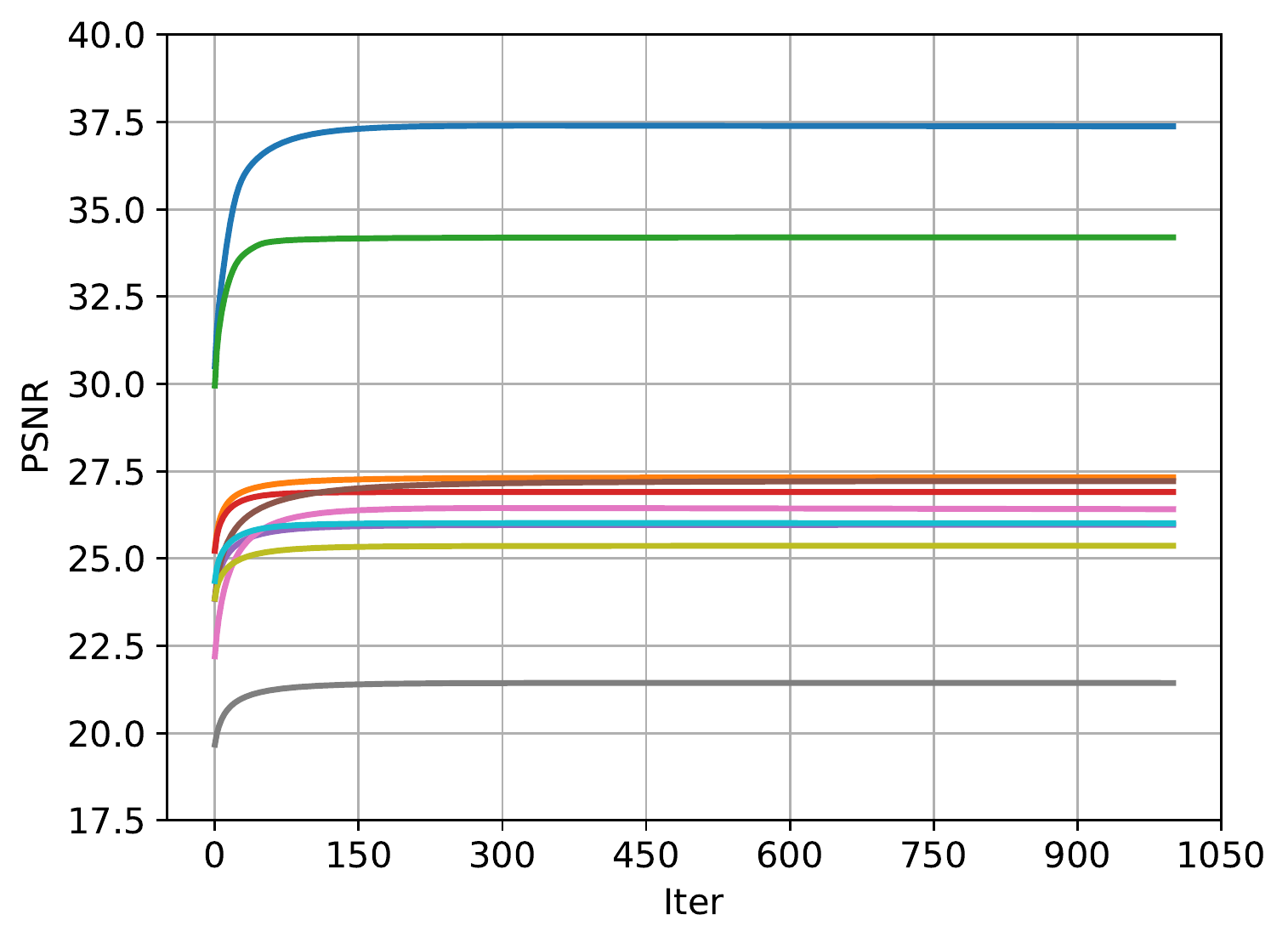}}}
    \subfloat[\centering PnP-$\hat\alpha$PGD (27.86dB)]{{\includegraphics[height=3.50cm]{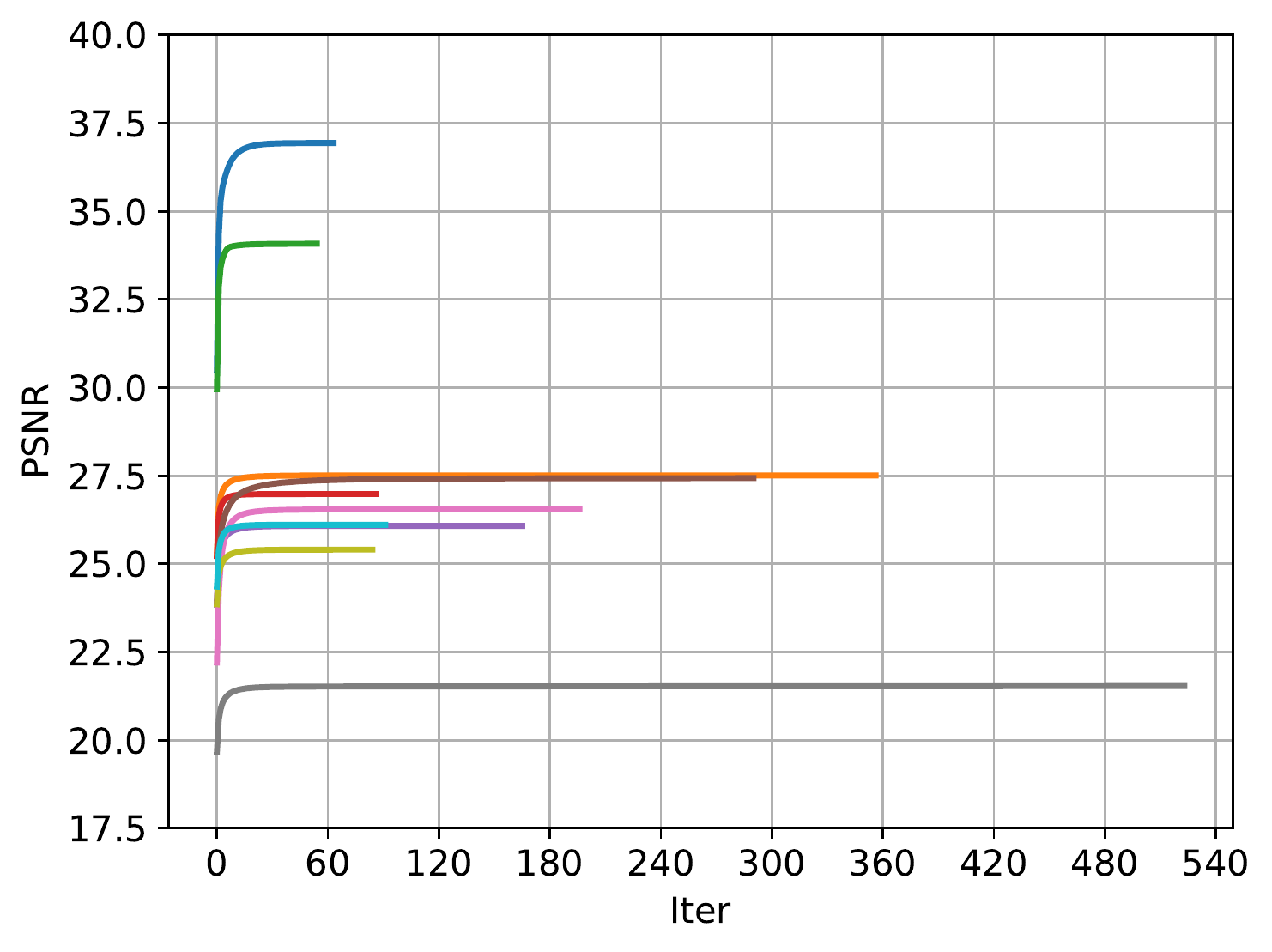}}}
    \subfloat[\centering PnP-FISTA (27.71dB)]{{\includegraphics[height=3.50cm]{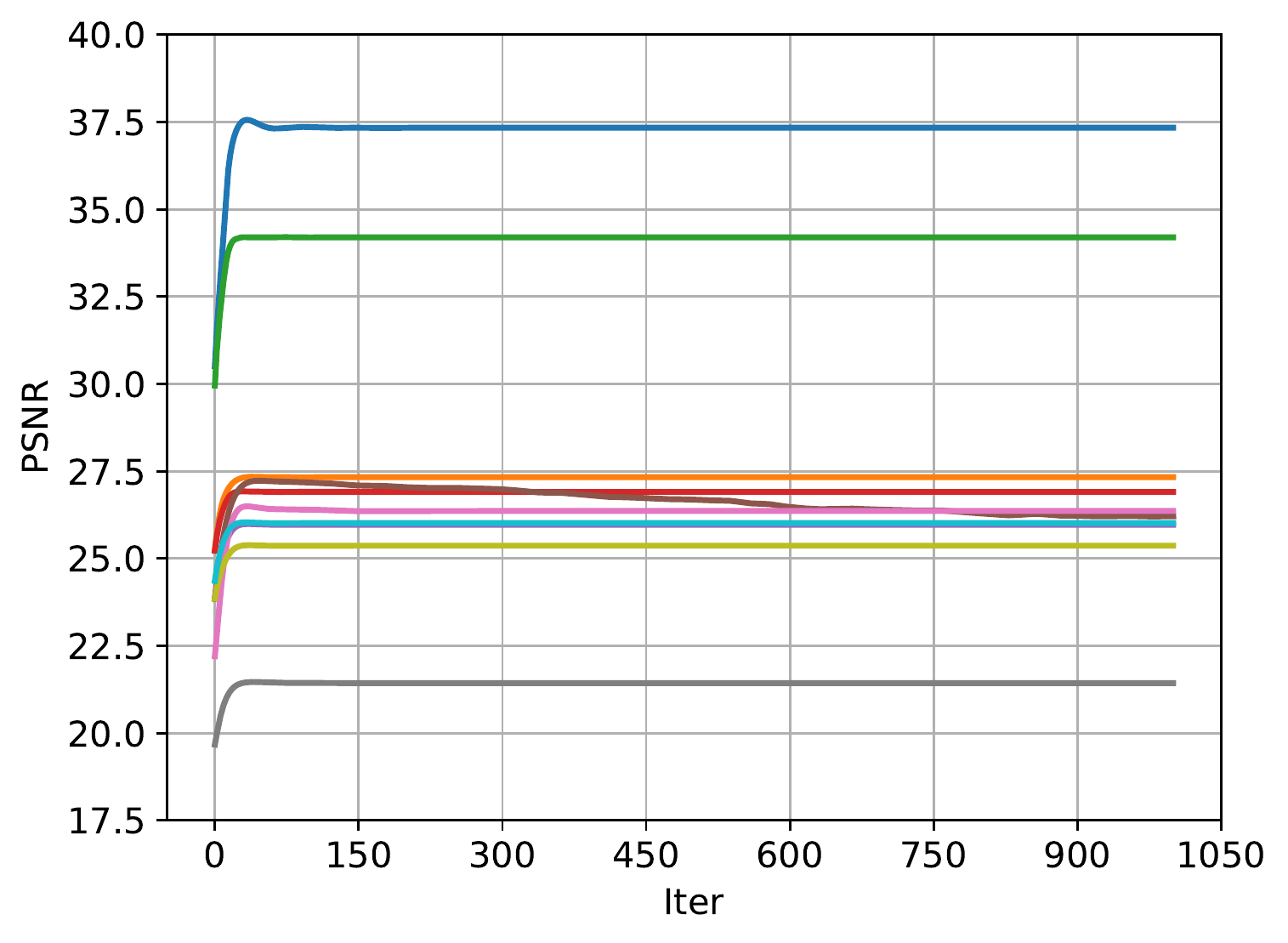}}}
    
    \subfloat[\centering PnP-DRSdiff (27.69dB)]{{\includegraphics[height=3.50cm]{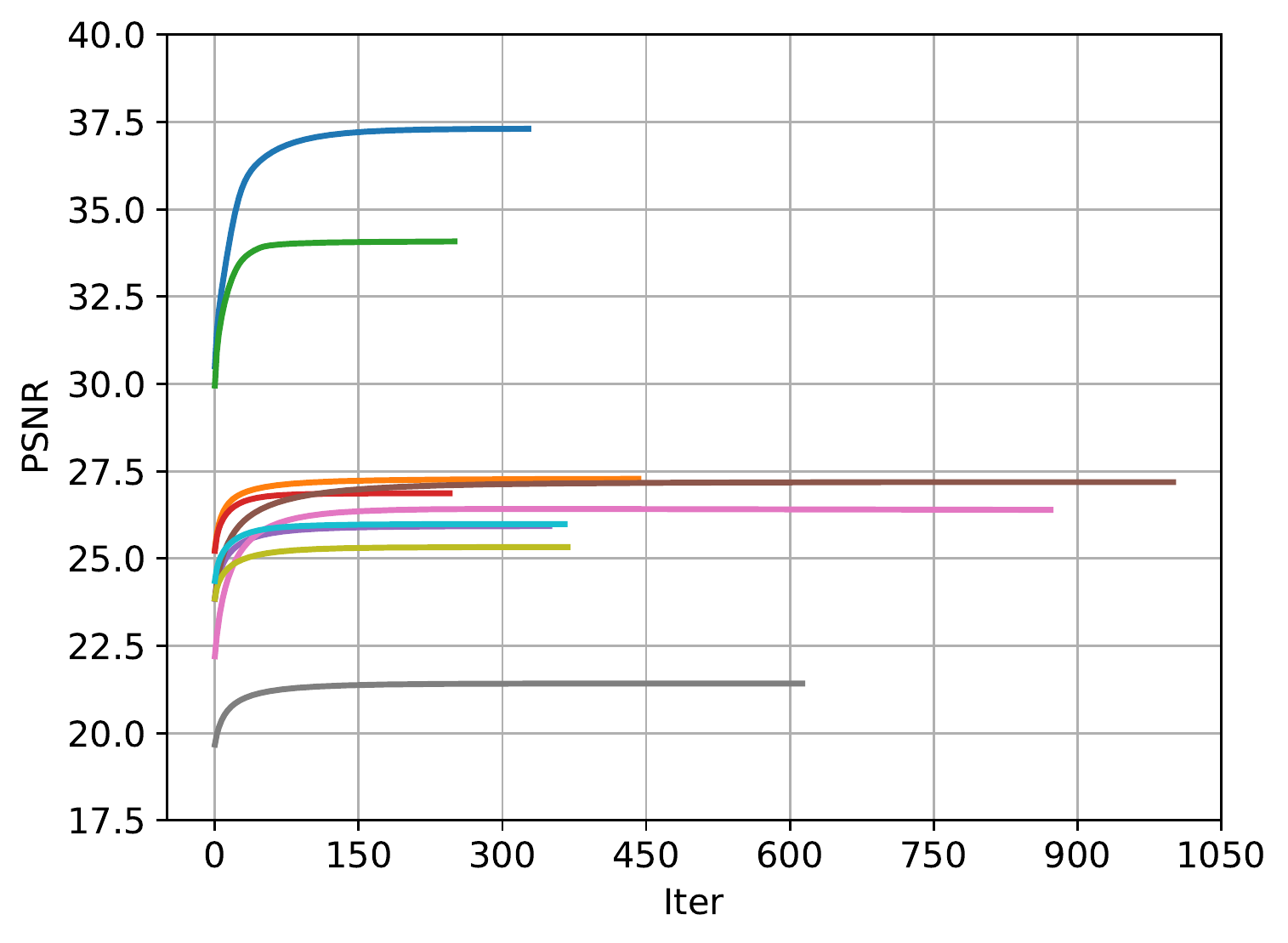}}}%
    \subfloat[\centering PnP-DRS (27.78dB)]{{\includegraphics[height=3.50cm]{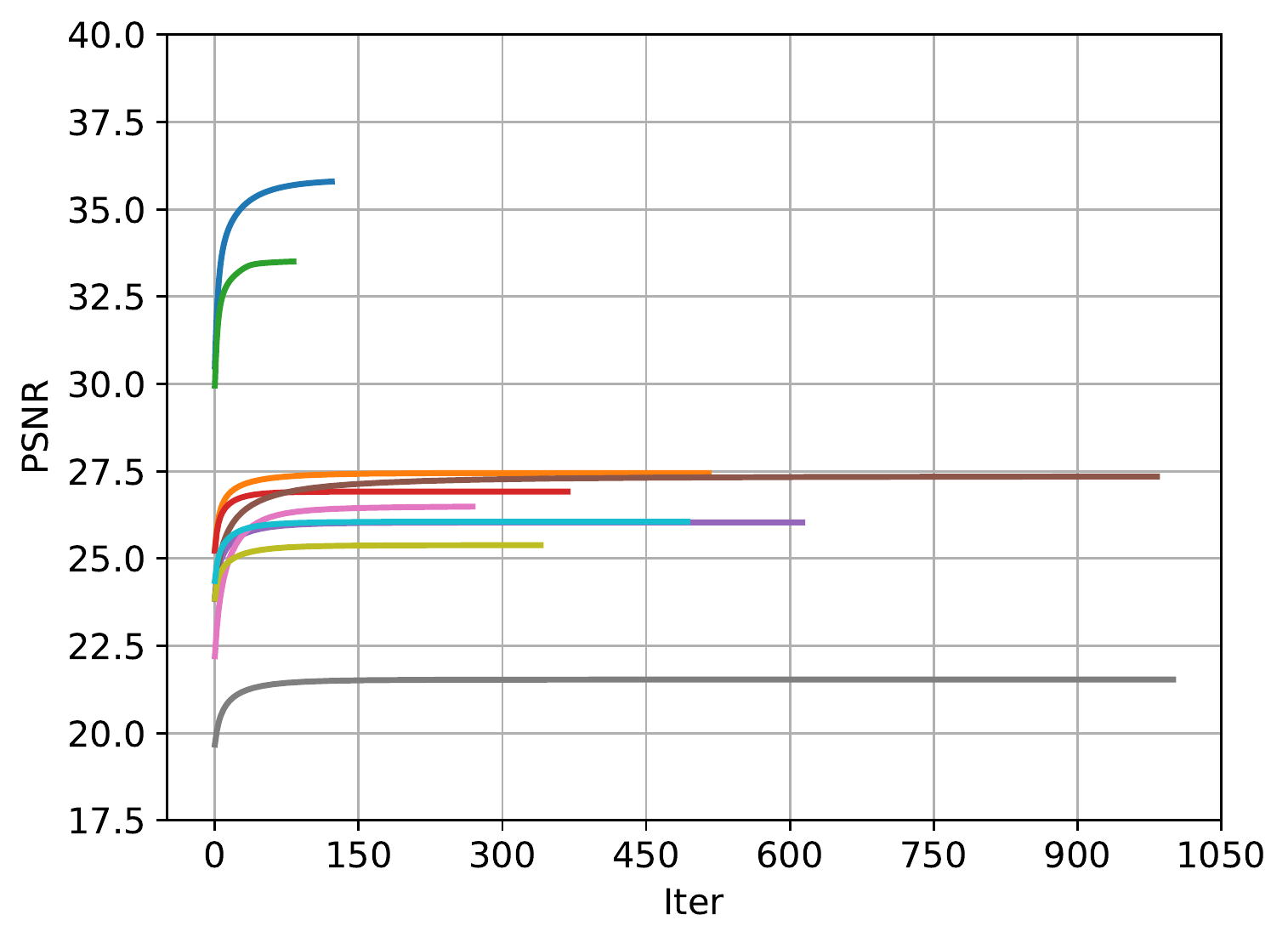}}}%
    \caption{Convergence of the PSNR (dB) of the various curves for super-resolution, with the average dB in brackets. Each curve corresponds to one of the 10 images from the CBSD10 dataset, evaluated with the Gaussian blur kernel with standard deviation $\sigma_{\text{blur}}=1.2$ and additive noise $\sigma = 7.65$, with scale $s_{sr}=2$. We observe the convergence of PSNRs in under 40 iterations for PnP-LBFGS\textsuperscript{1}, much faster than the compared PnP methods.}
    \label{fig:srConvergencePSNR}
\end{figure}
\begin{figure}[h]%
    \centering
    \subfloat[\centering PnP-LBFGS\textsuperscript{1} ]{{\includegraphics[height=3.50cm]{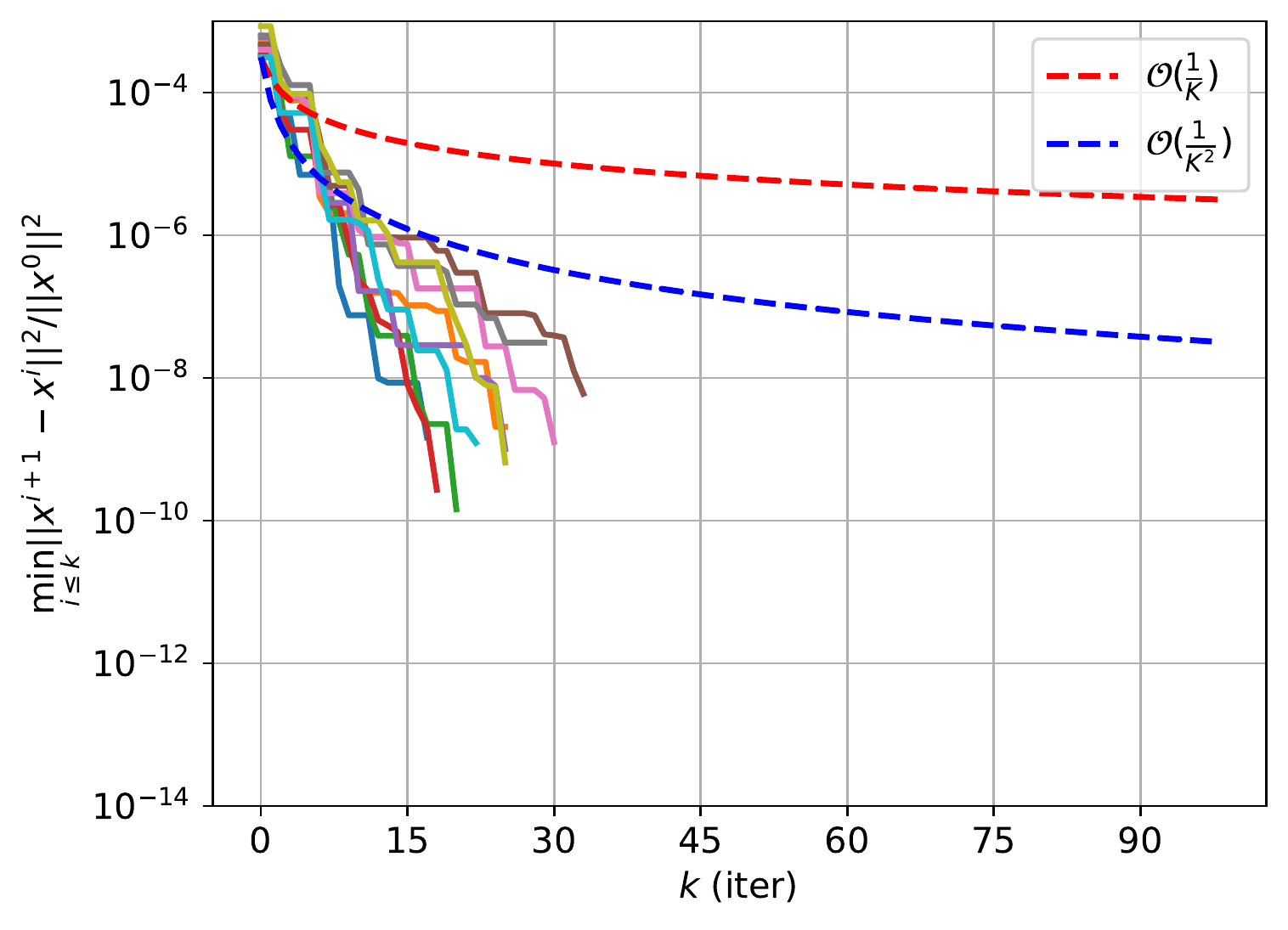}}}%
    \subfloat[\centering PnP-LBFGS\textsuperscript{2} ]{{\includegraphics[height=3.50cm]{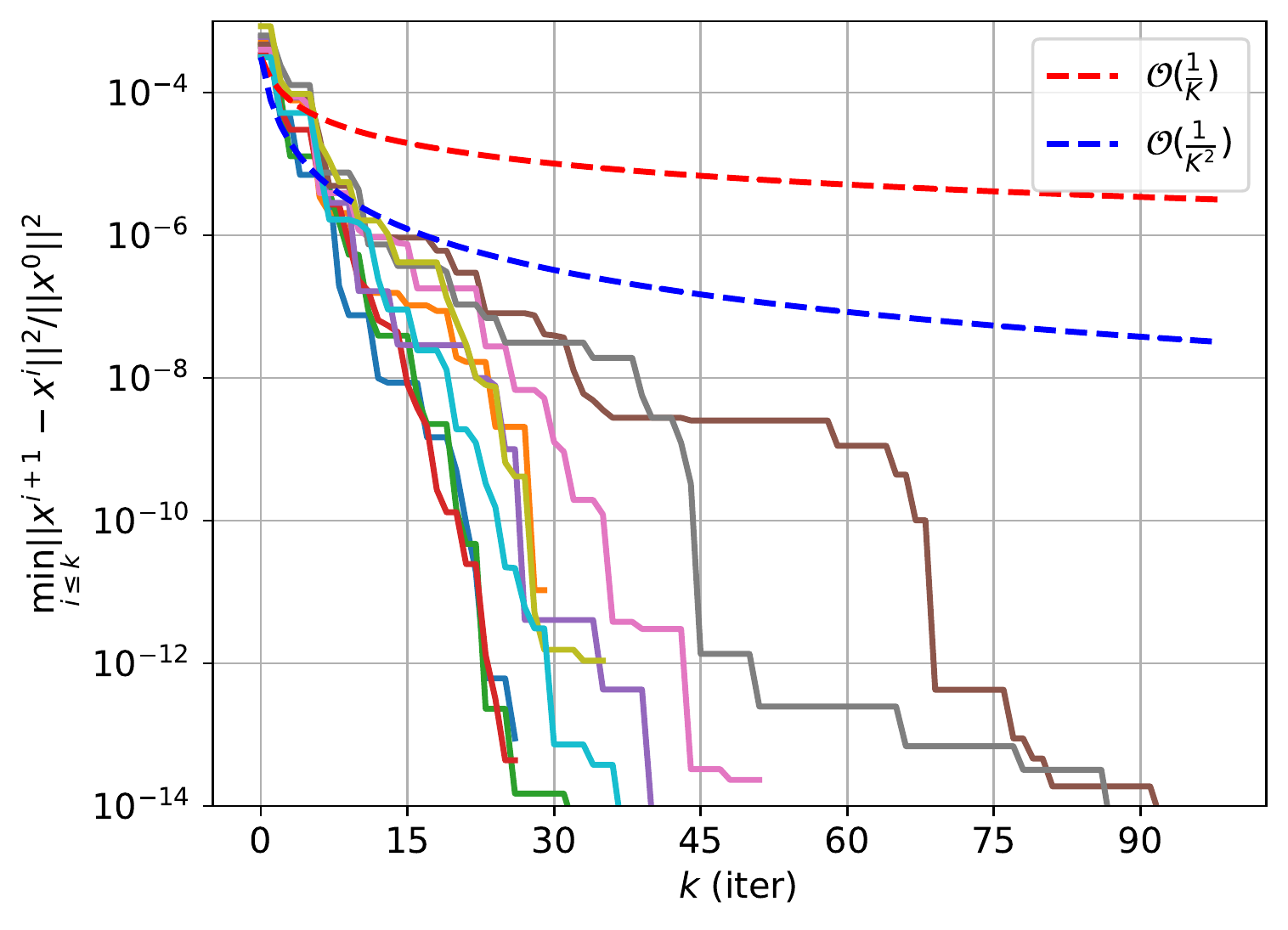}}}%
    \subfloat[\centering DPIR\label{subfig:SRconvlog2DPIR}]{{\includegraphics[height=3.50cm]{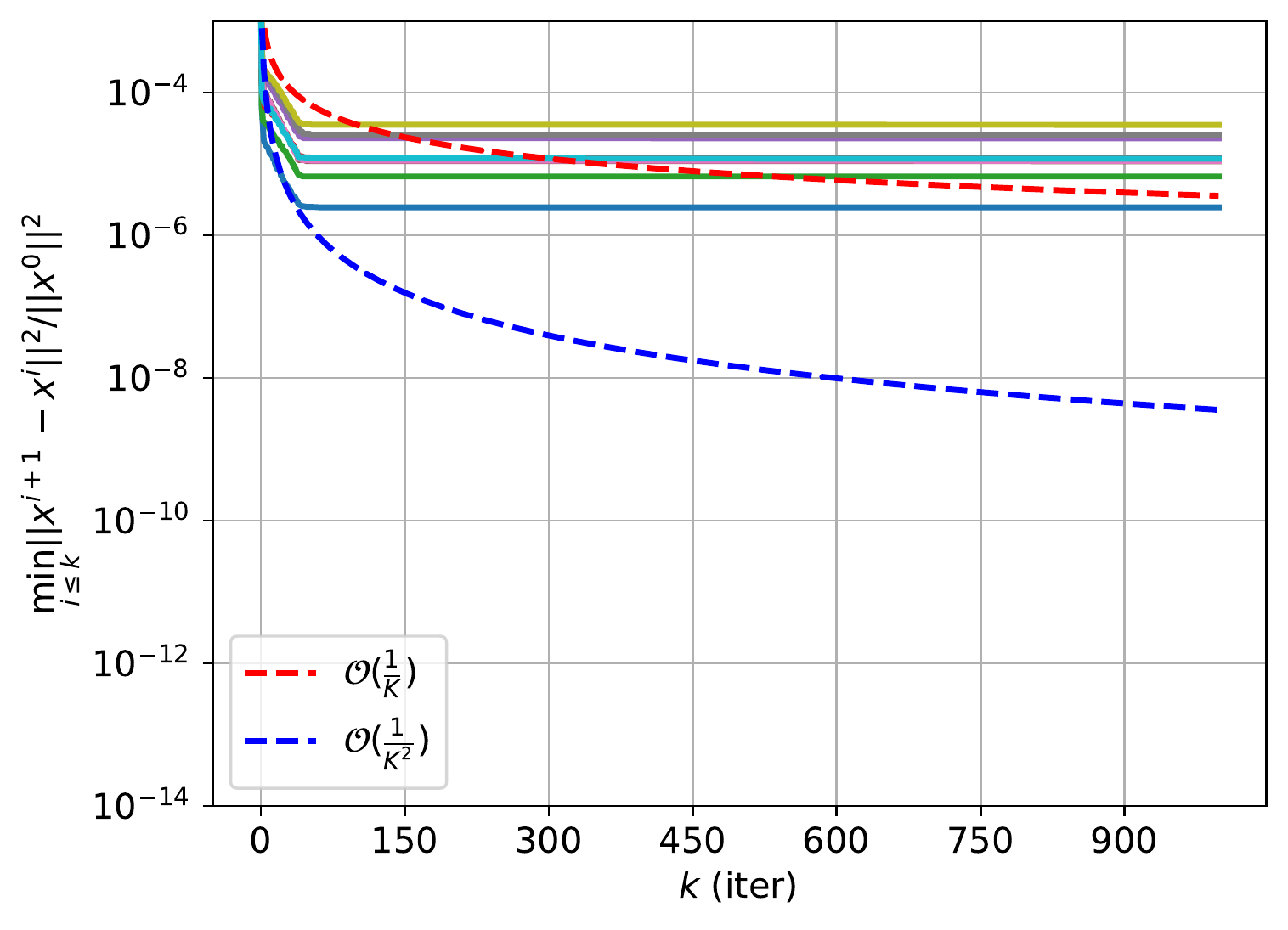}}}%
    
    \subfloat[\centering PnP-PGD]{{\includegraphics[height=3.50cm]{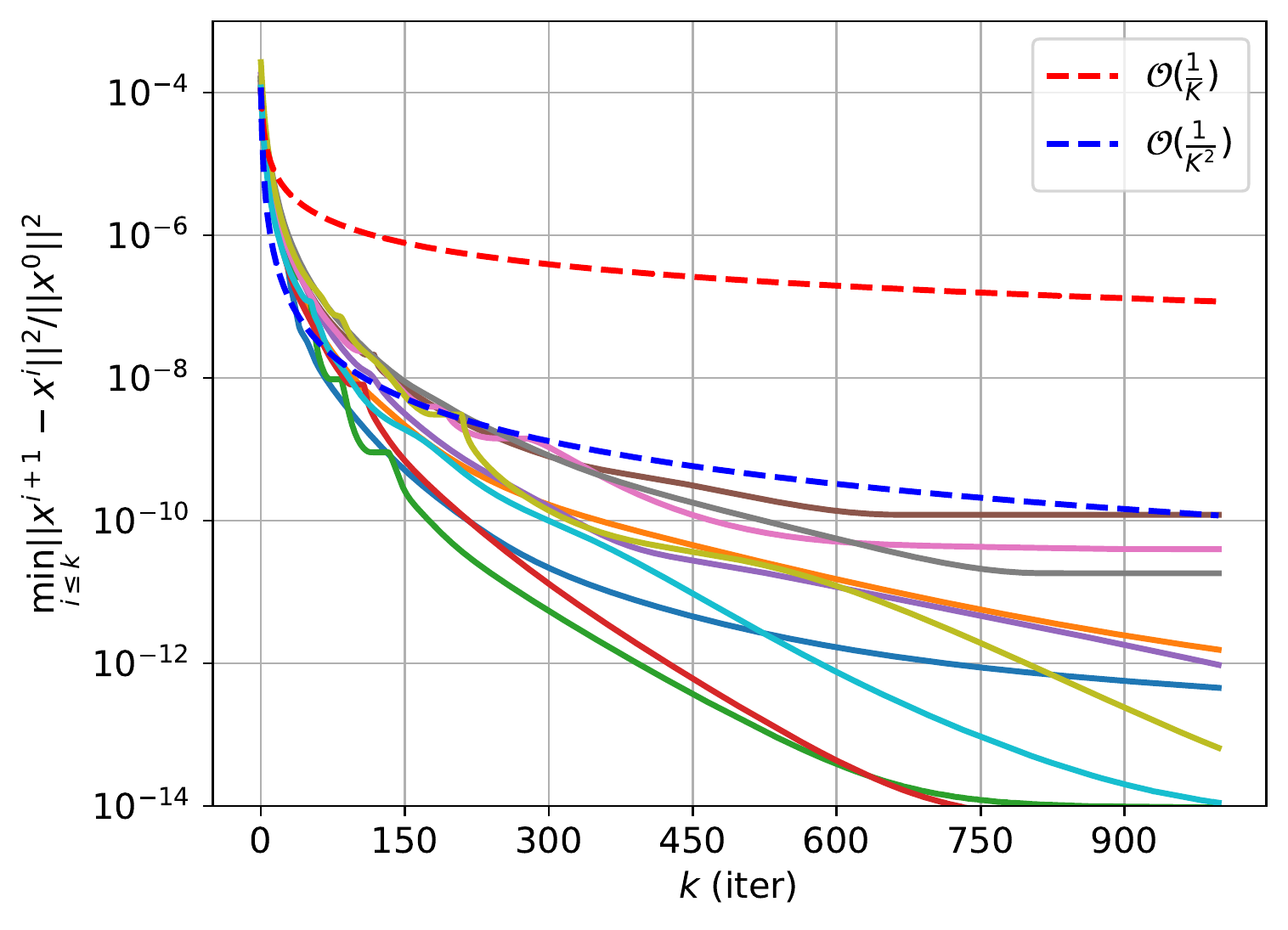}}}%
    \subfloat[\centering PnP-$\hat\alpha$PGD]{{\includegraphics[height=3.50cm]{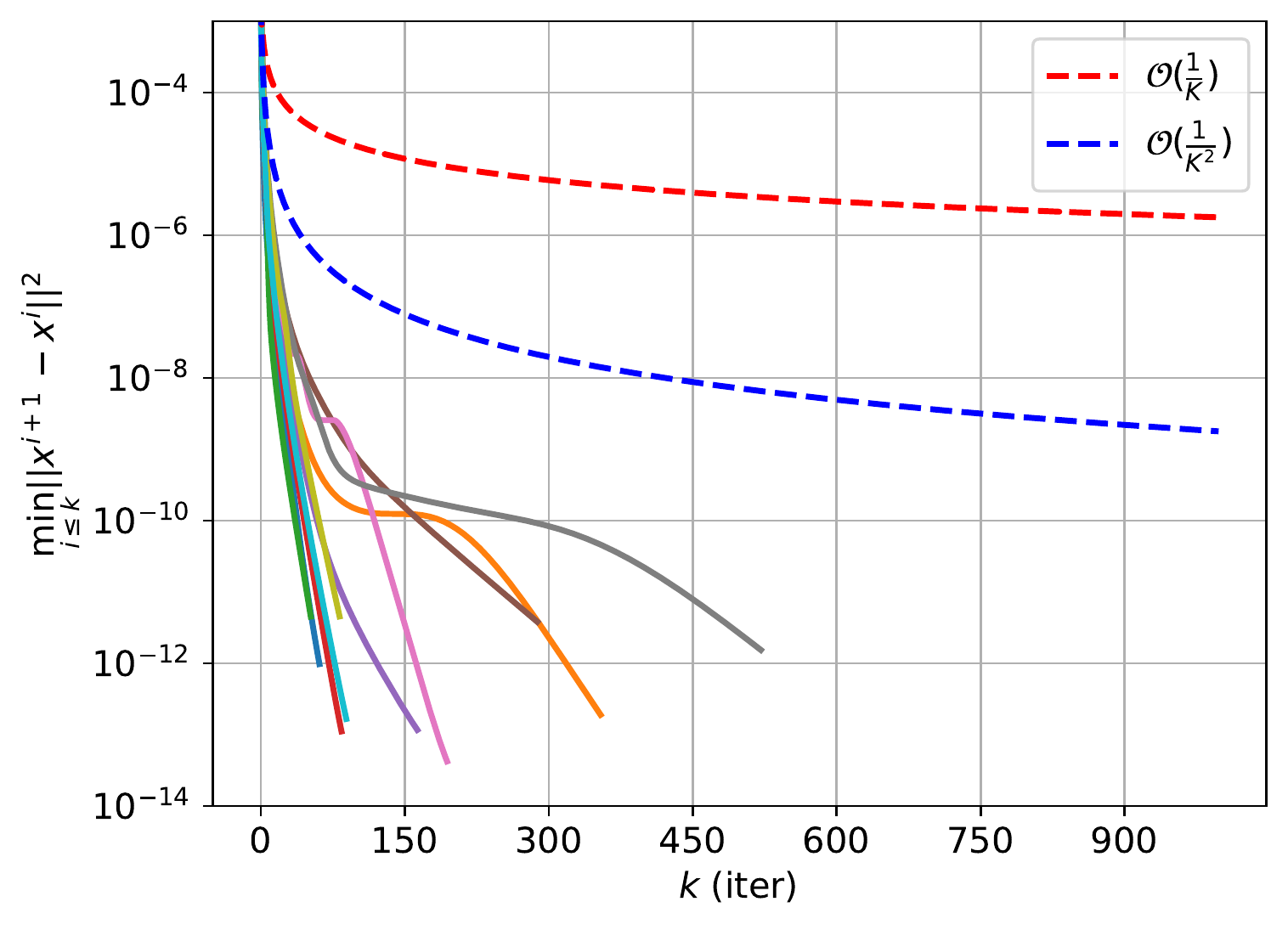}}}%
    \subfloat[\centering PnP-FISTA]{{\includegraphics[height=3.50cm]{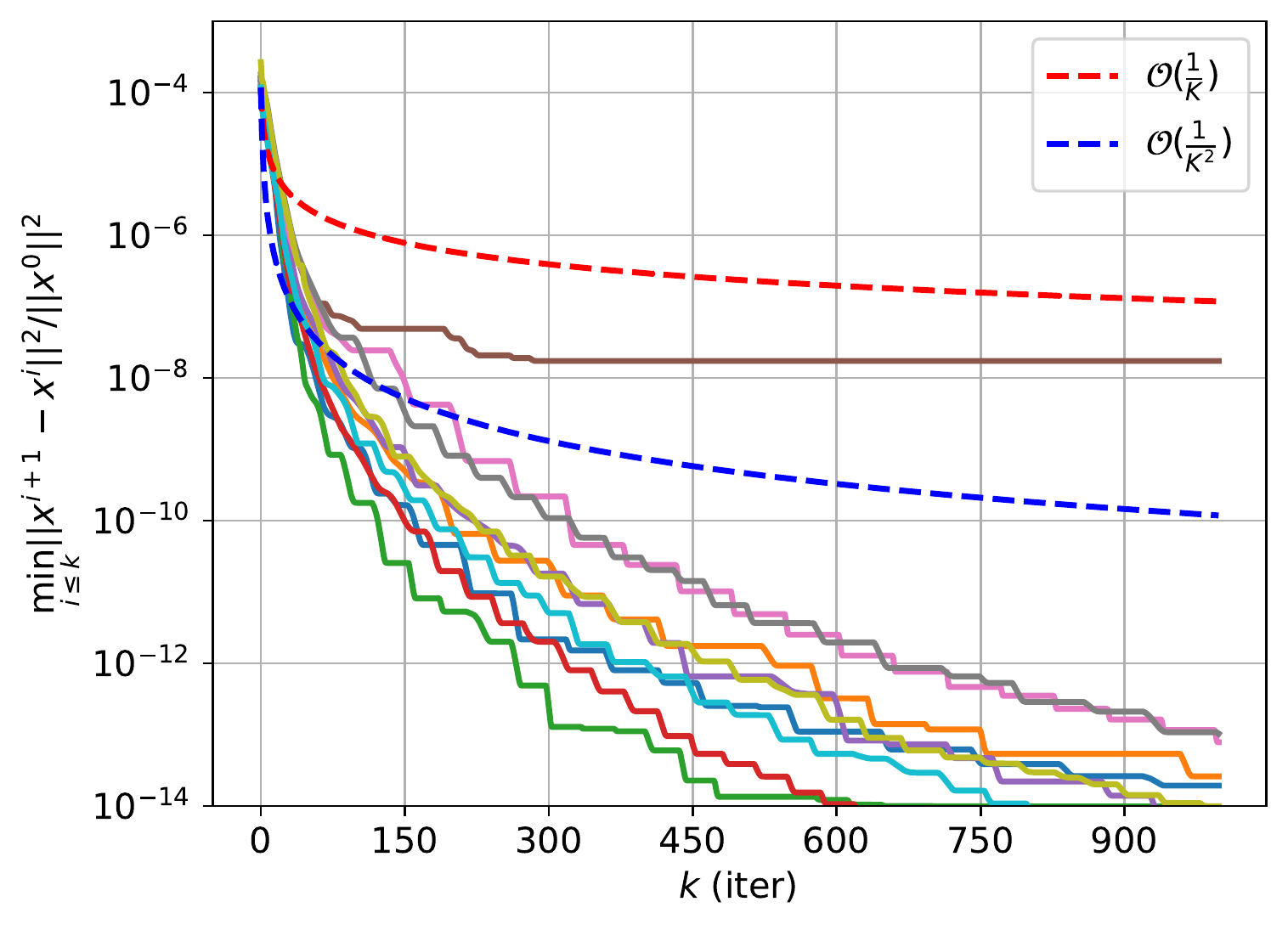}}}%
    
    \subfloat[\centering PnP-DRSdiff]{{\includegraphics[height=3.50cm]{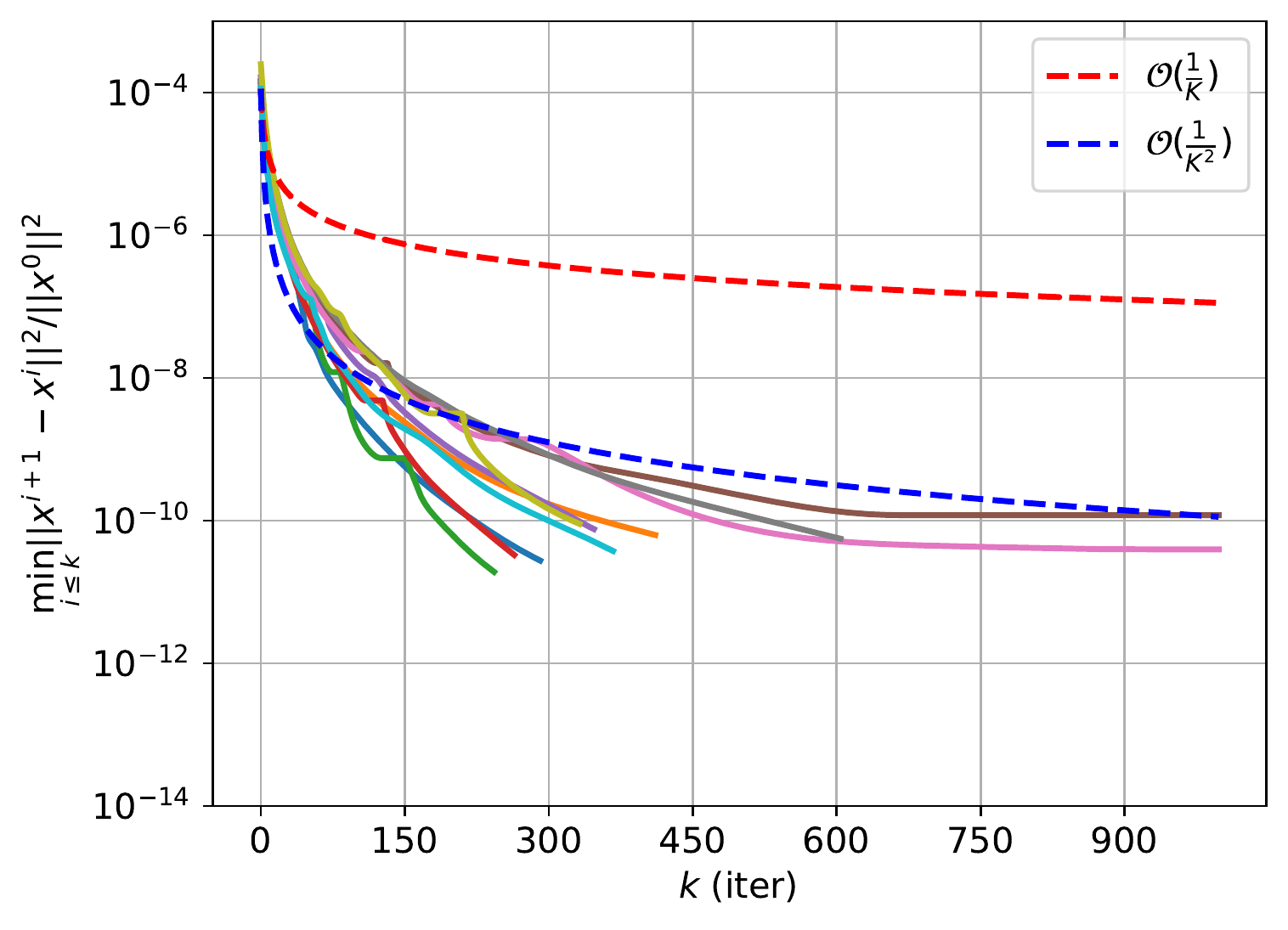}}}%
    \subfloat[\centering PnP-DRS]{{\includegraphics[height=3.50cm]{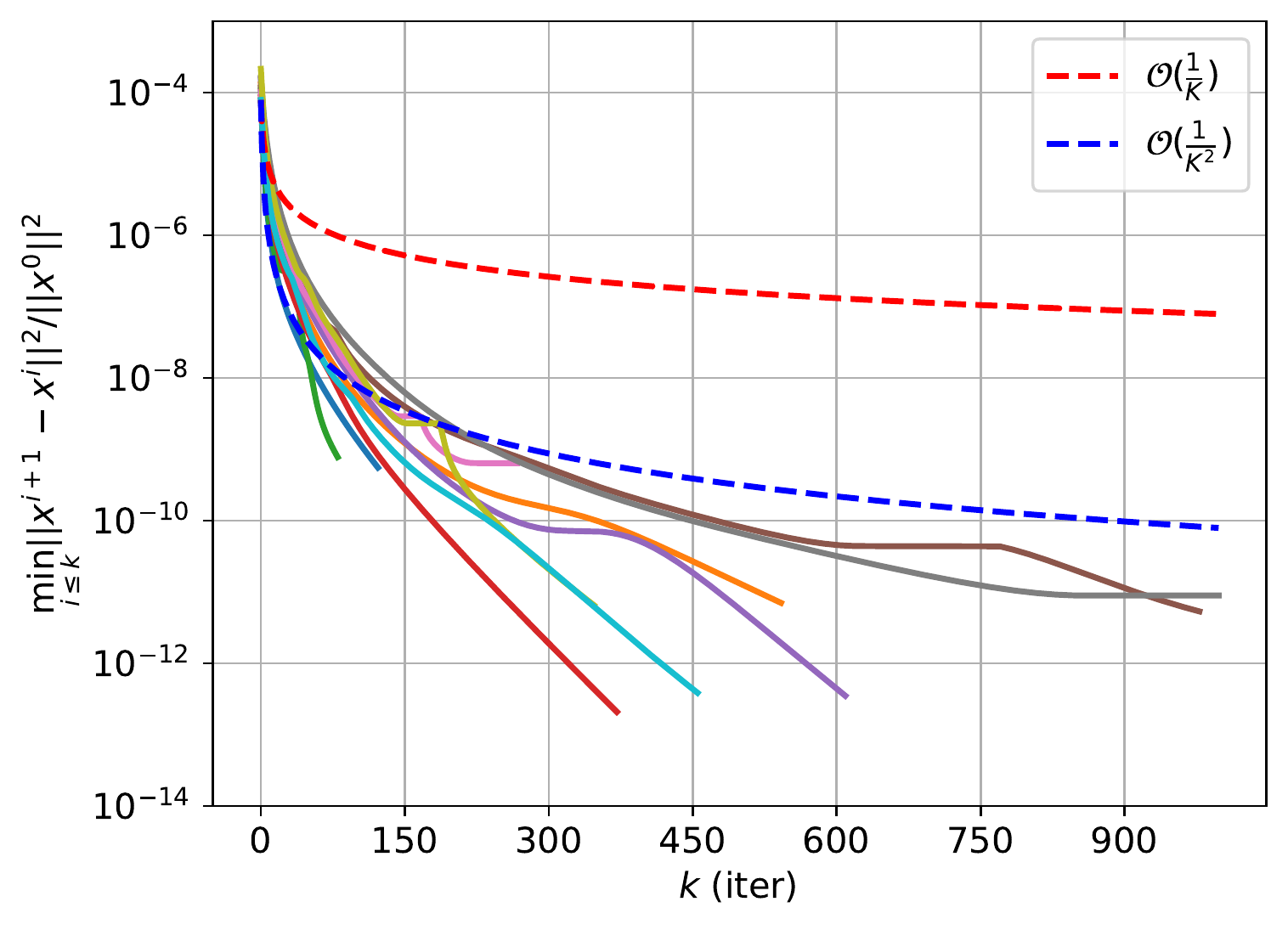}}}%
    \caption{Convergence of the residuals $\min_{i\le k} \|x^{i+1}-x^i\|^2/\|x^0\|^2$ of the various methods \Rev{for super-resolution}. Each curve corresponds to one of the 10 images from the CBSD10 dataset, evaluated with the Gaussian blur kernel with standard deviation $\sigma_{\text{blur}}=1.2$ and additive noise $\sigma/255 = 7.65$, with scale $s_{sr}=2$. PnP-LBFGS\textsuperscript{2} demonstrates significantly faster residual convergence of the proposed method.}
    \label{fig:srConvergencelog2}%
\end{figure}%
\begin{table}
\caption{Table of averaged PSNR (dB) corresponding to the competing PnP methods evaluated on the CBSD68 dataset for super-resolution, as compared with the proposed PnP-LBFGS method. The time is the average reconstruction time per image for $\sigma= 7.65$. The performance of PnP-LBFGS is almost identical to the compared \Rev{provable} PnP methods due to minimizing the same variational form, but with faster convergence. }
\label{tab:PSNR-superres}
\begin{tabular}{lllllllll}
\hline
Scale                                 & \multicolumn{4}{c}{$s=2$}                         & \multicolumn{4}{c}{$s=3$}        \\
$\sigma$                          & 2.55  & 7.65  & 12.75 & Time (s)                  & 2.55  & 7.65  & 12.75 & Time (s) \\ \hline
\multicolumn{1}{l|}{PnP-LBFGS\textsuperscript{1}}        & 27.89 & 26.62 & 25.80 & \multicolumn{1}{l|}{3.19} & 26.12 & 25.32 & 24.68 & 4.80    \\
\multicolumn{1}{l|}{PnP-LBFGS\textsuperscript{2}}        & 27.89 & 26.62 & 25.80 & \multicolumn{1}{l|}{9.81} & 26.12 & 25.30 & 24.68 & 13.15    \\
\multicolumn{1}{l|}{PnP-PGD}     & 27.44 & 26.57 & 25.82 & \multicolumn{1}{l|}{25.99}     & 25.60 & 25.20 & 24.63 & 37.33    \\
\multicolumn{1}{l|}{PnP-DRSdiff} & 27.44 & 26.58 & 25.82 & \multicolumn{1}{l|}{18.24}     & 25.60 & 25.19 & 24.63 & 32.83    \\
\multicolumn{1}{l|}{PnP-DRS}     & 27.93 & 26.61 & 25.79 & \multicolumn{1}{l|}{15.74}     & 26.13 & 25.29 & 24.67 & 27.00    \\ 
\multicolumn{1}{l|}{PnP-$\hat\alpha$PGD}     & 27.94 & 26.62 & 25.72 & \multicolumn{1}{l|}{4.24}     & 26.11 & 25.32 & 24.69 & 8.78    \\ \hline
\multicolumn{1}{l|}{PnP-FISTA}     & 26.38 & 26.44 & 25.79 & \multicolumn{1}{l|}{24.61}     & 24.96 & 25.15 & 24.63 & 33.13    \\
\multicolumn{1}{l|}{DPIR (iter $10^3$)}     & 18.58 & 26.36 & 25.74 & \multicolumn{1}{l|}{19.58}     & 17.53 & 24.96 & 24.55 & 19.67    \\
\multicolumn{1}{l|}{DPIR (iter 24)}     & 27.82 & 26.60 & 25.85 & \multicolumn{1}{l|}{0.98}     & 26.06 & 25.29 & 24.67 & 0.97    \\
\hline
\end{tabular}
\end{table}

\subsection{Computational Complexity}
While each iteration of PnP-LBFGS has increased complexity, we observed convergence in much fewer iterations. In this section, we outline the computational requirements for the number of neural network $N_\sigma$ evaluations, denoising steps $D_\sigma$, as well as computations of $\nabla f$ and $\nabla^2 f$ \RRev{required per iteration}. Note that if a closed form for $\nabla^2 f$ is intractable, \RRev{computations of \eqref{eq:gradVarphiGamma2}} can be replaced with Hessian-vector products, available in many deep learning libraries. 

We can calculate $T_\gamma$ and $R_\gamma$ together using one call each of $\nabla f$ and $D_\sigma$. From \eqref{eqs:pnpMINFBEclosedform}, $\varphi_\gamma$ requires $\nabla f$ and $g_\sigma$, which in turn requires $N_\sigma$. $\nabla \varphi_\gamma$ has a closed form, which requires $R_\gamma$ and an evaluation of $\nabla^2 f$. 

Consider a single iteration of PnP-LBFGS. We first compute $\nabla \varphi_\gamma (x^k)$ and $\varphi_\gamma(x^k)$. Computing $d^k$ using L-BFGS does not require any additional evaluations of $D_\sigma,N_\sigma, \nabla f$ or $\nabla^2 f$, as the secants and differences will have been computed in the previous iteration. For each test of $w^k$, we need to compute a single iteration of $\varphi_\gamma$, which takes one evaluation each of $\nabla f$ and $N_\sigma$.  Once a suitable $w^k$ is found, we compute $T_\gamma(w^k)$ and $R_\gamma(w^k)$ together using the last stored $\nabla f(w^k)$, requiring only one additional $D_\sigma$ operation. For the secant $y^k$, we require an evaluation of $\nabla \varphi_\gamma(w^k)$, which requires only one additional $\nabla^2 f$ evaluation. This concludes one iteration. 

To evaluate the proposed stopping criteria for PnP-LBFGS\textsuperscript{1}, we are also required to compute $\varphi(x^{k+1})$ from \eqref{eq:lastGather}. Note we already have $g_\sigma(w^k - \gamma \nabla f(w^k))$ from computing $\varphi_\gamma(w^k)$, and $T_\gamma(w^k) = x^k$, hence we get $\varphi(x^{k+1})$ with no further evaluations needed.

In total, assuming we need \RRev{$T$ tests for $\tau_k$}, the per iteration-cost is 
\begin{equation} \label{eq:pnplbfgsComplexity}
    \begin{pmatrix}
        \#N_\sigma\\
        \#D_\sigma\\
        \#\nabla f\\
        \#\nabla^2 f
    \end{pmatrix}_{\text{PnP-LBFGS}} = 
    \underbrace{
    \begin{pmatrix}
        1\\1\\1\\1
    \end{pmatrix}}_{\substack{\nabla \varphi_\gamma(x^k),\\ \varphi_\gamma(x^k)}} +\ T 
    \underbrace{\begin{pmatrix}
        1\\0\\1\\0
    \end{pmatrix}}_{\text{test } w^k} + 
    \underbrace{\begin{pmatrix}
        0\\1\\0\\0
    \end{pmatrix}}_{\substack{T_\gamma(w^k), \\ R_\gamma(w^k)}} + 
    \underbrace{\begin{pmatrix}
        0\\0\\0\\1
    \end{pmatrix}}_{\nabla \varphi_\gamma(w^k)} = 
    \begin{pmatrix}
        T+1\\2\\T+1\\2
    \end{pmatrix}.
\end{equation}

At later iterations, the number of tests is only $T=1$, since the step-size $\tau=1$ is accepted almost always. Therefore, later iterations require two of $N_\sigma, D_\sigma, \nabla f$ and $\nabla^2 f$. For comparison, PnP-PGD requires one evaluation each of $D_\sigma$ and $\nabla f$, and the PnP-DRS methods require one evaluation each of $D_\sigma$ and $\prox_f$. Note that for these methods to test their stopping criteria by computing $\varphi$, they also require one evaluation of $g_\sigma$ and hence of $N_\sigma$ \cite{hurault2022proximal}. These methods thus have complexity
\begin{equation*}
    \begin{pmatrix}
        \#N_\sigma\\
        \#D_\sigma\\
        \#\nabla f
    \end{pmatrix}_{\text{PnP-PGD}} =     \begin{pmatrix}
        1\\1\\1
    \end{pmatrix}, \quad
    \begin{pmatrix}
        \#N_\sigma\\
        \#D_\sigma\\
        \#\prox_f
    \end{pmatrix}_{\substack{
        \text{PnP-DRS;}\\
        \text{PnP-DRSdiff}}} = \begin{pmatrix}
        1\\1\\1
    \end{pmatrix}.
\end{equation*}

To compute the asymptotic complexity of PnP-LBFGS, suppose the images have dimension $d$, and that the denoisers have $P$ parameters. From \eqref{eq:pnplbfgsComplexity}, we can read off the complexity of computing one iteration given $d^k$ as $\mathcal{O}(d\times P \times T)$, with $\mathcal{O}(d)$ memory requirement to hold the $x^k, w^k$ and intermediate gradients. To compute $d^k$, the computational complexity of L-BFGS scales linearly with the input dimension and memory length $m$, and requires us to store $m$ secants and differences. The asymptotic complexity per iteration is thus $\mathcal{O}\left(d\times P \times T + md\right)$, where the number of tests $T$ is eventually always 1. The total memory requirement is $\mathcal{O}\left((m+1)\times d\right)$, where we store $m$ differences and secants. 

A similar complexity analysis can be applied to the PnP-PGD, PnP-DRSdiff and PnP-DRS methods to achieve a per-iteration computational complexity of $\mathcal{O}(d\times P)$ and memory requirement of $\mathcal{O}(d)$. However, these three PnP methods do not come with improved convergence rates under additional smoothness assumptions, and come with residual convergence at a rate $\min_{i\le k}\|x^{i+1} - x^i\| = \mathcal{O}(1/k)$. PnP-LBFGS achieves residual convergence $\min_{i\le k}\|R_{\gamma_i}(x^i)\| = \mathcal{O}(1/k)$ from \Cref{thm:residualConvergence}, as well as superlinear convergence under the assumptions of \Cref{thm:Superlinear}. This is summarized in \Cref{tab:complexity}.

\begin{table}[h]
\centering
\caption{Complexity to achieve an $\epsilon$-optimal solution, in terms of the squared residual for PnP-PGD/DRS/DRSdiff, and in terms of the residual $R_{\gamma_i}(x^i)$ for PnP-LBFGS. Under the assumptions of \Cref{thm:Superlinear} for superlinear convergence, the number of tests is eventually always $T=1$, and we are able to achieve at least linear speedup.}
\label{tab:complexity}
\begin{tabular}{@{}rlll@{}}
\toprule
\multicolumn{1}{l}{Complexity} &
  \multicolumn{1}{c}{PnP-PGD/DRS/DRSdiff} &
  \multicolumn{1}{c}{PnP-LBFGS} &
  \multicolumn{1}{c}{PnP-LBFGS superlinear} \\ \midrule
\multicolumn{1}{l|}{Computation} &
  $\mathcal{O}(d P \epsilon^{-1})$ &
  $\mathcal{O}\left((d P T + md)\epsilon^{-1}\right)$ &
  $\mathcal{O}\left((d P + md)\log \epsilon\right)$ \\
\multicolumn{1}{l|}{Memory} &
  $\mathcal{O}(d)$ &
  $\mathcal{O}\left((m+1) d\right)$ &
  $\mathcal{O}\left((m+1) d\right)$ \\ \bottomrule
\end{tabular}
\end{table}

The above complexity analysis shows that the main increase in computational burden for PnP-LBFGS is the requirement of two evaluations of $\nabla^2 f$ at each iteration, as well as at least double the number of neural network evaluations compared to the compared PnP methods. However, assuming only one test for $w^k$ is needed, each iteration only requires one additional evaluation of the denoiser-related networks $N_\sigma, D_\sigma$ and fidelity gradient $\nabla f$ (or $\prox_f$) to the compared PnP methods. In our experiments, $\nabla^2 f$ has a low computational cost due to the closed form. This allows us to trade roughly 2--3$\times$ the per-iteration cost with nearly $10\times$ fewer iterations required as shown in \Cref{fig:deblurConvergencelog2,fig:srConvergencelog2}, resulting in fewer total function calls, and thus the 4--5$\times$ faster reconstruction times as shown in \Cref{tab:PSNR_deblur,tab:PSNR-superres}. 

\section{Conclusion}
In this work, we propose a Plug-and-Play approach to image reconstruction that utilizes descent steps based on the forward-backward envelope. Using the descent formulation, we are able to further incorporate quasi-Newton steps to accelerate convergence. The resulting PnP scheme is provably convergent with a gradient-step assumption on the denoiser by using the Kurdyka-Łojasiewicz property and theoretically achieves superlinear convergence if a Hessian approximation satisfying the Dennis-Moré condition is available. Moreover, properties of the forward-backward envelope allow for additional ways of checking convergence. Our experiments demonstrate that it is able to converge significantly faster in terms of both time and iteration count as well as having highly competitive performance when compared with competing PnP methods with similar convergence guarantees.

For future works, one route is to consider alternative parameterizations of the denoiser $D_\sigma$. For example, consider the objective $\varphi = f + \phi_\sigma$ and the task of learning the regularization term $\phi_\sigma$ \cite{mukherjee2020LearnedConvexreg,mukherjee2022learned}. \Rev{By enforcing convexity of $\phi_\sigma$ through the neural network architecture, such as using input-convex neural networks \cite{amos2017input}, \RRev{(weakly-)} convex ridge regularizers \cite{goujon2022neural,goujon2023learning}, \RRev{firm nonexpansiveness} \cite{pesquet2021learning}, or parametric splines \cite{nguyen2017learning}}, results from \cite{Stella2017} utilizing convexity such as global sublinear convergence and local linear convergence can be applied. \RRev{This may also alleviate divergence problems caused when Lipschitz constraints on the denoisers are violated, as sometimes arises using spectral regularization.} \Rev{One restriction of the proposed method lies in the restriction of the regularization parameter, which imposes a bound on the minimum amount of regularization. Future works could look to loosen this restriction, similarly to \cite{hurault2023RelaxedProxDenoiser}.} In addition, while only simple forward operators such as image deblurring and super-resolution are experimented on in this work, the accelerated convergence rate and model-based interpretation may make this PnP scheme suitable for more complicated forward operators such as CT ray transforms. Future works may explore these practical applications, with a suitably trained ``denoiser" for these domains.

\section*{Acknowledgements}
Hong Ye Tan acknowledges support from GSK.ai and the Masason Foundation. Professor Carola-Bibiane Schönlieb acknowledges support from the Philip Leverhulme Prize, the Royal Society Wolfson Fellowship, the EPSRC advanced career fellowship EP/V029428/1, EPSRC grants EP/S026045/1 and EP/T003553/1, EP/N014588/1, EP/T017961/1, the Wellcome Innovator Awards 215733/Z/19/Z and 221633/Z/20/Z, the European Union Horizon 2020 research and innovation programme under the Marie Skodowska-Curie grant agreement No. 777826 NoMADS, the Cantab Capital Institute for the Mathematics of Information and the Alan Turing Institute.
\bibliographystyle{siamplain}
\bibliography{references}

\begin{thebibliography}{10}

\bibitem{amos2017input}
{\sc B.~Amos, L.~Xu, and J.~Z. Kolter}, {\em Input convex neural networks}, in International Conference on Machine Learning, PMLR, 2017, pp.~146--155.

\bibitem{arjomand2017deep}
{\sc S.~Arjomand~Bigdeli, M.~Zwicker, P.~Favaro, and M.~Jin}, {\em Deep mean-shift priors for image restoration}, Advances in Neural Information Processing Systems, 30 (2017).

\bibitem{armijo1966minimization}
{\sc L.~Armijo}, {\em Minimization of functions having {L}ipschitz continuous first partial derivatives}, Pacific Journal of mathematics, 16 (1966), pp.~1--3.

\bibitem{attouch2010proximal}
{\sc H.~Attouch, J.~Bolte, P.~Redont, and A.~Soubeyran}, {\em Proximal alternating minimization and projection methods for nonconvex problems: An approach based on the {K}urdyka-{{\L}}ojasiewicz inequality}, Mathematics of operations research, 35 (2010), pp.~438--457.

\bibitem{Attouch2013}
{\sc H.~Attouch, J.~Bolte, and B.~F. Svaiter}, {\em Convergence of descent methods for semi-algebraic and tame problems: proximal algorithms, forward--backward splitting, and regularized gauss--seidel methods}, Mathematical Programming, 137 (2013), pp.~91--129.

\bibitem{aujol2023fista}
{\sc J.-F. Aujol, C.~Dossal, and A.~Rondepierre}, {\em Fista is an automatic geometrically optimized algorithm for strongly convex functions}, Mathematical Programming,  (2023), pp.~1--43.

\bibitem{bauschke2011convex}
{\sc H.~Bauschke and P.~Combettes}, {\em Convex Analysis and Monotone Operator Theory in Hilbert Spaces}, CMS Books in Mathematics, Springer New York, 2011.

\bibitem{beck2017first}
{\sc A.~Beck}, {\em First-order methods in optimization}, SIAM, 2017.

\bibitem{beck2009fast}
{\sc A.~Beck and M.~Teboulle}, {\em A fast iterative shrinkage-thresholding algorithm for linear inverse problems}, SIAM journal on imaging sciences, 2 (2009), pp.~183--202.

\bibitem{Bolte2014}
{\sc J.~Bolte, S.~Sabach, and M.~Teboulle}, {\em Proximal alternating linearized minimization for nonconvex and nonsmooth problems}, Mathematical Programming, 146 (2014), pp.~459--494.

\bibitem{buades2011NLM}
{\sc A.~Buades, B.~Coll, and J.-M. Morel}, {\em Non-local means denoising}, Image Processing On Line, 1 (2011), pp.~208--212.

\bibitem{buzzard2018plug}
{\sc G.~T. Buzzard, S.~H. Chan, S.~Sreehari, and C.~A. Bouman}, {\em Plug-and-play unplugged: Optimization-free reconstruction using consensus equilibrium}, SIAM Journal on Imaging Sciences, 11 (2018), pp.~2001--2020.

\bibitem{byrd2016stochastic}
{\sc R.~H. Byrd, S.~L. Hansen, J.~Nocedal, and Y.~Singer}, {\em A stochastic quasi-{N}ewton method for large-scale optimization}, SIAM Journal on Optimization, 26 (2016), pp.~1008--1031.

\bibitem{chambolle2015convergence}
{\sc A.~Chambolle and C.~Dossal}, {\em On the convergence of the iterates of the “fast iterative shrinkage/thresholding algorithm”}, Journal of Optimization theory and Applications, 166 (2015), pp.~968--982.

\bibitem{chan2016plug}
{\sc S.~H. Chan, X.~Wang, and O.~A. Elgendy}, {\em Plug-and-play admm for image restoration: Fixed-point convergence and applications}, IEEE Transactions on Computational Imaging, 3 (2016), pp.~84--98.

\bibitem{cohen2021has}
{\sc R.~Cohen, Y.~Blau, D.~Freedman, and E.~Rivlin}, {\em It has potential: Gradient-driven denoisers for convergent solutions to inverse problems}, Advances in Neural Information Processing Systems, 34 (2021), pp.~18152--18164.

\bibitem{cohen2021regularization}
{\sc R.~Cohen, M.~Elad, and P.~Milanfar}, {\em Regularization by denoising via fixed-point projection (red-pro)}, SIAM Journal on Imaging Sciences, 14 (2021), pp.~1374--1406.

\bibitem{dabov2007bm3d}
{\sc K.~Dabov, A.~Foi, V.~Katkovnik, and K.~Egiazarian}, {\em Image denoising by sparse 3-d transform-domain collaborative filtering}, IEEE Transactions on image processing, 16 (2007), pp.~2080--2095.

\bibitem{daubechies2004iterative}
{\sc I.~Daubechies, M.~Defrise, and C.~De~Mol}, {\em An iterative thresholding algorithm for linear inverse problems with a sparsity constraint}, Communications on Pure and Applied Mathematics: A Journal Issued by the Courant Institute of Mathematical Sciences, 57 (2004), pp.~1413--1457.

\bibitem{dennis1974characterization}
{\sc J.~E. Dennis and J.~J. Mor{\'e}}, {\em A characterization of superlinear convergence and its application to quasi-{N}ewton methods}, Mathematics of computation, 28 (1974), pp.~549--560.

\bibitem{douglas1956numerical}
{\sc J.~Douglas and H.~H. Rachford}, {\em On the numerical solution of heat conduction problems in two and three space variables}, Transactions of the American mathematical Society, 82 (1956), pp.~421--439.

\bibitem{fazlyab2019efficient}
{\sc M.~Fazlyab, A.~Robey, H.~Hassani, M.~Morari, and G.~Pappas}, {\em Efficient and accurate estimation of {L}ipschitz constants for deep neural networks}, Advances in Neural Information Processing Systems, 32 (2019).

\bibitem{gan2023block}
{\sc W.~Gan, S.~Shoushtari, Y.~Hu, J.~Liu, H.~An, and U.~S. Kamilov}, {\em Block coordinate plug-and-play methods for blind inverse problems}, arXiv preprint arXiv:2305.12672,  (2023).

\bibitem{goldstein2014field}
{\sc T.~Goldstein, C.~Studer, and R.~Baraniuk}, {\em A field guide to forward-backward splitting with a fasta implementation}, arXiv preprint arXiv:1411.3406,  (2014).

\bibitem{goujon2022neural}
{\sc A.~Goujon, S.~Neumayer, P.~Bohra, S.~Ducotterd, and M.~Unser}, {\em A neural-network-based convex regularizer for image reconstruction}, arXiv preprint arXiv:2211.12461,  (2022).

\bibitem{goujon2023learning}
{\sc A.~Goujon, S.~Neumayer, and M.~Unser}, {\em Learning weakly convex regularizers for convergent image-reconstruction algorithms}, arXiv preprint arXiv:2308.10542,  (2023).

\bibitem{gribonval2020characterization}
{\sc R.~Gribonval and M.~Nikolova}, {\em A characterization of proximity operators}, Journal of Mathematical Imaging and Vision, 62 (2020), pp.~773--789.

\bibitem{helgason1980radon}
{\sc S.~Helgason and S.~Helgason}, {\em The radon transform}, vol.~2, Springer, 1980.

\bibitem{hertrich2021convolutional}
{\sc J.~Hertrich, S.~Neumayer, and G.~Steidl}, {\em Convolutional proximal neural networks and plug-and-play algorithms}, Linear Algebra and its Applications, 631 (2021), pp.~203--234.

\bibitem{huang2015broyden}
{\sc W.~Huang, K.~A. Gallivan, and P.-A. Absil}, {\em A {B}royden class of quasi-{N}ewton methods for {R}iemannian optimization}, SIAM Journal on Optimization, 25 (2015), pp.~1660--1685.

\bibitem{hurault2023RelaxedProxDenoiser}
{\sc S.~Hurault, A.~Chambolle, A.~Leclaire, and N.~Papadakis}, {\em A relaxed proximal gradient descent algorithm for convergent plug-and-play with proximal denoiser}, 2023.

\bibitem{hurault2021gradient}
{\sc S.~Hurault, A.~Leclaire, and N.~Papadakis}, {\em Gradient step denoiser for convergent plug-and-play}, arXiv preprint arXiv:2110.03220,  (2021).

\bibitem{hurault2022proximal}
{\sc S.~Hurault, A.~Leclaire, and N.~Papadakis}, {\em Proximal denoiser for convergent plug-and-play optimization with nonconvex regularization}, 2022.

\bibitem{jin2022sharpened}
{\sc Q.~Jin, A.~Koppel, K.~Rajawat, and A.~Mokhtari}, {\em Sharpened quasi-{N}ewton methods: Faster superlinear rate and larger local convergence neighborhood}, in International Conference on Machine Learning, PMLR, 2022, pp.~10228--10250.

\bibitem{jin2023non}
{\sc Q.~Jin and A.~Mokhtari}, {\em Non-asymptotic superlinear convergence of standard quasi-{N}ewton methods}, Mathematical Programming, 200 (2023), pp.~425--473.

\bibitem{kaipio2006statistical}
{\sc J.~Kaipio and E.~Somersalo}, {\em Statistical and computational inverse problems}, vol.~160, Springer Science \& Business Media, 2006.

\bibitem{kamilov2023plug}
{\sc U.~S. Kamilov, C.~A. Bouman, G.~T. Buzzard, and B.~Wohlberg}, {\em Plug-and-play methods for integrating physical and learned models in computational imaging: Theory, algorithms, and applications}, IEEE Signal Processing Magazine, 40 (2023), pp.~85--97.

\bibitem{kamilov2017plug}
{\sc U.~S. Kamilov, H.~Mansour, and B.~Wohlberg}, {\em A plug-and-play priors approach for solving nonlinear imaging inverse problems}, IEEE Signal Processing Letters, 24 (2017), pp.~1872--1876.

\bibitem{lee2014proximal}
{\sc J.~D. Lee, Y.~Sun, and M.~A. Saunders}, {\em Proximal {N}ewton-type methods for minimizing composite functions}, SIAM Journal on Optimization, 24 (2014), pp.~1420--1443.

\bibitem{levin2009understanding}
{\sc A.~Levin, Y.~Weiss, F.~Durand, and W.~T. Freeman}, {\em Understanding and evaluating blind deconvolution algorithms}, in 2009 IEEE conference on computer vision and pattern recognition, IEEE, 2009, pp.~1964--1971.

\bibitem{li2001modified}
{\sc D.-H. Li and M.~Fukushima}, {\em A modified {BFGS} method and its global convergence in nonconvex minimization}, Journal of Computational and Applied Mathematics, 129 (2001), pp.~15--35.

\bibitem{li2001global}
{\sc D.-H. Li and M.~Fukushima}, {\em On the global convergence of the {BFGS} method for nonconvex unconstrained optimization problems}, SIAM Journal on Optimization, 11 (2001), pp.~1054--1064.

\bibitem{liu1989limited}
{\sc D.~C. Liu and J.~Nocedal}, {\em On the limited memory {BFGS} method for large scale optimization}, Mathematical programming, 45 (1989), pp.~503--528.

\bibitem{lunz2018adversarial}
{\sc S.~Lunz, O.~{\"O}ktem, and C.-B. Sch{\"o}nlieb}, {\em Adversarial regularizers in inverse problems}, Advances in neural information processing systems, 31 (2018).

\bibitem{martin2001database}
{\sc D.~Martin, C.~Fowlkes, D.~Tal, and J.~Malik}, {\em A database of human segmented natural images and its application to evaluating segmentation algorithms and measuring ecological statistics}, in Proceedings Eighth IEEE International Conference on Computer Vision. ICCV 2001, vol.~2, IEEE, 2001, pp.~416--423.

\bibitem{miyato2018spectral}
{\sc T.~Miyato, T.~Kataoka, M.~Koyama, and Y.~Yoshida}, {\em Spectral normalization for generative adversarial networks}, arXiv preprint arXiv:1802.05957,  (2018).

\bibitem{mokhtari2015global}
{\sc A.~Mokhtari and A.~Ribeiro}, {\em Global convergence of online limited memory {BFGS}}, The Journal of Machine Learning Research, 16 (2015), pp.~3151--3181.

\bibitem{moreau1965proximite}
{\sc J.-J. Moreau}, {\em Proximit{\'e} et dualit{\'e} dans un espace hilbertien}, Bulletin de la Soci{\'e}t{\'e} math{\'e}matique de France, 93 (1965), pp.~273--299.

\bibitem{mukherjee2020LearnedConvexreg}
{\sc S.~Mukherjee, S.~Dittmer, Z.~Shumaylov, S.~Lunz, O.~Öktem, and C.-B. Schönlieb}, {\em Learned convex regularizers for inverse problems}, 2020.

\bibitem{mukherjee2022learned}
{\sc S.~Mukherjee, A.~Hauptmann, O.~{\"O}ktem, M.~Pereyra, and C.-B. Sch{\"o}nlieb}, {\em Learned reconstruction methods with convergence guarantees}, arXiv preprint arXiv:2206.05431,  (2022).

\bibitem{nair2021fixed}
{\sc P.~Nair, R.~G. Gavaskar, and K.~N. Chaudhury}, {\em Fixed-point and objective convergence of plug-and-play algorithms}, IEEE Transactions on Computational Imaging, 7 (2021), pp.~337--348.

\bibitem{neumayer2023approximation}
{\sc S.~Neumayer, A.~Goujon, P.~Bohra, and M.~Unser}, {\em Approximation of {L}ipschitz functions using deep spline neural networks}, SIAM Journal on Mathematics of Data Science, 5 (2023), pp.~306--322.

\bibitem{nguyen2017learning}
{\sc H.~Q. Nguyen, E.~Bostan, and M.~Unser}, {\em Learning convex regularizers for optimal {B}ayesian denoising}, IEEE Transactions on Signal Processing, 66 (2017), pp.~1093--1105.

\bibitem{NoceWrig06}
{\sc J.~Nocedal and S.~J. Wright}, {\em Numerical Optimization}, Springer, New York, NY, USA, 2e~ed., 2006.

\bibitem{ochs2019adaptive}
{\sc P.~Ochs and T.~Pock}, {\em Adaptive fista for nonconvex optimization}, SIAM Journal on Optimization, 29 (2019), pp.~2482--2503.

\bibitem{PyTorch}
{\sc A.~Paszke, S.~Gross, F.~Massa, A.~Lerer, J.~Bradbury, G.~Chanan, T.~Killeen, Z.~Lin, N.~Gimelshein, L.~Antiga, A.~Desmaison, A.~Kopf, E.~Yang, Z.~DeVito, M.~Raison, A.~Tejani, S.~Chilamkurthy, B.~Steiner, L.~Fang, J.~Bai, and S.~Chintala}, {\em Pytorch: An imperative style, high-performance deep learning library}, in Advances in Neural Information Processing Systems 32, H.~Wallach, H.~Larochelle, A.~Beygelzimer, F.~d\textquotesingle Alch\'{e}-Buc, E.~Fox, and R.~Garnett, eds., Curran Associates, Inc., 2019, pp.~8024--8035, \url{http://papers.neurips.cc/paper/9015-pytorch-an-imperative-style-high-performance-deep-learning-library.pdf}.

\bibitem{pesquet2021learning}
{\sc J.-C. Pesquet, A.~Repetti, M.~Terris, and Y.~Wiaux}, {\em Learning maximally monotone operators for image recovery}, SIAM Journal on Imaging Sciences, 14 (2021), pp.~1206--1237.

\bibitem{reehorst2018regularization}
{\sc E.~T. Reehorst and P.~Schniter}, {\em Regularization by denoising: Clarifications and new interpretations}, IEEE transactions on computational imaging, 5 (2018), pp.~52--67.

\bibitem{Rockafellar1972CA}
{\sc R.~Rockafellar}, {\em Convex Analysis}, Princeton mathematical series ; 28, Princeton University Press, Princeton, NJ, 1972.

\bibitem{rodomanov2021greedy}
{\sc A.~Rodomanov and Y.~Nesterov}, {\em Greedy quasi-{N}ewton methods with explicit superlinear convergence}, SIAM Journal on Optimization, 31 (2021), pp.~785--811.

\bibitem{rodomanov2021rates}
{\sc A.~Rodomanov and Y.~Nesterov}, {\em Rates of superlinear convergence for classical quasi-{N}ewton methods}, Mathematical Programming,  (2021), pp.~1--32.

\bibitem{ruderman94FourierReg}
{\sc D.~L. Ruderman}, {\em The statistics of natural images}, Network: Computation in Neural Systems, 5 (1994), pp.~517--548.

\bibitem{rudin1992nonlinear}
{\sc L.~I. Rudin, S.~Osher, and E.~Fatemi}, {\em Nonlinear total variation based noise removal algorithms}, Physica D: nonlinear phenomena, 60 (1992), pp.~259--268.

\bibitem{ryu2019plug}
{\sc E.~Ryu, J.~Liu, S.~Wang, X.~Chen, Z.~Wang, and W.~Yin}, {\em Plug-and-play methods provably converge with properly trained denoisers}, in International Conference on Machine Learning, PMLR, 2019, pp.~5546--5557.

\bibitem{schraudolph2007stochastic}
{\sc N.~N. Schraudolph, J.~Yu, and S.~G{\"u}nter}, {\em A stochastic quasi-{N}ewton method for online convex optimization}, in Artificial intelligence and statistics, PMLR, 2007, pp.~436--443.

\bibitem{sreehari2016plug}
{\sc S.~Sreehari, S.~V. Venkatakrishnan, B.~Wohlberg, G.~T. Buzzard, L.~F. Drummy, J.~P. Simmons, and C.~A. Bouman}, {\em Plug-and-play priors for bright field electron tomography and sparse interpolation}, IEEE Transactions on Computational Imaging, 2 (2016), pp.~408--423.

\bibitem{Stella2017}
{\sc L.~Stella, A.~Themelis, and P.~Patrinos}, {\em Forward--backward quasi-{N}ewton methods for nonsmooth optimization problems}, Computational Optimization and Applications, 67 (2017), pp.~443--487.

\bibitem{sun2020async}
{\sc Y.~Sun, J.~Liu, Y.~Sun, B.~Wohlberg, and U.~S. Kamilov}, {\em Async-red: A provably convergent asynchronous block parallel stochastic method using deep denoising priors}, arXiv preprint arXiv:2010.01446,  (2020).

\bibitem{sun2019online}
{\sc Y.~Sun, B.~Wohlberg, and U.~S. Kamilov}, {\em An online plug-and-play algorithm for regularized image reconstruction}, IEEE Transactions on Computational Imaging, 5 (2019), pp.~395--408.

\bibitem{sun2021scalable}
{\sc Y.~Sun, Z.~Wu, X.~Xu, B.~Wohlberg, and U.~S. Kamilov}, {\em Scalable plug-and-play admm with convergence guarantees}, IEEE Transactions on Computational Imaging, 7 (2021), pp.~849--863.

\bibitem{tang2022accelerating}
{\sc J.~Tang}, {\em Accelerating plug-and-play image reconstruction via multi-stage sketched gradients}, arXiv preprint arXiv:2203.07308,  (2022).

\bibitem{tjoa2020survey}
{\sc E.~Tjoa and C.~Guan}, {\em A survey on explainable artificial intelligence (xai): Toward medical xai}, IEEE transactions on neural networks and learning systems, 32 (2020), pp.~4793--4813.

\bibitem{vellido2020importance}
{\sc A.~Vellido}, {\em The importance of interpretability and visualization in machine learning for applications in medicine and health care}, Neural computing and applications, 32 (2020), pp.~18069--18083.

\bibitem{venkatakrishnan2013plug}
{\sc S.~V. Venkatakrishnan, C.~A. Bouman, and B.~Wohlberg}, {\em Plug-and-play priors for model based reconstruction}, in 2013 IEEE Global Conference on Signal and Information Processing, IEEE, 2013, pp.~945--948.

\bibitem{wang2017stochastic}
{\sc X.~Wang, S.~Ma, D.~Goldfarb, and W.~Liu}, {\em Stochastic quasi-{N}ewton methods for nonconvex stochastic optimization}, SIAM Journal on Optimization, 27 (2017), pp.~927--956.

\bibitem{wen2018survey}
{\sc F.~Wen, L.~Chu, P.~Liu, and R.~C. Qiu}, {\em A survey on nonconvex regularization-based sparse and low-rank recovery in signal processing, statistics, and machine learning}, IEEE Access, 6 (2018), pp.~69883--69906.

\bibitem{zhang2021plug}
{\sc K.~Zhang, Y.~Li, W.~Zuo, L.~Zhang, L.~Van~Gool, and R.~Timofte}, {\em Plug-and-play image restoration with deep denoiser prior}, IEEE Transactions on Pattern Analysis and Machine Intelligence, 44 (2021), pp.~6360--6376.

\bibitem{zhang2017learning}
{\sc K.~Zhang, W.~Zuo, S.~Gu, and L.~Zhang}, {\em Learning deep cnn denoiser prior for image restoration}, in IEEE Conference on Computer Vision and Pattern Recognition, 2017, pp.~3929--3938.

\end{thebibliography}

\newpage
\appendix
\section{Convergence under Kurdyka-Łojasiewicz (KL) Property}
We wish to prove \Cref{thm:KLConvergence}, which we restate below. The proof will be similar to that in \cite[Thm 3.10]{Stella2017}, which we reproduce the proofs below for completeness.

\begin{theorem}
    Suppose that $f$ satisfies the KL condition and $g$ is semialgebraic, and both are bounded from below. Suppose further that there exists constants $\bar\tau, c>0$ such that $\tau_k<\bar\tau$ and $\|d^k\| \le c \|R_{\gamma_k}(x^k)\|$, $\beta>0$, and that $\varphi$ is coercive or has compact level sets. Then the sequence of iterates is finite and ends with $R_{\gamma_k}(x^k)=0$ or converges to a critical point of $\varphi$.
\end{theorem}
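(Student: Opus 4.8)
The plan is to run the Attouch--Bolte--Svaiter Kurdyka--{\L}ojasiewicz descent scheme, adapted to the two--residual decrease supplied by \Cref{thm:phiDecreases}. First dispose of the trivial case: if some $R_{\gamma_k}(x^k)=0$, the algorithm stops and the sequence is finite. So assume the iteration is infinite. Since $f$ and $g$ are bounded below, $\varphi$ is bounded below; by \Cref{thm:phiDecreases}(i) the values $\varphi(x^k)$ are nonincreasing, hence $\varphi(x^k)\downarrow\varphi_*>-\infty$, and \Cref{thm:phiDecreases}(ii) gives $\|R_{\gamma_k}(x^k)\|\to0$ (and, as $\beta>0$, also $\|R_{\gamma_k}(w^k)\|\to0$), both square--summable. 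Coercivity (or compact level sets) makes $(x^k)$ bounded, so $\omega(x^0)\neq\emptyset$, and every cluster point lies in $\zer\partial\varphi$ by \Cref{thm:phiDecreases}(iii). Fix such a cluster point $x_*$; since $\gamma_k=\gamma_\infty$ eventually and the forward--backward envelope $\varphi_{\gamma_\infty}$ is continuous, passing to a subsequence and using \Cref{prop:PhiGammaEquality}(i) gives $\varphi_*=\varphi_{\gamma_\infty}(x_*)=\varphi(x_*)$.

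Next I establish the two standard inequalities, phrased at the level of $\varphi$ and in terms of the residual norms. Using $x^{k+1}-w^k=-\gamma_k R_{\gamma_k}(w^k)$, $w^k-x^k=\tau_k d^k$ with $\tau_k\le\bar\tau$ and $\|d^k\|\le c\|R_{\gamma_k}(x^k)\|$, and the bounds $\gamma_k\ge\gamma_\infty>0$ and $\gamma_k-M\gamma_k^2\ge \mathrm{const}>0$ guaranteed by \Cref{lem:gammaBddBelowWeakConv}, \Cref{thm:phiDecreases}(i) yields constants $a_1,a_2>0$ with
\begin{equation*}
\varphi(x^k)-\varphi(x^{k+1})\ \ge\ a_1\|R_{\gamma_k}(w^k)\|^2+a_2\|R_{\gamma_k}(x^k)\|^2,
\end{equation*}
and also $\|x^{k+1}-x^k\|\le\gamma_\infty\|R_{\gamma_k}(w^k)\|+\bar\tau c\,\|R_{\gamma_k}(x^k)\|$. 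For the subgradient bound, the optimality condition of $T_{\gamma_k}$ in \eqref{eq:TgammaDef} at $w^k$ gives $R_{\gamma_k}(w^k)-\nabla f(w^k)\in\partial g(x^{k+1})$, hence $\nabla f(x^{k+1})-\nabla f(w^k)+R_{\gamma_k}(w^k)\in\partial\varphi(x^{k+1})$ by the smooth/nonsmooth sum rule, and $L_f$-Lipschitzness of $\nabla f$ with $\|x^{k+1}-w^k\|=\gamma_k\|R_{\gamma_k}(w^k)\|$ gives $\dist(0,\partial\varphi(x^{k+1}))\le b\,\|R_{\gamma_k}(w^k)\|$ for some $b>0$.

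Now invoke the KL property of $\varphi$ at $x_*$ (the hypotheses that $f$ is KL and $g$ semialgebraic make $\varphi=f+g$ a KL function, as in the discussion around \Cref{cor:PnPMINFBEConvergence}), with desingularizing function $\Psi$. For $k$ large, $x^k$ lies in the KL neighborhood and $\varphi(x_*)<\varphi(x^k)<\varphi(x_*)+\eta$; set $\Delta_k=\Psi(\varphi(x^k)-\varphi(x_*))$. Concavity of $\Psi$ and the decrease give $\Delta_k-\Delta_{k+1}\ge\Psi'(\varphi(x^k)-\varphi(x_*))\,(a_1\|R_{\gamma_k}(w^k)\|^2+a_2\|R_{\gamma_k}(x^k)\|^2)$, while the KL inequality with the subgradient bound gives $\Psi'(\varphi(x^k)-\varphi(x_*))\ge 1/\dist(0,\partial\varphi(x^k))\ge 1/(b\|R_{\gamma_{k-1}}(w^{k-1})\|)$. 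An AM--GM split $2\sqrt{uv}\le\epsilon^{-1}u+\epsilon v$ then bounds $\|R_{\gamma_k}(w^k)\|+\|R_{\gamma_k}(x^k)\|$ by $\epsilon^{-1}$ times a constant multiple of $(\Delta_k-\Delta_{k+1})$ plus $\epsilon$ times $\|R_{\gamma_{k-1}}(w^{k-1})\|$; summing over $k$, the telescoping term is finite, and choosing $\epsilon$ small enough to absorb the shifted residual sum into the left-hand side yields $\sum_k\|R_{\gamma_k}(w^k)\|<\infty$ and $\sum_k\|R_{\gamma_k}(x^k)\|<\infty$. By the triangle-inequality bound above this gives $\sum_k\|x^{k+1}-x^k\|<\infty$, so $(x^k)$ is Cauchy and converges to some $\bar x\in\omega(x^0)\subseteq\zer\partial\varphi$, i.e.\ a critical point of $\varphi$.

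The main obstacle is the bookkeeping in the relative-error and summation step: both the sufficient decrease and the subgradient estimate are naturally expressed via $\|R_{\gamma_k}(w^k)\|$ and $\|R_{\gamma_k}(x^k)\|$ rather than $\|x^{k+1}-x^k\|$, and since $x^{k+1}-x^k=\tau_k d^k-\gamma_k R_{\gamma_k}(w^k)$ the two contributions can partially cancel; the argument must therefore be carried through in the residual norms and only converted to step lengths at the very end. A secondary care point is matching the KL offset, i.e.\ verifying $\varphi(x^k)\to\varphi(x_*)$ through continuity of $\varphi_{\gamma_\infty}$ and \Cref{prop:PhiGammaEquality}(i) rather than continuity of $\varphi$ itself.
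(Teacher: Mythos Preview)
Your overall strategy is essentially the same as the paper's (Attouch--Bolte--Svaiter KL descent scheme, run in the residual norms $\|R_{\gamma_k}(w^k)\|,\|R_{\gamma_k}(x^k)\|$), and the sufficient--decrease and subgradient estimates you write down match those in the paper's appendix. The AM--GM argument you use for summability is a fine substitute for the paper's auxiliary recursion lemma.

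There is, however, one genuine gap: the localization step. You fix a \emph{single} cluster point $x_*$ and then assert that ``for $k$ large, $x^k$ lies in the KL neighborhood''. This is not automatic: a priori $\omega(x^0)$ may contain more than one point, and the sequence may oscillate between neighborhoods of distinct cluster points without ever being trapped in the KL neighborhood of $x_*$. Square--summability of the residuals (hence $\|x^{k+1}-x^k\|\to 0$) gives $\dist_{\omega(x^0)}(x^k)\to 0$ and that $\omega(x^0)$ is compact and connected, but it does \emph{not} give $\dist(x^k,x_*)\to 0$ for a single $x_*$. The paper closes this gap by invoking the \emph{uniformized} KL lemma of Bolte et al.\ over the whole compact set $K=\omega(x^0)$ (on which $\varphi$ is constant), so that a single desingularizing function $\Psi$ works for all $x^k$ with $\dist_{\omega(x^0)}(x^k)<\varepsilon$. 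Alternatively, you could keep the single-point formulation but add the standard inductive trapping argument (choose $k_0$ with $x^{k_0}$ close to $x_*$ and $\varphi(x^{k_0})$ close to $\varphi(x_*)$, and show by induction, using the very summability bound you are proving, that all subsequent iterates remain in the KL neighborhood). As written, your sentence ``for $k$ large, $x^k$ lies in the KL neighborhood'' begs exactly the conclusion you are trying to prove.
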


Recall the definition of the KL property.

\begin{definition}[KL Property \cite{Attouch2013,Bolte2014}] \label{def:KL}
    Suppose $\varphi:\R^d \rightarrow \overline{\R}$ is proper and lower semi-continuous. $h$ satisfies the \emph{Kurdyka-Łojasiewicz (KL) property} at a point $x_*$ in $\dom \partial \varphi$ if there exists $\eta \in (0,+\infty]$, a neighborhood $U$ of $x_*$ and a continuous concave function $\Psi:[0,\eta) \rightarrow [0,+\infty)$ such that:
    \begin{enumerate}
        \item $\Psi(0) = 0$;
        \item $\Psi$ is $\mathcal{C}^1$ on $(0,\eta)$
        \item $\Psi'(s)>0$ for $s \in (0,\eta)$;
        \item For all $u \in U \cap \{\varphi(x_*) < \varphi(u) < \varphi(x_*) + \eta\}$, we have
        \[\psi'(\varphi(u) - \varphi(x_*)) \dist (0, \partial \varphi(u)) \ge 1.\]
    \end{enumerate}
    We say that $\varphi$ is a KL function if the KL property is satisfied at every point of $\dom \partial \varphi$.
\end{definition}

Let us state the main assumption that will be used in the following lemmas.
\begin{assumption}\label{ass:1}
    $\varphi = f+g$, where $f$ is $\mathcal{C}^1$ with $L_f$-Lipschitz gradient, and $g$ is $M$-weakly convex, proper and lower semicontinuous. Furthermore, assume $\gamma_0 < 1/M$.
\end{assumption}
\begin{lemma}[{\cite[Lem 6.6]{Stella2017}}] \label{lem:6.6}
    Assume \Cref{ass:1}, and consider the sequences $(x^k),\ (w^k)$ generated by MINFBE. If there exist $\bar \tau, c>0$ such that $\tau_k \le \bar \tau$ and $\|d^k\|\le c \|R_{\gamma_k}(x^k)\|$, then
    \begin{equation}\label{eq:6.2}
        \|x^{k+1} - x^k\| \le \gamma_k \|R_{\gamma_k}(w^k)\| + \bar \tau c \|R_{\gamma_k}(x^k)\| \quad \forall k \in \mathbb{N}
    \end{equation}
    and for $k$ large enough,
    \begin{equation}\label{eq:6.3}
        \|x^{k+1} - x^k\| \le \gamma_k \|R_{\gamma_k}(w^k)\| + \bar \tau c (1+\gamma_k L_f)\|R_{\gamma_{k-1}}(w^{k-1})\|.
    \end{equation}
\end{lemma}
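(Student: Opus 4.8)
The plan is to bound $\|x^{k+1}-x^k\|$ by inserting the intermediate point $w^k$ and using the triangle inequality, then controlling each of the two resulting pieces separately. Write $\|x^{k+1}-x^k\| \le \|x^{k+1}-w^k\| + \|w^k-x^k\|$. Since $x^{k+1} = T_{\gamma_k}(w^k)$, the first term is exactly $\|T_{\gamma_k}(w^k) - w^k\| = \gamma_k\|R_{\gamma_k}(w^k)\|$ by the definition of $R_\gamma$. For the second term, $w^k = x^k + \tau_k d^k$, so $\|w^k - x^k\| = \tau_k\|d^k\| \le \bar\tau c\,\|R_{\gamma_k}(x^k)\|$ using the two hypotheses $\tau_k \le \bar\tau$ and $\|d^k\| \le c\|R_{\gamma_k}(x^k)\|$. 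Adding these gives \eqref{eq:6.2} directly.

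For \eqref{eq:6.3}, the idea is to replace the $\|R_{\gamma_k}(x^k)\|$ term on the right of \eqref{eq:6.2} by something involving the previous iterate's residual at $w^{k-1}$. For $k$ large enough, Lemma~\ref{lem:gammaBddBelowWeakConv} tells us the step-sizes have stabilized, $\gamma_k = \gamma_{k-1} = \gamma_\infty$, so we may treat $R_{\gamma_k}$ and $R_{\gamma_{k-1}}$ as the same (Lipschitz-continuous) map $R_{\gamma_\infty}$. Recalling $x^k = T_{\gamma_{k-1}}(w^{k-1})$, we want to estimate $\|R_{\gamma_\infty}(x^k)\| = \|R_{\gamma_\infty}(T_{\gamma_{k-1}}(w^{k-1}))\|$. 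Since $x^k = T_{\gamma_{k-1}}(w^{k-1})$ and, for a critical point, $R$ vanishes, a natural route is to compare $R_{\gamma_\infty}(x^k)$ with $R_{\gamma_{k-1}}(w^{k-1})$: we have $x^k - w^{k-1} = -\gamma_{k-1} R_{\gamma_{k-1}}(w^{k-1})$, and the map $x \mapsto x - T_{\gamma}(x) = x - \prox_{\gamma g}(x - \gamma\nabla f(x))$ is Lipschitz. Using that $\prox_{\gamma g}$ is $1$-Lipschitz (for $\gamma < 1/M$, since $g + \frac{1}{2\gamma}\|\cdot\|^2$ is strongly convex, as already noted in the proof of Theorem~\ref{thm:Superlinear}) and $\nabla f$ is $L_f$-Lipschitz, one gets $\gamma_\infty\|R_{\gamma_\infty}(x^k)\| = \|x^k - T_{\gamma_\infty}(x^k)\|$, and a short computation bounds this by $(1 + \gamma_\infty L_f)\|x^k - w^{k-1}\| = \gamma_\infty(1+\gamma_\infty L_f)\|R_{\gamma_{k-1}}(w^{k-1})\|$ — essentially because $\|x^k - T_\gamma(x^k)\| \le \|x^k - w^{k-1}\| + \|T_\gamma(w^{k-1}) - T_\gamma(x^k)\|$ and $T_\gamma$ is $(1+\gamma L_f)$-Lipschitz under these assumptions. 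Hence $\|R_{\gamma_k}(x^k)\| \le (1+\gamma_k L_f)\|R_{\gamma_{k-1}}(w^{k-1})\|$, and substituting into \eqref{eq:6.2} yields \eqref{eq:6.3}.

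The main obstacle is the second estimate: one must be careful about which step-size index appears where, and must justify the Lipschitz bound $1+\gamma L_f$ on $T_\gamma = \prox_{\gamma g}\circ(I - \gamma\nabla f)$ in the \emph{weakly} convex regime. The key enabling fact is that for $\gamma < 1/M$ the proximal operator of the $M$-weakly convex $g$ is still single-valued and $1$-Lipschitz (nonexpansive), which is exactly where the standing assumption $\gamma_0 < 1/M$ in Assumption~\ref{ass:1} is used; combined with $\|I - \gamma\nabla f\|_{\mathrm{Lip}} \le 1 + \gamma L_f$ this gives the claim. Everything else is triangle inequalities and the stabilization of $\gamma_k$ from Lemma~\ref{lem:gammaBddBelowWeakConv}.
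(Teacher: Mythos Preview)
Your proposal is correct and follows essentially the same route as the paper: split via $w^k$ for \eqref{eq:6.2}, then for \eqref{eq:6.3} use $\gamma_k=\gamma_{k-1}=\gamma_\infty$ for large $k$, the identity $x^k=T_{\gamma_\infty}(w^{k-1})$, and nonexpansiveness of $\prox_{\gamma g}$ for $\gamma<1/M$ together with the $L_f$-Lipschitz gradient to get $\|R_{\gamma_\infty}(x^k)\|\le(1+\gamma_\infty L_f)\|R_{\gamma_\infty}(w^{k-1})\|$. One small cleanup: the triangle inequality you wrote in the parenthetical is superfluous, since $x^k=T_\gamma(w^{k-1})$ already gives the \emph{equality} $\|x^k-T_\gamma(x^k)\|=\|T_\gamma(w^{k-1})-T_\gamma(x^k)\|$, and then the $(1+\gamma L_f)$-Lipschitz bound on $T_\gamma$ finishes it directly.
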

\begin{proof}
    \eqref{eq:6.2} follows from the triangle inequality, the definition of $R_{\gamma}$ and $\tau_k \le \bar \tau$:
    \begin{align*}
    \|x^{k+1} - x^k\| &= \|x^{k+1} - w^k + \tau_k d^k\| \\
    & \le \|w^k - T_{\gamma_k}(w^k) \|+ \|\tau_k d^k\| \\
    &\le \gamma_k \|R_{\gamma_k}(w^k)\| + \bar \tau c \|R_{\gamma_k}(x^k)\|.
    \end{align*} 
    For $k$ sufficiently large s.t. $\gamma_k = \gamma_{k-1} = \gamma_\infty > 0$ (c.f. \Cref{lem:gammaBddBelowWeakConv}), dropping the subscripts of the $\gamma$,
    \begin{align*}
        \|R_\gamma(x^k)\| &= \gamma^{-1}\|x^k - T_\gamma(x^k)\| \\
        &= \gamma^{-1} \|T_\gamma(w^{k-1}) - T_\gamma(x^k)\| \\
        &\le \gamma^{-1} \|w^{k-1} - \gamma \nabla f(w^{k-1}) - x^k + \gamma \nabla f(x^k)\| \\
        &\le \gamma^{-1} \|w^{k-1} - x^k\| + \|\nabla f(w^{k-1}) - \nabla f(x^k)\| \\
        &\le (1+\gamma L_f) \|R_\gamma(w^{k-1})\|.
    \end{align*}
    where the first inequality follows from non-expansivity of $\prox_{\gamma g}$ as $\gamma < 1/M$, the second from triangle inequality, and the final from $\nabla f$ being $L_f$-Lipschitz. Adding back the subscripts yields \eqref{eq:6.3}.
\end{proof}

\begin{lemma}[{\cite[Lem 6.7]{Stella2017}}] \label{lem:6.7}
    Let $(\beta_k)_{k \in \mathbb{N}}$ and $(\delta_k)_{k \in \mathbb{N}}$ be real sequences with $\beta_k, \delta_k\ge 0,\, \delta_{k+1} \le \delta_k$ and $\beta^2_{k+1} \le (\delta_k - \delta_{k+1}) \beta_k$ for all $k \in \mathbb{N}$. Then $\sum_{k=0}^\infty \beta_k < \infty$.
\end{lemma}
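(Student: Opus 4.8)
The plan is to convert the quadratic recursion into a linear one via the arithmetic--geometric mean inequality, and then sum and telescope. First, note that since $\delta_{k+1}\le\delta_k$ and $\beta_k\ge 0$, the right-hand side of the hypothesis $\beta_{k+1}^2\le(\delta_k-\delta_{k+1})\beta_k$ is non-negative, so taking square roots is legitimate; applying $2\sqrt{ab}\le a+b$ with $a=\delta_k-\delta_{k+1}$ and $b=\beta_k$ gives
\[
2\beta_{k+1}\le(\delta_k-\delta_{k+1})+\beta_k\qquad\text{for all }k\in\mathbb{N}.
\]

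Next I would sum this inequality over $k=0,\dots,N$. The $\delta$-terms telescope to $\delta_0-\delta_{N+1}\le\delta_0$, using $\delta_{N+1}\ge 0$; while $\sum_{k=0}^N\beta_k=\beta_0+\sum_{k=0}^{N-1}\beta_{k+1}\le\beta_0+\sum_{k=0}^N\beta_{k+1}$. Setting $S_N:=\sum_{k=0}^N\beta_{k+1}$, this yields $2S_N\le\delta_0+\beta_0+S_N$, i.e. $S_N\le\delta_0+\beta_0$ for every $N$. Since each $\beta_{k+1}\ge 0$, the partial sums $S_N$ are non-decreasing and bounded above, hence convergent, so $\sum_{k=0}^\infty\beta_k=\beta_0+\lim_{N\to\infty}S_N\le 2\beta_0+\delta_0<\infty$.

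There is no genuine obstacle here; the only points that need a moment's care are the \emph{absorption} step — keeping half of $\sum\beta_{k+1}$ on the left so the self-referential bound closes — and the observation that $(\delta_k)$, being non-increasing and non-negative, contributes only the finite quantity $\delta_0$ after telescoping. (In the subsequent KL argument this lemma will be applied with $\beta_k$ a quantity controlling $\|x^{k+1}-x^k\|$ and $\delta_k$ built from the desingularizing function $\Psi$ evaluated at $\varphi(x^k)-\varphi_*$, so the conclusion $\sum\beta_k<\infty$ furnishes the Cauchy property of the iterates.)
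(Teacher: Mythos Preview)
Your argument is correct and is exactly the standard AM--GM/telescoping proof that the paper defers to \cite{Stella2017}; the paper itself does not reproduce the details but simply writes ``As in \cite{Stella2017}'', so there is nothing further to compare.
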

\begin{proof}
    As in \cite{Stella2017}.
\end{proof}

\begin{lemma}[{\cite[Lem 5]{Bolte2014}}] \label{lem:clusterpts}
    Suppose $(z^k)$ is a bounded sequence with $\|z^{k+1} - z^k\| \rightarrow 0$ as $k \rightarrow \infty$. Then
    \begin{enumerate}[i.]
        \item $\lim_{k\rightarrow \infty} \dist_{\omega (z^0)}(z^k) = 0$;
        \item $\omega(z^0)$ is nonempty compact and connected.
    \end{enumerate}
\end{lemma}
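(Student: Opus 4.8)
The plan is to treat the three assertions separately, since only connectedness of $\omega(z^0)$ genuinely uses the hypothesis $\|z^{k+1}-z^k\|\to 0$: nonemptiness, compactness, and claim (i) will follow from boundedness alone.

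First I would dispatch nonemptiness and compactness. Boundedness of $(z^k)$ and Bolzano--Weierstrass give a convergent subsequence, so $\omega(z^0)\neq\emptyset$. The set of subsequential limits of any sequence is closed (a limit of cluster points is again a cluster point, by a diagonal argument), and $\omega(z^0)$ is contained in the closure of the bounded set $\{z^k:k\in\mathbb{N}\}$, hence bounded; a closed bounded subset of $\R^n$ is compact. For claim (i), suppose $\dist_{\omega(z^0)}(z^k)\not\to 0$. Then there are $\varepsilon>0$ and a subsequence $(z^{k_j})$ with $\dist_{\omega(z^0)}(z^{k_j})\ge\varepsilon$; by boundedness pass to a further subsequence converging to some $z_*$, which lies in $\omega(z^0)$ by definition, so $\dist_{\omega(z^0)}(z_*)=0$. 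Since $z\mapsto\dist_{\omega(z^0)}(z)$ is continuous, this contradicts $\dist_{\omega(z^0)}(z^{k_j})\ge\varepsilon$, proving (i).

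The main obstacle is connectedness, which is where the step-size condition enters, and I would argue by contradiction. If $\omega(z^0)$ is disconnected then, being compact, it splits as a disjoint union of two nonempty compact sets $A$ and $B$ with $d:=\dist(A,B)>0$. Set $U_A=\{z:\dist_A(z)<d/3\}$ and $U_B=\{z:\dist_B(z)<d/3\}$, which are disjoint open neighborhoods of $A$ and $B$. Since $A$ and $B$ each contain cluster points of $(z^k)$, the sequence lies in $U_A$ for infinitely many indices and in $U_B$ for infinitely many indices. Choosing $K$ with $\|z^{k+1}-z^k\|<d/3$ for all $k\ge K$, the sequence cannot pass from $U_A$ directly to $U_B$ in one step, so there must be infinitely many indices $\ell$ with $z^\ell\notin U_A\cup U_B$, i.e.\ $\dist_A(z^\ell)\ge d/3$ and $\dist_B(z^\ell)\ge d/3$. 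By boundedness these gap points have a convergent subsequence with limit $z_*\in\omega(z^0)=A\cup B$, yet $\dist_A(z_*)\ge d/3$ and $\dist_B(z_*)\ge d/3$ force $z_*\notin A\cup B$ — a contradiction, so $\omega(z^0)$ is connected.

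The delicate point is making the ``cannot jump directly across'' step rigorous. I would make it precise as follows: given any $m\ge K$ with $z^m\in U_A$, let $n>m$ be the first subsequent index with $z^n\in U_B$ (which exists, as $U_B$ is entered infinitely often), and then let $\ell\in\{m+1,\dots,n\}$ be the first index with $z^\ell\notin U_A$. Then $z^{\ell-1}\in U_A$, so by the step bound $\dist_A(z^\ell)\le\dist_A(z^{\ell-1})+\|z^\ell-z^{\ell-1}\|<2d/3$; since $z^\ell\notin U_A$ we also have $\dist_A(z^\ell)\ge d/3$, and since $\ell\le n$ is the first index leaving $U_A$ while $z^n\in U_B$ only at $n$, one checks $z^\ell\notin U_B$ as well (otherwise $\dist_A(z^\ell)<d/3+d/3<d$, contradicting $\dist_A+\dist_B\ge d$). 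Letting $m\to\infty$ produces infinitely many such gap points, completing the contradiction.
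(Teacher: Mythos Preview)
Your proof is correct. The paper's own argument is much terser: for (i) it invokes the same subsequence-of-a-subsequence principle you use, and for (ii) it simply cites Lemma~5 and Remark~5 of \cite{Bolte2014} without reproducing the argument. Your write-up therefore fills in exactly what the citation omits, and your connectedness proof via the gap-point contradiction is the standard one. One small presentational slip: in the final sentence, the inequality chain ``$\dist_A(z^\ell)<d/3+d/3<d$'' should read $\dist_A(z^\ell)+\dist_B(z^\ell)<2d/3+d/3=d$, which then contradicts $\dist_A+\dist_B\ge d$; alternatively, you can simply observe that $z^{\ell-1}\in U_A$ and $\|z^\ell-z^{\ell-1}\|<d/3\le\dist(U_A,U_B)$ already force $z^\ell\notin U_B$, which is a cleaner way to close the argument.
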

\begin{proof}
    \textbf{(i).} Follows from the fact that a sequence converges if and only if every subsequence has a convergent subsequence. In this case, taking the sequence to be $(\dist_{\omega (z^0)}(z^k))_{k \in \mathbb{N}}$ and using boundedness of $z^k$ to get a convergent subsequence of any subsequence yields the result.
    \textbf{(ii).} Follows from Lemma 5 and Remark 5 in \cite{Bolte2014}.
\end{proof}

\begin{prop}[{\cite[Prop 6.8]{Stella2017}}] \label{prop:6.8}
Suppose \Cref{ass:1} is satisfied, and $\varphi$ is bounded from below. Consider the sequences generated by MINFBE. If $\beta \in (0,1)$ and there exist $\bar \tau, c>0$ such that $\tau_k \le \bar \tau$ and $\|d^k\|\le c \|R_{\gamma_k}(x^k)\|$, then 
\begin{equation}
    \sum_{k=0}^\infty \|x^{k+1} - x^k\|^2 < \infty.
\end{equation}
If additionally $(x^k)_{k \in \mathbb{N}}$ is bounded, then 
\begin{equation} \label{eq:distlimitzero}
    \lim_{k\rightarrow \infty} \dist_{\omega (x^0)}(x^k) = 0,
\end{equation}
where $\omega(x^0)$ is the set of cluster points of the sequence $(x^k)$. Moreover, $\omega(x^0)$ is a nonempty compact connected subset of $\zer \partial \varphi$ over which $\varphi$ is constant.
\end{prop}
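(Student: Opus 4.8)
The plan is to assemble the proposition from the per-step decrease and residual square-summability already obtained in \Cref{thm:phiDecreases}, the displacement estimate \eqref{eq:6.2} of \Cref{lem:6.6}, and the abstract cluster-point lemma \Cref{lem:clusterpts}. First I would note that, since $\varphi$ is bounded below, the monotone sequence $(\varphi(x^k))_{k\in\mathbb{N}}$ of \Cref{thm:phiDecreases}(i) converges to a finite limit $\varphi_*$; feeding $\varphi_*>-\infty$ into the telescoped estimate \eqref{eq:telescoping}, together with $\gamma_k\ge\gamma_\infty>0$ (\Cref{lem:gammaBddBelowWeakConv}) and $1-\gamma_k M$ bounded away from $0$, gives $\sum_k\|R_{\gamma_k}(x^k)\|^2<\infty$ and, using $\beta>0$, also $\sum_k\|R_{\gamma_k}(w^k)\|^2<\infty$. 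Squaring \eqref{eq:6.2}, using $(a+b)^2\le 2a^2+2b^2$ and $\gamma_k\le\gamma_0$, then yields
\[
\sum_{k=0}^\infty\|x^{k+1}-x^k\|^2\le 2\gamma_0^2\sum_{k=0}^\infty\|R_{\gamma_k}(w^k)\|^2+2\bar\tau^2c^2\sum_{k=0}^\infty\|R_{\gamma_k}(x^k)\|^2<\infty,
\]
which is the first assertion; in particular $\|x^{k+1}-x^k\|\to 0$.

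Assuming now that $(x^k)_{k\in\mathbb{N}}$ is bounded, I would apply \Cref{lem:clusterpts} to $(x^k)$ to obtain $\dist_{\omega(x^0)}(x^k)\to 0$ and that $\omega(x^0)$ is nonempty, compact and connected. The inclusion $\omega(x^0)\subseteq\zer\partial\varphi$ is precisely \Cref{thm:phiDecreases}(iii): for any subsequence $x^{k_j}\to\bar x$ one has $R_{\gamma_\infty}(x^{k_j})\to R_{\gamma_\infty}(\bar x)$ by continuity of the proximal operator and of $\nabla f$, while $R_{\gamma_k}(x^k)\to 0$ by square-summability, so $R_{\gamma_\infty}(\bar x)=0$, i.e. $\bar x=T_{\gamma_\infty}(\bar x)\in\zer\partial\varphi$.

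The remaining point is that $\varphi$ is constant on $\omega(x^0)$. Fix $\bar x\in\omega(x^0)$ and a subsequence $x^{k_j}\to\bar x$. Since $\|x^{k+1}-x^k\|\to 0$ we get $x^{k_j+1}\to\bar x$, and since $\|w^k-x^k\|=\tau_k\|d^k\|\le\bar\tau c\|R_{\gamma_k}(x^k)\|\to 0$ we get $w^{k_j}\to\bar x$. Writing $u^k\coloneqq w^k-\gamma_k\nabla f(w^k)$, so that $x^{k+1}=T_{\gamma_k}(w^k)=\prox_{\gamma_k g}(u^k)$, the defining minimality of the proximal map (with $\bar x\in\zer\partial\varphi\subseteq\dom\partial\varphi\subseteq\dom g$, so $g(\bar x)<+\infty$) gives
\[
g(x^{k_j+1})\le g(\bar x)+\frac{1}{2\gamma_{k_j}}\bigl(\|\bar x-u^{k_j}\|^2-\|x^{k_j+1}-u^{k_j}\|^2\bigr).
\]
Using that $\gamma_k$ is eventually equal to $\gamma_\infty>0$, that $u^{k_j}\to\bar x-\gamma_\infty\nabla f(\bar x)$ by continuity of $\nabla f$, and that $x^{k_j+1}\to\bar x$, the right-hand side tends to $g(\bar x)$, so $\limsup_j g(x^{k_j+1})\le g(\bar x)$; combined with lower semicontinuity of $g$ this forces $g(x^{k_j+1})\to g(\bar x)$, and since $f$ is continuous, $\varphi(x^{k_j+1})\to\varphi(\bar x)$. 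As $(\varphi(x^k))_{k\in\mathbb{N}}$ converges to $\varphi_*$, we conclude $\varphi(\bar x)=\varphi_*$ for every $\bar x\in\omega(x^0)$.

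The one genuinely delicate step is this last one: because $g$ is only proper and lower semicontinuous, one cannot pass $\varphi$ through the limit directly, so the proximal-optimality inequality above is essential — it is what upgrades convergence of the iterates into convergence of the $g$-values along the subsequence. All the other ingredients are a direct repackaging of \Cref{thm:phiDecreases}, \Cref{lem:6.6} and \Cref{lem:clusterpts}.
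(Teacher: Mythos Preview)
Your proof is correct and, for the square-summability of $\|x^{k+1}-x^k\|$ and the topological properties of $\omega(x^0)$, matches the paper's approach exactly (invoking \Cref{thm:phiDecreases}, \Cref{lem:6.6}, and \Cref{lem:clusterpts}). The only difference lies in the constancy argument: the paper simply writes that lower semicontinuity of $\varphi$ gives $\varphi(x_*)\le\liminf_j\varphi(x^{k_j})=\varphi^*$ and then asserts $\varphi(x_*)=\varphi^*$, which as written only yields one inequality. Your proximal-optimality argument is the standard way to close this gap (as in Bolte--Sabach--Teboulle), and is genuinely needed since $g$ is merely l.s.c.\ and may be extended-valued. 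An alternative fix, closer in spirit to the paper's machinery, would be to use continuity of the forward-backward envelope $\varphi_{\gamma_\infty}$ together with $\varphi(x_*)=\varphi_{\gamma_\infty}(x_*)$ for $x_*\in\zer\partial\varphi$ (\Cref{prop:PhiGammaEquality}(i)) and the sandwich $\varphi(x^{k+1})\le\varphi_{\gamma_\infty}(x^k)\le\varphi(x^k)$ from \eqref{eq:varphiStepPhigamma} to get $\varphi_{\gamma_\infty}(x^{k_j})\to\varphi^*$. Either route works; yours is self-contained and does not rely on regularity of the envelope.
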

\begin{proof}
    Since $\varphi$ is lower bounded, by \Cref{thm:phiDecreases}(ii) and (iv), $\|R_{\gamma_k}(x^k)\|$ and $\|R_{\gamma_k}(w^k)\|$ are square-summable. By \eqref{eq:6.2}, we have that $\|x^{k+1} - x^k\|$ is upper bounded by a sum of square-summable sequences, hence is square-summable.

    If $(x^k)$ is bounded, \Cref{lem:clusterpts} yields \eqref{eq:distlimitzero} and (nonempty) compactness and connectedness. $\omega(x^0) \subset \zer \partial \varphi$ follows from \Cref{thm:phiDecreases}(iii).

    To show $\varphi$ is constant over $\omega(x^0)$, denote by $\varphi^*$ the finite limit of $\varphi(x^k)$. This exists since $\varphi(x^k)$ is monotonically nonincreasing by \Cref{thm:phiDecreases}(i), and bounded below. Take $x^*$ in $\omega(x^0)$, which has subsequence $x^{k_j} \rightarrow x_*$. Then by (lower semi-)continuity of $\varphi$, we have $\varphi(x_*) \le \liminf \varphi(x^{k_j}) = \varphi^*$. So $\varphi(x_*) = \varphi^*$.
\end{proof}

\begin{lemma}[{\cite[Lem 6.9]{Stella2017}}, {\cite[Lem 6]{Bolte2014}}]\label{lem:uniformKL}
    Let $K \subset \R^n$ be a compact set and suppose the proper lower semi-continuous function $\varphi:\R^n \rightarrow \overline{\R}$ is constant on $K$ and satisfies the KL property at every $x_*$ in $K$. Then there exists $\varepsilon, \eta>0$ and a continuous concave function $\psi:[0,\eta] \rightarrow [0,+\infty)$ s.t. properties \ref{def:KL}(i-iii) hold and additionally
    \begin{enumerate}[i'.]
        \setcounter{enumi}{3}
        \item for all $x_*\in K$ and $x$ such that $\dist_K(x) < \varepsilon$ and $\varphi(x_*) < \varphi(x) < \varphi(x_*) + \eta$,
        \begin{equation} \label{eq:uniformKL}
            \psi'(\varphi(x) - \varphi(x_*)) \dist(0, \partial \varphi(x)) \ge 1.
        \end{equation}
    \end{enumerate}
\end{lemma}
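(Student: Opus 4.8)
The plan is to run the standard ``uniformization of the Kurdyka--Łojasiewicz inequality over a compact set'' argument (cf. \cite[Lem~6]{Bolte2014}), the key leverage being that $\varphi$ is constant on $K$. First I would localize: for every $\bar x\in K$ the KL property (\Cref{def:KL}) furnishes $\eta_{\bar x}\in(0,+\infty]$ (truncate to a finite value if needed), an open neighborhood $U_{\bar x}$ of $\bar x$, and a continuous concave $\psi_{\bar x}\colon[0,\eta_{\bar x})\to[0,+\infty)$, $\mathcal{C}^1$ on $(0,\eta_{\bar x})$ with $\psi_{\bar x}(0)=0$ and $\psi_{\bar x}'>0$, such that $\psi_{\bar x}'(\varphi(x)-\varphi(\bar x))\dist(0,\partial\varphi(x))\ge 1$ for every $x\in U_{\bar x}$ with $\varphi(\bar x)<\varphi(x)<\varphi(\bar x)+\eta_{\bar x}$. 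Throughout I would use that $\varphi$ is constant on $K$, so $\varphi(\bar x)=\varphi(x_*)$ for all $\bar x,x_*\in K$; this is precisely what makes a uniform threshold ``$\varphi(x_*)$'' coherent.

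Second, I would pass to a finite object via compactness. The family $\{U_{\bar x}\}_{\bar x\in K}$ covers the compact set $K$, so there are $\bar x_1,\dots,\bar x_p\in K$ with $K\subseteq U:=\bigcup_{i=1}^p U_{\bar x_i}$. Since $K$ is compact and $U\supseteq K$ is open, there exists $\varepsilon>0$ with $\{x\colon\dist_K(x)<\varepsilon\}\subseteq U$. Set $\eta:=\min_{1\le i\le p}\eta_{\bar x_i}>0$ and patch the desingularizing functions \emph{additively}, $\psi:=\sum_{i=1}^p\psi_{\bar x_i}$ restricted to $[0,\eta]$. A sum rather than a maximum is used so that $\mathcal{C}^1$-smoothness is preserved; continuity, concavity, $\psi(0)=\sum_i\psi_{\bar x_i}(0)=0$, and $\psi'=\sum_i\psi_{\bar x_i}'>0$ on $(0,\eta)$ are then immediate, and each $\psi_{\bar x_i}$ is defined on $[0,\eta]\subseteq[0,\eta_{\bar x_i})$, so $\psi$ is well defined. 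This establishes properties \ref{def:KL}(i)--(iii).

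Finally, I would verify the uniform inequality \eqref{eq:uniformKL}. Given $x_*\in K$ and $x$ with $\dist_K(x)<\varepsilon$ and $\varphi(x_*)<\varphi(x)<\varphi(x_*)+\eta$, the first condition puts $x\in U_{\bar x_i}$ for some $i$, and constancy of $\varphi$ on $K$ gives $\varphi(\bar x_i)=\varphi(x_*)$, hence $\varphi(\bar x_i)<\varphi(x)<\varphi(\bar x_i)+\eta\le\varphi(\bar x_i)+\eta_{\bar x_i}$. The local KL inequality at $\bar x_i$ therefore applies, giving $\psi_{\bar x_i}'(\varphi(x)-\varphi(x_*))\dist(0,\partial\varphi(x))\ge 1$; since all summands of $\psi'$ are nonnegative, $\psi'(\varphi(x)-\varphi(x_*))\ge\psi_{\bar x_i}'(\varphi(x)-\varphi(x_*))$, and the claim follows.

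There is no serious obstacle: the only points requiring care are (i) keeping a common domain $[0,\eta]$ valid by taking $\eta=\min_i\eta_{\bar x_i}$ and truncating any infinite $\eta_{\bar x}$; (ii) preserving $\mathcal{C}^1$-regularity and positivity of the derivative by summing rather than maximizing the $\psi_{\bar x_i}$; and (iii) invoking the constancy of $\varphi$ on $K$ to identify the local thresholds $\varphi(\bar x_i)$ with the uniform threshold $\varphi(x_*)$.
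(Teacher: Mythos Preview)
Your proposal is correct and reproduces precisely the standard uniformization argument of \cite[Lem~6]{Bolte2014}, which is exactly what the paper defers to (the paper's own proof consists solely of the citation ``As in \cite[Lem~6]{Bolte2014}'' and does not spell out the steps). The finite-subcover-plus-additive-desingularizer construction you outline is the same route taken in that reference.
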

\begin{proof}
    As in \cite[Lem 6]{Bolte2014}, cited without proof in \cite[Lem 6.9]{Stella2017}.
\end{proof}

We are ready to prove the main result now. We follow the proof of \cite[Thm 3.10]{Stella2017}.

\begin{proof}[Proof of \Cref{thm:KLConvergence}]
If $(x^k)$ terminates, then we are done. Suppose that $(x^k)$ is infinite, then $R_{\gamma_k}(x^k) \ne 0$ for all $k$. By \Cref{thm:phiDecreases}(i) and $\gamma_k < 1/M$, we have that $\varphi(x^{k+1}) < \varphi(x^k)$. Note that since $g$ is semialgebraic and hence KL, so is $\varphi = f + g$. Moreover, $\varphi$ is bounded below since both $f$ and $g$ are. By \Cref{prop:6.8}, the KL property \ref{def:KL} and \Cref{lem:uniformKL} for $\varphi$ applied with the compact cluster set $K = \omega(x^0)$, there exists $\varepsilon, \eta > 0$ and continuous concave $\psi:[0,\eta] \rightarrow [0,+\infty)$ such that for any $x$ with $\dist_{\omega(x^0)}(x)< \varepsilon$ and $\varphi(x_*) < \varphi(x) < \varphi(x_*) + \eta$, we have
\[\psi'(\varphi(x) - \varphi(x_*)) \dist(0, \partial \varphi(x)) \ge 1.\]

By \Cref{prop:6.8}, for sufficiently large $k$, $\dist_{\omega(x^0)}(x^k)< \varepsilon$. Moreover, since $\varphi(x^k) \rightarrow \varphi(x_*) = \varphi^*$, we have for sufficiently large $k$ that $\varphi(x^k) < \varphi(x_*) + \eta$. Let $\bar k$ be sufficiently large that for $k\ge \bar k$, both of these conditions hold. Then for every $k \ge \bar k$, we have
\[\psi'(\varphi(x^k) - \varphi(x_*)) \dist(0, \partial \varphi(x^k)) \ge 1.\]

From \Cref{thm:phiDecreases}(i), and $\gamma < 1/M$, we have
\begin{equation}\label{eq:weakSuffDecr}
    \varphi(x^{k+1}) \le \varphi(x^k) - \frac{\beta \gamma_k}{2} \|R_{\gamma_k}(w^k)\|^2.
\end{equation}

Define for $k>0$, $\tilde \nabla \varphi(x^k) = \nabla f(x^k) - \nabla f(w^{k-1}) + R_{\gamma_{k-1}}(w^{k-1})$. Since $R_{\gamma_{k-1}}(w^{k-1}) \in \nabla f(w^{k-1}) + \partial g(x^k)$, we have $\tilde \nabla \varphi(x^k) \in \partial \varphi(x^k)$, and
\begin{align*}
    \|\tilde \nabla \varphi(x^k)\| & \le \|\nabla f(x^k) - \nabla f(w^{k-1})\| + \|R_{\gamma_{k-1}}(w^k)\|\\
    &\le (1 + \gamma_{k-1}L_f) \|R_{\gamma_{k-1}}(w^{k-1})\|,
\end{align*}
where the second inequality follows from $\nabla f$ being $L_f$-Lipschitz.

From \eqref{eq:uniformKL}, since $\tilde \nabla \varphi(x^k) \in \partial \varphi(x^k)$,
\begin{align*}
    \psi'(\varphi(x^k) - \varphi(x_*)) &\ge \frac{1}{\dist(0, \partial\varphi(x^k))} \\
    & \ge \frac{1}{\|\tilde \nabla \varphi(x^k)\|}\\
    & \ge \frac{1}{(1 + \gamma_{k-1} L_f) \|R_{\gamma_{k-1}}(w^{k-1})\|}.
\end{align*}

Now let $\Delta_k = \psi(\varphi(x^k) - \varphi(x_*))$. By concavity of $\psi$ and \eqref{eq:weakSuffDecr},

\begin{align*}
    \Delta_k - \Delta_{k+1} &\ge \psi'(\varphi(x^k) - \varphi(x_*)) (\varphi(x^k) - \varphi(x^{k+1}) \\
    &\ge \frac{\beta  \gamma_k}{2(1+\gamma_{k-1}L_f)} \frac{\|R_{\gamma_k}(w^k)\|^2}{\|R_{\gamma_{k-1}}(w^{k-1})\|}\\
    & \ge \frac{\beta  \gamma_{\text{min}}}{2(1+\gamma_0 L_f)} \frac{\|R_{\gamma_k}(w^k)\|^2}{\|R_{\gamma_{k-1}}(w^{k-1})\|},
\end{align*}
where $\gamma_{\text{min}} = \min \left\{ \gamma_0, \xi (1-\beta)/L_f, 1/M\right\}$ as in \Cref{lem:gammaBddBelowWeakConv}. Rearranging, we have
\begin{equation}
    \|R_{\gamma_k}(w^k)\|^2 \le \alpha (\Delta_k - \Delta_{k+1}) \|R_{\gamma_{k-1}}(w^{k-1})\|,
\end{equation}
where $\alpha = 2(1+\gamma_0 L_f)/(\beta \gamma_{\text{min}})$. Applying \Cref{lem:6.7} with 
\[\delta_k = \alpha \Delta_k,\quad \beta_k = \|R_{\gamma_{k-1}}(w^{k-1})\|,\]
we get absolute summability of $\|R_{\gamma_k}(w^k)\|$. By \eqref{eq:6.3}, we get absolute summability of $(x^{k+1} - x^k)$. Hence $(x^k)$ is Cauchy and converges, and the limit is a critical point of $\varphi$ by \Cref{thm:phiDecreases}(iii).
\end{proof}

\section{Experiments: Blur kernels}
For the deblurring and super-resolution experiments, we use the same blur kernels as in \cite{hurault2022proximal}. This allows for a fair comparison of the algorithms by using the same initializations. For deblurring, the kernels consist of eight camera shake kernels, a $9\times 9$ uniform kernel and a $25\times25$ Gaussian kernel with standard deviation $\sigma_{\text{blur}} = 1.6$ \cite{levin2009understanding}. For super-resolution, we use Gaussian blur kernels with strengths $\{0.7,\,1.2,\,1.6,\,2.0\}$.

    \begin{figure}[H] \centering
    \begin{tabular}{c}
    \includegraphics[width=0.08\textwidth]{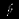}\;
    \includegraphics[width=0.08\textwidth]{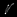}\;
    \includegraphics[width=0.08\textwidth]{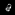}\;
    \includegraphics[width=0.08\textwidth]{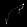}\;
    \includegraphics[width=0.08\textwidth]{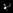}\\
    \includegraphics[width=0.08\textwidth]{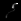}\;
    \includegraphics[width=0.08\textwidth]{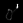}\;
    \includegraphics[width=0.08\textwidth]{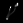}\;
    \includegraphics[width=0.08\textwidth]{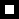}\;
    \includegraphics[width=0.08\textwidth]{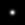}
    \end{tabular}
    \caption{The 10 blur kernels used for deblurring evaluation.\vspace{-0.3cm}
    }
    \end{figure}

    \begin{figure}[H] \centering
    \includegraphics[width=0.08\textwidth]{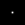}\;
    \includegraphics[width=0.08\textwidth]{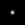}\;
    \includegraphics[width=0.08\textwidth]{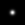}\;
    \includegraphics[width=0.08\textwidth]{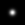}\;
    \caption{The 4 blur kernels used for super-resolution evaluation.\vspace*{-0.2cm}
    }
    \end{figure}

\end{document}